\pdfoutput=1
\documentclass[11pt]{article}
\usepackage[utf8]{inputenc}
\usepackage{authblk}
\usepackage{color}
\usepackage{helvet}         
\usepackage{courier}        
%
\usepackage{makeidx}         
\usepackage{graphicx}        
\usepackage{multicol}        
\usepackage{amsmath}
\usepackage{amssymb}
\usepackage{bbm}
\usepackage{amsthm}
\usepackage{hyperref}
\usepackage{enumitem}

\newtheorem{theorem}{Theorem}
\newtheorem{corollary}[theorem]{Corollary}

\newtheorem{lemma}[theorem]{Lemma}

\newtheorem{proposition}[theorem]{Proposition}

\newtheorem{definition}[theorem]{Definition}
\newtheorem{propA}{Proposition}

\newtheorem{corA}{Corollary}

\usepackage[top=2cm, bottom=2cm, left=2cm, right=2cm]{geometry}
\numberwithin{theorem}{section}
\numberwithin{figure}{section}
\numberwithin{equation}{section}
\DeclareMathOperator{\CR}{CR}

\DeclareMathOperator{\dist}{dist}
\DeclareMathOperator{\SLE}{SLE}

\DeclareMathOperator{\CLE}{CLE}

\DeclareMathOperator{\hcap}{hcap}
\DeclareMathOperator{\diam}{diam}

\DeclareMathOperator{\Rep}{Re}
\DeclareMathOperator{\Atom}{Atom}
\DeclareMathOperator{\Range}{Range}
\DeclareMathOperator{\Card}{Card}
\DeclareMathOperator{\Bij}{Bij}
\DeclareMathOperator{\osc}{osc}

\renewcommand{\Im}{\operatorname{Im}} 





\begin{document}

\title{A level line of the Gaussian free field with \\measure-valued boundary conditions}

\author{Titus Lupu\thanks{Email: titus.lupu@upmc.fr. 
Funded by the French National Research Agency (ANR) grant
within the project MALIN (ANR-16-CE93-0003).}}
\affil{CNRS and Sorbonne Université, LPSM, Paris, France}
\author{Hao Wu\thanks{Email: hao.wu.proba@gmail.com. Funded by Beijing Natural Science Foundation (JQ20001, Z180003). }}
\affil{Tsinghua University, Beijing, China}

\date{}
\maketitle
\vspace{-1cm}
\begin{center}
\begin{minipage}{0.8\textwidth}
\abstract{In this article, we construct samples of SLE-like curves out of samples of CLE and Poisson point process of Brownian excursions. We show that the law of these curves depends continuously on the intensity measure of the Brownian excursions. Using such construction of curves, we extend the notion of level lines of GFF to the case when the boundary condition is measure-valued.
\smallbreak
\noindent\textbf{Keywords}: Schramm Loewner evolution (SLE), conformal loop ensemble (CLE), Brownian excursion, Gaussian free field (GFF). }
\end{minipage}
\end{center}

\tableofcontents

\newcommand{\eps}{\epsilon}
\newcommand{\ov}{\overline}
\newcommand{\U}{\mathbb{U}}
\newcommand{\T}{\mathbb{T}}
\newcommand{\HH}{\mathbb{H}}
\newcommand{\TK}{\mathtt{K}}
\newcommand{\LA}{\mathcal{A}}
\newcommand{\LB}{\mathcal{B}}
\newcommand{\LC}{\mathcal{C}}
\newcommand{\LD}{\mathcal{D}}
\newcommand{\LF}{\mathcal{F}}
\newcommand{\LK}{\mathcal{K}}
\newcommand{\LE}{\mathcal{E}}
\newcommand{\LG}{\mathcal{G}}
\newcommand{\LL}{\mathcal{L}}
\newcommand{\LM}{\mathcal{M}}
\newcommand{\LQ}{\mathcal{Q}}
\newcommand{\LO}{\mathcal{O}}
\newcommand{\CP}{\mathcal{P}}
\newcommand{\LR}{\mathcal{R}}
\newcommand{\LT}{\mathcal{T}}
\newcommand{\LS}{\mathcal{S}}
\newcommand{\LU}{\mathcal{U}}
\newcommand{\LV}{\mathcal{V}}
\newcommand{\PartF}{\mathcal{Z}}
\newcommand{\LH}{\mathcal{H}}
\newcommand{\FC}{\mathfrak{C}}
\newcommand{\R}{\mathbb{R}}
\newcommand{\C}{\mathbb{C}}
\newcommand{\N}{\mathbb{N}}
\newcommand{\Z}{\mathbb{Z}}
\newcommand{\E}{\mathbb{E}}
\newcommand{\D}{\mathbb{D}}
\newcommand{\PP}{\mathbb{P}}
\newcommand{\QQ}{\mathbb{Q}}
\newcommand{\A}{\mathbb{A}}
\newcommand{\SZ}{\mathsf{Z}}
\newcommand{\SU}{\mathsf{U}}
\newcommand{\one}{\mathbbm{1}}
\newcommand{\bn}{\mathbf{n}}
\newcommand{\MR}{MR}
\newcommand{\cond}{\,|\,}
\newcommand{\la}{\langle}
\newcommand{\ra}{\rangle}
\newcommand{\tree}{\Upsilon}
\global\long\def\chamber{\mathfrak{X}}

\section{Introduction}
\label{Sec Intro}
The goal of the present paper is to construct samples of variants of Schramm Loewner Evolution (SLE) curves out of samples of Conformal Loop Ensemble (CLE) and Poisson point process of Brownian excursions.
The intensity of Brownian excursions is parametrized
by a non-negative Radon measure $\nu$ supported on a boundary arc.
Our construction generalizes that of \cite{WernerWuCLEtoSLE},
where instead of the measure $\nu$ one had a constant.
We further show the continuity in law of the curve with respect to the
boundary measure $\nu$.
For the parameter $\kappa=4$ of the CLE,
we show that the curve we construct is actually distributed as a level line of a Gaussian free field (GFF) with boundary condition $\nu$.
This is related to random walk/Brownian motion representations of the GFF, known as isomorphism theorems,
and in particular relies on a construction in
\cite{AruLupuSepulvedaFPSGFFCVGISO}.
In \cite{PowellWuLevellinesGFF}, the authors constructed the level lines of a GFF with regulated boundary conditions and satisfying an additional condition related to the continuation threshold.
The construction of our paper allows both to go beyond the regulated
setting and to drop the continuation threshold condition.

In order to state our main conclusions properly, let us first introduce the CLE and the Brownian excursions. 
We denote by $\D$ the unit disc and by $\HH$ the upper half-plane: 
\begin{equation*}\label{eqn::def_unitdisc_halfplane}
\D:=\{z\in\C: |z|<1\}, \qquad \HH:=\{z\in\C: \Im(z)>0\}.
\end{equation*}

We first introduce the CLE. 
Consider probability measures on collections of countably many continuous simple loops in simply connected domains such that these loops are two-by-two disjoint and non-surrounding. 
In~\cite{SheffieldWernerCLE}, the authors prove that there exists a one-parameter family of such probability measures satisfying conformal invariance, a certain domain Markov property and an extra regularity assumption ``local finiteness". 
This family is denoted by 
$\CLE_{\kappa}$ with $\kappa\in (8/3,4]$. 
In a $\CLE_{\kappa}$ sample, the loops all locally look like 
$\SLE_{\kappa}$ curves. 

We next introduce the Brownian excursion measure. In this paper, ``Brownian" will refer to the standard Brownian motion in $\C$. 
Given $x, y\in\partial\D$, denote by $\mu_{x,y}^{\D}$ the non-normalized measure on Brownian excursions from $x$ to $y$ in $\D$ (see Section~\ref{Subsec Excursions}). Let $\sigma_{\partial \D}$ denote the arc-length measure on
$\partial\D$, with
$\sigma_{\partial \D}(\partial \D)=2\pi$. 
Let $A_{\rm L}$ and $A_{\rm R}$ denote the left and right half-circles 
in $\partial \D$:
\begin{equation}\label{eqn::def_halfcircle}
A_{\rm L}=\{z\in\partial\D : \Rep (z)<0\},
\qquad
A_{\rm R}=\{z\in\partial\D : \Rep (z)>0\}.
\end{equation}
Let $\nu$ be a 
finite non-negative Radon measure on 
$\overline{A_{\rm L}}$. 
The Brownian excursion measure $\mu^{\D}_{\nu}$ is defined as follows:
\begin{equation}\label{eqn::Brownianexcursionmeasurenu}
\mu^{\D}_{\nu}(\cdot)=
\dfrac{1}{2}
\iint_{\overline{A_{\rm L}}\times \overline{A_{\rm L}}}
d(\nu\otimes\nu)(x,y)\mu_{x,y}^{\D}(\cdot). 
\end{equation}

Now, we are ready to state our construction. 
Fix $\kappa\in (8/3 , 4]$ and 
let $\FC_{\kappa}$ be a $\CLE_{\kappa}$ loop ensemble in $\D$.
Fix $\nu$ a finite non-negative Radon measure on 
$\overline{A_{\rm L}}$ and let $\Xi_{\nu}$ be a Poisson point process of intensity $\mu^{\D}_{\nu}$, independent of $\FC_{\kappa}$.
For $\gamma\in \Xi_{\nu}$, let $\LS_{\kappa}(\gamma)$ be the union of $\gamma$ and all loops in $\FC_{\kappa}$ intersecting $\gamma$. Let $\LS_{\kappa,\nu}$ be the union of all $\LS_{\kappa}(\gamma)$ with $\gamma\in \Xi_{\nu}$. 
In the limit case $\kappa=8/3$, we just set $\LS_{8/3,\nu}$ to be the union of all $\gamma\in \Xi_{\nu}$. 
By construction, 
$\LS_{\kappa,\nu}\cap A_{\rm R} = \emptyset$.
Let $\LD_{\rm R,\kappa,\nu}$ be the connected component of
$\overline{\D}\setminus
(\LS_{\kappa,\nu}\cup\overline{A_{\rm L}})$ 
that contains $A_{\rm R}$.
The set $\LD_{\rm R,\kappa,\nu}$ is of form
$\LO_{\kappa,\nu}\cup A_{\rm R}$,
where $\LO_{\kappa,\nu}$ is an open simply connected domain. Let $\psi_{\kappa,\nu}$ be the conformal transformation from $\D$ to $\LO_{\kappa, \nu}$, uniquely defined by the normalization 
\begin{equation}\label{eqn::psikappanu_def}
\psi_{\kappa, \nu}(-i)=-i,\quad \psi_{\kappa, \nu}(1)=1, \quad \psi_{\kappa, \nu}(i)=i\quad\text{and }\psi_{\kappa, \nu}(A_{\rm R})=A_{\rm R}.
\end{equation}
Set
\begin{equation}
\label{Eq eta kappa nu}
\eta_{\kappa,\nu}:=
(\partial \LD_{\rm R,\kappa,\nu})\setminus A_{\rm R}.
\end{equation}
Informally, $\eta_{\kappa,\nu}$ is constructed as the
envelop from the right of the set 
$\LS_{\kappa,\nu}\cup \overline{A_{\rm L}}$. See Figure~\ref{fig::envelopconstruction}. 

\begin{figure}[ht!]
\begin{center}
\includegraphics[width=0.9\textwidth]{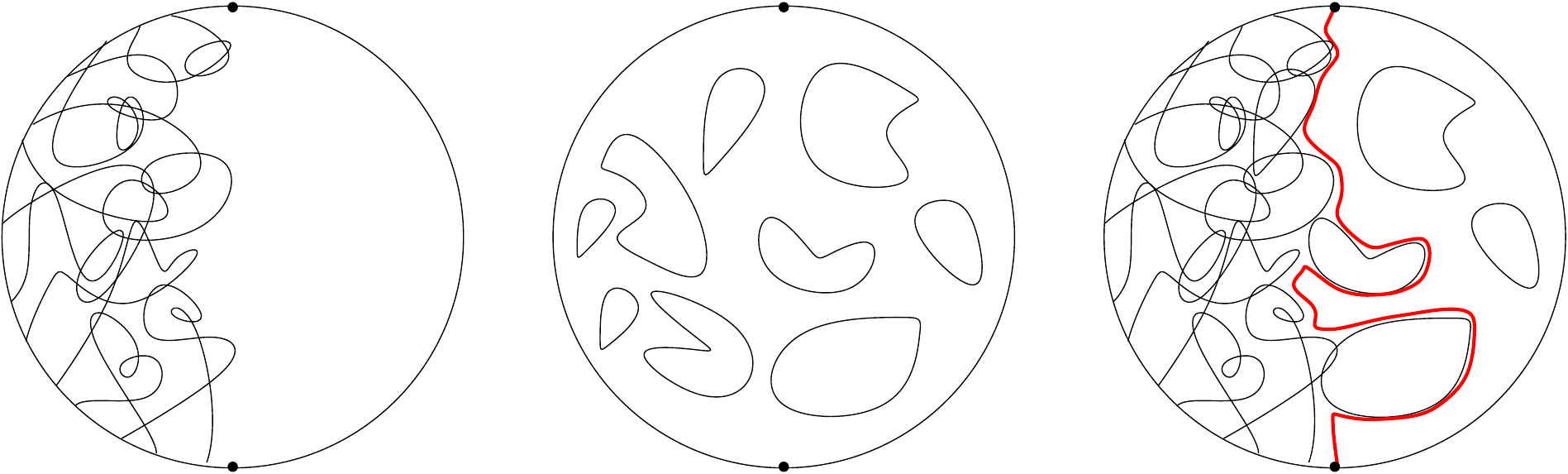}
\end{center}
\caption{\label{fig::envelopconstruction} In the left panel, the curves indicate a Poisson point process of Brownian excursions with intensity $\mu^{\D}_{\nu}$. In the middle panel, the loops indicate a $\CLE_{\kappa}$ loop ensemble. In the right panel, the curve in red indicates $\eta_{\kappa,\nu}$. }
\end{figure}

\subsection{Continuity of the envelop}
\label{subsec::intro_envelop}
The first goal of this paper is to derive continuity properties of $\eta_{\kappa, \nu}$. 

\begin{proposition}\label{prop::envelop_parameterization}
Fix $\kappa\in [8/3,4]$ and $\nu$ a finite non-negative Radon measure on $\overline{A_{\rm{L}}}$. 
\begin{enumerate}[label=(\arabic*)]
\item The law of $\eta_{\kappa,\nu}$ is conformal covariant in the following sense: given $\psi$ a Möbius transformation of $\D$,
with $\psi(-i)=-i$ and $\psi(i)=i$, the set 
$\psi(\eta_{\kappa,\nu})$ is distributed as
$\eta_{\kappa,\nu_{\psi}}$,
where
\[d\nu_{\psi}(x) = \vert\psi'\circ\psi^{-1}(x)\vert d(\psi_{\ast}\nu)(x).\]
\item \label{item::envelop_continuity} The set 
$\C\setminus \LO_{\kappa,\nu}$ is locally connected. The conformal map $\psi_{\kappa, \nu}$ extends continuously to $\overline{\D}$ and $(\psi_{\kappa,\nu}(x))_{x\in\overline{A_{\rm L}}}$ parameterizes $\eta_{\kappa,\nu}$ as a continuous curve in $\overline{\D}$ from $-i$ to $i$. 
\end{enumerate}
\end{proposition}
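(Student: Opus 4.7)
For part (1), the plan is to combine the conformal invariance of $\CLE_\kappa$ with the conformal covariance of the boundary-to-boundary Brownian excursion measure. A Möbius self-map $\psi$ of $\D$ fixing both $-i$ and $i$ preserves the hyperbolic geodesic between them, and therefore sends $A_{\rm L}$ to $A_{\rm L}$ and $A_{\rm R}$ to $A_{\rm R}$, so by \cite{SheffieldWernerCLE} the pushforward $\psi(\FC_\kappa)$ is again a $\CLE_\kappa$. Using the classical identity $\psi_\ast \mu_{x,y}^\D = |\psi'(x)||\psi'(y)|\, \mu_{\psi(x),\psi(y)}^\D$ together with the change of variables $x'=\psi(x),\, y'=\psi(y)$ in \eqref{eqn::Brownianexcursionmeasurenu}, one gets $\psi_\ast \mu_\nu^\D = \mu_{\nu_\psi}^\D$ with $\nu_\psi$ as stated. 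By the mapping theorem for Poisson point processes, $\psi(\Xi_\nu)$ has the law of $\Xi_{\nu_\psi}$, independent of $\psi(\FC_\kappa)$. Since the envelope defined in \eqref{Eq eta kappa nu} is a topological construction that commutes with the homeomorphism $\psi|_{\overline\D}$, I obtain $\psi(\eta_{\kappa,\nu})\stackrel{d}{=}\eta_{\kappa,\nu_\psi}$.

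For part (2), the continuous extension of $\psi_{\kappa,\nu}$ to $\overline\D$ and the parameterization of $\eta_{\kappa,\nu}$ as a continuous curve from $-i$ to $i$ follow from Carathéodory's theorem once the local connectedness of $\partial\LO_{\kappa,\nu}$ is established, so the core task is local connectedness. My plan is to first handle the scale-invariant case $\nu = c\,\sigma_{\partial\D}|_{\overline{A_{\rm L}}}$, where, upon passing to $\HH$ with $\pm i\mapsto 0,\infty$, the envelope coincides in law with a continuous SLE-type curve (an $\SLE_\kappa(\rho;\rho)$ variant) by \cite{WernerWuCLEtoSLE}; this yields local connectedness directly in the constant case. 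For a general finite $\nu$, I would approximate $\nu$ by piecewise-constant measures on short boundary arcs, use part (1) to transfer each arc to the constant-intensity setting via a suitable Möbius transformation, and exploit the monotone coupling $\Xi_\nu \subset \Xi_{\nu'}$ whenever $\nu\le\nu'$ (which implies $\LO_{\kappa,\nu'}\subset \LO_{\kappa,\nu}$) to sandwich the general envelope between continuous ones.

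The main obstacle is controlling the geometry of $\partial\LO_{\kappa,\nu}$ near points of $\overline{A_{\rm L}}$ where excursions accumulate, in order to rule out cusps and pinch points on the right-hand side. I expect to handle this with an equicontinuity estimate for the approximating envelopes: using the local finiteness of $\CLE_\kappa$ from \cite{SheffieldWernerCLE} and tail bounds on the Brownian excursion measure, only finitely many excursions and CLE loops of diameter exceeding a given $\varepsilon > 0$ contribute in any macroscopic window, so the small-scale features collectively have small diameter. A Hausdorff-type compactness argument over configurations should then produce a continuous parameterization of $\eta_{\kappa,\nu}$ in the limit and transfer the local connectedness from the approximants to the full envelope.
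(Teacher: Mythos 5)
Your argument for part (1) is correct and is essentially the paper's (the paper disposes of it in one sentence: conformal invariance of $\CLE_{\kappa}$ plus conformal covariance of $\Xi_{\nu}$; your verification via the covariance of $\mu^{\D}_{x,y}$ and the mapping theorem for Poisson point processes is exactly what is implicit there).

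For part (2), however, your route has genuine gaps. First, the proposed reduction to the constant case does not work: part (1) transforms the \emph{whole} measure at once into $\nu_{\psi}$ with the density factor $\vert\psi'\circ\psi^{-1}\vert$, so no Möbius transformation converts a piecewise-constant (let alone general) $\nu$ into a constant multiple of $\one_{A_{\rm L}}\sigma_{\partial\D}$, and there is no way to ``transfer each arc separately'' — Theorem~\ref{Thm Werner Wu} is therefore only available for the single one-parameter family $\nu=a\one_{A_{\rm L}}\sigma_{\partial\D}$ and cannot be bootstrapped to approximants of a general $\nu$. Second, the sandwich step proves nothing by itself: if $\nu_{1}\le\nu\le\nu_{2}$ then $\eta_{\kappa,\nu}$ is trapped between two continuous curves, but a compact set squeezed between two continuous curves need not be locally connected (a topologist's sine curve is). To pass local connectedness through a limit of approximants one needs \emph{uniform} local connectedness of the family (Definition~\ref{Def Loc Connec}), and obtaining that is precisely the hard content of the paper's Theorem~\ref{thm::cvg_envelop} (Lemma~\ref{Lem Unif Loc Connect}); it is not a consequence of monotonicity, and your ``equicontinuity estimate'' is asserted rather than derived.

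The observation you make at the end — that only finitely many excursions and $\CLE_{\kappa}$ loops have diameter exceeding any fixed $\varepsilon>0$, so the remaining pieces are collectively small — is in fact the whole proof, provided you apply it \emph{directly} to the set $\LS_{\kappa,\nu}\cup\partial\D$ for the given $\nu$ rather than to approximants. That is what the paper does: $\LS_{\kappa,\nu}\cup\partial\D$ is a countable union of compact sets (the circle, the excursion ranges, the touched CLE loops), each connected and locally connected, each meeting the previously accumulated union, with diameters tending to $0$; Lemma~\ref{Lem Count Union Loc Con} (applied twice, first adjoining the excursions to $\partial\D$, then the loops) gives local connectedness, Lemma~\ref{Lem Loc Con Boundary} upgrades this to $\C\setminus\LO_{\kappa,\nu}$, and Carathéodory's theorem (Theorem~\ref{Thm Cara boundary}) yields the continuous extension of $\psi_{\kappa,\nu}$ and the parameterization of $\eta_{\kappa,\nu}$. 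No identification with SLE, no approximation of $\nu$, and no limiting procedure is needed; the statement holds deterministically on the almost-sure event where the local finiteness properties hold.
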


\begin{theorem}
\label{thm::cvg_envelop}
Fix $\kappa\in [8/3,4]$ and $\nu$ a finite non-negative Radon measure
on $\overline{A_{\rm L}}$.
Let $(\nu_{n})_{n\geq 0}$ be a sequence of finite non-negative Radon measures on $\overline{A_{\rm L}}$, 
converging weakly to $\nu$.
Then the sequence of curves
$\left((\psi_{\kappa,\nu_{n}}(x))_{x\in \overline{A_{\rm L}}}\right)_{n \geq 0}$
converges in law to
$(\psi_{\kappa,\nu}(x))_{x\in \overline{A_{\rm L}}}$
for the uniform topology.
\end{theorem}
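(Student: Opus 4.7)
The plan is to construct a coupling on a single probability space under which $\nu_n \to \nu$ weakly forces almost sure Carathéodory convergence $\LO_{\kappa,\nu_n} \to \LO_{\kappa,\nu}$ (with the CLE sample fixed), and then to promote this to uniform convergence of the associated conformal maps on $\overline{\D}$ using Proposition~\ref{prop::envelop_parameterization}(\ref{item::envelop_continuity}). For the coupling, I would exploit that the map $(x,y) \mapsto \mu^{\D}_{x,y}$ is weakly continuous on the off-diagonal part of $\overline{A_{\rm L}} \times \overline{A_{\rm L}}$, and that excursions joining nearby boundary points concentrate in small neighborhoods of $\overline{A_{\rm L}}$. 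Consequently, for every $\eta > 0$, the restriction $\mu^{\D}_{\nu_n}\big|_{\{\diam \geq \eta\}}$ is a finite measure on the Polish path space converging weakly to $\mu^{\D}_{\nu}\big|_{\{\diam \geq \eta\}}$. A Skorokhod representation, applied jointly along a sequence $\eta_k \downarrow 0$, yields a coupling under which the $\eta$-large excursions of $\Xi_{\nu_n}$ converge almost surely in Hausdorff distance to those of $\Xi_\nu$, while the CLE ensemble $\FC_\kappa$ is the same for all $n$.

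The technical heart is the Carathéodory convergence of the domains, which I would split into two steps. \emph{(a) Truncation stability:} letting $\LS_{\kappa,\nu}^{\eta}$ denote the variant of $\LS_{\kappa,\nu}$ built only from excursions of diameter at least $\eta$, show that the associated right-envelope domain converges in the Carathéodory sense to $\LO_{\kappa,\nu}$ as $\eta \downarrow 0$, with an error that is uniformly small along the coupled sequence in $n$. This amounts to ruling out that a microscopic excursion close to $\overline{A_{\rm L}}$ drags along a macroscopic $\CLE_\kappa$ loop; one combines local finiteness of $\CLE_\kappa$ (only finitely many loops of diameter $\geq \delta$) with quantitative bounds on the Brownian excursion measure near the boundary to show that, with high probability, excursions of diameter $< \eta$ and the CLE loops they touch lie in a shrinking neighborhood of $\overline{A_{\rm L}}$. \emph{(b) Finite stability:} for each fixed $\eta$, the envelope is a continuous functional of the finite collection of $\eta$-large excursions and of the finitely many CLE loops they intersect; the Skorokhod coupling therefore gives almost sure Hausdorff convergence of $\LS_{\kappa,\nu_n}^\eta$ to $\LS_{\kappa,\nu}^\eta$, and hence Carathéodory convergence of the right connected components of their complements. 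A diagonal argument in $\eta$ combines (a) and (b).

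Once Carathéodory convergence $\LO_{\kappa,\nu_n} \to \LO_{\kappa,\nu}$ is in hand under the fixed three-point normalization~\eqref{eqn::psikappanu_def}, the classical Carathéodory--Pommerenke theorem applies because Proposition~\ref{prop::envelop_parameterization}(\ref{item::envelop_continuity}) ensures that the limit has locally connected complement; this upgrades the convergence to uniform convergence of $\psi_{\kappa,\nu_n}$ to $\psi_{\kappa,\nu}$ on $\overline{\D}$. Restricting to $\overline{A_{\rm L}}$ yields the uniform convergence of parameterized curves almost surely under the coupling, which in turn gives convergence in law, as required. The main obstacle I anticipate is step (a): a quantitative control of the set of CLE loops that can become attached to small boundary excursions. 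Without such an estimate the chain of arguments collapses, since Carathéodory convergence can genuinely fail if a vanishing amount of excursion mass can, with positive probability, cause a macroscopic reshuffling of attached loops. The remaining steps are either standard consequences of weak convergence/Skorokhod coupling or direct applications of Proposition~\ref{prop::envelop_parameterization}.
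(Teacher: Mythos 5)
There is a genuine gap at the final step. You claim that Carath\'eodory convergence of $(\LO_{\kappa,\nu_n},w)$ to $(\LO_{\kappa,\nu},w)$, together with the fact that $\C\setminus\LO_{\kappa,\nu}$ is locally connected (Proposition~\ref{prop::envelop_parameterization}\ref{item::envelop_continuity}), upgrades the locally uniform convergence of the Riemann maps in $\D$ to uniform convergence on $\overline{\D}$. That implication is false: local connectedness of the \emph{limit} complement is not sufficient. A standard example is $D_n=\D\setminus C_n$ with $C_n$ an arc of the circle $\{|z|=1-1/n\}$ of angular length $2\pi-1/n$; then $(D_n,0)\to(\D,0)$ in the Carath\'eodory sense and the limit has the nicest possible boundary, yet the normalized Riemann maps do not converge uniformly on $\overline{\D}$ because most of the harmonic measure from $0$ sits on the two sides of $C_n$. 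The correct hypothesis, which is exactly \cite[Corollary~2.4]{Pommerenke}, is that the \emph{family} $(\C\setminus\LO_{\kappa,\nu_n})_{n}$ be \emph{uniformly} locally connected in the sense of Definition~\ref{Def Loc Connec}. Establishing this (Lemma~\ref{Lem Unif Loc Connect}, built on Lemma~\ref{Lem Unif Loc aux 1}) is the hardest part of the paper's argument --- a long case analysis over pairs of points lying on excursions, on CLE loops attached to excursions, or on $\partial\D$ --- and your proposal contains no substitute for it.

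The second issue is your step~(a), which you yourself flag as the point where ``the chain of arguments collapses'' absent a quantitative estimate; the proposal therefore does not actually close it. The paper's route avoids the need for such an estimate by building a stronger coupling than a Skorokhod representation: in Proposition~\ref{Prop Coupling Exc}, the ``middle parts'' of the excursions reaching $\D_{\varepsilon}$ satisfy $\widetilde{\Xi}_{\nu_{n},\varepsilon}=\widetilde{\Xi}_{\nu,\varepsilon}$ \emph{exactly} for all $n$ large (via absolute continuity of the Poisson intensities through the Markovian decomposition, plus a Borel--Cantelli argument). Combined with local finiteness of the CLE and the fact that CLE loops do not touch $\partial\D$ --- so every loop meeting $\D_{\varepsilon}$ stays at some positive distance $\varepsilon'$ from $A_{\rm L}$ --- this gives Lemma~\ref{Lem Conv CLE}: $\LO_{\kappa,\nu_{n},\varepsilon}=\LO_{\kappa,\nu,\varepsilon}$ eventually, which is precisely the statement that no small boundary excursion can reshuffle the macroscopic picture. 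If you want to salvage your outline, you should replace the Skorokhod coupling by this exact-equality coupling and add a proof of uniform local connectedness of the complements; mere weak convergence of the truncated excursion measures and local connectedness of the limit do not suffice.
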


Next, we will consider $\eta_{\kappa, \nu}$ as a Loewner chain. To this end, it is more convenient to work in $\HH$. Let $\psi_{0}$ be the Möbius transformation from
$\D$ to $\HH$ with $\psi_{0}(0)=i$,
$\psi_{0}(-i)=0$, and hence
$\psi_{0}(i)= \infty$:
\begin{equation}
\label{eqn::psi0}
\psi_{0}(z)=-i\dfrac{z + i}{z-i}.
\end{equation} 
Define 
\[\tilde{\eta}_{\kappa,\nu}:=\psi_0(\eta_{\kappa,\nu}).\]

\begin{proposition}\label{prop::envelop_drivingfunction}
Fix $\kappa\in [8/3,4]$ and $\nu$ a finite non-negative Radon measure
on $\overline{A_{\rm L}}$. Suppose the support of $\nu$ equals $\overline{A_{\rm L}}$. Then we can parameterize $\tilde{\eta}_{\kappa,\nu}$ (up to its first hitting time of $\infty$) by the half-plane capacity $(\tilde{\eta}_{\kappa,\nu}(t))_{0\leq t<T_{\rm max}}$,
with $T_{\rm max}\in (0,+\infty]$,
such that
\begin{equation}\label{eqn::envelop_hcap}
\tilde{\eta}_{\kappa,\nu}(0)=0,
\qquad
\lim_{t\to T_{\rm max}}\tilde{\eta}_{\kappa,\nu}(t)
= \infty; 
\qquad
\hcap(\tilde{\eta}_{\kappa,\nu}([0,t])) = 2t, \quad \forall t \in [0,T_{\rm max}).
\end{equation}
When parameterized by the half-plane capacity, $\tilde{\eta}_{\kappa,\nu}$ is a continuous curve with continuous driving function. 
\end{proposition}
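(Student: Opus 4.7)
The plan is to deduce the statement from Proposition \ref{prop::envelop_parameterization} together with the classical criterion characterizing continuous Loewner driving functions. After composing with the Möbius transformation $\psi_0$, part (2) of Proposition \ref{prop::envelop_parameterization} furnishes a continuous parameterization of $\tilde{\eta}_{\kappa,\nu}$ by the arc $\psi_0(\overline{A_{\rm L}})\subset\R\cup\{\infty\}$. Call this initial parameterization $\gamma(s)$ and let $K_s$ denote its filling inside $\HH$, i.e.\ the union of $\gamma([0,s])$ with the bounded connected components of $\HH\setminus\gamma([0,s])$. Then $(K_s)$ is an increasing family of $\HH$-hulls, and $s\mapsto\hcap(K_s)$ is continuous and non-decreasing by standard properties of half-plane capacity under monotone limits.

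The first step is to reparameterize by half-plane capacity. Here the assumption that the support of $\nu$ equals $\overline{A_{\rm L}}$ is essential: if $\nu$ had a gap in its support, the envelope $\eta_{\kappa,\nu}$ would coincide with $\partial\D$ on the corresponding arc, so after $\psi_0$ the image curve would remain on $\R$ without contributing to hcap, and the driving function would then have a jump at the moment when the curve leaves $\R$ to re-enter $\HH$. With full support, between any two parameter values $s_1<s_2$ the intermediate arc of $\overline{A_{\rm L}}$ has strictly positive $\nu$-mass, so almost surely it carries Brownian excursions whose hulls lie strictly in $\HH$, ensuring $\hcap(K_{s_2})>\hcap(K_{s_1})$. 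Reparameterizing by $t=\hcap(K_s)/2$ then produces $(\tilde{\eta}_{\kappa,\nu}(t))_{0\le t<T_{\max}}$ satisfying \eqref{eqn::envelop_hcap}, while preserving continuity in $t$.

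The main obstacle is to verify that the driving function $W(t)$ is continuous. I would apply the standard criterion: $(K_t)$ generates a continuous driving function if and only if for every $t_0\in[0,T_{\max})$ and every $\epsilon>0$ there exists $\delta>0$ such that $\diam(g_{K_{t_0}}(K_{t_0+\delta}\setminus K_{t_0}))<\epsilon$. The delicate point is that $\tilde{\eta}_{\kappa,\nu}$ can self-touch densely, so a priori $K_{t_0+\delta}\setminus K_{t_0}$ is neither a Jordan arc nor even connected. To check the criterion, I would combine the continuity of $\gamma$ in the reference parameterization, the local connectedness of $\C\setminus\LO_{\kappa,\nu}$ from Proposition \ref{prop::envelop_parameterization}(2), and conformal distortion estimates that transfer local continuity of $\gamma$ into local continuity of $g_{K_{t_0}}\circ\gamma$ near the tip. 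An alternative and more robust route is approximation via Theorem \ref{thm::cvg_envelop}: approximate $\nu$ by the uniform measures $c\,\sigma_{\partial\D}|_{\overline{A_{\rm L}}}$ (for which the Loewner description is available from \cite{WernerWuCLEtoSLE}) and pass to the limit, provided one controls uniform moduli of continuity for the hcap change of time variable.
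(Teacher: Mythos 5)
Your proposal is correct and takes essentially the same route as the paper: the paper packages your ``standard criterion'' as Proposition~\ref{prop::Loewnerchain_qualification} (a continuous curve whose future stays in the closure of the unbounded complementary component and which spends no interval of time in its past hull or on $\R$ generates a continuous driving function, cited from Pommerenke/Kinneberg/Lawler/Miller--Sheffield), and it verifies the second condition exactly as you do, by combining the full-support assumption with Proposition~\ref{Prop contain subarc} to rule out the curve tracing a subarc of $\overline{A_{\rm L}}$. Your first suggested route for the continuity of the driving function is in effect a sketch of the proof of that cited proposition, so no further work is needed; the alternative route via Theorem~\ref{thm::cvg_envelop} would be more delicate and is not the one the paper uses.
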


\begin{proposition}\label{prop::cvg_envelop_drivingfunction}
Fix $\kappa\in [8/3,4]$ and $\nu$ a finite non-negative Radon measure
on $\overline{A_{\rm L}}$.
Let $(\nu_{n})_{n\geq 0}$ be a sequence of finite non-negative Radon measures on $\overline{A_{\rm L}}$, 
converging weakly to $\nu$.
Suppose the supports of $\nu$ and of $\nu_n$ equal $\overline{A_{\rm L}}$. 
When parameterized by the half-plane capacity, $\tilde{\eta}_{\kappa, \nu_n}$ converges in law to $\tilde{\eta}_{\kappa,\nu}$ and the driving function of $\tilde{\eta}_{\kappa, \nu_n}$ converges in law to the driving function of $\tilde{\eta}_{\kappa,\nu}$ for the local uniform topology.  
\end{proposition}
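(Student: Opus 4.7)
The plan is to bootstrap from Theorem~\ref{thm::cvg_envelop} to convergence in the capacity parameterization, and then to convergence of the driving functions. Throughout, I work with the Skorohod representation of the weak convergence from Theorem~\ref{thm::cvg_envelop}: since the space of continuous curves with the uniform topology is Polish, I can couple all the curves on a single probability space so that the convergence holds almost surely. After composing with the Möbius map $\psi_0$ and using the spherical metric on $\overline{\HH}\cup\{\infty\}$, this gives almost sure uniform convergence on $\overline{A_{\rm L}}$ of the ``boundary-parameterized'' curves $x\mapsto\psi_0(\psi_{\kappa,\nu_n}(x))$ to $x\mapsto\psi_0(\psi_{\kappa,\nu}(x))$. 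It then suffices to prove local uniform convergence of the capacity parameterizations and driving functions on this coupling.

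The next step is to translate this into convergence of the capacity parameterizations. For $x\in\overline{A_{\rm L}}\setminus\{i\}$, set $h(x):=\tfrac12\hcap(\psi_0(\psi_{\kappa,\nu}([-i,x])))$, and define $h_n$ analogously for $\nu_n$. By Proposition~\ref{prop::envelop_drivingfunction}, the full-support hypothesis makes $h$ a continuous strictly increasing homeomorphism from $\overline{A_{\rm L}}\setminus\{i\}$ onto $[0,T_{\rm max})$, and likewise each $h_n$ is a homeomorphism onto $[0,T^{(n)}_{\rm max})$. Uniform convergence of the curves implies Carathéodory (kernel) convergence of the complementary simply connected domains for each $x$, and by the standard continuity of the half-plane capacity under kernel convergence one gets $h_n(x)\to h(x)$. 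Monotonicity and continuity of $h$ upgrade this pointwise convergence to uniform convergence on every compact subset of $\overline{A_{\rm L}}\setminus\{i\}$, which in turn yields local uniform convergence of the inverses $h_n^{-1}$ on $[0,T_{\rm max})$; composing with Step~1 gives the desired almost sure local uniform convergence $\tilde\eta_{\kappa,\nu_n}\to\tilde\eta_{\kappa,\nu}$ in the capacity parameterization.

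For the driving functions, I would invoke the classical fact that if capacity-parameterized curves $\gamma_n$ converge to $\gamma$ locally uniformly and each generates a Loewner chain with continuous driving function, then the driving functions $W_n$ converge to $W$ locally uniformly; this can be read off from uniform convergence of the mapping-out conformal maps $g_{n,t}$ on compacts of $\HH$ (again by kernel convergence of the complementary domains) together with their boundary behaviour near the tip. Transferring back through the Skorohod coupling, one recovers convergence in law on the space of continuous functions with the local uniform topology, for both the curves and the driving functions.

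The main obstacle is Step~2, the continuous change of parameterization. The full-support hypothesis on $\nu$ and each $\nu_n$ is essential here: without it, Proposition~\ref{prop::envelop_drivingfunction} would allow intervals where $h$ or $h_n$ are constant, i.e.\ the curve makes excursions inside its own hull, and the capacity parameterization would fail to be a homeomorphism, destroying the argument. Additional care is required near the endpoint $x=i$, where $T_{\rm max}$ can be infinite and the curve tends to $\infty$ in $\HH$; but since we only claim local uniform convergence on $[0,T_{\rm max})$, it suffices to control $h_n$ uniformly on compact subsets of $[-i,i)$, which follows from the argument above.
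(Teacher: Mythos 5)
Your overall architecture---an almost-sure coupling realizing Theorem~\ref{thm::cvg_envelop}, convergence of half-plane capacities, reparameterization, then convergence of the driving functions---matches the paper's (which proves a general statement, Proposition~\ref{prop::cvg_fromcurvetodrivingfunction}, and applies it). But there is a genuine gap at the last step, and it is precisely the step you declare classical. That local uniform convergence of capacity-parameterized curves forces local uniform convergence of the driving functions is \emph{not} a formal consequence of convergence of the mapping-out maps $g_{n,t}$ on compacts of $\HH$: the driving value $\xi_t=g_t(\eta(t))$ is the image of a \emph{boundary} point of the domain of $g_t$, where interior locally uniform convergence says nothing. The ``boundary behaviour near the tip'' you defer to is the entire technical content of the paper's Lemma~\ref{lem::cvg_renormalizedharm}, which controls $h_t(x)=g_t(x)-\xi_t$ through its harmonic-measure interpretation (the renormalized probability that a Brownian motion started at $iy$ exits through the right side of the curve) and a Beurling estimate; that argument uses quantitatively that the curves do not re-enter their own hulls (condition~\ref{item::Loewnerchain_qualifya} of Proposition~\ref{prop::Loewnerchain_qualification}) and must accommodate curves that touch $\overline{A_{\rm L}}$, possibly on a set of positive arc-length (Proposition~\ref{Prop pos meas}), so no off-the-shelf statement for simple curves in the open half-plane can be cited. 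Without an argument of this type the proof is incomplete, and you have located the main difficulty in the wrong place.

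A secondary issue is in your Step~2: uniform convergence of the curves does \emph{not} in general imply Carath\'eodory convergence of the unbounded complementary components. When the limit curve touches $\R$ (equivalently $\overline{A_{\rm L}}$) and swallows a bounded region, the approximating curves may leave a thin corridor to infinity, so the kernel of the sequence of domains is strictly larger than the limit domain and kernel convergence fails. Since these envelopes do hit $\overline{A_{\rm L}}$ with positive probability, this must be addressed: the conclusion $h_n(x)\to h(x)$ remains true, but one needs either the paper's route via $\hcap=\tfrac{2}{3}f_t'''(i)$ together with the equicontinuity estimate \eqref{eqn::equicontinuity_Lawler}, or a separate argument that the nearly-swallowed regions contribute negligibly to the capacity. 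On the positive side, your monotonicity argument for upgrading pointwise convergence of $h_n$ to locally uniform convergence (using that full support makes $h$ strictly increasing) is a legitimate and slightly more elementary alternative to the paper's oscillation bound for that particular upgrade.
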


We will complete the proof of Propositions~\ref{prop::envelop_parameterization} and~\ref{prop::envelop_drivingfunction} in Section~\ref{Sec Construction}. 
We will complete the proof of Theorem~\ref{thm::cvg_envelop} and Proposition~\ref{prop::cvg_envelop_drivingfunction} in Section~\ref{Sec Continuity}. 
For the proof of Theorem~\ref{thm::cvg_envelop}
we rely on a strong coupling between the Poisson point processes
with intensity $\mu^{\D}_{\nu}$,
respectively $\mu^{\D}_{\nu_{n}}$
(see Proposition~\ref{Prop Coupling Exc})
and on the notion of uniform local connectedness
(see Definition \ref{Def Loc Connec}).

In~\cite{WernerWuCLEtoSLE}, the authors construct the same process $\eta_{\kappa,\nu}$ as in our construction except that they focus on the case when $\nu$ is a constant $a>0$ times the arc-length measure $\sigma_{\partial\D}$ restricted to $\overline{A_{\rm L}}$. In such case, they prove that $\eta_{\kappa,\nu}$ has the same law as $\SLE_{\kappa}(\rho)$ process where $\rho>-2$ is uniquely determined by $\kappa$ and $a>0$, see Theorem~\ref{Thm Werner Wu}. 
Readers may wonder whether the law of $\eta_{\kappa, \nu}$ for general $\nu$ is the same as $\SLE_{\kappa}(\rho)$ process with multiple force points. We will show that $\eta_{\kappa, \nu}$ is absolutely continuous with respect to $\SLE_{\kappa}$ process away from the boundary, see Proposition~\ref{Prop abs cont}. However, $\eta_{\kappa, \nu}$ is not in the family of $\SLE_{\kappa}(\rho)$ processes with multiple force points in general. 
Here is a preciser answer. 
\begin{itemize}
\item The answer is negative for $\kappa\in [8/3,4)$.
By construction, 
$\eta_{\kappa, \nu}$ enjoys ``reversibility": the time-reversal of 
$\eta_{\kappa, \nu}$ has the same law as 
$\eta_{\kappa, \bar{\nu}}$,
where $\bar{\nu}$ is the image of $\nu$ by the reflection
$z\mapsto\bar{z}$.
However, 
$\SLE_{\kappa}(\rho)$ with multiple force points does not have such reversibility in general, see discussion in~\cite{DubedatCommutationSLE, MillerSheffieldIG2}. 
\item The answer is positive for $\kappa=4$ and this is related to the second goal of this paper, see Section~\ref{Subsec equation kappa 4}. 
\end{itemize}


\subsection{Identification of the envelop when $\kappa=4$}

The (zero-boundary) Gaussian free field (GFF) in the unit disc $\D$ is a centered Gaussian process $\Phi$ indexed by the set of smooth functions with compact support in $\D$ where the covariance is given by the Green's function (see Section~\ref{Subsec Excursions}):
\[\E[(\Phi, f)(\Phi, g)]=\iint_{\D\times \D}f(z)G_{\D}(z,w)g(w)dzdw.\]

Suppose $\nu$ is a finite Radon measure on $\partial \D$.
We will again denote by $\nu$ the harmonic extension
of $\nu$ in $\D$:
\[\nu(z):=\int_{\partial D}H_{\D}(z, x)d\nu(x), \quad \forall z\in \D,\]
where $H_{\D}(z,x)$ is the Poisson kernel 
(see Section~\ref{Subsec Excursions}). 
Note also that any non-negative harmonic function on
$\D$ is a harmonic extension of a finite 
Radon measure on $\partial \D$;
see Appendix~\ref{appendix}.
The GFF in $\D$ with boundary condition $\nu$ is $\Phi+\nu$ where 
$\Phi$ is a zero-boundary GFF. The definition for GFF in a general simply connected domain can be passed by conformal invariance. 

Next, we introduce level lines of GFF. Fix $\lambda=\sqrt{\pi/8}$. Suppose $(\eta(t))_{t\ge 0}$ is a continuous simple curve in $\overline{\D}$ from $-i$ to $i$ and $\psi_0(\eta)$ has continuous driving function.  Define $\nu_t$ to be the harmonic extension of the following boundary data on $\D\setminus \eta[0,t]$: it is $2\lambda$ on the left side of $\eta[0,t]$ and is $0$ on the right side of $\eta[0,t]$, and it coincides with $\nu$ on $\partial\D$. That is 
\begin{equation}\label{eqn::def_boundarydata_Markov}
\nu_t(z)=
\int_{\partial \D}H_{\D\setminus\eta[0,t]}(z,x)d\nu(x)
+2\lambda H_{\D\setminus\eta[0,t]}(z, \LL(\eta[0,t])), \quad z\in \D\setminus\eta[0,t],
\end{equation}
where $\LL(\eta[0,t])$ denotes the left side of $\eta[0,t]$. 

\begin{definition}\label{def::GFFlevelline}
Suppose $\Phi$ is zero-boundary GFF in $\D$ and $\nu$ is a finite non-negative Radon measure on $A_{\rm L}$. 
Suppose $\eta$ is a continuous simple curve in $\overline{\D}$ from $-i$ to $i$.  
We say that $\eta$ is a level line of $\Phi+\nu$ if there exists a coupling $(\Phi, \eta)$ such that the following domain Markov property holds: for any finite $\eta$-stopping time $\tau$, the conditional law of $(\Phi+\nu)|_{\D\setminus \eta[0,\tau]}$ given $\eta[0,\tau]$ is equal to the law of GFF in $\D\setminus \eta[0,\tau]$ with boundary condition $\nu_{\tau}$ as defined in~\eqref{eqn::def_boundarydata_Markov}.   
\end{definition}
The definition in simply connected domains is given via conformal image. 

The notion of level lines of GFF originally appears in~\cite{DubedatSLEFreefield, SchrammSheffieldDiscreteGFF, SchrammSheffieldContinuumGFF}. 
In particular, the authors of~\cite{SchrammSheffieldContinuumGFF} prove that, in $\D$, the coupling exists when $\nu=2\lambda\one_{A_{\rm L}}\sigma_{\partial\D}$, and the law of the level line is an $\SLE_4$ in $\D$ from $-i$ to $+i$. 
In~\cite{WangWuLevellinesGFFI}, the authors give a survey on level lines of GFF when the boundary condition is piecewise constant; later, in~\cite{PowellWuLevellinesGFF}, the authors construct level lines of GFF when the boundary condition is regulated. 
In this article, we provide a more general coupling. 

First, we recall the result of 
\cite{AruLupuSepulvedaFPSGFFCVGISO} 
which relates some level lines of the GFF to envelops $\eta_{4,\nu}$ in the case of $\nu$ being a piecewise 
constant function; see \cite[Proposition~5.11]{AruLupuSepulvedaFPSGFFCVGISO}.

\begin{theorem}[Aru-Lupu-Sep\'ulveda 
\cite{AruLupuSepulvedaFPSGFFCVGISO}]
\label{thm::ALS}
Fix $\kappa=4$.
Let $u$ be a strictly positive piecewise constant 
function on $A_{\rm L}$ assuming finitely many values.
Denote 
$\eta_{4,u} := \eta_{4,u \one_{A_{\rm L}}\sigma_{\partial\D}}$. 
In this case, there exists a coupling $(\Phi, \eta_{4,u})$ such that 
$\eta_{4,u}$ is a level line of $\Phi+u$.  
\end{theorem}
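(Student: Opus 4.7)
The plan is to identify the envelope $\eta_{4,u}$ with the right frontier of a first passage set of the GFF $\Phi + u$, and then derive the domain Markov property from the local set property of the first passage set, following the approach of Aru-Lupu-Sepúlveda.

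First, I would build a joint coupling $(\Phi, \FC_4, \Xi_u)$ in which $\FC_4$ is a $\CLE_4$ sample and $\Xi_u$ is a Poisson point process of Brownian excursions with intensity $\mu^{\D}_{u \one_{A_{\rm L}}\sigma_{\partial \D}}$, with both $\FC_4$ and $\Xi_u$ measurable with respect to $\Phi + u$. The $\CLE_4$ component comes from the Miller-Sheffield coupling, in which the CLE loops are the boundary touching level loops of the field carrying independent $\pm 2\lambda$ labels inside each loop, so that the GFF can be reconstructed from $\FC_4$ by filling in each component with an independent zero-boundary GFF together with a constant shift of $\pm 2\lambda$. The Brownian excursion component is then adjoined via the FPS-level isomorphism identity developed in \cite{AruLupuSepulvedaFPSGFFCVGISO}, with the intensity of the excursions tied to the boundary shift $u$ through the normalization $\lambda = \sqrt{\pi/8}$, in such a way that the cluster $\overline{\LS_{4,u} \cup A_{\rm L}}$ is precisely the first passage set of $\Phi + u$ reached from $\overline{A_{\rm L}}$ at height $-2\lambda$.

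Granted this identification, $\LO_{4,u}$ is exactly the connected component of the FPS complement containing $A_{\rm R}$, and $\eta_{4,u}$ is the frontier of the FPS seen from the right. Proposition~\ref{prop::envelop_parameterization} ensures $\eta_{4,u}$ is a continuous simple curve from $-i$ to $i$, so the parametrization required in Definition~\ref{def::GFFlevelline} is well defined. By the defining property of the FPS, the conditional law of $(\Phi+u)|_{\LO_{4,u}}$ given $\eta_{4,u}$ is a GFF with boundary data $2\lambda$ on $\eta_{4,u}$ and $0$ on $A_{\rm R}$, which matches $\nu_\infty$ in~\eqref{eqn::def_boundarydata_Markov}. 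The Markov property at an arbitrary stopping time $\tau$ then follows by applying the same analysis to the portion of the FPS already explored up to the hitting time of $\eta_{4,u}(\tau)$, invoking the strong Markov property of local sets.

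The main obstacle is the construction of the coupling itself: one must verify that adjoining to $\CLE_4$ an independent Poisson point process of Brownian excursions with the prescribed intensity $u \one_{A_{\rm L}} \sigma_{\partial \D}$ produces exactly the boundary shift $u$ of the GFF, in the sense that the resulting boundary-touching cluster is distributed as an FPS of $\Phi+u$. This rests on the Dynkin-type isomorphism between Brownian excursions and squared GFFs, lifted to the FPS setting in \cite{AruLupuSepulvedaFPSGFFCVGISO}, and on a constructive identification of the FPS as the cluster in $\FC_4 \cup \Xi_u$ attached to the boundary. The restriction to strictly positive piecewise constant $u$ with finitely many values keeps the FPS a well-behaved local set and allows the coupling to be built arc by arc, which is precisely the content of \cite[Proposition~5.11]{AruLupuSepulvedaFPSGFFCVGISO}.
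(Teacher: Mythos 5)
The paper does not actually prove this statement: Theorem~\ref{thm::ALS} is imported from \cite{AruLupuSepulvedaFPSGFFCVGISO} (their Proposition~5.11), so your proposal has to be measured against that reference rather than against an internal argument. Your overall architecture --- realize the boundary-touching cluster of $\CLE_{4}$ loops and Poisson excursions as a first passage set (FPS) of $\Phi+u$ via the isomorphism of \cite{AruLupuSepulvedaFPSGFFCVGISO}, and read the level line off as its right frontier --- is the correct framework. However, your constants are off: with excursion intensity $\tfrac12\iint u(x)u(y)\,\mu^{\D}_{x,y}(\cdot)\,\sigma_{\partial\D}(dx)\,\sigma_{\partial\D}(dy)$ and no additive shift of $u$, the isomorphism identifies the closure of $\LS_{4,u}\cup\overline{A_{\rm L}}$ with the FPS of $\Phi+u$ at level $0$, not $-2\lambda$; consequently the conditional law of $(\Phi+u)|_{\LO_{4,u}}$ given the FPS is a GFF with boundary value $0$ on the frontier and $0$ on $A_{\rm R}$ --- which is what agrees with \eqref{eqn::def_boundarydata_Markov} restricted to $\LO_{4,u}$ --- and not $2\lambda$ on the frontier as you assert.

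The more serious problem is the step you compress into ``by the defining property of the FPS \ldots invoking the strong Markov property of local sets''. The FPS property governs the field only in the complementary components of the \emph{whole} set, where the boundary value is the FPS level $0$. Definition~\ref{def::GFFlevelline} demands something different: conditionally on the curve segment $\eta_{4,u}[0,\tau]$ \emph{alone}, the field on all of $\D\setminus\eta_{4,u}[0,\tau]$ must be a GFF whose boundary value on the \emph{left} side of the segment is $2\lambda$. The left side of $\eta_{4,u}$ abuts the FPS itself, where the field is described not by a harmonic function but by the non-negative measure that $\Phi+u$ induces on the set; the value $2\lambda$ emerges only after this measure and the zero-boundary GFFs in the holes of the cluster are integrated out, and this requires the quantitative fact that the FPS has ``width'' $2\lambda$ seen from one side --- precisely where $\lambda=\sqrt{\pi/8}$ enters, and not a formal consequence of the local-set axioms. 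In \cite{AruLupuSepulvedaFPSGFFCVGISO} this is obtained by going in the opposite direction: the level line of $\Phi+u$ for piecewise constant $u$ is already known to exist and to be an $\SLE_{4}(\rho_{n},\dots,\rho_{1})$ curve (see Section~\ref{subsec::levellines}), the FPS is constructed by iterating such level lines in the successive left components, and the cluster description then identifies its right frontier with $\eta_{4,u}$. As written, your argument would produce boundary value $0$ on both sides of the curve, which is not the level-line coupling.
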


We extend the result of \cite{AruLupuSepulvedaFPSGFFCVGISO}
beyond the piecewise constant case. Suppose $\nu$ is a finite non-negative Radon measure on 
$\overline{A_{\rm L}}$. 
Denote by $\Atom(\nu)$ the set of atoms of $\nu$,
if any.
Denote
\begin{equation}\label{eqn::atom_conv_LR}
\begin{split}
\Atom_{\rm conv}^{\rm l}(\nu) :=
\Big\{x\in \Atom(\nu) : 
\int_{\substack{y\in\overline{A_{\rm L}}
\\ \Im(y)>\Im(x)}}
\dfrac{1}
{\vert y-x\vert^{2}} d\nu(y)<+\infty
\Big\},
\\
\Atom_{\rm conv}^{\rm r}(\nu) :=
\Big\{x\in \Atom(\nu) : 
\int_{\substack{y\in\overline{A_{\rm L}}
\\ \Im(y)<\Im(x)}}
\dfrac{1}
{\vert y-x\vert^{2}} d\nu(y)<+\infty
\Big\}.
\end{split}
\end{equation}
Set 
\begin{equation}\label{eqn::atom_conv_star}
\Atom_{\rm conv}^{\ast}(\nu) :=
\Atom_{\rm conv}^{\rm l}(\nu)\cup
\Atom_{\rm conv}^{\rm r}(\nu).
\end{equation}

\begin{theorem}\label{thm::gfflevelline_coupling}
Fix $\kappa=4$ and $\nu$ a finite non-negative Radon measure on 
$\overline{A_{\rm L}}$ such that the support of $\nu$ equals $\overline{A_{\rm L}}$ and $\Atom_{\rm conv}^{\ast}(\nu)=\emptyset$.
Then we have the followings.
\begin{itemize}
\item The curve $\eta_{4,\nu}$ is a continuous simple curve with continuous driving function.
\item Suppose $\Phi$ is zero-boundary GFF in $\D$. 
There exists a coupling $(\Phi, \eta_{4,\nu})$ such that $\eta_{4,\nu}$ is a level line of $\Phi+\nu$.  
\end{itemize} 
\end{theorem}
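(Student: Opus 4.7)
The plan is to reduce to the piecewise constant case of Theorem~\ref{thm::ALS} by approximation and to exploit the continuity results of Section~\ref{subsec::intro_envelop} to pass to the limit. Concretely, I would construct a sequence of strictly positive piecewise constant functions $u_{n}$ on $A_{\rm L}$ so that the measures $\nu_{n}:=u_{n}\mathbbm{1}_{A_{\rm L}}\sigma_{\partial\D}$ converge weakly to $\nu$ and their harmonic extensions $\nu_{n}(z)$ converge to $\nu(z)$ locally uniformly on $\D$. For instance, partition $\overline{A_{\rm L}}$ into $n$ equal subarcs and set $u_{n}$ on each subarc equal to $\nu(\text{subarc})$ divided by the arclength plus a vanishing regularization $1/n$, so that the support equals $\overline{A_{\rm L}}$. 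Theorem~\ref{thm::ALS} furnishes, for each $n$, a coupling $(\Phi_{n},\eta_{4,\nu_{n}})$ in which $\eta_{4,\nu_{n}}$ is a level line of $\Phi_{n}+\nu_{n}$.

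Next, by combining the coupling of Poisson point processes of Brownian excursions (Proposition~\ref{Prop Coupling Exc}) across different intensities with a single common $\CLE_{4}$ sample, I would assemble a probability space carrying $(\Phi_{n},\eta_{4,\nu_{n}})_{n\ge 0}$ and a limit $(\Phi,\eta_{4,\nu})$ on which the conclusions of Theorem~\ref{thm::cvg_envelop} and Proposition~\ref{prop::cvg_envelop_drivingfunction} hold almost surely and $\Phi_{n}\to\Phi$, with $\Phi$ a zero-boundary GFF. The almost sure convergence $\Phi_{n}+\nu_{n}\to\Phi+\nu$ as distributions would then follow from the local uniform convergence of the harmonic extensions.

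To transfer the domain Markov property of Definition~\ref{def::GFFlevelline} to the limit, I would fix an $\eta_{4,\nu}$-stopping time $\tau$ at positive distance from $\partial\D$ and approximate it by $\eta_{4,\nu_{n}}$-stopping times $\tau_{n}\to\tau$. The key point is to check that the conditional harmonic boundary data $(\nu_{n})_{\tau_{n}}$ defined by~\eqref{eqn::def_boundarydata_Markov} converge locally uniformly in $\D\setminus\eta_{4,\nu}[0,\tau]$ to $\nu_{\tau}$. The required Carathéodory convergence of the slit domains $\D\setminus\eta_{4,\nu_{n}}[0,\tau_{n}]$ toward $\D\setminus\eta_{4,\nu}[0,\tau]$ follows from the uniform convergence of driving functions on compact time intervals; combined with the local uniform convergence of $\nu_{n}$ to $\nu$, this yields that the conditional distribution of the restriction of $\Phi+\nu$ to $\D\setminus\eta_{4,\nu}[0,\tau]$ given $\eta_{4,\nu}[0,\tau]$ matches the one prescribed in Definition~\ref{def::GFFlevelline}. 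A diagonal extension handles general stopping times and exhausts neighborhoods of $\partial\D$.

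Finally, I expect the main technical obstacle to be showing that $\eta_{4,\nu}$ is \emph{simple} under the hypothesis $\Atom_{\rm conv}^{\ast}(\nu)=\emptyset$. Each approximating $\eta_{4,\nu_{n}}$ is an $\SLE_{4}(\rho)$-type simple curve (Theorem~\ref{Thm Werner Wu}) and the almost sure uniform limit is therefore non-crossing, so it is at worst self-touching; absolute continuity with respect to $\SLE_{4}$ in $\D$ (Proposition~\ref{Prop abs cont}) rules out self-touches away from the boundary. The only potential self-touches are therefore at boundary atoms of $\nu$. A quantitative estimate using the explicit form of the Brownian excursion measure $\mu^{\D}_{\nu}$ near such an atom $x$ would show that the envelope self-touches $x$ with positive probability precisely when $\int|y-x|^{-2}\,d\nu(y)$ converges on at least one side of $x$, i.e.\ when $x\in\Atom_{\rm conv}^{\ast}(\nu)$; the hypothesis then excludes boundary self-touches. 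This matching between the integrability conditions in~\eqref{eqn::atom_conv_LR} and the geometry of the envelope near boundary atoms — presumably proven by comparing excursion arrival densities to the Brownian hitting-probability tails near $x$ — is the subtle analytic step on which the argument ultimately hinges.
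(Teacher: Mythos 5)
Your skeleton --- approximate $\nu$ by strictly positive piecewise constant measures, invoke Theorem~\ref{thm::ALS} for each approximant, and pass to the limit using Theorem~\ref{thm::cvg_envelop} and Proposition~\ref{prop::cvg_envelop_drivingfunction} --- is the paper's skeleton, and your choice of $\nu_{n}$ is an acceptable variant of \eqref{Eq nu n}. But the mechanism you propose for transferring the level-line property to the limit is not the one that works, and it hides the real difficulty. The paper does not approximate $\eta_{4,\nu}$-stopping times by $\eta_{4,\nu_{n}}$-stopping times and pass conditional laws of the field to the limit; it is unclear how one would even produce $\tau_{n}$ from $\tau$, since the approximating curves generate different filtrations, and identifying a conditional law of a Gaussian field under a limit of couplings needs more than Carath\'eodory convergence of the slit domains. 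Instead the paper uses the Schramm--Sheffield characterization (Lemma~\ref{lem::criteria_coupling}): the coupling of Definition~\ref{def::GFFlevelline} exists if and only if, for each fixed $z\in\D$, the functional $(\nu_{t}(z))_{t\ge0}$ of the curve alone is a Brownian motion in the time $-\log\CR(z,\D\setminus\eta[0,t])$. This reduces everything to the local uniform convergence of $(\nu_{n,t}(z))_{t}$ and of the conformal radii --- a statement about the curves only, with no conditioning and no joint coupling of the fields required. Your proposal is missing this reduction, and without it the "transfer of the domain Markov property" step is not an argument.

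The second gap is that you misplace the role of the hypothesis $\Atom_{\rm conv}^{\ast}(\nu)=\emptyset$. It is not primarily there to rule out boundary self-touches: simplicity already follows from Proposition~\ref{Prop double points}, because $\Atom_{\rm conv}^{\ast}(\nu)=\emptyset$ forces $\Atom_{\rm conv}(\nu)=\emptyset$, so no new estimate on excursion densities near atoms is needed (your proposed criterion also conflates the two-sided integrability condition governing double points with the one-sided conditions of \eqref{eqn::atom_conv_LR}, which govern whether the curve \emph{hits} an atom). The hypothesis is used, via Lemma~\ref{Lem hit}, to guarantee that $\eta_{4,\nu}$ a.s. does not hit any atom of $\nu$. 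That is exactly what saves the convergence of $\int H_{\D\setminus\eta^{(n)}[0,t]}(z,x)\,d\nu_{n}(x)$ to $\int H_{\D\setminus\eta[0,t]}(z,x)\,d\nu(x)$ in \eqref{eqn::def_boundarydata_Markov}: the Poisson kernels converge away from the two endpoints of the arc $A_{t}(z)$ accessible from $z$, but need not converge at those endpoints, and the limit identity can fail if $\nu(\partial A_{t}(z))>0$. Lemma~\ref{Lem hit} gives $\nu(\partial A_{t}(z))=0$ a.s., which is precisely the step your phrase "combined with the local uniform convergence of $\nu_{n}$ to $\nu$" elides. Without identifying this, the passage to the limit of the boundary data --- the heart of the proof --- is unjustified.
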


The condition $\Atom_{\rm conv}^{\ast}(\nu)=\emptyset$ above is only
to ensure that $\eta_{4,\nu}$ a.s. does not hit an atom of $\nu$.
See Lemma \ref{Lem hit}.

\begin{theorem}\label{thm::determination}
If $(\Phi, \eta_{4,\nu})$ are coupled as in Theorem~\ref{thm::gfflevelline_coupling}, then $\eta_{4,\nu}$ is almost surely determined by $\Phi$. 
\end{theorem}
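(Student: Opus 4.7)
The plan is to deduce determination from uniqueness of the coupling: if $\eta'$ is any curve such that $(\Phi,\eta')$ is a coupling in the sense of Definition~\ref{def::GFFlevelline} with the same $\Phi$, then $\eta'=\eta_{4,\nu}$ almost surely. Granting this, I take $\eta'$ to be a conditionally independent copy of $\eta_{4,\nu}$ given $\Phi$ (using the regular conditional distribution of $\eta_{4,\nu}$ given $\Phi$ coming from Theorem~\ref{thm::gfflevelline_coupling}); then $\eta'$ is again a level line of $\Phi+\nu$, so $\eta'=\eta_{4,\nu}$ a.s., and hence $\eta_{4,\nu}$ is measurable with respect to $\sigma(\Phi)$ up to a null set, which is the claim.

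For the uniqueness of the coupling, I would follow the classical local-set argument of Schramm and Sheffield. The domain Markov property of Definition~\ref{def::GFFlevelline} implies that $\eta_{4,\nu}[0,\tau]$ and $\eta'[0,\tau']$ are local sets of $\Phi+\nu$ for any finite stopping times $\tau,\tau'$. Hence the conditional mean of $\Phi+\nu$ on $\D\setminus(\eta_{4,\nu}[0,\tau]\cup \eta'[0,\tau'])$ is the harmonic function with boundary value $2\lambda$ on the left sides of both curves, $0$ on the right sides, and equal to the harmonic extension of $\nu$ on the remaining boundary arc. If $\eta_{4,\nu}$ and $\eta'$ separated on an event of positive probability, then on a bubble trapped between them the harmonic conditional mean would have to jump by $\pm 2\lambda$ across the two curves while being entirely determined by $\nu$-independent data near the pinch point, producing a contradiction by the Schramm-Sheffield continuation-threshold/Beurling-estimate argument at the tips of $\eta_{4,\nu}$ and $\eta'$.

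The main obstacle is carrying out these local-set and conditional-mean manipulations when $\nu$ is a general measure rather than a piecewise constant function. The assumption $\Atom_{\rm conv}^{\ast}(\nu)=\emptyset$ together with Lemma~\ref{Lem hit} ensures that $\eta_{4,\nu}$ almost surely avoids atoms of $\nu$, which is exactly what is needed to keep the harmonic extension of $\nu$ well-behaved along the trace of the curve. A cleaner alternative is to approximate $\nu$ weakly by strictly positive piecewise constant functions $\nu_n$, for which the determination statement is already covered by Theorem~\ref{thm::ALS} together with the Schramm-Sheffield determination theorem for piecewise constant boundary data \cite{SchrammSheffieldContinuumGFF, WangWuLevellinesGFFI}, and then to transfer the result to $\eta_{4,\nu}$ through a joint coupling of the Brownian excursion intensities in which the continuity results Theorem~\ref{thm::cvg_envelop} and Proposition~\ref{prop::cvg_envelop_drivingfunction} give almost sure convergence of $\eta_{4,\nu_n}$ to $\eta_{4,\nu}$, while the GFFs on both sides are fixed.
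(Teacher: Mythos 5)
Your reduction of determination to uniqueness of the coupling via a conditionally independent copy is sound, and it is indeed how the paper handles the case of measures bounded below (Proposition~\ref{prop::determination_PW}). The problem is that neither of your two routes to uniqueness closes for a general $\nu$. The direct local-set/continuation-threshold argument you sketch is exactly what the lower bound $\nu\geq\eps\,\one_{\overline{A_{\rm L}}}\sigma_{\partial\D}$ is needed for (assumption~\eqref{eqn::determination_assumptionPW} in Proposition~\ref{prop::determination_PW}); you acknowledge the obstacle but do not resolve it, and the paper does not carry this argument out for general $\nu$ either.

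The approximation route has a concrete gap in the transfer step. The almost sure convergence $\eta_{4,\nu_n}\to\eta_{4,\nu}$ furnished by Theorem~\ref{thm::cvg_envelop} lives in the coupling of the Poisson point processes of excursions (Proposition~\ref{Prop Coupling Exc}); in that coupling there is no single GFF $\Phi$ of which every $\eta_{4,\nu_n}$ is simultaneously a level line, so you cannot ``fix the GFFs on both sides'' and pass measurability to the limit. Conversely, if you fix $\Phi$ and let $\eta_{\nu_n}$ be the level line of $\Phi+\nu_n$ (each determined by $\Phi$), the continuity theorem only gives convergence in law, not a.s.\ convergence of this particular sequence. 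The paper resolves this by choosing a \emph{monotone} approximation $\nu_n=\nu+2^{-n}$ decreasing to $\nu$: by Corollary~\ref{cor::mono_PW} the level lines $\eta_{\nu_n}$ of the same field $\Phi$ are a.s.\ ordered, lying to the right of $\eta_{4,\nu}$ and of each other, so the right-hand domains $\LO_n$ increase a.s.\ to a limit $\LO_\infty\subset\LO_{4,\nu}$; Theorem~\ref{thm::cvg_envelop} then shows $\LO_\infty$ and $\LO_{4,\nu}$ have the same law, which upgrades the inclusion to a.s.\ equality and yields measurability. Note also that the base case needed is determination for $\nu+2^{-n}$, a measure bounded below (Proposition~\ref{prop::determination_PW}), not for piecewise constant data: a weakly convergent piecewise constant approximation would not in general dominate $\nu$ and so would not produce the monotone structure this argument relies on.
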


We will complete the proof of 
Theorems~\ref{thm::gfflevelline_coupling} and~\ref{thm::determination} in Section~\ref{Sec Level lines}. 
These two theorems are generalizations of~\cite[Theorems~1.2 and~1.3]{PowellWuLevellinesGFF} where the authors prove the same conclusion under the assumption that $\nu$ is a regulated function and is bounded away from $0$, i.e. the assumption~\eqref{assumptionPW}. Let us briefly summarize the proof in~\cite{PowellWuLevellinesGFF}. The idea is to approximate uniformly the regulated function by piecewise constant functions and then show that the level lines corresponding to piecewise constant boundary functions are convergent. For that one shows that the limit of the level lines gives the desired coupling. However, the limiting process is not automatically a continuous curve with continuous driving function. In order to guarantee that the limiting process can be encoded as a Loewner chain with continuous driving function, the authors use the conclusions from~\cite{KemppainenSmirnovRandomCurves}. 
The technical assumption from~\cite{KemppainenSmirnovRandomCurves},
related to the probabilities of crossings of quadrilaterals,
restricts the method in~\cite{PowellWuLevellinesGFF} and the authors are only able to show the above conclusion under the assumption~\eqref{assumptionPW}.  
We will see in Section~\ref{Sec Level lines} that our construction improves the conclusion from~\cite{PowellWuLevellinesGFF}. 
The continuity result of Theorem~\ref{thm::cvg_envelop}, which replaces the technical tool from~\cite{KemppainenSmirnovRandomCurves}, 
allows to replace the uniform convergence of the boundary data
by the weak convergence of measures,
and does not require the assumption~\eqref{assumptionPW}.
The above method works for any finite non-negative Radon measure $\nu$ whose support equals $\overline{A_{\rm L}}$ and 
such that $\Atom_{\rm conv}^{\ast}(\nu)=\emptyset$.

We end the introduction with a few interesting open questions. 
Our construction of level lines of thr GFF is a significant generalization of previous constructions, but there is still an interesting scenario that we do not understand. For instance, we do not know what happens to the coupling between the GFF and 
$\eta_{4,\nu}$ when the curve hits an atom of $\nu$ with positive probability, see open questions in Section~\ref{Sec questions}. 

Let us come back to the question at the end of Section~\ref{subsec::intro_envelop}. 
Fix $\kappa=4$ and $\nu$ a finite non-negative Radon measure on 
$\overline{A_{\rm L}}$ as in Theorem~\ref{thm::gfflevelline_coupling}. 
We parameterize the curve $\eta_{4,\nu}$ by the half-plane capacity. 
Then $\eta_{4,\nu}$ can be encoded as a generalization of $\SLE_4(\rho)$ process on the time interval when it is away from the boundary, see Section~\ref{Subsec equation kappa 4}. However, as it is possible for $\eta_{4,\nu}$ to intersect the boundary with a positive Lebesgue measure (see Section~\ref{subsec::hittingboundary}), we do not know what happens to the driving function when the curve hits the boundary, see open questions in Section~\ref{Sec questions}.

\section{Preliminaries}
\label{Sec Prelim}
\subsection{Local connectedness and cut points}
\label{Subsec Loc Connec}

We will introduce the notions of 
\textit{local connectedness} and
\textit{uniform local connectedness}, and cite and derive a few elementary properties which will be useful later. We refer interested readers to~\cite[Section~2.2]{Pommerenke} for more detail.

\begin{definition}
\label{Def Loc Connec}
Given $C$ a closed non-empty subset of $\C$,
and $z,z'\in C$, we say that
$z$ and $z'$ are $\varepsilon$-\textit{connected in} $C$ if
there is $K$ a compact connected subset of $C$
with $\diam(K)<\varepsilon$ such that $z,z'\in K$.

A closed non-empty subset $C\subset\C$ is \textit{locally connected}
if for every $\varepsilon>0$, there is $\delta>0$ such that for every $z,z'\in C$ with $\vert z'-z\vert<\delta$,
the points $z$ and $z'$ are $\varepsilon$-connected in $C$.

A family of closed non-empty subsets $(C_{n})_{n\geq 0}$ of $\C$
is \textit{uniformly locally connected}
if for every $\varepsilon>0$, there is $\delta>0$ such that for every
$n\geq 0$ and every $z,z'\in C_{n}$ with $\vert z'-z\vert<\delta$,
the points $z$ and $z'$ are $\varepsilon$-connected in $C_{n}$.
\end{definition}

\begin{lemma}
\label{Lem Loc Con Elem}
\begin{enumerate}[label=(\arabic*)]
\item \label{item::localcon1} If $\gamma : [0,1]\rightarrow\C$ is a continuous parametrized curve,
then $\Range(\gamma)$ is locally connected.
\item \label{item::localcon2} If $K_{1}$ and $K_{2}$ are two locally connected compact subsets of $\C$, then $K_{1}\cup K_{2}$ is locally connected.
\end{enumerate}
\end{lemma}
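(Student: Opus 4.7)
For part (1), the plan is to exploit uniform continuity of $\gamma$ on the compact interval $[0,1]$. Given $\varepsilon>0$, I would pick a partition $0=t_{0}<t_{1}<\dots<t_{N}=1$ fine enough that each piece $A_{i}:=\gamma([t_{i},t_{i+1}])$ has diameter strictly less than $\varepsilon/2$; each $A_{i}$ is then a compact connected subset of $\Range(\gamma)$. Then I would set
\[
\delta:=\min\bigl(\varepsilon/2,\;\min\{\dist(A_{i},A_{j}) : i\neq j,\ A_{i}\cap A_{j}=\emptyset\}\bigr),
\]
with the convention that the inner minimum is $+\infty$ if all pairs intersect. This $\delta$ is strictly positive because the inner minimum is taken over a finite collection of pairs of disjoint compact sets. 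Given $z,z'\in\Range(\gamma)$ with $|z-z'|<\delta$, choose indices $i,j$ with $z\in A_{i}$, $z'\in A_{j}$. If $A_{i}\cap A_{j}=\emptyset$ then $\dist(A_{i},A_{j})\le|z-z'|<\delta\le\dist(A_{i},A_{j})$, a contradiction; so $A_{i}\cap A_{j}\neq\emptyset$, and $A_{i}\cup A_{j}$ is a compact connected subset of $\Range(\gamma)$ of diameter at most $\diam(A_{i})+\diam(A_{j})<\varepsilon$ containing both $z$ and $z'$. This delivers the required $\varepsilon$-connection.

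For part (2), the delicate case is when $z\in K_{1}\setminus K_{2}$ and $z'\in K_{2}\setminus K_{1}$ with $|z-z'|$ small; the natural idea is to bridge $z$ and $z'$ through a point of $K_{1}\cap K_{2}$, and the main obstacle is arguing that such a nearby bridging point exists. I would argue by contradiction using compactness. Suppose there exist $\varepsilon>0$ and sequences $z_{n},z_{n}'\in K_{1}\cup K_{2}$ with $|z_{n}-z_{n}'|\to 0$ such that no pair $(z_{n},z_{n}')$ is $\varepsilon$-connected in $K_{1}\cup K_{2}$. After passing to a subsequence, $z_{n},z_{n}'\to z_{\infty}\in K_{1}\cup K_{2}$. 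If $z_{\infty}$ lies in only one of the $K_{i}$, then since the other is closed, both $z_{n}$ and $z_{n}'$ eventually lie in that single $K_{i}$, and its local connectedness supplies an $\varepsilon$-connection for large $n$, a contradiction. If instead $z_{\infty}\in K_{1}\cap K_{2}$, then for large $n$ each of $z_{n},z_{n}'$ belongs to some $K_{i}$ containing $z_{\infty}$; applying the $(\varepsilon/2)$-local connectedness of the relevant $K_{i}$ to the pair $(z_{n},z_{\infty})$ and to $(z_{n}',z_{\infty})$ yields compact connected sets $K,K'$ of diameter strictly less than $\varepsilon/2$ joining $z_{n}$ (resp.\ $z_{n}'$) to $z_{\infty}$ inside $K_{1}$ or $K_{2}$. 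Their union $K\cup K'$ shares the point $z_{\infty}$, hence is a connected compact subset of $K_{1}\cup K_{2}$ of diameter strictly less than $\varepsilon$ containing both $z_{n}$ and $z_{n}'$, contradicting the choice of the sequences.
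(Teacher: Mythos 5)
Your proof is correct. The paper itself does not argue either part directly: it simply cites textbook results (Newman, Theorems 8.1 and 8.2 of Chapter IV, and Pommerenke, Section 2.2), namely that a continuous image of a compact locally connected set is locally connected, and that a union of two compact locally connected sets is locally connected. You instead give self-contained elementary arguments. For part (1), your partition of $[0,1]$ via uniform continuity, combined with taking $\delta$ below the minimal distance between disjoint pieces $A_{i}$, is a direct proof of the special case of the cited general fact; the only points worth noting are that the case $i=j$ is subsumed by $A_{i}\cap A_{j}\neq\emptyset$, and that the diameter bound $\diam(A_{i}\cup A_{j})\leq\diam(A_{i})+\diam(A_{j})$ uses that the two pieces share a point. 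For part (2), your compactness-and-contradiction argument is clean: extracting a convergent subsequence $z_{n},z_{n}'\to z_{\infty}$, the closedness of each $K_{i}$ disposes of the case where $z_{\infty}$ lies in only one of them, and bridging through $z_{\infty}\in K_{1}\cap K_{2}$ with two $(\varepsilon/2)$-connections handles the remaining case; the union of the two bridges is connected because both contain $z_{\infty}$, and its diameter is strictly below $\varepsilon$. What your route buys is independence from the external references and a quantitative flavor (explicit $\delta$ in part (1)); what the paper's route buys is brevity and consistency with the topological toolkit (cut points, Carathéodory extension) it imports from the same sources throughout Section 2.
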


\begin{proof}
For~\ref{item::localcon1}, 
$\Range(\gamma)$ is locally connected since it is
the image of the compact locally connected set $[0,1]$ by a continuous map; see \cite[Theorem~8.2, Chapter~IV]{NewmanTopology} 
and \cite[Section~2.2]{Pommerenke}.
For~\ref{item::localcon2}, see
\cite[Theorem~8.1, Chapter~IV]{NewmanTopology} 
and the subsequent corollary,
and \cite[Section~2.2]{Pommerenke}.
\end{proof}

\begin{lemma}
\label{Lem Count Union Loc Con}
Let $(K_{n})_{n\geq 0}$ be a sequence of non-empty compact subsets of 
$\C$. Assume that the following four conditions hold:
\begin{enumerate}[label=(\alph*)]
\item \label{item::countunionlocalcon1} For each $n\geq 0$, $K_{n}$ is locally connected.
\item \label{item::countunionlocalcon2} For each $n\geq 1$, $K_{n}$ is connected.
\item \label{item::countunionlocalcon3} For each $n\geq 1$, $K_{n}\cap K_{0}\neq\emptyset$.
\item \label{item::countunionlocalcon4} $\diam(K_{n})\to 0$ as $n\to +\infty$.
\end{enumerate}
Then the union $\bigcup_{n\geq 0} K_{n}$ is compact and locally connected.
\end{lemma}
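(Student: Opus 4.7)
The plan is to prove compactness and local connectedness of $K := \bigcup_{n\geq 0} K_n$ separately.

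For compactness, boundedness is immediate from \ref{item::countunionlocalcon3} and \ref{item::countunionlocalcon4}: the diameters $\diam(K_n)$ are uniformly bounded and every $K_n$ meets the compact set $K_0$, so $K$ lies in a bounded neighbourhood of $K_0$. For closedness, take $z_k \to z$ with $z_k \in K_{n_k}$: if some index $n$ appears in $(n_k)$ infinitely often, compactness of $K_n$ gives $z \in K_n \subset K$; otherwise $n_k \to \infty$, and by \ref{item::countunionlocalcon3} I choose $y_k \in K_{n_k}\cap K_0$, with $|y_k - z_k|\le \diam(K_{n_k}) \to 0$ by \ref{item::countunionlocalcon4}, so $y_k \to z \in K_0 \subset K$.

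For local connectedness I would first prove a pointwise version---for each $z \in K$ and each $\varepsilon > 0$, $z$ has a connected compact neighbourhood in $K$ of diameter less than $\varepsilon$---and then upgrade to the uniform version of Definition~\ref{Def Loc Connec} via the Lebesgue number lemma on the compact set $K$. For the pointwise construction at $z$, fix $\varepsilon > 0$ and separate indices into a finite ``coarse'' part and a small-diameter ``tail'' using \ref{item::countunionlocalcon4}. For each coarse $K_n$ with $z \in K_n$, local connectedness of $K_n$ from \ref{item::countunionlocalcon1} yields a connected neighbourhood $V_n$ of $z$ in $K_n$ with diameter $<\varepsilon/3$; for each coarse $K_n$ with $z \notin K_n$, the positive distance $\dist(z, K_n) > 0$ (from compactness) lets me stay away. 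Tail pieces $K_m$ that meet a small enough ball around $z$ are each connected by \ref{item::countunionlocalcon2} and meet $K_0$ by \ref{item::countunionlocalcon3}; in the case $z \in K_0$ their meeting points with $K_0$ are forced to sit inside $V_0$ (by choosing the radius appropriately), so each such $K_m$ grafts onto $V_0$ without increasing the total diameter beyond $\varepsilon$; in the case $z \notin K_0$, one simply shrinks the ball below $\dist(z, K_0)$ to exclude tail pieces altogether, leaving only finitely many coarse sets to combine at $z$.

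The main subtlety is the grafting step when $z \in K_0$: the radius of the ball around $z$, the cutoff between coarse and tail indices, and the connected-neighbourhood size $V_0$ for $K_0$ at $z$ have to be chosen coherently so that every small tail $K_m$ entering the ball---together with its meeting point in $K_0$---is automatically captured by $V_0$ and the connected neighbourhood under construction. This coherent choice is made possible by the rate-free hypothesis \ref{item::countunionlocalcon4}, which permits the tail threshold to be taken as small as needed once all the other parameters have been fixed. Passing to closures at the end yields a compact connected set, as required by Definition~\ref{Def Loc Connec}.
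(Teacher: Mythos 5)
Your proposal is correct, but it takes a structurally different route from the paper's. The paper works directly with the uniform, two-point formulation of Definition~\ref{Def Loc Connec}: it fixes $\varepsilon$ and produces an explicit $\delta$ through a nested chain of choices ($\delta_0$ for $K_0$, then a tail cutoff $N_0$, then $\delta_1$ for the finite union $\widetilde K_{N_0}$, then $N_1$, then $\delta_2$), followed by a three-case analysis on where the two points $z,z'$ sit (both in far tails; one in the coarse part and one in an intermediate piece; one in the coarse part and one in a far tail), grafting small pieces onto $K_0$ via hypotheses \ref{item::countunionlocalcon2}--\ref{item::countunionlocalcon4} in each case. You instead establish a pointwise statement (arbitrarily small compact connected neighbourhoods at each point of the union) and then upgrade to the uniform version by the Lebesgue number lemma on the compact union; the grafting of tail pieces onto a connected neighbourhood $V_0\subset K_0$ is the same core idea, and your observation that the tail threshold must be chosen \emph{after} the radius of $V_0$ is exactly the coherence issue the paper resolves with its nested $\delta$'s. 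Your route buys a cleaner conceptual separation (local construction, then compactness does the uniformization) at the cost of invoking two standard facts you should make explicit: first, that the paper's uniform definition of local connectedness for the compact sets $K_n$ yields genuine small connected compact \emph{neighbourhoods} of each point (needed to apply \ref{item::countunionlocalcon1} pointwise and to make the Lebesgue-number cover out of relative interiors); second, that the union of $V_0$ with the possibly infinitely many tail pieces meeting a small ball is still compact (it is, since any sequence running through infinitely many distinct tail pieces accumulates on $V_0$ by \ref{item::countunionlocalcon3} and \ref{item::countunionlocalcon4}). Your compactness argument for $\bigcup_{n\ge 0}K_n$ is more detailed than the paper's one-line claim and is correct.
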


\begin{proof}
The compactness of $\bigcup_{n\geq 0} K_{n}$ is ensured by the compactness of each $K_{n}$ and the conditions~\ref{item::countunionlocalcon3} and~\ref{item::countunionlocalcon4}.
It remains to check the local connectedness.

For $N\geq 1$, denote
\begin{displaymath}
\widetilde{K}_{N}:
=\bigcup_{n=0}^{N-1} K_{n} .
\end{displaymath}
The condition~\ref{item::countunionlocalcon1} and Lemma~\ref{Lem Loc Con Elem}~\ref{item::localcon2} ensure that the compact sets $\widetilde{K}_{N}$ are locally connected.
Fix $\varepsilon>0$. 
Since $K_{0}$ is locally connected (the condition~\ref{item::countunionlocalcon1}),
there is $\delta_{0}>0$
such that for every $z,z'\in K_{0}$ with
$\vert z'-z\vert<\delta_{0}$,
$z$ and $z'$ are $\varepsilon/3$-connected in $K_{0}$.
The condition~\ref{item::countunionlocalcon4} ensures that there is $N_{0}\geq 1$
such that for every $n\geq N_{0}$, 
$\diam(K_{n})<(\varepsilon\wedge\delta_{0})/3$.
Further, there is $\delta_{1}>0$ such that
for every $z,z'\in \widetilde{K}_{N_{0}}$ with
$\vert z'-z\vert<\delta_{1}$,
$z$ and $z'$ are $\varepsilon/2$-connected in $\widetilde{K}_{N_{0}}$.
Then there is $N_{1}\geq N_{0}$ such that
for every $n\geq N_{1}$, 
$\diam(K_{n})<(\varepsilon\wedge\delta_{1})/2$.
Finally, there is $\delta_{2}>0$
such that for every $z,z'\in \widetilde{K}_{N_{1}}$ with
$\vert z'-z\vert<\delta_{2}$,
$z$ and $z'$ are $\varepsilon$-connected in 
$\widetilde{K}_{N_{1}}$.
Set
\begin{displaymath}
\delta : =
\dfrac{\delta_{0}}{3}
\wedge
\dfrac{\delta_{1}}{2}
\wedge
\delta_{2}
\wedge
\dfrac{\varepsilon}{3}.
\end{displaymath}

Take
$z,z'\in \bigcup_{j\geq 0} K_{j}$
with
$\vert z'-z\vert <\delta$.
Since $z$ and $z'$ play symmetric roles, 
there are three cases to consider:
\begin{itemize}
\item Case 1:
$z\in K_{n}$, $z'\in K_{n'}$,
with $n,n'\geq N_{0}$.
\item Case 2:
$z\in \widetilde{K}_{N_{0}}$
and $z'\in K_{n'}$
with $n'\leq N_{1}-1$.
\item Case 3:
$z\in \widetilde{K}_{N_{0}}$
and $z'\in K_{n'}$
with $n'\geq N_{1}$.
\end{itemize}

In Case 1, the condition~\ref{item::countunionlocalcon3} ensures that one can take points
$\tilde{z}\in K_{n}\cap K_{0}$ and
$\tilde{z}'\in K_{n'}\cap K_{0}$.
Then
\begin{displaymath}
\vert\tilde{z}'-\tilde{z}\vert < 
\vert z'-z\vert + \dfrac{2}{3}\delta_{0}< \delta_{0}.
\end{displaymath}
So there is $K$ a compact connected subset of 
$K_{0}$ with $\diam(K)<\varepsilon/3$
such that $\tilde{z},\tilde{z}'\in K$.
Then $K\cup K_{n} \cup K_{n'}$ is a compact subset of
$\bigcup_{j\geq 0} K_{j}$, 
by the condition~\ref{item::countunionlocalcon2}, it is connected,
it contains $z$ and $z'$, and
\begin{displaymath}
\diam(K\cup K_{n} \cup K_{n'})
\leq \diam(K)+\diam(K_{n})+\diam(K_{n'})<\varepsilon .
\end{displaymath}

In Case 2, we have $z,z'\in \widetilde{K}_{N_{1}}$
and $\vert z'-z\vert < \delta_{2}$.
So $z'$ and $z$ are $\varepsilon$-connected in 
$\widetilde{K}_{N_{1}}$, 
and thus in $\bigcup_{j\geq 0} K_{j}$.

In Case 3, consider $\tilde{z}'\in K_{n'}\cap K_{0}$.
Then 
\begin{displaymath}
\vert \tilde{z}' - z\vert
<\vert z' - z\vert + \dfrac{1}{2}\delta_{1} <\delta_{1} .
\end{displaymath}
Thus, there is $K$ a compact connected subset of 
$\widetilde{K}_{N_{0}}$
with $\diam(K)<\varepsilon/2$
such that $z,\tilde{z}'\in K$.
Then $K\cup K_{n'}$ is a 
compact connected subset of $\bigcup_{j\geq 0} K_{j}$
containing $z$ and $z'$,
and $\diam(K\cup K_{n'})<\varepsilon$.
\end{proof}

\begin{lemma}
\label{Lem Loc Con Boundary}
\begin{enumerate}[label=(\arabic*)]
\item \label{item::localconboundary1} Let $C$ and $\widetilde{C}$ be closed non-empty subsets of
$\C$, with $\widetilde{C}\subset C$.
Assume that $C\setminus \widetilde{C}$ is an open subset of $\C$.
Then, if $\widetilde{C}$ is locally connected, then so is $C$.
\item \label{item::localconboundary2} Let $(C_{n})_{n\geq 0}$ and $(\widetilde{C}_{n})_{n\geq 0}$
be two families of closed non-empty subsets of $\C$.
Assume that for every $n\geq 0$,
$\widetilde{C}_{n}\subset C_{n}$ and
$C_{n}\setminus \widetilde{C}_{n}$ is an open subset of $\C$.
Then, if the family $(\widetilde{C}_{n})_{n\geq 0}$
is uniformly locally connected, 
then so is the family $(C_{n})_{n\geq 0}$.
\end{enumerate}
\end{lemma}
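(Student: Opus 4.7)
My plan rests on a single structural consequence of the hypotheses. Since $C \setminus \widetilde{C}$ is open in $\C$ and contained in the closed set $C$, it is contained in the interior $\inte(C)$; consequently the topological boundary $\partial C$ (in $\C$) is contained in $\widetilde{C}$. This enables a clean ``entry/exit'' argument: whenever the straight segment $[z, z']$ leaves $C$, its first exit point and its last re-entry point both lie in $\widetilde{C}$, where the local connectedness hypothesis can then be invoked to bridge them.

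For part (1), given $\varepsilon > 0$, let $\delta_0 = \delta_0(\varepsilon) > 0$ be the threshold provided by local connectedness of $\widetilde{C}$ at precision $\varepsilon/2$, and set $\delta := \delta_0 \wedge (\varepsilon/3)$. Take $z, z' \in C$ with $|z - z'| < \delta$. If $[z, z'] \subset C$, the segment itself is a compact connected subset of $C$ of diameter $|z - z'| < \varepsilon$ containing $z$ and $z'$, and we are done. Otherwise, define
\[
t_1 := \inf\{t \in [0, 1] : z + t(z' - z) \notin C\},
\qquad
t_2 := \sup\{t \in [0, 1] : z + t(z' - z) \notin C\},
\]
and set $w_i := z + t_i(z' - z)$ for $i = 1, 2$. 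By closedness of $C$ and openness of $\C \setminus C$, $w_1, w_2 \in \partial C \subset \widetilde{C}$, and by construction both sub-segments $[z, w_1]$ and $[w_2, z']$ lie entirely in $C$. Since $|w_1 - w_2| \leq |z - z'| < \delta_0$, local connectedness of $\widetilde{C}$ yields a compact connected $K_{12} \subset \widetilde{C}$ of diameter $< \varepsilon/2$ containing $w_1$ and $w_2$. The set $K := [z, w_1] \cup K_{12} \cup [w_2, z']$ is then a compact connected subset of $C$ containing $z$ and $z'$, of diameter at most $\delta + \varepsilon/2 < \varepsilon$. The degenerate cases $t_1 = 0$ or $t_2 = 1$, meaning $z \in \widetilde{C}$ or $z' \in \widetilde{C}$, simply cause $[z, w_1]$ or $[w_2, z']$ to collapse to a singleton and present no additional difficulty.

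For part (2), exactly the same argument applies verbatim, with $\delta_0$ replaced by the uniform threshold supplied by uniform local connectedness of $(\widetilde{C}_n)_{n \geq 0}$. The inclusion $\partial C_n \subset \widetilde{C}_n$ is an intrinsic pointwise property of each pair $(C_n, \widetilde{C}_n)$, so no $n$-dependent quantity enters the resulting $\delta = \delta(\varepsilon)$.

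The only genuinely conceptual point in the whole proof is the inclusion $\partial C \subset \widetilde{C}$ (and its counterpart for each $C_n$), which I would verify by contradiction: any $p \in \partial C$ lies in $C$ by closedness, and if it were in $C \setminus \widetilde{C}$, the openness of that set in $\C$ would yield an open $\C$-neighborhood of $p$ contained in $C$, contradicting $p \in \partial C$. This is essentially the only step requiring care; once it is in hand, the geometric construction above is routine and readily yields both statements.
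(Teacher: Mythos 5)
Your proof is correct and follows essentially the same route as the paper's: join $z$ and $z'$ by the straight segment, use the openness of $C\setminus\widetilde{C}$ to keep the terminal sub-segments inside $C$, and bridge the gap through $\widetilde{C}$ via its (uniform) local connectedness. The only cosmetic difference is your choice of pivot points (first-exit/last-entry points of $C$, which lie in $\partial C\subset\widetilde{C}$) versus the paper's first and last points of the segment lying in $\widetilde{C}$; both choices work and the diameter bookkeeping in your version checks out.
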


\begin{proof}
Since~\ref{item::localconboundary2} clearly implies~\ref{item::localconboundary1}, it suffices to show~\ref{item::localconboundary2}.
Fix $\varepsilon>0$, $n\geq 0$ and
$z,z'\in C_{n}$
with $\vert z'-z\vert<\varepsilon/2$.
One can consider the straight line segment
$I_{z,z'}$ joining $z$ and $z'$.
If 
$I_{z,z'}\cap \widetilde{C}_{n}=\emptyset$,
then necessarily 
$I_{z,z'}\subset C_{n}\setminus \widetilde{C}_{n}
\subset C_{n}$,
because $C_{n}\setminus \widetilde{C}_{n}$ is open.
Otherwise, one can consider
$\tilde{z}$ the point of
$I_{z,z'}\cap \widetilde{C}_{n}$
which is the closest to $z$,
and
$\tilde{z}'$ the point of
$I_{z,z'}\cap \widetilde{C}_{n}$
which is the closest to $z'$.
By construction, 
$\vert\tilde{z}'-\tilde{z}\vert\leq \vert z'-z\vert$.
Let $I_{z,\tilde{z}}$, respectively
$I_{\tilde{z}',z'}$, be the subsegment joining
$z$ and $\tilde{z}$, respectively $z'$ and $\tilde{z}'$.
We have that 
$I_{z,\tilde{z}}\cup I_{\tilde{z}',z'}
\subset C_{n}$.
Thus, $z$ and $z'$ are $\varepsilon$-connected in
$C_{n}$ as soon as
$\tilde{z}$ and $\tilde{z}'$
are $\varepsilon/2$-connected in 
$\widetilde{C}_{n}$.
\end{proof}

Next, we will introduce the notion of
\textit{cut points}, and cite a few elementary properties which will be useful later.
We refer interested readers to~\cite[Section~2.3]{Pommerenke} for more detail.

\begin{definition}
\label{Def Cut Point}
Given $C$ a closed connected non-empty subset of $\C$,
a point $z\in C$ is said to be a
\textit{cut point} of $C$
if $C\setminus\{ z\}$ is not connected.
\end{definition}

Next lemma is standard and we state it without proof.

\begin{lemma}
\label{Lem disks}
Let $K$ be a compact connected subset of the Riemann sphere
$\widehat{\C}:=\C\cup\{\infty\}\cong\mathbb{S}^{2}$,
such that $K$ and $\widehat{\C}\setminus K$ are both non-empty.
Then for every $\LO$ connected component of
$\widehat{\C}\setminus K$, 
$\LO$ is simply connected, and in particular there is a conformal
transformation from $\D$ to $\LO$.
The boundary $\partial\LO$ is connected.
\end{lemma}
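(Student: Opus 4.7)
The plan is to combine the standard characterization of simple connectedness on the sphere, the Riemann mapping theorem, and classical facts about continua in $\widehat{\C}$. First I would observe that $\LO$ is automatically open, as a connected component of the open set $\widehat{\C}\setminus K$ in a locally connected space, and that $\partial\LO\subset K$ (since $\LO$ is also closed in $\widehat{\C}\setminus K$).

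For simple connectedness I would use the classical characterization that an open connected $\LO\subset\widehat{\C}$ is simply connected if and only if $\widehat{\C}\setminus\LO$ is connected. Writing
\[
\widehat{\C}\setminus\LO \;=\; K \cup \bigcup_{\LO'\neq\LO}\overline{\LO'},
\]
with the union running over the other connected components of $\widehat{\C}\setminus K$, each $\overline{\LO'}$ is connected and has frontier in $K$, so it meets the connected set $K$. A family of connected sets all meeting a common connected ``hub'' has connected union, so $\widehat{\C}\setminus\LO$ is connected and $\LO$ is simply connected. The Riemann mapping theorem then supplies the conformal map $\D\to\LO$, as soon as $\widehat{\C}\setminus\LO$ has more than one point; this is implicit in the intended applications, where $K$ will always contain a non-trivial continuum such as $\overline{A_{\rm L}}$.

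For the connectedness of $\partial\LO$, I would simply quote the classical theorem that the frontier of a simply connected proper subdomain of $\widehat{\C}$ whose complement has more than one point is itself connected (see e.g.\ Pommerenke, Section~2.3, or Newman's \emph{Topology of Plane Sets of Points}). A self-contained argument would proceed by contradiction: given a hypothetical disconnection $\partial\LO = A\sqcup B$ with $A,B$ non-empty disjoint closed subsets of $\widehat{\C}$, one would split the components of $\widehat{\C}\setminus\overline{\LO}$ according to whether their closures meet $A$ or $B$, use the connectedness of each such closure to ensure no component meets both, and thereby produce a disconnection of $\widehat{\C}\setminus\LO$, contradicting the previous step. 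I expect this last piece to be the only real technical obstacle in a fully self-contained treatment, and in practice I would defer to the standard reference rather than rewrite the argument in detail.
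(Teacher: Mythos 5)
The paper states this lemma without proof (``Next lemma is standard and we state it without proof''), so there is no argument of the authors to compare against; I assess your proposal on its own. Your treatment of simple connectedness is correct and complete: $\widehat{\C}\setminus\LO=K\cup\bigcup_{\LO'\neq\LO}\overline{\LO'}$, each $\overline{\LO'}$ is connected with non-empty frontier contained in $K$, so the union is connected because every piece meets the connected hub $K$, and the characterization of simply connected domains in $\widehat{\C}$ by connectedness of the complement applies. Your remark that the Riemann mapping theorem needs $\widehat{\C}\setminus\LO$ to contain at least two points is also a legitimate catch: as literally stated the lemma fails when $K$ is a singleton (then $\LO\cong\C$), and one has to read in that $K$ is non-degenerate, which holds in all of the paper's applications.

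The one genuine weak spot is your optional self-contained sketch for the connectedness of $\partial\LO$. The step ``use the connectedness of each such closure to ensure no component meets both'' is not a valid deduction: a connected set can perfectly well meet both of two disjoint closed sets, and ruling this out is precisely the hard content here --- it amounts to the unicoherence of $\mathbb{S}^{2}$, which is a non-trivial topological input (the analogous statement fails on the torus). If you want a self-contained argument, either invoke unicoherence directly ($\widehat{\C}=\overline{\LO}\cup(\widehat{\C}\setminus\LO)$ is a union of two closed connected sets, so their intersection $\partial\LO$ is connected), or use the Riemann map $\psi:\D\to\LO$ you have already produced and observe that $\partial\LO=\bigcap_{r<1}\overline{\psi(\{r<\vert z\vert<1\})}$ is a decreasing intersection of continua, hence a continuum. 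Deferring to Pommerenke or Newman, as you primarily propose, is of course acceptable and is no less than what the paper itself does.
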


\begin{lemma}
\label{Lem K cuts}
Let $K$ be a compact connected non-empty subset of $\C$.
Assume that $K$ has no cut points.
Then for every $\LO$ connected component of $\C\setminus K$, 
$\partial\LO$ has no cut points.
\end{lemma}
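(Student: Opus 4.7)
The plan is to argue by contrapositive. Suppose $\LO$ is a connected component of $\C\setminus K$ whose boundary $\partial\LO$ admits a cut point $z_{0}$, with $\partial\LO\setminus\{z_{0}\}=A\sqcup B$ where $A,B$ are nonempty and mutually separated in $\partial\LO\setminus\{z_{0}\}$ (equivalently, in $\widehat{\C}$ one has $\overline{A}\cap\overline{B}=\{z_{0}\}$). Passing to the Riemann sphere, let $\LO^{*}$ denote $\LO$ if $\LO$ is bounded and $\LO\cup\{\infty\}$ otherwise, so that by Lemma~\ref{Lem disks} the set $\LO^{*}$ is a simply connected domain of $\widehat{\C}$ with connected boundary $\partial\LO^{*}=\partial\LO$. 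The goal is to exhibit a separation of $K\setminus\{z_{0}\}$, forcing $z_{0}$ to be a cut point of $K$ and contradicting the hypothesis.

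The core step is to build a Jordan curve $J\subset\widehat{\C}$ passing through $z_{0}$, with $J\setminus\{z_{0}\}\subset\LO^{*}$, such that $A$ and $B$ lie in the two distinct components of $\widehat{\C}\setminus J$. I realize $J$ as $\gamma\cup\{z_{0}\}$, where $\gamma$ is a small crosscut of $\LO^{*}$---a Jordan arc in $\LO^{*}$ whose two endpoints on $\partial\LO^{*}$ both converge to $z_{0}$. To ensure $\gamma$ separates $A$ from $B$, I invoke the theory of prime ends for simply connected domains (Pommerenke~\cite[Ch.~2]{Pommerenke}): because $\partial\LO^{*}$ is a connected compactum and $z_{0}$ is a cut point of it, there exist at least two distinct prime ends of $\LO^{*}$ whose impressions contain $z_{0}$, and the two arcs of the prime-end circle between them correspond via the Riemann map to boundary points of $\LO^{*}$ whose impressions lie respectively in $\overline{A}$ and in $\overline{B}$. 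A crosscut of sufficiently small diameter separating these two prime ends then supplies $\gamma$, whose closure $\gamma\cup\{z_{0}\}$ is a genuine Jordan curve through $z_{0}$ with $A$ and $B$ on opposite sides.

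Once $J$ is in hand, the Jordan curve theorem gives $\widehat{\C}\setminus J=D_{A}\sqcup D_{B}$ with $A\subset D_{A}$ and $B\subset D_{B}$. Since $K\cap\LO^{*}=\emptyset$ and $\gamma\subset\LO^{*}$, the set $K$ meets $J$ only at $z_{0}$, and therefore
\[K\setminus\{z_{0}\}=(K\cap D_{A})\sqcup(K\cap D_{B})\]
is a separation of $K\setminus\{z_{0}\}$ into two disjoint relatively open (hence clopen) pieces, each nonempty since $A\subset K\cap D_{A}$ and $B\subset K\cap D_{B}$. This shows $z_{0}$ is a cut point of $K$, contradicting the assumption and finishing the proof. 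The main obstacle is the prime-end construction of $\gamma$: one must arrange both that the endpoints of $\gamma$ converge to $z_{0}$, so that $\gamma\cup\{z_{0}\}$ is a true Jordan curve, and that $\overline{A}$ and $\overline{B}$ end up in separate complementary components; the remainder of the argument is a direct application of the Jordan curve theorem together with the elementary observation that $K$ avoids $\LO^{*}$.
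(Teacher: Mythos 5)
Your overall architecture coincides with the paper's: produce a Jordan curve $J$ through $z_{0}$ with $J\setminus\{z_{0}\}\subset\LO$ whose two complementary components both meet $\partial\LO$, then use that $K\cap J=\{z_{0}\}$ (since $\partial\LO\subset K\subset\C\setminus\LO$) to split $K\setminus\{z_{0}\}$ into two nonempty relatively open pieces, contradicting the absence of cut points of $K$. The paper obtains precisely this separation by citing the proof of \cite[Proposition~2.5]{Pommerenke}, which is the crosscut construction you are trying to reproduce, and your closing application of the Jordan curve theorem is correct and identical in substance. (Two minor remarks there: nonemptiness of the two pieces uses $A,B\subset\partial\LO\subset K$, which you should say; and you do not need all of $A$ to land in $D_{A}$ --- which may fail, since $A$ need not be connected --- but only that each of $D_{A},D_{B}$ contains some point of $\partial\LO\setminus\{z_{0}\}$.)

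The genuine gap is in the construction of $J$. Knowing that $z_{0}$ lies in the impressions of two distinct prime ends does not let you produce a crosscut whose two ends converge to $z_{0}$: a point of the impression of a prime end $\zeta$ need not be the landing point of any curve in $\LO^{*}$ approaching $\zeta$ (in a comb-type domain an impression is a whole segment, almost none of whose points are accessible through that prime end), and a small crosscut ``separating'' $\zeta_{1}$ from $\zeta_{2}$ has its endpoints at accessible boundary points that need not be anywhere near $z_{0}$. For $\gamma\cup\{z_{0}\}$ to be a Jordan curve you need $z_{0}$ to be \emph{accessible} along two distinct prime ends, i.e.\ two arcs in $\LO^{*}$ landing at $z_{0}$ and eventually disjoint; this is strictly stronger than membership of $z_{0}$ in two impressions and is exactly the statement that requires proof. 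Likewise, your claim that the two arcs of the prime-end circle between $\zeta_{1}$ and $\zeta_{2}$ sort the remaining boundary into $\overline{A}$ and $\overline{B}$ is the second half of the difficulty and is asserted without argument (a priori an impression can meet both $A$ and $B$ only by containing $z_{0}$, so one must first control the set of prime ends whose impression contains $z_{0}$, e.g.\ via upper semicontinuity of impressions and density of accessible points in the open subsets $A$ and $B$ of $\partial\LO$). These are the points for which the paper invokes the proof of \cite[Proposition~2.5]{Pommerenke}; as written, your sketch does not supply them.
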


\begin{proof}
Since we can always consider the Riemann sphere
$\widehat{\C}=\C\cup\{\infty\}$, we assume without loss of generality
that $\LO$ is bounded.
Assume that $z$ is a cut point of $\partial\LO$.
It follows from the proof of~\cite[Proposition~2.5]{Pommerenke} that
there are two points $z_{1},z_{2}\in\partial\LO$
that are in two distinct connected components of
$(\C\setminus\LO)\setminus\{z\}$.
Since $K\subset \C\setminus\LO$,
the points $z_{1}$ and $z_{2}$ are also in two distinct connected components of
$K\setminus\{z\}$.
This contradicts the assumptions.
\end{proof}

Next we recall Carathéodory's theorem on the extension of conformal maps to the boundary; see~\cite[Theorem~2.1, Theorem~2.6, Corollary~2.8]{Pommerenke}.

\begin{theorem}
\label{Thm Cara boundary}
Let $D$ be an open bounded simply connected domain in $\C$.
Let $\psi$ be a conformal map from $\D$ to $D$.
\begin{enumerate}[label=(\arabic*)]
\item If $\C\setminus D$ is locally connected, then
$\psi$ extends continuously to $\overline{\D}$.
In particular $\partial D$ can be parametrized as a continuous closed curve.
\item If on top of that, $\partial D$ has no cut points, 
then $\partial D$ is a Jordan curve, 
i.e. continuous closed simple curve,
and $\psi$ extends to a homeomorphism
from $\overline{\D}$ to $\overline{D}$.
\end{enumerate}
\end{theorem}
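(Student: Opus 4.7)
The plan is to follow the classical length--area approach of Carathéodory, as laid out in~\cite[Sections~2.1--2.3]{Pommerenke}. The key technical device is the family of \emph{circular crosscuts} $C_{r}(\zeta):=\D\cap\{|z-\zeta|=r\}$ for $\zeta\in\partial\D$ and $r>0$, combined with the finite area identity $\iint_{\D}|\psi'|^{2}\,dA=\mathrm{Area}(D)<+\infty$.

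For part~(1), I would fix $\zeta\in\partial\D$ and apply a length--area argument: writing the area integral in polar coordinates centered at $\zeta$ and using Cauchy--Schwarz on dyadic annuli $\{2^{-k-1}<|z-\zeta|<2^{-k}\}$, one extracts a sequence $r_{n}\to 0$ such that the Euclidean length of $\psi(C_{r_{n}}(\zeta))$ tends to $0$. In particular $\diam(\psi(C_{r_{n}}(\zeta)))\to 0$, and the two endpoints of each $\psi(C_{r_{n}}(\zeta))$ lie in $D$ but accumulate on $\partial D$. Local connectedness of $\C\setminus D$ then yields, for any $\varepsilon>0$ and all large $n$, a connected subset of $\partial D$ of diameter less than $\varepsilon$ joining those two boundary cluster points; concatenating this boundary arc with $\psi(C_{r_{n}}(\zeta))$ produces a nested sequence of compact connected subsets of $\overline{D}$ with vanishing diameter, each containing $\psi(\D\cap\{|z-\zeta|<r_{n}\})$. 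Their intersection is a single point $w(\zeta)\in\partial D$, which I declare to be the boundary value $\psi(\zeta)$. A uniform choice of the local-connectedness modulus (valid because $\C\setminus D$ is compact and locally connected) upgrades these pointwise boundary values to a genuinely continuous extension $\psi:\overline{\D}\to\overline{D}$, and then $\partial D=\psi(\partial\D)$ is automatically the continuous image of a circle.

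For part~(2), suppose for contradiction that $\zeta_{1}\neq\zeta_{2}\in\partial\D$ satisfy $\psi(\zeta_{1})=\psi(\zeta_{2})=w$. Joining $\zeta_{1}$ and $\zeta_{2}$ by a chord $I\subset\overline{\D}$, the continuous extension from~(1) maps $I$ to a Jordan curve $J\subset\overline{D}$ that meets $\partial D$ only at the single point $w$. The open chord $I\setminus\{\zeta_{1},\zeta_{2}\}$ cuts $\D$ into two components, which $\psi$ sends onto the two components of $D\setminus J$; the boundary of each of these components in $\C$ is $J$ together with a non-trivial sub-arc of $\partial D$ ending at $w$. Hence $\partial D\setminus\{w\}$ has at least two connected components, so $w$ is a cut point of $\partial D$, contradicting the hypothesis. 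Injectivity of the extended map on $\partial\D$, combined with part~(1) and the compactness of $\overline{\D}$, then promotes $\psi$ to a homeomorphism $\overline{\D}\to\overline{D}$.

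The main obstacle is the length--area estimate itself: one must pass from the single integrability $\iint|\psi'|^{2}\,dA<+\infty$ to the existence of a specific sequence of radii along which the arc-length is small \emph{and} whose $\psi$-images approach $\partial D$. Once these good radii are in hand, the remaining arguments are topological, relying only on local connectedness and the Jordan curve theorem, with no further input beyond Pommerenke's Chapter~2.
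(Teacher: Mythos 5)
The paper offers no proof of this statement at all---it is quoted verbatim from \cite[Theorem~2.1, Theorem~2.6, Corollary~2.8]{Pommerenke}---and your length--area/crosscut argument is exactly the classical proof given there, so the approach is the same and the outline is sound. If you write it out, two steps need the standard extra care that your sketch glosses over: in part~(1), the endpoints of $\psi(C_{r_n}(\zeta))$ already lie on $\partial D$ (finite length of the image arc), the connector produced by local connectedness lives in $\C\setminus D$ rather than in $\partial D$, and one must argue that $\psi(\D\cap\{|z-\zeta|<r_n\})$ sits in a \emph{bounded} complementary component of the crosscut-plus-connector continuum (the other component of $D\setminus\psi(C_{r_n}(\zeta))$ contains $\psi(0)$ and is therefore the unbounded one), which is what bounds its diameter; in part~(2), the assertion that $\partial D\setminus\{w\}$ is disconnected requires showing that the two components of $D\setminus J$ lie on opposite sides of the Jordan curve $J$ and that $\psi$ is non-constant on each of the two boundary arcs of $\partial\D\setminus\{\zeta_1,\zeta_2\}$ (e.g.\ by Schwarz reflection), since $\partial D$ has no a priori ``sub-arc'' structure at this stage.
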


Next we recall the notion of \textit{Carathéodory convergence}.
See \cite[Section~1.4]{Pommerenke}.

\begin{definition}
\label{Def Cara}
Let $D$ and $(D_{n})_{n\geq 0}$ be open non-empty simply connected domains in $\C$, different from $\C$.
Let $w\in D$, respectively $w_{n}\in D_{n}$.
The sequence of marked domains
$((D_{n},w_{n}))_{n\geq 0}$ is said to converge to
$(D,w)$ in the Carathéodory sense if the following holds:
\begin{enumerate}[label=(\arabic*)]
\item \label{item::defCara1}
$w_{n}\to w$;
\item \label{item::defCara2}
for every $z\in D$, there is a neighborhood $U$ of $z$ in $D$
such that
\begin{displaymath}
U\subset \bigcap_{n\geq m} D_{n}
\end{displaymath}
for $m$ large enough.
\item \label{item::defCara3}
for every $z\in\partial D$, 
there exist $z_{n}\in D_{n}$ such that 
$z_{n}\to z$ as $n\to +\infty$.
\end{enumerate}
\end{definition}

Note that the Carathéodory convergence does not imply that
$D_{n}$ converges $D$ for the Hausdorff distance, even for $D$ bounded.

\subsection{Poisson point processes of boundary to boundary excursions}
\label{Subsec Excursions}
Recall that $\D$ denotes the unit disc and $A_L, A_R$ denote the left and right half-circles in $\partial\D$ as in~\eqref{eqn::def_halfcircle}. 
We first introduce Green's function and Poisson kernel. Denote by $G_{\D}(z,w)$ the Green's function on $\D$
with Dirichlet $0$ boundary conditions: 
\[
G_{\D}(z,w)=\frac{1}{2\pi}\log\left|\frac{1-\bar{z}w}{z-w}\right|, \quad z\neq w\in\D .
	\]
For any simply connected domain $D$, we define Green's function via conformal image. Let $\psi: \D\to D$ be any conformal map, we have 
\begin{align*}
G_D(z,w)=&G_{\D}(\psi(z), \psi(w)),\quad z\neq w\in D. 
\end{align*}
Denote by $H_{\D}(z,x)$ the Poisson kernel on $\D$: 
\begin{equation}
\label{Eq Poisson kernel}
H_{\D}(z,x)=\frac{1}{2\pi}\frac{1-|z|^2}{|x-z|^2}, \quad z\in \D, x\in \partial\D. 
\end{equation}
For any simply connected domain $D$ with a boundary point $x\in\partial D$ such that $\partial D$ is analytic in neighborhood of $x$, we define Poisson kernel via conformal image. Let $\psi: \D\to D$ be any conformal map, we have 
\begin{align*}
H_D(z, x)=&\vert\psi'\circ\psi^{-1}(x)\vert^{-1}
H_{\D}(\psi^{-1}(z), \psi^{-1}(x)), \quad z\in D, x\in\partial D. 
\end{align*}
Denote by $H_{\D}(x,y)$ the boundary Poisson kernel on 
$\partial\D$
(see \cite[Section~5.2]{LawlerConformallyInvariantProcesses}):
\[
H_{\D}(x,y) = \dfrac{1}{\pi \vert y - x\vert^{2}},
\quad x\neq y\in\partial\D .
\]
For any simply connected domain $D$ with two boundary points $x, y$ such that $\partial D$ is analytic in neighborhoods of $x$ and $y$, we define the boundary Poisson kernel via conformal image. Let $\psi: \D\to D$ be any conformal map, we have
\begin{align*}
H_D(x,y)=&\vert\psi'\circ\psi^{-1}(x)\vert^{-1}
\vert\psi'\circ\psi^{-1}(y)\vert^{-1}
H_{\D}(\psi^{-1}(x), \psi^{-1}(y)),\quad x\neq y\in \partial D. 
\end{align*}

Next, we describe the measures on Brownian excursions. 
Given $x\neq y\in\partial\D$,
denote by $\mu_{x,y}^{\D,\#}$ the normalized probability measure
on Brownian excursions from $x$ to $y$ in $\D$;
see \cite[Section~5.2]{LawlerConformallyInvariantProcesses}. Denote by 
$\mu_{x,y}^{\D}$ the non-normalized measure
\[
\mu_{x,y}^{\D} := 
H_{\D}(x,y)\mu_{x,y}^{\D,\#}.
\]
For $x\in\partial\D$, let $\mu_{x,x}^{\D}$ denote the measure on Brownian excursions from to $x$ to $x$ in $\D$
\[
\mu_{x,x}^{\D} = 
\lim_{\substack{y\to x \\ y\in \partial\D\setminus\{x\}}}
\mu_{x,y}^{\D}.
\]
Note that $\mu_{x,x}^{\D}$ is up to a constant the Brownian bubble measure
of~\cite[Section~5.5]{LawlerConformallyInvariantProcesses}.
It has infinite total mass. However,
for every $\varepsilon>0$,
\[
\mu_{x,x}^{\D}(\{\gamma : \diam \Range(\gamma)>\varepsilon\})
< +\infty.
\]

For a general simply connected domain $D$ with two boundary points $x, y$ such that $\partial D$ is analytic in neighborhoods of $x, y$, we may extend the definition of Brownian excursion measure via conformal image: Let $\psi: \D\to D$ be any conformal map, 
\[\mu^D_{x,y}=
\vert\psi'\circ\psi^{-1}(x)\vert^{-1}
\vert\psi'\circ\psi^{-1}(y)\vert^{-1}
\psi_{\ast}
\mu_{\psi^{-1}(x),\psi^{-1}(y)}^{\D}.
\]
The total mass of $\mu^D_{x,y}$ is given by $H_D(x,y)$.

Suppose $\nu$ is a finite non-negative Radon measure on $\overline{A_L}$, we separate its 
atomic and non-atomic parts:
\[
\Atom (\nu):=
\{x\in\overline{A_{\rm L}} :
\nu(\{ x\})>0\},
\qquad
\hat{\nu}:=
\nu - \sum_{x\in \Atom (\nu)}\nu(\{ x\})\delta_{x}.
\]
Note that $\Atom (\nu)$ is at most countable.
We define $\mu^{\D}_{\nu}$ as in~\eqref{eqn::Brownianexcursionmeasurenu} and we see that 
\begin{eqnarray*}
\mu^{\D}_{\nu}(\cdot)&=&
\dfrac{1}{2}
\iint_{\overline{A_{\rm L}}\times \overline{A_{\rm L}}}
d(\nu\otimes\nu)(x,y)\mu_{x,y}^{\D}(\cdot)
\\
&=&
\dfrac{1}{2}
\iint_{A_{\rm L}\times A_{\rm L}}
d\hat{\nu}(x)d\hat{\nu}(y)\mu_{x,y}^{\D}(\cdot)
+
\dfrac{1}{2}
\sum_{x\in \Atom (\nu)} \nu(\{ x\})
\int_{A_{\rm L}}d\hat{\nu}(y)\mu_{x,y}^{\D}(\cdot)
\\ &&
+
\dfrac{1}{2}
\sum_{y\in \Atom (\nu)} \nu(\{ y\})
\int_{A_{\rm L}}d\hat{\nu}(x)\mu_{x,y}^{\D}(\cdot)
+
\dfrac{1}{2}
\sum_{(x,y)\in \Atom (\nu)^{2}} 
\nu(\{ x\})\nu(\{ y\})\mu_{x,y}^{\D}(\cdot).
\end{eqnarray*}
Note that the last of the four terms above 
also involves measures $\mu^{\D}_{x,x}$. All other terms
only involve measures $\mu^{\D}_{x,y}$ for $y\neq x$.
The measure $\mu^{\D}_{\nu}$ is conformally covariant in the following sense. 
If $D\subsetneq\C$ is an open simply connected domain with 
piecewise analytic boundary and $\psi$ is a conformal transformation from
$\D$ to $D$, then the image of $\mu^{\D}_{\nu}$
by $\psi$ is the measure
\begin{displaymath}
\iint_{\overline{\psi(A_{\rm L})}\times \overline{\psi(A_{\rm L})}}
d((\psi_{\ast}\nu)\otimes(\psi_{\ast}\nu))(x,y)
\vert\psi'\circ\psi^{-1}(x)\vert
\vert\psi'\circ\psi^{-1}(y)\vert
\mu_{x,y}^{D}(d\gamma)
\end{displaymath}
up to a change of time in excursions
$ds=\vert (\psi^{-1})'(\gamma(t))\vert^{2} dt$. 

Denote by $\Xi_{\nu}$ the Poisson point process of intensity
$\mu^{\D}_{\nu}$. We see it as a random at most countable collection of
time-parametrized Brownian boundary-to-boundary excursions in $\D$.
Given $\varepsilon>0$, denote 
\begin{equation}
\label{Eq exc eps}
\Xi_{\nu,\varepsilon}:=
\{\gamma\in\Xi_{\nu} : \diam \Range(\gamma)>\varepsilon\}.
\end{equation}

\begin{lemma}
\label{Lem Xi elem}
Let $\nu$ be a finite non-negative non-zero Radon measure
on $\overline{A_{\rm L}}$. Then $\Xi_{\nu}$ satisfies the following.
\begin{enumerate}[label=(\arabic*)]
\item \label{item::pppelem1} A.s. for every $\varepsilon >0$,
the subset
$\Xi_{\nu,\varepsilon}$
is finite.
\item \label{item::pppelem2} A.s. for every subarc $A$ of 
$\overline{A_{\rm L}}$ such that
$\nu(A)>0$, the subset
$\{\gamma\in\Xi_{\nu} : \gamma
\text{ has both ends in } A\}$ is infinite.
\end{enumerate}
\end{lemma}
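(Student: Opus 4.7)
The plan is to reduce each of the two claims to a computation of the total $\mu^{\D}_{\nu}$-mass of the relevant class of excursions, and then invoke the elementary fact that a Poisson point process is almost surely finite (respectively infinite) precisely when its intensity measure has finite (respectively infinite) total mass.

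For \ref{item::pppelem1}, since $\Xi_{\nu,\varepsilon}$ is non-increasing in $\varepsilon$, it is enough to prove finiteness for $\varepsilon=1/n$, $n\geq 1$, and intersect countably many almost sure events. The main step is a uniform bound
\[
\mu^{\D}_{x,y}(\{\gamma : \diam \Range(\gamma)>\varepsilon\}) \leq C(\varepsilon), \qquad x,y\in\overline{A_{\rm L}},
\]
with $C(\varepsilon)$ depending only on $\varepsilon$. When $\vert y-x\vert\geq \varepsilon/2$, one uses the explicit formula for the boundary Poisson kernel to get the trivial total-mass bound $H_{\D}(x,y)\leq 4/(\pi\varepsilon^{2})$. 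When $\vert y-x\vert<\varepsilon/2$, the total mass $H_{\D}(x,y)$ blows up as $y\to x$, but is compensated by the decay of the probability under $\mu^{\D,\#}_{x,y}$ of exiting the ball of radius $\varepsilon/2$ centred at $x$; a standard Brownian hitting-probability computation (or weak convergence to the Brownian bubble measure $\mu^{\D}_{x,x}$ together with its already-mentioned finiteness of mass on $\{\diam>\varepsilon\}$) yields the required uniform bound. Integrating against $\nu\otimes\nu$ then gives $\mu^{\D}_{\nu}(\{\diam>\varepsilon\})\leq \tfrac{1}{2}C(\varepsilon)\nu(\overline{A_{\rm L}})^{2}<+\infty$, which is what is needed.

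For \ref{item::pppelem2}, I will fix a countable family $\LA$ of closed subarcs of $\overline{A_{\rm L}}$ with rational endpoints (in some fixed parametrisation) and show that whenever $A'\in\LA$ satisfies $\nu(A')>0$, the intensity of excursions with both ends in $A'$ has infinite total mass. If $\nu$ has an atom $x_{0}\in A'$, the diagonal contribution $\tfrac{1}{2}\nu(\{x_{0}\})^{2}\mu^{\D}_{x_{0},x_{0}}$ alone has infinite mass, since the Brownian bubble measure is infinite. Otherwise $\nu|_{A'}$ is non-atomic, and I will show that
\[
\iint_{A'\times A'}\frac{d\nu(x)\,d\nu(y)}{\vert y-x\vert^{2}}=+\infty.
\]
Partition $A'$ into $N$ subarcs $I_{1},\dots,I_{N}$ of equal arclength $L/N$, where $L$ is the length of $A'$. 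For $x,y\in I_{k}$ one has $\vert y-x\vert\leq L/N$, hence $1/\vert y-x\vert^{2}\geq (N/L)^{2}$ there, and by Cauchy--Schwarz $\sum_{k}\nu(I_{k})^{2}\geq \nu(A')^{2}/N$, giving
\[
\iint_{A'\times A'}\frac{d\nu(x)\,d\nu(y)}{\vert y-x\vert^{2}}\;\geq\; \frac{N^{2}}{L^{2}}\sum_{k=1}^{N}\nu(I_{k})^{2}\;\geq\; \frac{N\,\nu(A')^{2}}{L^{2}}\;\xrightarrow[N\to\infty]{}\;+\infty.
\]
Taking a countable intersection over $\LA$ gives an almost sure event on which, for every $A'\in\LA$ with $\nu(A')>0$, the corresponding Poisson subcollection is infinite. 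For a general subarc $A$ with $\nu(A)>0$, inner regularity of the Radon measure $\nu$ produces a point of the support of $\nu|_A$ strictly inside $A$, and hence an element $A'\in\LA$ with $A'\subset A$ and $\nu(A')>0$; the inclusion of excursion sets concludes.

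The main technical obstacle is the uniform estimate $\mu^{\D}_{x,y}(\{\diam>\varepsilon\})\leq C(\varepsilon)$ in the regime $\vert y-x\vert\ll\varepsilon$, where the compensation between the exploding total mass $H_{\D}(x,y)$ and the vanishing probability of a large excursion must be controlled uniformly in $(x,y)$; the remaining ingredients (Poisson bookkeeping, Cauchy--Schwarz, Radon regularity) are routine.
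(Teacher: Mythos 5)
Your proposal is correct. For part (1) it follows the same route as the paper, which simply invokes the uniform bound $\sup_{x,y\in\partial\D}\mu^{\D}_{x,y}(\{\gamma:\diam\Range(\gamma)>\varepsilon\})<+\infty$ with a citation to Lawler's book; your sketch of how to establish that bound (splitting according to whether $\vert y-x\vert$ is comparable to $\varepsilon$, and near the diagonal balancing the blow-up of $H_{\D}(x,y)$ against the decay of the probability of a large excursion) is exactly what lies behind that citation. For part (2) the structure is also the same — countable reduction, atoms handled via the infinite mass of the bubble measure $\mu^{\D}_{x_0,x_0}$, non-atomic part handled by showing the integral $\iint\vert y-x\vert^{-2}d\nu\,d\nu$ diverges — but where the paper cites the potential-theoretic fact that a measure carried by a set of Hausdorff dimension $1$ has infinite $2$-energy (Bishop--Peres), you prove the divergence directly by equipartitioning the arc into $N$ pieces and applying Cauchy--Schwarz. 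That substitute is correct (the chord length is dominated by the arclength, so $\vert y-x\vert^{-2}\geq (N/L)^{2}$ on $I_{k}\times I_{k}$, and $\sum_{k}\nu(I_{k})^{2}\geq\nu(A')^{2}/N$) and makes the proof self-contained where the paper relies on an external theorem.

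One small gap in your countable reduction: the claim that inner regularity always produces a point of the support of $\nu|_{A}$ \emph{strictly inside} $A$ fails when $A$ is a closed subarc whose entire $\nu$-mass sits on an endpoint of $A$ that is an atom; in that case no closed rational subarc contained in $A$ carries positive mass. The fix is to enlarge the countable family $\LA$ by the singletons $\{x_{0}\}$ for $x_{0}\in\Atom(\nu)$: for each atom the excursions with both ends at $x_{0}$ already form an a.s.\ infinite set by your bubble-measure computation, and a general subarc $A$ with $\nu(A)>0$ either contains an atom of $\nu$ (so the corresponding singleton family is contained in the one for $A$) or satisfies $\nu(\inte A)=\nu(A)>0$, in which case your rational-subarc reduction applies. (The paper's own proof is equally terse about this reduction, so this is a refinement rather than a divergence from its argument.)
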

\begin{proof}
The first point comes from that
\begin{displaymath}
\sup_{x,y\in\partial\D}
\mu_{x,y}^{\D}(\{\gamma : \diam \Range(\gamma)>\varepsilon\})
<+\infty .
\end{displaymath}
See \cite[Section~5.2]{LawlerConformallyInvariantProcesses}.

For the second point it is enough to restrict to a countable collection of subarcs.
If such a subarc $A$ contains an atom
$x_{0}$ of $\nu$, then
$\mu^{\D}_{\nu}\geq \frac{1}{2}\nu\{x_{0}\}^{2}\mu^{\D}_{x_{0},x_{0}}$,
and the measure $\mu^{\D}_{x_{0},x_{0}}$ on excursions with both
endpoints in $x_{0}$ has infinite total mass. If 
$A\cap\Atom (\nu)=\emptyset$, then one needs to check that
\begin{displaymath}
\iint_{A\times A}
\dfrac{1}{\vert y -x \vert^{2}} d\nu (x)d\nu (y) = +\infty.
\end{displaymath}
The integral above is the two-dimensional energy of the measure
$\one_{A}\nu$; see \cite[Definition 3.4.1]{BishopPeresFractals}.
Since the Hausdorff dimension of $A$ is 1, and in particular smaller than 2, the two-dimensional energy equals $+\infty$;
see \cite[Theorem 3.4.2]{BishopPeresFractals}.
\end{proof}

\begin{figure}[ht!]
\begin{center}
\includegraphics[width=0.3\textwidth]{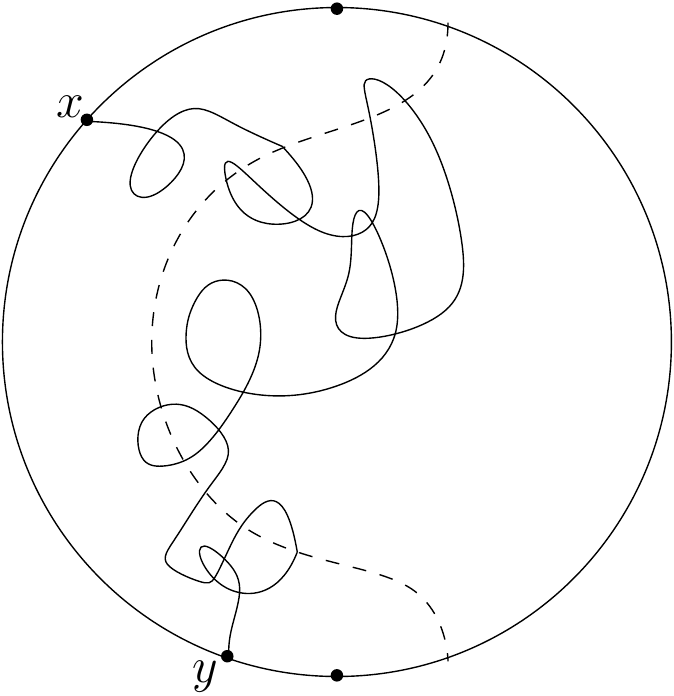}$\quad$
\includegraphics[width=0.3\textwidth]{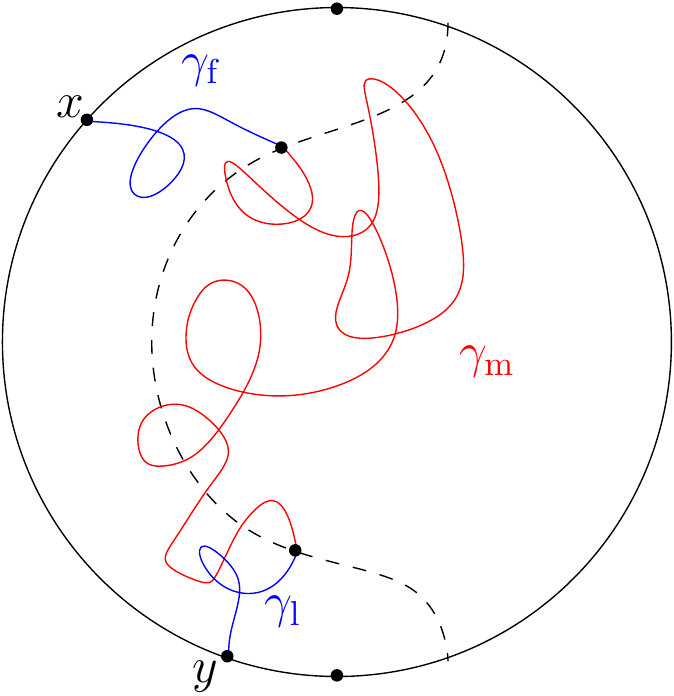}
\end{center}
\caption{\label{fig::Markovdecomposition} In the left panel, the dashed line indicates $\partial\D_{\varepsilon}\cap\D$ and the curve indicates a continuous path $\gamma$ intersecting $\D_{\varepsilon}$. In the right panel, the path $\gamma$ is decomposed into three pieces. The first piece is $\gamma_{\rm{f}}$ (in blue): it is the part of $\gamma$ from 0 to $T_{\gamma,\varepsilon}^{\rm{f}}$. The second piece is $\gamma_{\rm{m}}$ (in red): it is the part of $\gamma$ from $T_{\gamma,\varepsilon}^{\rm{f}}$ to $T_{\gamma,\varepsilon}^{\rm{l}}$. The last piece is $\gamma_{\rm{l}}$ (in blue): it is the part of $\gamma$ from $T_{\gamma,\varepsilon}^{\rm{l}}$ to $T_{\gamma}$. }
\end{figure}

Next we describe the Markovian decomposition of the measures on
Brownian excursions.
Given $z\neq w\in\D$,
denote by $\mu_{z,w}^{\D,\#}$ the normalized probability measure
on Brownian excursions from $z$ to $w$ in $\D$;
see \cite[Section~5.2]{LawlerConformallyInvariantProcesses}.
Denote by $\mu_{z,w}^{\D}$ the non-normalized measure
\[
\mu_{z,w}^{\D} := 
G_{\D}(z,w)\mu_{z,w}^{\D,\#}.
\] 

For $\varepsilon\in (0,1)$, denote
\begin{equation}
\label{Eq D eps}
\D_{\varepsilon} :=
\{z\in \D : \dist(z,A_{\rm L})>\varepsilon \},
\qquad
\widehat{\D}_{\varepsilon} :=
\{z\in \D : \dist(z,A_{\rm L})<\varepsilon \}.
\end{equation}
The domain $\widehat{\D}_{\varepsilon}$ 
is open and simply connected, with
piecewise analytic boundary. Recall that 
$\mu_{x,y}^{\widehat{\D}_{\varepsilon}}$ denotes the
non-normalized measure on Brownian excursions from $x$ to $y$
in $\widehat{\D}_{\varepsilon}$.
Denote by $\sigma_{\partial\widehat{\D}_{\varepsilon}}$ the
arc-length measure on $\partial\widehat{\D}_{\varepsilon}$.
Denote by $T_{\gamma}$ the total duration of a generic element 
$\gamma$ of $\Xi_{\nu}$.
Given $(\gamma(t))_{0\leq t\leq T_{\gamma}}$ a continuous path
intersecting $\D_{\varepsilon}$, denote
\begin{equation}
\label{Eq Tf Tl}
T_{\gamma,\varepsilon}^{\rm f}:=
\inf\{t\in (0,T_{\gamma}) : \gamma(t)\in \D_{\varepsilon}\},
\qquad
T_{\gamma,\varepsilon}^{\rm l}:=
\sup\{t\in (0,T_{\gamma}) : \gamma(t)\in \D_{\varepsilon}\}.
\end{equation}
The Markovian decomposition is as follows.
For details, see
\cite[Section~5.2]{LawlerConformallyInvariantProcesses}
and
\cite[Proposition~3.7]{AruLupuSepulvedaFPSGFFCVGISO}. See also Figure~\ref{fig::Markovdecomposition} for illustration.

\begin{proposition}
\label{Prop Markov}
Fix $\varepsilon\in (0,1)$.
We will denote by $F$ an arbitrary bounded measurable functional on the appropriate space.
For $x,y\in\overline{A_{\rm L}}$,
\begin{align*}
&\int\limits_{
\substack{\gamma \text{ s.t. }\\\Range(\gamma)\cap\D_{\varepsilon}
\neq\emptyset} }
F\big(\gamma(T_{\gamma,\varepsilon}^{\rm f}),
\gamma(T_{\gamma,\varepsilon}^{\rm l}),
(\gamma(t))_{0\leq t\leq T_{\gamma,\varepsilon}^{\rm f}},
(\gamma(T_{\gamma,\varepsilon}^{\rm f}+t))_{0\leq t\leq T_{\gamma,\varepsilon}^{\rm l}
-T_{\gamma,\varepsilon}^{\rm f}},
(\gamma(T_{\gamma,\varepsilon}^{\rm l}+t))_{0\leq t\leq T_{\gamma}-T_{\gamma,\varepsilon}^{\rm l}}
\big)
\mu^{\D}_{x,y}(d\gamma)
\\=&
\iint
\limits_{(\partial\widehat{\D}_{\varepsilon}\cap\D)^{2}}
\sigma_{\partial\widehat{\D}_{\varepsilon}}(dz)
\sigma_{\partial\widehat{\D}_{\varepsilon}}(dw)
\iiint
\mu_{x,z}^{\widehat{\D}_{\varepsilon}}(d\gamma_{\rm f})
\mu^{\D}_{z,w}(d\gamma_{\rm m})
\mu_{w,y}^{\widehat{\D}_{\varepsilon}}(d\gamma_{\rm l})
F(z,w,\gamma_{\rm f},\gamma_{\rm m},\gamma_{\rm l}).
\end{align*}
In particular,
\begin{align*}
&\int\limits_{
\substack{\gamma \text{ s.t. }\\\Range(\gamma)\cap\D_{\varepsilon}
\neq\emptyset} }
F\big(\gamma(T_{\gamma,\varepsilon}^{\rm f}),
\gamma(T_{\gamma,\varepsilon}^{\rm l}),
(\gamma(T_{\gamma,\varepsilon}^{\rm f}+t))_{0\leq t\leq T_{\gamma,\varepsilon}^{\rm l}
-T_{\gamma,\varepsilon}^{\rm f}}
\big)
\mu^{\D}_{x,y}(d\gamma)
\\
=&
\iint
\limits_{(\partial\widehat{\D}_{\varepsilon}\cap\D)^{2}}
\sigma_{\partial\widehat{\D}_{\varepsilon}}(dz)
\sigma_{\partial\widehat{\D}_{\varepsilon}}(dw)
H_{\widehat{\D}_{\varepsilon}}(x,z)
H_{\widehat{\D}_{\varepsilon}}(w,y)
\int
\mu^{\D}_{z,w}(d\gamma_{\rm m})
F(z,w,\gamma_{\rm m}).
\end{align*}
\end{proposition}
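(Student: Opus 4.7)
The plan is to reduce the statement to a standard decomposition of the Brownian excursion measure at the first and last entry times of an interior set, which already appears (in slightly different packaging) in~\cite[Section~5.2]{LawlerConformallyInvariantProcesses} and~\cite[Proposition~3.7]{AruLupuSepulvedaFPSGFFCVGISO}. I would proceed by successively conditioning on the first entry into $\D_{\varepsilon}$ and on the last exit from $\D_{\varepsilon}$, then identifying each of the three resulting path fragments with a suitable excursion measure.

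First, I would fix $x\neq y\in A_{\rm L}$, and note that excursions in $\D$ intersecting $\D_{\varepsilon}$ necessarily start and end in $\partial\widehat{\D}_{\varepsilon}\cap\partial\D$ and cross the interior arc $\partial\widehat{\D}_{\varepsilon}\cap\D$ both at time $T^{\rm f}_{\gamma,\varepsilon}$ and at time $T^{\rm l}_{\gamma,\varepsilon}$. I would apply the strong Markov property at $T^{\rm f}_{\gamma,\varepsilon}$ to an $h$-transformed Brownian motion underlying $\mu^{\D}_{x,y}$ (equivalently, start from the defining formula $\mu^{\D}_{x,y}=H_{\D}(x,y)\mu^{\D,\#}_{x,y}$ and use the representation of $\mu^{\D,\#}_{x,y}$ as a limit of interior excursion measures as in~\cite[Section~5.2]{LawlerConformallyInvariantProcesses}). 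This gives the decomposition
\[
\mathbf{1}_{\Range(\gamma)\cap\D_{\varepsilon}\neq\emptyset}\,\mu^{\D}_{x,y}(d\gamma)
=\int_{\partial\widehat{\D}_{\varepsilon}\cap\D}\sigma_{\partial\widehat{\D}_{\varepsilon}}(dz)\,\mu^{\widehat{\D}_{\varepsilon}}_{x,z}(d\gamma_{\rm f})\,\mu^{\D}_{z,y}(d\gamma'),
\]
where $\gamma_{\rm f}$ is the pre-$T^{\rm f}_{\gamma,\varepsilon}$ part of $\gamma$ and $\gamma'$ is the post-$T^{\rm f}_{\gamma,\varepsilon}$ part. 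The measure $\mu^{\widehat{\D}_{\varepsilon}}_{x,z}$ appears because up to $T^{\rm f}_{\gamma,\varepsilon}$ the path is a Brownian excursion staying in $\widehat{\D}_{\varepsilon}$ and ending at an interior boundary point $z$.

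Next, I would apply the time-reversal symmetry $\mu^{\D}_{z,y}=\mu^{\D}_{y,z}$ (after reversal) and repeat the same argument at the first time the reversed path enters $\D_{\varepsilon}$, i.e.\ at $T_{\gamma'}-(T^{\rm l}_{\gamma,\varepsilon}-T^{\rm f}_{\gamma,\varepsilon})$ in the original parametrization. This yields
\[
\mu^{\D}_{z,y}(d\gamma')=\int_{\partial\widehat{\D}_{\varepsilon}\cap\D}\sigma_{\partial\widehat{\D}_{\varepsilon}}(dw)\,\mu^{\D}_{z,w}(d\gamma_{\rm m})\,\mu^{\widehat{\D}_{\varepsilon}}_{w,y}(d\gamma_{\rm l}),
\]
with the middle piece $\gamma_{\rm m}$ now an interior-to-interior excursion in $\D$ (not in $\D_{\varepsilon}$, because between $T^{\rm f}$ and $T^{\rm l}$ the path can still re-enter $\widehat{\D}_{\varepsilon}\setminus\D_{\varepsilon}$), and the last piece $\gamma_{\rm l}$ an excursion in $\widehat{\D}_{\varepsilon}$. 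Concatenating the two decompositions produces the first displayed identity of the proposition. The second identity is obtained by integrating out $\gamma_{\rm f}$ and $\gamma_{\rm l}$, using that the total mass of $\mu^{\widehat{\D}_{\varepsilon}}_{x,z}$ is $H_{\widehat{\D}_{\varepsilon}}(x,z)$ and that of $\mu^{\widehat{\D}_{\varepsilon}}_{w,y}$ is $H_{\widehat{\D}_{\varepsilon}}(w,y)$.

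The main technical point to handle with care is justifying the boundary-to-interior-boundary disintegration, since $x\in\overline{A_{\rm L}}$ is on $\partial\D$ while $z\in\partial\widehat{\D}_{\varepsilon}\cap\D$ is in the interior of $\D$: one should take the defining limit $\mu^{\D}_{x,y}=\lim_{x_{n}\to x,y_{n}\to y}\mu^{\D}_{x_{n},y_{n}}$ from interior points (up to the standard Poisson kernel normalization), apply the strong Markov decomposition there, and then pass to the limit, which is legitimate thanks to the continuity of $\mu^{\widehat{\D}_{\varepsilon}}_{\cdot,z}$ and $H_{\widehat{\D}_{\varepsilon}}(\cdot,z)$ at boundary points $x$ away from $z$. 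The extension to atoms and to the case $x=y$ in the application to $\mu^{\D}_{\nu}$ follows since the formula is linear in the source and target measures and both sides remain finite when restricted to $\{\Range(\gamma)\cap\D_{\varepsilon}\neq\emptyset\}$.
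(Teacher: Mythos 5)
The paper gives no proof of this proposition at all --- it simply points to \cite[Section~5.2]{LawlerConformallyInvariantProcesses} and \cite[Proposition~3.7]{AruLupuSepulvedaFPSGFFCVGISO} --- and your argument (first-passage decomposition at $T^{\rm f}_{\gamma,\varepsilon}$ via the strong Markov property for the $h$-process underlying $\mu^{\D}_{x,y}$, followed by time reversal to turn the last exit time into a first hitting time, with the limit from interior starting points handling the boundary normalization) is exactly the standard argument underlying those citations. The identification of the three fragments with $\mu^{\widehat{\D}_{\varepsilon}}_{x,z}$, $\mu^{\D}_{z,w}$ and $\mu^{\widehat{\D}_{\varepsilon}}_{w,y}$, the observation that the middle piece lives in $\D$ rather than $\D_{\varepsilon}$, and the derivation of the second display by integrating out the first and last pieces using the total masses $H_{\widehat{\D}_{\varepsilon}}(x,z)$ and $H_{\widehat{\D}_{\varepsilon}}(w,y)$ are all correct.
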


\subsection{Loewner chain and SLE}

Recall that $\psi_{0}$ is the Möbius transformation from
$\D$ to $\HH$ as in~\eqref{eqn::psi0}. 
Since we will deal with conformally invariant objects, working in
$\D$ or in $\HH$ will be equivalent, but it will be more convenient 
to handle Brownian excursions in
$\D$, and to work with Loewner chains in $\HH$. 

An $\HH$-hull is a compact subset $K$ of $\overline{\HH}$ such that $\HH\setminus K$ is simply connected. By Riemann's mapping theorem, there exists a unique conformal map $g_K$ from $\HH\setminus K$ onto $\HH$ with the hydrodynamic normalization $\lim_{z\to\infty}|g_K(z)-z|=0$. The quantity 
\[\hcap(K):=\lim_{z\to\infty}z(g_K(z)-z)\]
is non-negative and we call it the half-plane capacity of $K$ (seen from $\infty$). For background on the half-plane capacity, see 
see \cite[Section~3.4]{LawlerConformallyInvariantProcesses}
and \cite[Section~6.2]{BerestyckiNorrisSLE}.

Loewner chain is a collection of $\HH$-hulls $(K_t)_{t\ge 0}$ associated to the family of conformal maps $(g_t)_{ t\ge 0}$ which solves the following Loewner equation: for each $z\in\HH$,
\begin{equation}\label{eqn::Loewnerequation}
\partial_t g_t(z)=\frac{2}{g_t(z)-\xi_t},\quad g_0(z)=z,
\end{equation}
where $(\xi_t)_{t\ge 0}$ is a one-dimensional continuous function which we call the driving function. For $z\in\overline{\HH}$, the swallowing time of $z$ is defined to be $\sup\left\{t\ge 0: \min_{s\in[0,t]}|g_s(z)-\xi_s|>0\right\}$. Let $K_t$ be the closure of $\{z\in\HH: T_z\le t\}$. It turns out that $g_t$ is the unique conformal map from $\HH\setminus K_t$ onto $\HH$ with normalization $\lim_{z\to\infty}|g_t(z)-z|=0$. Since $\hcap(K_t)=\lim_{z\to\infty}z(g_t(z)-z)=2t$, we say that the process $(K_t)_{t\ge 0}$ is parameterized by the half-plane capacity. 
We say that $(K_t)_{t\ge 0}$ can be generated by continuous curve $(\eta(t))_{t\ge 0}$ if, for any $t$, the unbounded connected component of $\HH\setminus\eta[0,t]$ is the same as $\HH\setminus K_t$. 

The following proposition explains which kind of continuous curve enjoys continuous driving function. 
\begin{proposition}\label{prop::Loewnerchain_qualification}
Suppose $T\in (0,\infty]$. Let $\eta: [0,T)\to \overline{\HH}$ be a continuous curve with $\eta(0)=0$. Assume the following hold: for every $t\in (0,T)$, 
\begin{enumerate}[label=(\alph*)]
\item \label{item::Loewnerchain_qualifya} $\eta(t,T)$ is contained in the closure of the unbounded connected component of $\HH\setminus\eta[0,t]$,
\item \label{item::Loewnerchain_qualifyb} $\eta^{-1}(\eta[0,t]\cup\R)$ has empty interior in $(t,T)$. 
\end{enumerate}
For each $t>0$, let $g_t$ be the conformal map from the unbounded connected component of $\HH\setminus\eta[0,t]$ onto $\HH$ with normalization $\lim_{z\to\infty}|g_t(z)-z|=0$. After reparameterization, $(g_t)_{t\ge 0}$ solves~\eqref{eqn::Loewnerequation} with continuous driving function $(\xi_t)_{t\ge 0}$. \end{proposition}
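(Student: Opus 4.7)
The plan is to realize the curve $\eta$ as an increasing family of $\HH$-hulls $(K_t)$ satisfying the two hypotheses of the classical Loewner-chain theorem (see e.g.\ \cite[Section~4]{LawlerConformallyInvariantProcesses}): namely, $t\mapsto\hcap(K_t)$ continuous and strictly increasing so as to enable reparametrization by half-plane capacity, and the local growth property. Once both are verified, the classical theorem produces~\eqref{eqn::Loewnerequation} with a continuous driving function $\xi_t$, characterized as the unique point in $\bigcap_{s\downarrow 0}\overline{g_t(K_{t+s}\setminus K_t)}$.

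First I would set $\LO_t$ to be the unbounded connected component of $\HH\setminus\eta[0,t]$ and $K_t:=\overline{\HH}\setminus\LO_t$. Hypothesis~\ref{item::Loewnerchain_qualifya} yields $\eta[s,T)\subset\overline{\LO_t}$ for every $s>t$, so $(K_t)$ is increasing, and each $K_t$ is a compact $\HH$-hull. Continuity of $\eta$ combined with Carathéodory convergence of $(\LO_{t_n})$ as $t_n\to t$ gives that $t\mapsto\hcap(K_t)$ is continuous. For strict monotonicity I would argue as follows: if $\hcap(K_s)=\hcap(K_t)$ with $s<t$, then additivity of half-plane capacity under composition of hulls forces $K_t\setminus K_s\subset\R$; combining this with \ref{item::Loewnerchain_qualifya}, which confines $\eta(s,t]$ to $\overline{\LO_s}$ and hence outside the interior of $K_s$, one obtains $\eta(s,t]\subset(\partial K_s\cap\HH)\cup\R\subset\eta[0,s]\cup\R$, contradicting~\ref{item::Loewnerchain_qualifyb}. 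A continuous time change then produces the parametrization $\hcap(K_t)=2t$.

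The key step is then the local growth property: for each $t$ and $\varepsilon>0$ there exists $\delta>0$ with $\diam\bigl(g_t(K_{t+s}\setminus K_t)\bigr)<\varepsilon$ for all $s\in(0,\delta)$. Writing $\tilde g:=g_t:\LO_t\to\HH$, the set $K_{t+s}\setminus K_t$ is the union of $\eta(t,t+s]\cap\LO_t$ with the bounded components of $\LO_t\setminus\eta[t,t+s]$. Uniform continuity of $\eta$ confines $\eta[t,t+s]$ to an arbitrarily small ball $B(\eta(t),r)$ as $s\downarrow 0$, and each bounded component pinched off by $\eta[t,t+s]$ is automatically contained in that ball as well (outside $B(\eta(t),r)$ the domain $\LO_t\setminus\eta[t,t+s]$ agrees with $\LO_t$ and is connected through $\infty$). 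By~\ref{item::Loewnerchain_qualifya} together with~\ref{item::Loewnerchain_qualifyb}, the point $\eta(t)$ is accessed from inside $\LO_t$ along a sequence $\eta(t_n)\in\LO_t$ with $t_n\downarrow t$, making it an accessible prime end $p$ of $\LO_t$; continuity of the prime-end extension of $\tilde g$ at $p$ then yields the required smallness of $\diam\tilde g(K_{t+s}\setminus K_t)$.

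The main obstacle I anticipate is precisely this last step, the continuity of $\tilde g$ at the prime end $p$: the domain boundary $\partial\LO_t$ need not be locally connected, so Theorem~\ref{Thm Cara boundary} does not apply directly. The two hypotheses are designed exactly to circumvent this. Hypothesis~\ref{item::Loewnerchain_qualifya} guarantees that $\eta(t)$ is approached from inside $\LO_t$, singling out an accessible prime end; hypothesis~\ref{item::Loewnerchain_qualifyb} ensures that $\eta$ does not linger on $K_t\cup\R$ past time $t$, so that the prime end is genuinely reached on a dense set of times and there is no degeneracy that would leave $\hcap$ flat or prevent the image $\tilde g(K_{t+s}\setminus K_t)$ from shrinking. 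Once the local-growth bound is in place, the classical Loewner theory delivers the continuous driving function and completes the proposition.
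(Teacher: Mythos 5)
The paper offers no proof of its own for this proposition: it simply points to \cite{PommerenkeLoewner}, \cite[Theorem~1.2]{KinnebergLoewner}, \cite[Section~4]{LawlerConformallyInvariantProcesses} and \cite[Proposition~6.12]{MillerSheffieldIG1}, of which the last is essentially this statement verbatim. Your outline reproduces the standard architecture of those proofs: build the hulls $K_t$, check that $t\mapsto\hcap(K_t)$ is continuous and strictly increasing so that one can reparametrize by capacity, verify the local growth property, and invoke the classical Loewner theorem. Your strict-monotonicity argument (a zero capacity increment forces the new growth to avoid $\LO_s$, hence $\eta(s,t]\subset\eta[0,s]\cup\R$, contradicting hypothesis~\ref{item::Loewnerchain_qualifyb}) is correct, as is the use of Carath\'eodory kernel convergence for continuity of $\hcap$.

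There is, however, a concrete error in your local-growth step. You assert that every bounded component of $\LO_t\setminus\eta[t,t+s]$ is contained in $B(\eta(t),r)$ because ``outside $B(\eta(t),r)$ the domain agrees with $\LO_t$ and is connected through $\infty$.'' Agreement of the two domains outside the ball does not give connectivity to $\infty$ outside the ball: a point of $\LO_t\setminus B(\eta(t),r)$ may reach $\infty$ within $\LO_t$ only via paths passing through $B(\eta(t),r)$. Concretely, if $\LO_t$ contains a macroscopic fjord whose mouth lies inside $B(\eta(t),r)$ (for instance because $\eta[0,t]$ has traced a large near-loop with a tiny remaining gap at $\eta(t)$), an arbitrarily small increment $\eta[t,t+s]$ can seal the mouth and swallow the entire fjord, so $K_{t+s}\setminus K_t$ is not small in $\overline{\HH}$ and is certainly not confined to the ball. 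The local growth property is nonetheless true, but the correct reduction is different: one separates $K_{t+s}\setminus K_t$ from $\infty$ in $\LO_t$ by a small crosscut near the tip, notes that the bounded component of $\HH$ minus the image crosscut has diameter at most that of the image crosscut, and then uses prime-end theory (null chains of crosscuts map to null chains) to make that image small. Two further points you gloss over are handled in the cited references: a sequence $\eta(t_n)\in\LO_t$ with $t_n\downarrow t$ does not by itself make $\eta(t)$ an accessible boundary point, and one must rule out the new growth attaching simultaneously to two distinct prime ends of $\LO_t$ lying over the same point $\eta(t)$. As written, your reduction to ``continuity of the prime-end extension at $p$'' therefore does not go through; repairing it amounts to reproving \cite[Proposition~6.12]{MillerSheffieldIG1}, which is why the paper defers to the literature.
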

\begin{proof}
See
\cite{PommerenkeLoewner}, 
\cite[Theorem~1.2]{KinnebergLoewner},
\cite[Section~4]{LawlerConformallyInvariantProcesses} and \cite[Proposition~6.12]{MillerSheffieldIG1}. 
\end{proof}


Schramm Loewner evolution (SLE) is a Loewner chain with driving function equal a multiple of Brownian motion. For $\kappa>0$, $\SLE_{\kappa}$ is the Loewner chain with driving function $\xi_t=\sqrt{\kappa}B_t$ where $(B_t)_{t\ge 0}$ is a standard one-dimensional Brownian motion. It is known that $\SLE_{\kappa}$ is almost surely generated by a continuous curve for all $\kappa$; see~\cite{RohdeSchrammSLEBasicProperty}. In particular, when $\kappa\in (0,4]$, it is a simple curve.

\subsection{Gaussian free field and level lines}
\label{subsec::levellines}

In this section, we will collect some known results on level lines of GFF from~\cite{DubedatSLEFreefield, SchrammSheffieldDiscreteGFF, SchrammSheffieldContinuumGFF, WangWuLevellinesGFFI, PowellWuLevellinesGFF} and relate the level lines to variants of SLE$_4$ process. 
To this end, it is more convenient to work in $\HH$. 

We first consider the case when GFF has piecewise constant boundary data. Suppose $x_n<\cdots<x_1< 0$ and $\rho_n, \ldots, \rho_1\in\R$. 
Denote 
\[\bar{\rho}_k:=\sum_{j=1}^k \rho_j,\quad \text{for all }k\in \{1,\ldots, n\}.\]
Consider GFF on $\HH$ with the following boundary data: 
\begin{equation}\label{eqn::piecewisebc}
\zeta(x)=2\lambda\one_{(x_1, 0)}(x)+\sum_{k=1}^n \lambda(2+\bar{\rho}_k)\one_{(x_{k+1}, x_k]}(x), \quad x<0; \qquad \zeta(x)=0,\quad x>0, 
\end{equation}
where we use the convention that $x_{n+1}=-\infty$. 
Suppose $\Phi$ is zero-boundary GFF in $\HH$ and suppose $\bar{\rho}_k>-2$ for all $k\in\{1,\ldots, n\}$. 
Then the level line of $\Phi+\zeta$ exists and is uniquely determined by $\Phi$. Furthermore, it is a continuous curve with continuous driving function $(\xi_t)_{t\ge 0}$ which is the solution to the following SDE:  
\[
d\xi_t=2dB_t+\sum_{j=1}^n \frac{\rho_j dt}{\xi_t-V_t^j}, \quad dV_t^k =\frac{2dt}{V_t^k-\xi_t}, \text{ for }k\in\{1, \ldots, n\}, 
\]
with initial values $\xi(0)=0$ and $V_0^k=x_k$ for $k\in\{1, \ldots, n\}$. Note that the Lowener chain with the above driving function is called $\SLE_4(\rho_n, \ldots, \rho_1)$ process with force points $(x_n, \ldots, x_1)$. For more detail on $\SLE_{\kappa}(\rho)$ with multiple force points, see~\cite[Section~2.2]{MillerSheffieldIG1}. 

Next, we consider GFF with regulated boundary conditions. Suppose the boundary condition is a regulated function $\zeta$ on $\R$. Assume that there exists $\eps>0$ such that 
\begin{equation}\label{assumptionPW}
\zeta(x)\ge \eps, \quad x<0; \qquad \zeta(x)=0, \quad x>0.
\end{equation}
The authors in~\cite{PowellWuLevellinesGFF} prove that there exists a coupling $(\Phi, \eta)$ as in Definition~\ref{def::GFFlevelline} with boundary data $\zeta$ and $\eta$ is a continuous simple curve with continuous driving function. Furthermore, they identify the law of $\eta$ when $\zeta$ is of bounded variation. 

Suppose $\zeta$ is of bounded variation and $\zeta=0$ on $\R_+$. 
Such function can be described almost every as the integral of a finite signed Radon measure $\rho$ on $(-\infty,0]$: 
\begin{equation}
\label{Eq bc}
\zeta(x^{+})=2\lambda+\lambda\rho((x,0]), \quad x<0.
\end{equation}
Suppose that there exists $\eps>0$ such that 
\begin{equation}\label{assumption::PowellWu}
\rho((x,0]) \ge -2+\eps/\lambda, \quad x<0. 
\end{equation}
Under the assumption~\eqref{assumption::PowellWu}, the authors in~\cite{PowellWuLevellinesGFF} prove that the law of $\eta$ is an $\SLE_4(\rho)$ process defined as follows. 
\begin{definition}\label{def::SLE4rho}
Suppose $(B_t)_{t\ge 0}$ is one-dimensional Brownian motion. We say that the process \[(\xi_t, (V_t(x))_{x\le 0} )_{t\ge 0}\] describes an $\SLE_4(\rho)$ process if it is adapted to the filtration of $B$ and the following hold:
\begin{itemize}
\item We have $\xi_0=0$ and $V_0(x)=x$ for $x\le 0$. 
\item The processes $B_t, \xi_t, (V_t(x))_{x\le 0}$ satisfy the following SDE on time intervals where $\xi_t$ does not collide with any of the $V_t(x)$:
\begin{equation}
\label{Eq SDE}
d\xi_t=2dB_t+\left(\int_{(-\infty,0]}\frac{d\rho(x)}{\xi_t-V_t(x)}\right)dt, \quad dV_t(x)=\frac{2dt}{V_t(x)-\xi_t},\quad x\le 0.
\end{equation} 
\item We have instantaneous reflection of $\xi_t$ off the $V_t(x)$, i.e. it is almost surely the case that for Lebesgue almost all times $t$ we have that $\xi_t\neq V_t(x)$ for each $x\le 0$. 
\end{itemize}
The $\SLE_4(\rho)$ process is then defined to be the Loewner chain with driving function $(\xi_t)_{t\ge 0}$. 
\end{definition}
Note that the existence of $\SLE_4(\rho)$ is not clear from the above definition. It is part of the conclusion from~\cite{PowellWuLevellinesGFF} that there exists an $\SLE_4(\rho)$ process under the assumption~\eqref{assumption::PowellWu} and it is a continuous simple curve with continuous driving function. We emphasize that~\cite{PowellWuLevellinesGFF} only provides the existence of $\SLE_4(\rho)$, and it does not give the uniqueness in law.

\section{Construction of chordal curves}
\label{Sec Construction}

\subsection{Proof of Propositions~\ref{prop::envelop_parameterization} and~\ref{prop::envelop_drivingfunction}}
\label{SubSec Construction}
Let us recall the construction of $\eta_{\kappa, \nu}$ given in the introduction and provide more detail.  
Our construction of chordal curves in $\overline{\D}$ from $-i$ to $i$ involves two ingredients: Brownian excursions
introduced in Section~\ref{Subsec Excursions} 
and conformal loop ensembles
$\CLE_{\kappa}$ with $\kappa\in (8/3 , 4]$. 
For the construction of the CLE, 
see \cite{SheffieldExplorationTree,SheffieldWernerCLE}.
Note that according to \cite{SheffieldWernerCLE}, a 
$\CLE_{\kappa}$ is also the set of outermost boundaries of
clusters in Brownian loop-soups that were introduced in 
\cite{LawlerWernerBrownianLoopsoup}.
Here we emphasize that $\CLE_{\kappa}$ satisfies a local finiteness property:
a.s., for every $\varepsilon>0$, there are only finitely many loops of diameter greater than $\varepsilon$.

Here is our construction.
Fix $\kappa\in (8/3 , 4]$ and 
let $\FC_{\kappa}$ denote a $\CLE_{\kappa}$ loop ensemble.
Fix $\nu$ a finite non-negative Radon measure on $\overline{A_{\rm L}}$
and 
let $\Xi_{\nu}$ be a Poisson point process of excursions of intensity 
$\mu_{\nu}^{\D}$, independent of $\FC_{\kappa}$.
For $\gamma\in \Xi_{\nu}$,
denote
\begin{equation}
\label{Eq notations gamma}
\widetilde{\FC}_{\kappa}(\gamma):=
\{\tilde{\gamma}\in\FC_{\kappa} :
\Range(\tilde{\gamma})\cap \Range(\gamma)
\neq\emptyset\},
\qquad
\LS_{\kappa}(\gamma):=
\Big(\bigcup_{\tilde{\gamma}\in\widetilde{\FC}_{\kappa}(\gamma)}
\Range(\tilde{\gamma})\Big)
\cup
\Range(\gamma).
\end{equation}
Define
\begin{equation}
\label{Eq tilde FC}
\widetilde{\FC}_{\kappa,\nu}:=
\bigcup_{\gamma\in \Xi_{\nu}}\widetilde{\FC}_{\kappa}(\gamma)
=
\{\tilde{\gamma}\in\FC_{\kappa} :
\exists\gamma\in \Xi_{\nu}, 
\Range(\tilde{\gamma})\cap \Range(\gamma)
\neq\emptyset\}.
\end{equation}
Let $\LS_{\kappa,\nu}$ be the following random subset of
$\overline{\D}$:
\begin{displaymath}
\LS_{\kappa,\nu}:=
\bigcup_{\gamma\in\Xi_{\nu}}\LS_{\kappa}(\gamma)
=
\Big(\bigcup_{\tilde{\gamma}\in\widetilde{\FC}_{\kappa,\nu}}
\Range(\tilde{\gamma})\Big)
\cup
\Big(\bigcup_{\gamma\in\Xi_{\nu}}\Range(\gamma)\Big).
\end{displaymath}
In the limit case $\kappa=8/3$, we set
\begin{displaymath}
 \LS_{8/3,\nu}:=
 \bigcup_{\gamma\in\Xi_{\nu}}\Range(\gamma).
\end{displaymath}

By construction, 
$\LS_{\kappa,\nu}\cap A_{\rm R} = \emptyset$.
Let $\LD_{\rm R,\kappa,\nu}$ be the connected component of
$\overline{\D}\setminus
(\LS_{\kappa,\nu}\cup\overline{A_{\rm L}})$ 
that contains $A_{\rm R}$.
Then $\LD_{\rm R,\kappa,\nu}$ is of form
$\LO_{\kappa,\nu}\cup A_{\rm R}$,
where $\LO_{\kappa,\nu}$ is an open simply connected domain.
Set
\begin{displaymath}
\eta_{\kappa,\nu}:=
(\partial \LD_{\rm R,\kappa,\nu})\setminus A_{\rm R}.
\end{displaymath}
Informally, $\eta_{\kappa,\nu}$ is constructed as the
envelop from the right of the set 
$\LS_{\kappa,\nu}\cup \overline{A_{\rm L}}$.
First of all, we will show that $\eta_{\kappa, \nu}$ is a continuous curve and satisfies conformal covariance. 

\begin{proof}[Proof of Proposition~\ref{prop::envelop_parameterization}]
The conformal covariance in law of $\eta_{\kappa,\nu}$
follows form the conformal invariance in law of the
$\CLE_{\kappa}$ and the conformal covariance in law of
$\Xi_{\nu}$. It remains to show Proposition~\ref{prop::envelop_parameterization}~\ref{item::envelop_continuity}. 

For the continuity of the envelop we will use a somewhat different argument from~\cite[Section~2.4]{WernerWuCLEtoSLE},
relying on Lemma~\ref{Lem Count Union Loc Con}. 
If one shows that
$\partial\LO_{\kappa,\nu}$ is a continuous closed curve,
then one gets that
$\eta_{\kappa,\nu}=
\partial\LO_{\kappa,\nu}\setminus A_{\rm R}$
is a continuous curve. 
Its endpoints are $-i$ and $i$ since these are also the
endpoints of $A_{\rm R}$.
According to Theorem~\ref{Thm Cara boundary},
to show that $\partial\LO_{\kappa,\nu}$ is a continuous closed curve
one needs to check that $\C\setminus \LO_{\kappa,\nu}$
is locally connected.
According to Lemma~\ref{Lem Loc Con Boundary},
it is enough to check that
$\LS_{\kappa,\nu}\cup \partial \D$ is locally connected.
To this end, we will apply 
Lemma~\ref{Lem Count Union Loc Con} twice.
The first time, we apply it to
$K_{0}=\partial\D$ and
$(K_{n})_{n\geq 1}=\Xi_{\nu}$.
We get that 
$\partial \D\cup\bigcup_{\gamma\in \Xi_{\nu}}\Range(\gamma)$
is locally connected.
The second time, we apply it to
$K_{0}=\partial \D\cup\bigcup_{\gamma\in \Xi_{\nu}}\Range(\gamma)$
and $(K_{n})_{n\geq 1}=\widetilde{\FC}_{\kappa,\nu}$
and get that 
$\LS_{\kappa,\nu}\cup \partial \D$ is locally connected.
This completes the proof. 
\end{proof}

From the construction, the curves $\eta_{\kappa,\nu}$ satisfy an obvious monotonicity in 
$\nu$. 
Indeed, if $\nu_{1}\leq \nu_{2}$,
$\Xi_{\nu_{1}}$ can be realized as a subset of
$\Xi_{\nu_{2}}$.

\begin{proposition}
\label{Prop Monotonicity}
Fix $\kappa\in [8/3 , 4]$.
Let $\nu_{1}$ and $\nu_{2}$ be two finite non-negative Radon measures on 
$\overline{A_{\rm L}}$ such that
$\nu_{1}\leq \nu_{2}$,
i.e. $\nu_{2}-\nu_{1}$ is a non-negative measure.
Then $\eta_{\kappa,\nu_{1}}$ and $\eta_{\kappa,\nu_{2}}$
can be coupled on the same probability space such that
a.s., $\eta_{\kappa,\nu_{1}}$ is contained between
$\overline{A_{\rm L}}$ and $\eta_{\kappa,\nu_{2}}$,
and in particular,
\begin{displaymath}
\left(\eta_{\kappa,\nu_{2}}\cap A_{\rm L}\right)
\subset
\left(\eta_{\kappa,\nu_{1}}\cap A_{\rm L}\right).
\end{displaymath}
\end{proposition}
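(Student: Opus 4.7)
The plan is to realize the coupling via Poisson superposition on a common probability space and then to read off the geometric containment directly from the construction of the envelop.

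The key point is that the map $\nu\mapsto \mu^{\D}_{\nu}$ is monotone in $\nu$. Writing $\nu_{3}:=\nu_{2}-\nu_{1}\ge 0$ and expanding,
\[
\nu_{2}\otimes\nu_{2}-\nu_{1}\otimes\nu_{1}
=\nu_{1}\otimes\nu_{3}+\nu_{3}\otimes\nu_{1}+\nu_{3}\otimes\nu_{3}
\]
is a non-negative measure on $\overline{A_{\rm L}}\times\overline{A_{\rm L}}$; since $\mu^{\D}_{x,y}\ge 0$, the defining formula~\eqref{eqn::Brownianexcursionmeasurenu} gives $\mu^{\D}_{\nu_{2}}\ge \mu^{\D}_{\nu_{1}}$ as non-negative Radon measures on the space of excursions. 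Setting $\mu':=\mu^{\D}_{\nu_{2}}-\mu^{\D}_{\nu_{1}}\ge 0$, the standard Poisson superposition theorem lets us build, on a single probability space, two independent Poisson point processes $\Xi_{\nu_{1}}$ and $\Xi'$ of intensities $\mu^{\D}_{\nu_{1}}$ and $\mu'$ respectively, so that $\Xi_{\nu_{2}}:=\Xi_{\nu_{1}}\sqcup\Xi'$ is Poisson of intensity $\mu^{\D}_{\nu_{2}}$ and contains $\Xi_{\nu_{1}}$. Sample a single $\CLE_{\kappa}$ configuration $\FC_{\kappa}$, independent of all of the above, and construct both $\eta_{\kappa,\nu_{1}}$ and $\eta_{\kappa,\nu_{2}}$ from this common $\FC_{\kappa}$ together with $\Xi_{\nu_{1}}$ and $\Xi_{\nu_{2}}$.

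Under this coupling, I would then propagate the inclusion through each layer of the construction. From $\Xi_{\nu_{1}}\subset\Xi_{\nu_{2}}$ and the definitions~\eqref{Eq notations gamma}--\eqref{Eq tilde FC} it is immediate that $\widetilde{\FC}_{\kappa,\nu_{1}}\subset\widetilde{\FC}_{\kappa,\nu_{2}}$, and hence $\LS_{\kappa,\nu_{1}}\subset\LS_{\kappa,\nu_{2}}$ (the case $\kappa=8/3$ is direct). Because the obstacle set $\LS_{\kappa,\nu}\cup\overline{A_{\rm L}}$ has grown, the connected component of its complement in $\overline{\D}$ containing $A_{\rm R}$ can only shrink; that is, $\LD_{\rm R,\kappa,\nu_{2}}\subset\LD_{\rm R,\kappa,\nu_{1}}$, and consequently $\LO_{\kappa,\nu_{2}}\subset\LO_{\kappa,\nu_{1}}$.

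Finally I would extract both assertions. The curve $\eta_{\kappa,\nu_{1}}\subset\partial\LO_{\kappa,\nu_{1}}$ is disjoint from the open set $\LO_{\kappa,\nu_{1}}$ and therefore from its subset $\LO_{\kappa,\nu_{2}}$; since its endpoints $\pm i$ lie on $\overline{A_{\rm L}}$, this is exactly the meaning of $\eta_{\kappa,\nu_{1}}$ being contained between $\overline{A_{\rm L}}$ and $\eta_{\kappa,\nu_{2}}$. For the boundary inclusion, any $x\in A_{\rm L}\cap\eta_{\kappa,\nu_{2}}$ is a limit of points of $\LO_{\kappa,\nu_{2}}\subset\LO_{\kappa,\nu_{1}}$, so $x\in\overline{\LO_{\kappa,\nu_{1}}}\cap A_{\rm L}\subset\partial\LO_{\kappa,\nu_{1}}\setminus A_{\rm R}=\eta_{\kappa,\nu_{1}}$. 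There is no real technical obstacle here: once the superposition coupling $\Xi_{\nu_{1}}\subset\Xi_{\nu_{2}}$ is in hand, the rest is a direct monotonicity argument reading the containment off the definition of the envelop.
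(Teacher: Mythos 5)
Your proof is correct and follows exactly the route the paper intends: the paper states this proposition without a written proof, justifying it only by the one-line remark that $\Xi_{\nu_1}$ can be realized as a subset of $\Xi_{\nu_2}$, which is precisely your Poisson superposition coupling (your tensor-product expansion showing $\mu^{\D}_{\nu_2}-\mu^{\D}_{\nu_1}\geq 0$ is the small verification the paper leaves implicit). The subsequent propagation of the inclusion through $\LS_{\kappa,\nu}$, $\LO_{\kappa,\nu}$ and the boundary is the intended monotonicity argument.
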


By construction, 
$\eta_{\kappa,\nu}\cap A_{\rm R}=\emptyset$.
However, $\eta_{\kappa,\nu}$ may intersect $A_{\rm L}$.
Next we give a condition under which
$\eta_{\kappa,\nu}$ may contain a whole subarc of $A_{\rm L}$.

\begin{proposition}
\label{Prop contain subarc}
Fix $\kappa\in [8/3 , 4]$ and 
$\nu$ a finite non-negative Radon measure on $\overline{A_{\rm L}}$.
Let $A$ be a non-empty open subarc of $A_{\rm L}$
and let $A_{1}$ and $A_{2}$ denote the two connected components of
$\overline{A_{\rm L}}\setminus A$.
Then $\PP(\overline{A}\subset \eta_{\kappa,\nu})>0$
if and only if $\nu(A)=0$.
Moreover, in the latter case,
the event $\overline{A}\subset \eta_{\kappa,\nu}$
coincides a.s. with the event defined by the following two conditions
(and only the first one if $\kappa=8/3$).
\begin{enumerate}[label=(\arabic*)]
\item There is no excursion in $\Xi_{\nu}$
joining $A_{1}$ and $A_{2}$.
\item The process $\Xi_{\nu}$ does not contain 
an excursion from $A_{1}$ to $A_{1}$ and an excursion from
$A_{2}$ to $A_{2}$ intersecting the same
$\CLE_{\kappa}$ loop in $\FC_{\kappa}$
(provided $\kappa\neq 8/3$).
\end{enumerate}
On the complementary event,
again in the case $\nu(A)=0$,
we have a.s. $A\cap \eta_{\kappa,\nu}=\emptyset$.
\end{proposition}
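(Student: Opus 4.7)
The plan is to decompose the proof according to whether $\nu(A)>0$ or $\nu(A)=0$, and in the latter case whether the event $E$ given by (1)--(2) occurs. The unifying observation is that a point $x\in\overline{A}$ belongs to $\eta_{\kappa,\nu}$ if and only if there are points of $\D$ arbitrarily close to $x$ lying in $\LD_{\rm R,\kappa,\nu}$; equivalently, $\LS_{\kappa,\nu}\cup\overline{A_{\rm L}}$ does not separate $x$ from $A_{\rm R}$ in $\overline{\D}$. Since $\overline{A_{\rm L}}$ is already a boundary arc, this reduces to asking whether $\LS_{\kappa,\nu}$ contains a connected subset with touchpoints in both $A_{1}$ and $A_{2}$.

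First, assume $\nu(A)>0$. I pick a subarc $A'\subset A$ with $\nu(A')>0$ (take a small subarc around a support point of $\nu|_{A}$). Lemma~\ref{Lem Xi elem}\ref{item::pppelem2} produces, a.s., an excursion $\gamma\in\Xi_{\nu}$ with both endpoints $x_{-},x_{+}$ lying in $A'$. Then $\Range(\gamma)$ together with the subarc of $A_{\rm L}$ between $x_{-}$ and $x_{+}$ bounds a region of $\D$ disjoint from $A_{\rm R}$, so the open subarc of $A$ strictly between $x_{-}$ and $x_{+}$ is separated from $A_{\rm R}$ by $\LS_{\kappa,\nu}\cup\overline{A_{\rm L}}$ and thus disjoint from $\partial\LD_{\rm R,\kappa,\nu}$. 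This yields $\overline{A}\not\subset\eta_{\kappa,\nu}$ a.s., hence $\PP(\overline{A}\subset\eta_{\kappa,\nu})=0$.

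Next, assume $\nu(A)=0$. The direction $E^{c}\Rightarrow A\cap\eta_{\kappa,\nu}=\emptyset$ is handled by exhibiting explicit connected separators in $\LS_{\kappa,\nu}$. If (1) fails, the range of an excursion from $A_{1}$ to $A_{2}$ is itself a connected crosscut, and together with $\overline{A_{\rm L}}$ forms a closed curve in $\overline{\D}$ enclosing $A$ but not $A_{\rm R}$. If (2) fails, the set $\Range(\gamma_{1})\cup\ell\cup\Range(\gamma_{2})$ is a connected subset of $\LS_{\kappa,\nu}$ with touchpoints in $A_{1}$ and $A_{2}$, leading to the same separation. In either case no interior point of $A$ is accessible from $A_{\rm R}$ inside $\overline{\D}\setminus(\LS_{\kappa,\nu}\cup\overline{A_{\rm L}})$, which also proves the last assertion of the proposition.

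The main obstacle is the converse: showing that under $E$ (with $\nu(A)=0$), $\overline{A}\subset\eta_{\kappa,\nu}$ almost surely. The plan is to argue that no connected component of $\LS_{\kappa,\nu}$ has touchpoints on both $A_{1}$ and $A_{2}$. I would analyze the intersection graph whose vertices are the excursions $\gamma\in\Xi_{\nu}$ (each labelled $A_{1}$ or $A_{2}$ by condition~(1)) together with the loops in $\widetilde{\FC}_{\kappa,\nu}$, with edges given by nontrivial range intersections. Since $\CLE_{\kappa}$ loops are pairwise disjoint, there are no loop-loop edges, so every graph path from an $A_{1}$-excursion to an $A_{2}$-excursion must either pass through at least one loop or use an excursion-excursion edge; a shortest-path analysis shows that in a minimal chain some intermediate loop must be intersected by one $A_{1}$-type excursion and one $A_{2}$-type excursion, contradicting~(2). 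The delicate point is ruling out direct excursion-excursion bridges between different types; the expected approach is to show that, almost surely, any two intersecting excursions of different types also share a common $\CLE_{\kappa}$ loop in $\FC_{\kappa}$, relying on the CLE--loop soup correspondence and on the fact that Brownian excursion intersection points in $\D$ lie on the CLE carpet. Once this non-separation is established, a small disk around any $x\in A$ contains points of $\D\setminus\LS_{\kappa,\nu}$ connected to $A_{\rm R}$ inside $\overline{\D}\setminus(\LS_{\kappa,\nu}\cup\overline{A_{\rm L}})$, giving $x\in\partial\LD_{\rm R,\kappa,\nu}$; the boundary points $\{\pm i\}$ are already in $\eta_{\kappa,\nu}$ by Proposition~\ref{prop::envelop_parameterization}. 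Positivity $\PP(E)>0$, and hence $\PP(\overline{A}\subset\eta_{\kappa,\nu})>0$, follows from a finite-expectation bound on forbidden configurations using that $A_{1}$ and $A_{2}$ are bounded apart and $\nu$ is finite.
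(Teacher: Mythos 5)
Your treatment of the case $\nu(A)>0$ and of the implication $E^{c}\Rightarrow A\cap\eta_{\kappa,\nu}=\emptyset$ matches the paper's. For the converse implication ($E\Rightarrow\overline{A}\subset\eta_{\kappa,\nu}$) you take a different route — a chain analysis in the intersection graph of excursions and loops — whereas the paper splits $\LS_{\kappa,\nu}$ into the two clusters $\LS_{1}$, $\LS_{2}$ generated by the $A_{1}$- and $A_{2}$-excursions, shows that on $E$ the closed sets $\LS_{1}\cup A_{1}$ and $\LS_{2}\cup A_{2}$ are disjoint and each meets $\partial\D$ only in $A_{1}$, resp.\ $A_{2}$, and concludes by Janiszewski's theorem. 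Your route leaves a genuine gap at exactly the point this machinery is designed to handle: knowing that no single connected component of $\LS_{\kappa,\nu}$ touches both $A_{1}$ and $A_{2}$ does not by itself imply that $\LS_{\kappa,\nu}\cup\overline{A_{\rm L}}$ fails to separate $A$ from $A_{\rm R}$ — a union of infinitely many components, each touching at most one of the two arcs, can in principle accumulate onto a separating crosscut. One needs the closedness of $\LS_{1}\cup A_{1}$ and $\LS_{2}\cup A_{2}$ (which rests on the local finiteness of the CLE) together with a two-set non-separation theorem such as Janiszewski's; you assert the passage from ``no connecting component'' to ``no separation'' without supplying this step.

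Two further points. First, your ``delicate point'' — that two intersecting excursions of opposite type a.s.\ share a CLE loop — is indeed needed (the paper uses it implicitly when asserting $(\LS_{1}\cup A_{1})\cap(\LS_{2}\cup A_{2})=\emptyset$ on $E_{2}^{\rm c}$), but your justification is off: an interior intersection point, being independent of $\FC_{\kappa}$, a.s.\ does \emph{not} lie on the CLE carpet; rather it is a.s.\ surrounded by some CLE loop, and since both excursions pass through that point and reach $\partial\D$ they must both cross that loop. Second, your positivity argument is not valid as stated: finiteness of the expected number of ``forbidden configurations'' does not imply that the probability of having none is positive (a random count with finite mean can be a.s.\ at least $1$). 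For $E_{1}^{\rm c}$ the first-moment bound suffices because the relevant count is Poisson, but for $E_{2}^{\rm c}$ the paper instead exhibits an explicit positive-probability event: all $A_{1}$-excursions confined to a neighborhood $U_{1}$ of $A_{1}$, all $A_{2}$-excursions confined to a disjoint neighborhood $U_{2}$ of $A_{2}$, and no CLE loop meeting both $U_{1}$ and $U_{2}$; these events are independent and each has positive probability. You should replace your expectation heuristic with an argument of this type.
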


\begin{proof}
First assume that $\nu(A)>0$.
According to Lemma~\ref{Lem Xi elem},
the process $\Xi_{\nu}$ contains a.s. an excursion $\gamma$ with
both endpoints $x,y$ in $A$.
This implies that $A(x,y)\cap\eta_{\kappa,\nu} =\emptyset$,
where $A(x,y)$ is the open subarc of $A$ with endpoints $x,y$.
In particular, a.s. $A\not\subset \eta_{\kappa,\nu}$.

Next assume that $\nu(A)=0$.
Also take $\kappa \neq 8/3$.
The case $\kappa=8/3$ is actually simpler.
Since $\nu(A)=0$, the process $\Xi_{\nu}$ does not contain excursions
with one or both endpoints in $A$.
\begin{itemize}
\item Let $E_{0}$ denote the event that
$\overline{A}\subset \eta_{\kappa,\nu}$.
Let $\widetilde{E}_{0}$ be the event that 
$A\cap \eta_{\kappa,\nu}=\emptyset$.
Clearly, $\widetilde{E}_{0}\subset E_{0}^{\rm c}$.
\item Let $E_{1}$ denote the event that there is an excursion in 
$\Xi_{\nu}$ joining $A_{1}$ and $A_{2}$.
If $\gamma$ is such an excursion, its range disconnects in
$\overline{\D}$ the arc $A$ from $A_{\rm R}$.
Thus, $E_{1}\subset \widetilde{E}_{0}$.
\item Let $E_{2}$ denote the event that there is
an excursion from $A_{1}$ to $A_{1}$ and an excursion from
$A_{2}$ to $A_{2}$ intersecting the same
$\CLE_{\kappa}$ loop in $\FC_{\kappa}$.
If $\gamma_{1}$, respectively $\gamma_{2}$,
are such excursion from $A_{1}$, respectively $A_{2}$,
and $\tilde{\gamma}$ is the common loop in $\FC_{\kappa}$
they intersect,
then 
$\Range(\gamma_{1})\cup\Range(\gamma_{2})\cup\Range(\tilde{\gamma})$
disconnects in $\overline{\D}$
the arc $A$ from $A_{\rm R}$.
Thus, $E_{2}\subset \widetilde{E}_{0}$.
\end{itemize}

Let us further show that
$E_{1}^{\rm c}\cap E_{2}^{\rm c}
\subset E_{0}$,
which will establish 
$E_{0}=\widetilde{E}_{0}^{\rm c}=E_{1}^{\rm c}\cap E_{2}^{\rm c}$.
Set
\begin{displaymath}
\Xi_{1}:=
\{\gamma\in\Xi_{\nu}\vert
\gamma \text{ has both endpoints in } A_{1}\},
\qquad
\Xi_{2}:=
\{\gamma\in\Xi_{\nu}\vert
\gamma \text{ has both endpoints in } A_{2}\},
\end{displaymath}
\begin{displaymath}
\widetilde{\FC}_{\kappa,1}:=
\{\tilde{\gamma}\in\FC_{\kappa} :
\exists\gamma\in \Xi_{1}, 
\Range(\tilde{\gamma})\cap \Range(\gamma)
\neq\emptyset\},
\end{displaymath}
\begin{displaymath}
\widetilde{\FC}_{\kappa,2}:=
\{\tilde{\gamma}\in\FC_{\kappa} :
\exists\gamma\in \Xi_{2}, 
\Range(\tilde{\gamma})\cap \Range(\gamma)
\neq\emptyset\},
\end{displaymath}
\begin{displaymath}
\LS_{1}:=
\Big(\bigcup_{\tilde{\gamma}\in\widetilde{\FC}_{\kappa,1}}
\Range(\tilde{\gamma})\Big)
\cup
\Big(\bigcup_{\gamma\in\Xi_{1}}\Range(\gamma)\Big),
\qquad
\LS_{2}:=
\Big(\bigcup_{\tilde{\gamma}\in\widetilde{\FC}_{\kappa,2}}
\Range(\tilde{\gamma})\Big)
\cup
\Big(\bigcup_{\gamma\in\Xi_{2}}\Range(\gamma)\Big).
\end{displaymath}
The local finiteness of the CLE ensures that
$\LS_{1}\cup A_{1}$ and $\LS_{2}\cup A_{2}$
are closed subsets of $\overline{\D}$ and that
$\partial\D\cap\LS_{1}\subset A_{1}$,
$\partial\D\cap\LS_{2}\subset A_{2}$.
Thus, neither $\LS_{1}\cup A_{1}$ nor $\LS_{2}\cup A_{2}$
disconnect $A$ from $A_{\rm R}$ in $\overline{\D}$.
Moreover, on the event $E_{2}^{\rm c}$,
we have 
\begin{displaymath}
(\LS_{1}\cup A_{1})\cap(\LS_{2}\cup A_{2})
=\emptyset.
\end{displaymath}
It follows from the Janiszewski's theorem, 
on the event $E_{2}^{\rm c}$, we have that 
$(\LS_{1}\cup A_{1})\cup(\LS_{2}\cup A_{2})$
does not disconnect $A$ from $A_{\rm R}$ in $\overline{\D}$;
see \cite[Section~1.1]{Pommerenke}.
Moreover, on the event $E_{1}^{\rm c}$, we have
$\LS_{\kappa,\nu}=\LS_{1}\cup\LS_{2}$.
So on the event $E_{1}^{\rm c}\cap E_{2}^{\rm c}$,
the set $\LS_{\kappa,\nu}$ does not
disconnect $A$ from $A_{\rm R}$ in $\overline{\D}$
and thus, $\overline{A}\subset \eta_{\kappa,\nu}$.

Finally, let us check that $\PP(E_{1}^{\rm c}\cap E_{2}^{\rm c})>0$.
Actually, the two events are independent and it suffices to show $\PP(E_1^{\rm c})>0$ and $\PP(E_2^{\rm c})>0$.
\begin{itemize}
\item We have $\PP(E_{1})<1$ because the intensity measure of excursions from
$A_{1}$ to $A_{2}$ is finite.
\item Let $U_{1}$ be an open neighborhood of $A_{1}$
such that $\overline{U_{1}}\cap A_{2}=\emptyset$.
The local finiteness and the fact that the CLE loops do not hit the boundary guarantee that there is 
$U_{2}$ an open neighborhoods of $A_{2}$,
such that $\overline{U_{1}}\cap\overline{U_{2}}=\emptyset$
and such that with positive probability no loop in 
$\FC_{\kappa}$ intersects both $U_{1}$ and $U_{2}$.
With positive probability,
all the excursions of $\Xi_{1}$ are contained in $U_{1}$,
and all the excursions of $\Xi_{2}$ are contained in $U_{2}$,
and these are independent events;
see Lemma \ref{Lem Xi elem}.
Thus, $\PP(E_{2}^{\rm c})>0$. 
\end{itemize}
These complete the proof. 
\end{proof}

Now, we are ready to complete the proof of Proposition~\ref{prop::envelop_drivingfunction}. 
\begin{proof}[Proof of Proposition~\ref{prop::envelop_drivingfunction}]
Recall that $\psi_0$ is the conformal transformation from $\D$ to $\HH$ given by~\eqref{eqn::psi0} and $
\tilde{\eta}_{\kappa,\nu} := \psi_{0}(\eta_{\kappa,\nu})$. It suffices to check that $\eta_{\kappa, \nu}$ satisfies the conditions in Proposition~\ref{prop::Loewnerchain_qualification}. 
\begin{itemize}
\item From the construction, $\eta_{\kappa, \nu}$ clearly satisfies Proposition~\ref{prop::Loewnerchain_qualification}~\ref{item::Loewnerchain_qualifya}. 
\item As the support of $\nu$ equals $\overline{A_L}$, $\eta_{\kappa, \nu}$ satisfies Proposition~\ref{prop::Loewnerchain_qualification}~\ref{item::Loewnerchain_qualifyb} due to Proposition~\ref{Prop contain subarc}. 
\end{itemize}
Therefore, 
$\tilde{\eta}_{\kappa,\nu}$ is a continuous curve in
$\overline{\HH}$ from $0$ to $\infty$ with continuous driving function.
The half-plane capacity is a continuous strictly increasing function on $\tilde{\eta}_{\kappa,\nu}$.
So one can parametrize $\tilde{\eta}_{\kappa,\nu}$
as $(\tilde{\eta}_{\kappa,\nu}(t))_{0\leq t<T_{\rm max}}$,
with $T_{\rm max}\in (0,+\infty]$,
such that~\eqref{eqn::envelop_hcap} holds. 
Note that one does not necessarily have $T_{\rm max}=+\infty$.
\end{proof}

Next, we consider the simplicity of $\eta_{\kappa,\nu}$. 
\begin{lemma}
\label{lem::simple}
Fix $\kappa\in [8/3 , 4]$ and 
$\nu$ a finite non-negative Radon measure on $\overline{A_{\rm L}}$.
 Assume further that $\nu$ has no atoms. Then the curve $\eta_{\kappa,\nu}$ is a.s. simple.
\end{lemma}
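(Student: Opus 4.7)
The plan is to show that a.s.\ the boundary $\partial\LO_{\kappa,\nu}=\eta_{\kappa,\nu}\cup\overline{A_{\rm R}}$ has no cut points in $\C$. Combined with the local connectedness established in Proposition~\ref{prop::envelop_parameterization} and Theorem~\ref{Thm Cara boundary}(2), this implies that $\partial\LO_{\kappa,\nu}$ is a Jordan curve, whence the subarc $\eta_{\kappa,\nu}$ is a simple curve from $-i$ to $i$. Set $K:=\LS_{\kappa,\nu}\cup\partial\D$, a compact subset of $\overline{\D}$. It is connected because $\partial\D$ is connected, every excursion in $\Xi_{\nu}$ has its endpoints in $\overline{A_{\rm L}}\subset\partial\D$, and every loop in $\widetilde{\FC}_{\kappa,\nu}$ meets such an excursion. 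The set $\LO_{\kappa,\nu}$ is a bounded connected component of $\C\setminus K$ (separated from $\C\setminus\overline{\D}$ by $\partial\D\subset K$). By Lemma~\ref{Lem K cuts}, it suffices to show that $K$ a.s.\ has no cut points.

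We then perform a case analysis on a candidate cut point $p\in K$. If $p\in\partial\D$ is not an endpoint of any excursion in $\Xi_{\nu}$, then $\partial\D\setminus\{p\}$ is still a connected arc to which all remaining pieces of $K$ attach. If $p$ is an endpoint of some excursion $\gamma\in\Xi_{\nu}$ (so $p\in\overline{A_{\rm L}}$), the atomlessness of $\nu$ ensures that a.s.\ $\gamma$ is the unique such excursion, that its other endpoint is distinct from $p$, and that $p\neq\pm i$ (since $\nu(\{\pm i\})=0$); hence $\Range(\gamma)\setminus\{p\}$ remains path-connected and attaches to $\partial\D\setminus\{p\}$ via the other endpoint of $\gamma$.

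The interior case $p\in\D$ is the crux. Such $p$ lies on at least one excursion in $\Xi_{\nu}$ and on at most one $\CLE_{\kappa}$ loop (loops in $\FC_{\kappa}$ being pairwise disjoint). If $p$ is a single-visit point of a single excursion $\gamma$ and no loop passes through $p$, then $\Range(\gamma)\setminus\{p\}$ splits into two arcs each with an endpoint on $\overline{A_{\rm L}}$, so $K\setminus\{p\}$ stays connected. Otherwise $p$ is a meeting point of two distinct pieces (excursion--excursion or excursion--loop) or a multi-visit point of a single excursion. The key technical inputs are that almost surely: (i) for any two distinct excursions $\gamma_{1},\gamma_{2}\in\Xi_{\nu}$, the intersection $\Range(\gamma_{1})\cap\Range(\gamma_{2})$, if non-empty, is a perfect set; (ii) for any excursion $\gamma\in\Xi_{\nu}$ and loop $\tilde\gamma\in\FC_{\kappa}$, the intersection $\Range(\gamma)\cap\Range(\tilde\gamma)$, if non-empty, is a perfect set; (iii) the self-intersection set of any single excursion accumulates at each of its multi-visit points. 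These accumulation statements, provable via the strong Markov property of planar Brownian motion together with the non-polarity of $\CLE_{\kappa}$ loops (which have positive Hausdorff dimension and locally look like $\SLE_{\kappa}$), allow one to re-route through alternative intersection points arbitrarily close to $p$, so that $K\setminus\{p\}$ remains path-connected. The verification of (i)--(iii), and particularly (ii) in the excursion--CLE loop case, constitutes the main technical obstacle.
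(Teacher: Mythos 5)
Your reduction (no cut points of $\LS_{\kappa,\nu}\cup\partial\D$ plus local connectedness, then Carath\'eodory and Lemma~\ref{Lem K cuts}) is exactly the paper's skeleton, but the execution diverges at the crucial point and leaves a genuine gap. You work with the full ranges $\Range(\gamma)$ of the excursions, and consequently you must control the fine topology of planar Brownian paths: your claims (i)--(iii) — that excursion--excursion and excursion--loop intersections are perfect, and that self-intersections accumulate at every multiple point — are precisely the hard content, and you assert them rather than prove them. They are not routine: the ``first intersection point'' of two paths is not a stopping time for either path, so the strong Markov property does not apply directly and uniformly over the uncountably many candidate points; and (iii) as stated does not even suffice, since to rule out a cut point at a double point $p=\gamma(s)=\gamma(t)$ you need the pendant loop $\gamma([s,t])$ to reconnect to $\gamma([0,s])\cup\gamma([t,T_\gamma])$ away from $p$, not merely that self-intersections accumulate at $p$. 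As written, the proof is a program with its main steps open.

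The paper avoids all of this with one observation you are missing: the domain $\LO_{\kappa,\nu}$ (the component adjacent to $A_{\rm R}$) is unchanged if each $\Range(\gamma)$ is replaced by its right boundary $\LR(\gamma)$, and $\widetilde{\FC}_{\kappa,\nu}$ by the loops meeting some $\LR(\gamma)$ — any loop or piece of path hidden to the left of $\LR(\gamma)$ cannot affect the right envelope. By conformal restriction, $\LR(\gamma)$ is a \emph{simple} curve (an $\SLE_{8/3}(2/3)$) joining the two endpoints of $\gamma$, which are distinct precisely because $\nu$ has no atoms. Once every building block is a Jordan arc or Jordan curve, the cut-point check reduces to a short case analysis; the only probabilistic input left is that $\LR(\gamma)\cap\Range(\tilde\gamma)$ is never a singleton, which follows because an excursion hitting a $\CLE_\kappa$ loop a.s.\ enters its interior. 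You should either adopt this reduction or supply complete proofs of (i)--(iii) together with the pendant-loop case; as it stands the argument is incomplete.
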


\begin{proof}

According to Theorem \ref{Thm Cara boundary},
to show that $\partial\LO_{\kappa,\nu}$ is a Jordan curve one
additionally needs to check that $\partial\LO_{\kappa,\nu}$
has no cut points.
Since $\nu$ has no atoms, for every $\gamma\in \Xi_{\nu}$,
the two endpoints of $\gamma$ are distinct.
Denote by $\LR(\gamma)$ the right boundary of the excursion
$\gamma$,
defined rigorously as a portion of the boundary of the
connected component of 
$\overline{\D}\setminus\Range(\gamma)$ that contains $A_{\rm R}$.
According to 
\cite[Corollary~8.5]{LawlerSchrammWernerConformalRestriction},
$\LR(\gamma)$ is a continuous simple curve joining
the two endpoints of $\gamma$,
more specifically distributed as a
chordal $\SLE_{8/3}(\rho)$ process,
with $\rho=2/3$. Denote
\begin{displaymath}
\widehat{\FC}_{\kappa,\nu}:=
\{\tilde{\gamma}\in\FC_{\kappa} :
\exists\gamma\in \Xi_{\nu}, 
\Range(\tilde{\gamma})\cap \LR(\gamma)
\neq\emptyset\},
\qquad
\widehat{\LS}_{\kappa,\nu}:=
\Big(\bigcup_{\tilde{\gamma}\in\widehat{\FC}_{\kappa,\nu}}
\Range(\tilde{\gamma})\Big)
\cup
\Big(\bigcup_{\gamma\in\Xi_{\nu}}\LR(\gamma)\Big).
\end{displaymath}
Then $\LO_{\kappa,\nu}$ is a connected component of
$\C\setminus(\widehat{\LS}_{\kappa,\nu}\cup\partial\D)$.
According to Lemma \ref{Lem K cuts},
it is enough to check that $\widehat{\LS}_{\kappa,\nu}\cup\partial\D$
has no cut points.
To this end, we classify the points of 
$\widehat{\LS}_{\kappa,\nu}\cup\partial\D$ as follows:
\begin{enumerate}[label=(\arabic*)]
\item \label{item::simple1}
The points of $\partial \D$ that are not an endpoints of an excursion $\gamma\in \Xi_{\nu}$.
\item \label{item::simple2}
The endpoints of excursions $\gamma\in \Xi_{\nu}$.
\item \label{item::simple3}
The points on $\LR(\gamma)$, 
for $\gamma\in \Xi_{\nu}$,
that are not endpoints and do not lie on
$\Range(\tilde{\gamma})$ for 
$\tilde{\gamma}\in \widehat{\FC}_{\kappa,\nu}$.
\item \label{item::simple4}
The points on $\Range(\tilde{\gamma})$, for 
$\tilde{\gamma}\in \widehat{\FC}_{\kappa,\nu}$,
that do not lie on $\LR(\gamma)$
for $\gamma\in \Xi_{\nu}$.
\item \label{item::simple5}
The points that belong to an intersection
$\LR(\gamma)\cap \Range(\tilde{\gamma})$
for $\gamma\in \Xi_{\nu}$ and
$\tilde{\gamma}\in \widehat{\FC}_{\kappa,\nu}$.
\end{enumerate}
It is clear that the points of type~\ref{item::simple1} cannot be cut points.
The points of type~\ref{item::simple2} cannot be cut points because each $\LR(\gamma)$ has two distinct endpoints. 
The points of type~\ref{item::simple3} cannot be cut points because the curve $\LR(\gamma)$ is simple.
The points of type~\ref{item::simple4} cannot be cut points because $\CLE_{\kappa}$ loops are Jordan curves.
Regarding the points of type~\ref{item::simple5}, 
they cannot be cut points because for
every $\gamma\in \Xi_{\nu}$ and
$\tilde{\gamma}\in \widehat{\FC}_{\kappa,\nu}$,
the intersection $\LR(\gamma)\cap \Range(\tilde{\gamma})$
is either empty or contains at least two points.
Indeed, given an independent Brownian motion that hits a 
$\CLE_{\kappa}$ loop, 
it will a.s. enter the interior surrounded by the loop.
\end{proof}

Let us explore more the multiple points of the curve
$\eta_{\kappa,\nu}$ in the case the measure $\nu$ has atoms.
Denote
\begin{displaymath}
\Atom_{\rm isol}(\nu) :=
\{x\in \Atom(\nu) : \exists A \subset \partial \D, A \text{ open arc, }
x\in A, \one_{A}\nu = \nu(\{ x\})\delta_{x}\},
\end{displaymath}
\begin{displaymath}
\Atom_{\rm conv}(\nu) :=
\Big\{x\in \Atom(\nu) : 
\int_{\overline{A_{\rm L}}\setminus \{ x\}}
\dfrac{1}{\vert y-x\vert^{2}} d\nu(y)<+\infty
\Big\}.
\end{displaymath}
Note that $\Atom_{\rm isol}(\nu)\subset \Atom_{\rm conv}(\nu)$.

\begin{proposition}
\label{Prop double points}
Fix $\kappa\in [8/3 , 4]$ and 
$\nu$ a finite non-negative Radon measure on $\overline{A_{\rm L}}$.
Let $\psi$ be an uniformizing map from
$\D$ to $\LO_{\kappa,\nu}$.
Then a.s., for every $x\in\eta_{\kappa,\nu}$,
the number $\Card(\psi^{-1}(\{x\}))$
(which does not depend on the choice of $\psi$)
is either $1$ or $2$.
Moreover, a.s.,
\begin{equation}\label{eqn::doublepoints1}
\{x\in \eta_{\kappa,\nu} : \Card(\psi^{-1}(\{x\}))=2\}
\subset \Atom_{\rm conv}(\nu)
\subset \overline{A_{\rm L}}.
\end{equation}
In particular, if $\Atom_{\rm conv}(\nu)=\emptyset$,
then the curve $\eta_{\kappa,\nu}$ is a.s. simple.
As a partial converse, 
we have that for every $x\in \Atom_{\rm isol}(\nu)$,
\begin{equation}\label{eqn::doublepoints2}
\PP(\Card(\psi^{-1}(\{x\}))=2)>0.
\end{equation}
\end{proposition}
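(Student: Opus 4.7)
The plan is to analyze the prime ends of $\LO_{\kappa,\nu}$ at each boundary point $x \in \eta_{\kappa,\nu}$ in three stages: first restrict which points can give $\Card(\psi^{-1}(\{x\})) \geq 2$, then cap the count at $2$, and finally locate the double points with respect to $\Atom_{\rm conv}(\nu)$ along with the positive-probability statement for isolated atoms.

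For the first stage, I invoke the five-type classification of points in $\widehat{\LS}_{\kappa,\nu} \cup \partial\D$ from the proof of Lemma~\ref{lem::simple}, together with the observation (via Lemma~\ref{Lem K cuts} and Theorem~\ref{Thm Cara boundary}) that a point $x$ with $\Card(\psi^{-1}(\{x\})) \geq 2$ corresponds to a cut point of $\widehat{\LS}_{\kappa,\nu} \cup \partial\D$. The classification shows that only type-(2) points (excursion endpoints) can be cut points, and only when at least two distinct excursions of $\Xi_{\nu}$ share the same endpoint $x$. Since atoms of a Poisson process are a.s.\ distinct, this forces $x \in \Atom(\nu) \subset \overline{A_{\rm L}}$. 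For the upper bound $\Card(\psi^{-1}(\{x\})) \leq 2$, work in a conformal chart sending $x \mapsto 0$, $\D \mapsto \HH$, and $A_{\rm L} \mapsto \R$. The infinite bubble intensity at an atom implies that $\LS_{\kappa,\nu}$ is locally fractal-dense at $0$: at every scale $\varepsilon > 0$ there are excursions at $x$ of diameter smaller than $\varepsilon$ emanating into $\HH$ in essentially all directions. Any between-arm sector appearing at scale $\varepsilon$ is broken up by finer arms at smaller scales, so no sequence of crosscuts of $\LO_{\kappa,\nu}$ can shrink to $x$ through such a sector. Only the two outermost sectors, bordered by the two half-segments of $A_{\rm L}$ adjacent to $0$, can sustain a shrinking sequence, yielding at most two prime ends at $x$.

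For the second stage, suppose $x \in \Atom(\nu) \setminus \Atom_{\rm conv}(\nu)$, so
\[
\int_{\overline{A_{\rm L}} \setminus \{x\}} \frac{d\nu(y)}{|y-x|^{2}} = +\infty.
\]
At least one of the two sides of $x$ along $A_{\rm L}$, call it $S$, still gives $+\infty$. Using the definition of $\mu^{\D}_{\nu}$ and the boundary Poisson kernel $H_{\D}(x,y) = (\pi |y-x|^2)^{-1}$, the intensity of excursions in $\Xi_{\nu}$ with one endpoint at $x$ and the other in $S \cap B(x,\delta)$ is infinite for every $\delta > 0$. Hence $\Xi_{\nu}$ a.s.\ contains infinitely many excursions $\gamma_n$ with endpoints $x$ and $y_n \in S$ satisfying $y_n \to x$. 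Each $\gamma_n$ together with the sub-arc of $A_{\rm L}$ between $x$ and $y_n$ bounds a pocket $R_n \subset \D$ separated from $A_{\rm R}$ by $\gamma_n$, hence $R_n \cap \LO_{\kappa,\nu} = \emptyset$. Any crosscut of $\LO_{\kappa,\nu}$ approaching $x$ from the $S$ side at scale smaller than $|y_n - x|$ would have to lie inside $R_n$, which is impossible; letting $n \to \infty$, no such crosscut exists at any scale. Thus the $S$-side prime end does not exist, so $\Card(\psi^{-1}(\{x\})) \leq 1$, proving~\eqref{eqn::doublepoints1}. The simplicity statement when $\Atom_{\rm conv}(\nu) = \emptyset$ is then immediate.

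For the final statement, fix $x \in \Atom_{\rm isol}(\nu)$ and an open arc $A \ni x$ on which $\one_A \nu = \nu(\{x\}) \delta_x$. Let $A_1, A_2$ be the two components of $A \setminus \{x\}$, both satisfying $\nu(A_i) = 0$. Adapting the argument of Proposition~\ref{Prop contain subarc} jointly to $A_1$ and $A_2$ yields a positive-probability event, for instance the event on which all excursions in $\Xi_\nu$ other than bubbles at $x$ avoid $A$ and all $\CLE$-loops in $\FC_\kappa$ remain at positive distance from $A$ except possibly where they touch a bubble at $x$, on which both $\overline{A_1} \subset \eta_{\kappa,\nu}$ and $\overline{A_2} \subset \eta_{\kappa,\nu}$. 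On this event, $\eta_{\kappa,\nu}$ traces $\overline{A_1}$ up to $x$, wraps around the non-degenerate bubble cloud at $x$ to the right, and then traces $\overline{A_2}$; the two visits to $x$ correspond to two distinct prime ends, giving $\Card(\psi^{-1}(\{x\})) = 2$ with positive probability. The main obstacle is the fractal-pinching argument for the upper bound in the first stage, together with the blocking argument of the second stage: both rely on matching the formal prime-end definition of $\LO_{\kappa,\nu}$ to the dynamic geometry of infinitely many small excursions (and their attached $\CLE$ chains) accumulating at $x$, and require quantitative control on how these small objects pinch or eliminate candidate shrinking crosscuts.
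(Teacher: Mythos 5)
Your outline matches the paper's in its skeleton (reduce to cut points via the classification from Lemma~\ref{lem::simple}, isolate atoms as the only candidates, then treat non-convergent atoms and isolated atoms separately), but the two core analytic steps are replaced by heuristics that do not close, and you flag this yourself. The missing idea is Lemma~\ref{Lem Intersect}: two independent Brownian excursions sharing an endpoint a.s.\ intersect each other at points accumulating at that endpoint but distinct from it. The paper combines this with the clean topological criterion (from \cite[Proposition~2.5]{Pommerenke}) that $\Card(\psi^{-1}(\{x\}))$ equals the number of connected components of $\C\setminus(\LO_{\kappa,\nu}\cup\{x\})$. For the bound $\Card\leq 2$ at an atom $x$, your ``arms in all directions at all scales'' picture is not sufficient even if true: infinitely many pairwise disjoint arms emanating from $x$ could carry infinitely many prime ends, one in each gap. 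What caps the count at two is that, by Lemma~\ref{Lem Intersect}, all bubbles at $x$ pairwise intersect away from $x$, so after deleting $x$ the attached cluster falls into at most two pieces (the one containing $\partial\D\setminus\{x\}$ and possibly the bubble cloud $\LS_x\setminus\{x\}$). For the non-convergent case, your pocket argument only excludes crosscuts wedged between $\overline{A_{\rm L}}$ and the excursion $\gamma_n$; the candidate second prime end sits between the excursions $\gamma_n$ emanating from $x$ and the bubble cloud at $x$, and killing it again requires Lemma~\ref{Lem Intersect} (each $\gamma_n$ meets every bubble at $x$ away from $x$, so $\LS_{\kappa,\nu}\setminus\{x\}$ stays connected). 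A symptom that your crosscut bookkeeping is off: if both sides of $x$ are non-convergent, your logic would eliminate both ``side'' prime ends and leave $\Card=0$, which is absurd since $x\in\eta_{\kappa,\nu}$.

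Two smaller points. First, the statement that $\Card\geq 2$ forces ``two distinct excursions sharing the endpoint $x$'' is slightly off — a single excursion with both endpoints at $x$ already does it — though the conclusion $x\in\Atom(\nu)$ survives since bubbles only occur at atoms. Second, in the partial converse your isolation event (all non-bubble excursions and all relevant CLE loops avoid the arc $A$) is not the right event: an excursion with both endpoints far from $A$ can still swing through $\D$ and hit the cluster $\LS_x$, reconnecting it to the rest away from $x$ and destroying the second prime end. The paper's event $E_x$ is that no excursion outside $\Xi_x$ meets $\LS_x$ itself; with that correction, and with Lemma~\ref{Lem Intersect} guaranteeing that $\LS_x\setminus\{x\}$ is connected, the positive-probability conclusion goes through as you intend.
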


\begin{proof}
According to \cite[Proposition~2.5]{Pommerenke}
and the corresponding proof,
for every $x\in \eta_{\kappa,\nu}$,
the number $\Card(\psi^{-1}(\{x\}))$
equals the number of connected components in
$\C\setminus(\LO_{\kappa,\nu}\cup\{ x\})$.
Similarly to Lemma~\ref{lem::simple},
one can show that $\C\setminus\LO_{\kappa,\nu}$ does not have
cut points in $\eta_{\kappa,\nu}\setminus\Atom(\nu)$. 
Thus $\Card(\psi^{-1}(\{x\}))$ is either 1 or 2 and it can be 2 only for points $x\in\Atom(\nu)$. 

Then we show~\eqref{eqn::doublepoints1}. From Lemma~\ref{Lem Intersect} below
and an absolute continuity argument near an endpoint applied to 
Brownian excursions,
it follows that for every
$\gamma,\gamma' \in \Xi_{\nu}$,
such that $\Range(\gamma)\cap\Range(\gamma')\neq\emptyset$,
one also has
$(\Range(\gamma)\cap\Range(\gamma'))\setminus \partial\D\neq\emptyset$.
If $x\in\Atom(\nu)\setminus\Atom_{\rm conv}(\nu)$,
then a.s. there is an excursion $\gamma\in \Xi_{\nu}$
with one endpoint $x$ and the other endpoint different from $x$.
For every $\gamma'$ excursion in $\Xi_{\nu}$
with both endpoints in $x$, we have
$(\Range(\gamma)\cap\Range(\gamma'))\setminus \{x\}\neq\emptyset$.
This implies that
$\LS_{\kappa,\nu}\setminus \{x\}$ is connected a.s.
Thus,
for every $x\in\eta_{\kappa,\nu}\setminus\Atom_{\rm conv}(\nu)$,
we have $\Card(\psi^{-1}(\{x\}))=1$. This completes the proof for~\eqref{eqn::doublepoints1}. 

Now, consider $x\in \Atom_{\rm isol}(\nu)$.
Let $\Xi_{x}$ be the subset of $\Xi_{\nu}$
made of all the excursions with both endpoints in $x$.
Denote
\begin{displaymath}
\LS_{x}:=\bigcup_{\gamma\in\Xi_{x}}\LS_{\kappa}(\gamma).
\end{displaymath}
Since for all $\gamma,\gamma'\in \Xi_{x}$,
we have $(\Range(\gamma)\cap\Range(\gamma'))\setminus \{x\}\neq\emptyset$. Thus $\LS_{x}\setminus \{x\}$ is connected a.s.
Define the event \[E_{x}:=\{\text{for every }\gamma\in\Xi_{\nu}\setminus\Xi_{x},\text{ we have }
\Range(\gamma)\cap\LS_{x}=\emptyset\}.\]
One the event $E_{x}$,
the set $\C\setminus(\LO_{\kappa,\nu}\cup\{ x\})$
has exactly two connected components,
one containing $\LS_{x}\setminus \{x\}$,
and the other $\C\setminus\overline{\D}$.
So, on the event $E_{x}$,
we have $\Card(\psi^{-1}(\{x\}))=2$.
On the complementary event $E_{x}^{\rm c}$, 
the set $\C\setminus(\LO_{\kappa,\nu}\cup\{ x\})$ is connected and 
$\Card(\psi^{-1}(\{x\}))=1$.
Further, it is easy to see that for
$x\in \Atom_{\rm isol}(\nu)$,
we have $\PP(E_{x})>0$. This completes the proof for~\eqref{eqn::doublepoints2}. 
\end{proof}

\begin{lemma}
\label{Lem Intersect}
Let $\wp_{1}$ and $\wp_{2}$ be two i.i.d. Brownian excursions 
from $-i$ to $i$ in $\D$, 
sampled according to $\mu^{\D,\#}_{-i,i}$.
Then
\begin{displaymath}
\dist(-i,(\Range(\wp_{1})\cap\Range(\wp_{2}))\setminus\{ -i\}) = 0 ~~a.s.
\end{displaymath}
\end{lemma}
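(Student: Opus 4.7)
The plan is to transport the problem to the upper half-plane via the Möbius transformation $\psi_0$ from~\eqref{eqn::psi0}, then combine the scaling invariance of Brownian excursions with a Blumenthal-type $0$-$1$ law at time $0$, reducing everything to showing that the intersection is nonempty with positive probability, which I would settle by a classical planar Brownian intersection fact. By the conformal invariance of the normalized Brownian excursion measure, it suffices to prove the analogue for two i.i.d.\ Brownian excursions $\widetilde{\wp}_1,\widetilde{\wp}_2$ from $0$ to $\infty$ in $\HH$: namely that $(\Range(\widetilde{\wp}_1)\cap\Range(\widetilde{\wp}_2))\setminus\{0\}$ a.s.\ accumulates at $0$.

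I would next realize each $\widetilde{\wp}_j=(X_j,Y_j)$ as a pair of independent processes with $X_j$ a one-dimensional standard Brownian motion and $Y_j$ a three-dimensional Bessel process, both starting at $0$ (the Doob transform of planar Brownian motion by $h(x,y)=y$). This law is invariant under Brownian scaling $(z,t)\mapsto(\lambda z,\lambda^2 t)$, so the set $Z:=\Range(\widetilde{\wp}_1)\cap\Range(\widetilde{\wp}_2)$ satisfies $\lambda Z\stackrel{d}{=}Z$ for every $\lambda>0$. Hence $\PP(Z\cap B(0,r)\setminus\{0\}\neq\emptyset)$ does not depend on $r>0$, and monotone convergence as $r\to 0$ and $r\to\infty$ gives
\[
\PP(Z\setminus\{0\}\neq\emptyset)\;=\;\PP(Z\setminus\{0\}\text{ accumulates at }0)\;=:\;p.
\]
The transience of $\mathrm{BES}(3)$ together with $|\widetilde{\wp}_j|\geq Y_j$ implies that $\tau_\epsilon^j:=\sup\{t:|\widetilde{\wp}_j(t)|\leq\epsilon\}$ tends a.s.\ to $0$ as $\epsilon\to 0$, so the accumulation event is, for every $\epsilon>0$ and up to null events, determined by the initial segments $(\widetilde{\wp}_j|_{[0,\epsilon]})_{j=1,2}$, placing it in the germ $\sigma$-algebra at time $0$. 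The Blumenthal $0$-$1$ law applied to the pair of independent Brownian and Bessel components then forces $p\in\{0,1\}$.

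To conclude I would show $p>0$. Fix $0<\delta<M$ and work on the positive-probability event that $Y_j(s)\in[\delta,M]$ for all $s\in[1,2]$ and $j=1,2$. There, conditionally on $\widetilde{\wp}_j(1)$, the restriction $\widetilde{\wp}_j|_{[1,2]}$ is absolutely continuous with respect to a planar Brownian motion started at $\widetilde{\wp}_j(1)$, the Radon--Nikodym derivative being bounded above and below by constants depending only on $\delta$ and $M$. Since two independent planar Brownian motions starting at points with a continuous joint density in $\HH$ intersect during a finite time window with positive probability (a classical planar intersection fact, see e.g.\ \cite[Chapter~3]{LawlerConformallyInvariantProcesses}), absolute continuity transfers this to the excursions, giving $p>0$ and hence $p=1$. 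The main obstacle will be the germ-algebra step, where one must verify carefully that the accumulation event really is determined by arbitrarily short initial segments; this relies crucially on $\mathrm{BES}(3)$ transience, which guarantees that each excursion visits any neighborhood of $0$ only during a vanishing initial time interval.
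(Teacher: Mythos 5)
Your proof is correct and shares the paper's main ingredients — conformal transport to $\HH$, Brownian scaling invariance, and a comparison with planar Brownian motion to obtain a positive probability of intersection away from the origin — but it upgrades positive probability to probability one by a different mechanism. You use scaling to equate $\PP(Z\setminus\{0\}\neq\emptyset)$ with the probability that $Z\setminus\{0\}$ accumulates at $0$ (where $Z$ is the intersection of the two ranges), and then invoke Blumenthal's $0$-$1$ law on the germ $\sigma$-algebra of the pair; this commits you to the delicate verification that the accumulation event is germ-measurable (via transience of the $\mathrm{BES}(3)$ component) and that Blumenthal applies to the $h$-transformed process started from the boundary point $0$. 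The paper avoids this machinery entirely: it sets $\tilde{\delta}:=\dist(0,(\Range(\tilde{\wp}_1)\cap\Range(\tilde{\wp}_2))\setminus\{0\})$, notes that scaling invariance forces $\tilde{\delta}\in\{0,+\infty\}$ a.s., and then observes that if $\PP(\tilde{\delta}=+\infty)$ were positive, the a.s.\ positive conditional probability that the two paths intersect after time $1$ (given their initial segments up to time $1$) would produce the event $\tilde{\delta}\in(0,+\infty)$ with positive probability, a contradiction. Both arguments are valid; the paper's is shorter and needs no germ-field considerations, while yours isolates the $0$-$1$ dichotomy as a general principle and makes the $(\text{BM},\mathrm{BES}(3))$ structure explicit. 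If you keep your route, the step to nail down is precisely the one you flag: Blumenthal for the entrance law at $0$, which is standard for $\mathrm{BES}(3)$ from the origin but should be stated rather than assumed.
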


\begin{proof}
Because of the conformal covariance, 
one can consider $\tilde{\wp}_{1}$ and $\tilde{\wp}_{2}$
two i.i.d. Brownian excursions from $0$ to $\infty$
in the upper half-plane $\HH$.
Define the random variable
\begin{displaymath}
\tilde{\delta} :=
\dist(0,(\Range(\tilde{\wp}_{1})\cap\Range(\tilde{\wp}_{2}))
\setminus\{ 0\})
\end{displaymath}
with values in $[0,+\infty]$.
It is enough to show that $\tilde{\delta}=0$ a.s.
Since the law of $\tilde{\wp}_{1}$ and $\tilde{\wp}_{2}$
is invariant under Brownian scaling,
we have that for every $c>0$,
$c\tilde{\delta}$ is distributed as $\tilde{\delta}$.
This in turn implies that
$\tilde{\delta}\in\{0,+\infty\}$ a.s.
So we have only to show that
$\PP(\tilde{\delta} = +\infty)=0$.
On the event that 
$\tilde{\delta} = +\infty$, we in particular have that
$\tilde{\wp}_{1}([0,1])\cap\tilde{\wp}_{2}([0,1])
= \{0\}$.
However, 
\begin{displaymath}
\PP\big(\tilde{\wp}_{1}([1,+\infty))\cap\tilde{\wp}_{2}([1,+\infty))
\neq\emptyset \vert
(\tilde{\wp}_{1}(t),\tilde{\wp}_{2}(t))_{0\leq t\leq 1}
\big)
> 0 ~~a.s.
\end{displaymath}
So, if the probability $\PP(\tilde{\delta} = +\infty)$ were positive, then we would have that
$\PP(\tilde{\delta}\in (0,+\infty))>0$,
which is a contradiction.
Thus, $\PP(\tilde{\delta} = +\infty)=0$.
\end{proof}

Next we complement Proposition \ref{Prop double points}
and give a partial results on atoms of $\nu$
which cannot be hit by $\eta_{\kappa,\nu}$.
We will need this result in Section~\ref{Sec Level lines}.
Recall that $\Atom_{\rm conv}^{\rm l}(\nu), \Atom_{\rm conv}^{\rm r}(\nu)$ and $\Atom_{\rm conv}^{\ast}(\nu)$ are defined in~\eqref{eqn::atom_conv_LR} and~\eqref{eqn::atom_conv_star}. 
Note that by construction,
$\Atom(\nu)\cap\{ -i,i\}\subset \Atom_{\rm conv}^{\ast}(\nu)$.

\begin{lemma}
\label{Lem hit}
For every $x\in \Atom(\nu)\setminus \Atom_{\rm conv}^{\ast}(\nu)$,
$\PP (x\in \eta_{\kappa,\nu})=0$.
\end{lemma}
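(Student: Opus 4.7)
The plan is to show $x\notin\overline{\LO_{\kappa,\nu}}$ almost surely, which yields $x\notin\eta_{\kappa,\nu}=\partial\LO_{\kappa,\nu}\setminus A_{\rm R}$. First, the hypothesis $x\in\Atom(\nu)\setminus\Atom_{\rm conv}^{\ast}(\nu)$ gives
\[
\int_{\substack{y\in\overline{A_{\rm L}}\\\Im(y)>\Im(x)}}\frac{d\nu(y)}{|y-x|^{2}}=\int_{\substack{y\in\overline{A_{\rm L}}\\\Im(y)<\Im(x)}}\frac{d\nu(y)}{|y-x|^{2}}=+\infty.
\]
Combined with the fact that $\mu_{x,y}^{\D,\#}(\Range(\gamma)\subset B(x,\eta))\to 1$ as $y\to x$ in $\overline{A_{\rm L}}$, the Campbell formula for the Poisson process $\Xi_{\nu}$ shows that, for every $\eta>0$, the expected number of excursions $\gamma\in\Xi_{\nu}$ with one endpoint at $x$, the other endpoint $y\in\overline{A_{\rm L}}\cap B(x,\eta)$ with $\Im(y)>\Im(x)$, and $\Range(\gamma)\subset B(x,\eta)$, is infinite; the same holds on the lower side. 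Hence almost surely there are infinitely many such ``small'' excursions on each side of $x$.

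Next, exactly as in the proof of Proposition~\ref{Prop double points}, Lemma~\ref{Lem Intersect} together with a Brownian absolute continuity argument near the common endpoint $x$ shows that any two $\gamma_{1},\gamma_{2}\in\Xi_{\nu}$ sharing the endpoint $x$ satisfy $\dist(x,(\Range(\gamma_{1})\cap\Range(\gamma_{2}))\setminus\{x\})=0$ almost surely, so they share interior points in $\D$ arbitrarily close to $x$.

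The topological construction then proceeds as follows. Fix small $\eta>0$ and $\varepsilon_{0}\ll\eta$, and define the \emph{initial angle} of a small excursion at $x$ via its first exit from $\partial B(x,\varepsilon_{0})\cap\D$. By absolute continuity against planar Brownian motion near $x$, this law has full support on the inward half-plane at $x$, and the infinitude of excursions on each side makes the collection of initial angles almost surely dense there. Pick a finite family $\gamma_{1},\dots,\gamma_{k}$ with initial angles $\theta_{1}<\cdots<\theta_{k}$ forming a $\delta$-net of the inward half-plane. By the intersection statement above, each consecutive pair $(\gamma_{j},\gamma_{j+1})$ shares a point $z_{j}\in\D$ with $|z_{j}-x|<\varepsilon_{0}$, and the two sub-arcs from $x$ to $z_{j}$ form a Jordan loop $L_{j}\subset B(x,\eta)$ meeting $\partial\D$ only at $x$. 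For $\delta$ and $\varepsilon_{0}$ small enough, the bounded ``inside'' of $L_{j}$ in $\overline{\D}$ contains, locally at $x$, the narrow angular sector between $\theta_{j}$ and $\theta_{j+1}$. Since $\LO_{\kappa,\nu}$ is connected and contains $A_{\rm R}$ (far from $x$), it lies in the ``outside'' of each $L_{j}$, and hence cannot approach $x$ through the sector $(\theta_{j},\theta_{j+1})$. Taking the union over $j$ rules out approach from every direction, so $B(x,\eta')\cap\LO_{\kappa,\nu}=\emptyset$ for some $\eta'>0$, giving $x\notin\overline{\LO_{\kappa,\nu}}$.

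The main obstacle is verifying that the bounded ``inside'' of each small bubble $L_{j}$ is indeed the narrow angular sector $(\theta_{j},\theta_{j+1})$ rather than its complement; this is controlled by further shrinking $\varepsilon_{0}$ and restricting to excursions of small diameter so that the bubble's topology is governed by the local angular structure at $x$. It is precisely the two-sided infinite intensity in the hypothesis $x\notin\Atom_{\rm conv}^{\ast}(\nu)$ that supplies arbitrarily small such bubbles on each side.
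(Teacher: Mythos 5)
Your probabilistic input is exactly the paper's: the divergence of both integrals defining $\Atom_{\rm conv}^{\rm l}(\nu)$ and $\Atom_{\rm conv}^{\rm r}(\nu)$ yields, via Campbell's formula, excursions of $\Xi_{\nu}$ from $x$ to boundary points strictly on either side of $x$, and Lemma~\ref{Lem Intersect} (transferred by absolute continuity near the common endpoint) makes any two of them meet at some $z\neq x$. The gap is in the topological step, precisely at the point you flag and then wave away. The claim that the bounded complementary component of the ``bubble'' $L_{j}$ contains, near $x$, the angular sector between the exit angles $\theta_{j}$ and $\theta_{j+1}$ cannot be repaired by shrinking $\varepsilon_{0}$: a Brownian excursion started at $x$ has no asymptotic direction of departure (its angular coordinate around $x$ oscillates over the whole inward range at every scale), so its behaviour below scale $\varepsilon_{0}$ --- which is what determines the components of $\C\setminus L_{j}$ near $x$ --- is not governed by the exit point from $\partial B(x,\varepsilon_{0})$. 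Worse, even granting that claim the argument does not close: each $L_{j}$ is built only from the sub-arcs of $\gamma_{j},\gamma_{j+1}$ between $x$ and $z_{j}$, so $L_{j}\cap\partial\D=\{x\}$ and every point of $\partial\D\setminus\{x\}$ lies in the unbounded component of $\C\setminus L_{j}$ (join it to $\infty$ by the outward radial ray). Hence no union of such bubbles blocks $\LO_{\kappa,\nu}$ from accumulating at $x$ through points of $\D$ hugging the boundary arc just above or below $x$; the near-tangential ``sectors'' outside $[\theta_{1},\theta_{k}]$ are never covered.

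The fix is to \emph{not} truncate the excursions at $z_{j}$ --- this is where the paper's (much shorter) proof goes. Take one excursion $\gamma$ from $x$ to a point $y_{1}$ strictly on one side of $x$ and one excursion $\gamma'$ from $x$ to a point $y_{2}$ strictly on the other side, meeting at some $z\neq x$. By point polarity of planar Brownian motion, the portions of $\gamma$ and $\gamma'$ from $z$ onward avoid $x$, so their union is a compact connected subset of $\LS_{\kappa,\nu}$ joining $y_{1}$ to $y_{2}$, meeting $\partial\D$ exactly at $\{y_{1},y_{2}\}$ and avoiding $x$. Such a set separates, in $\overline{\D}$, the open boundary arc through $x$ from the complementary arc containing $A_{\rm R}$; since $\LO_{\kappa,\nu}$ is connected and contains $A_{\rm R}$, it lies entirely in the component on the far side, and $x$ has a neighbourhood in $\overline{\D}$ (the component containing the arc through $x$) disjoint from it. So a single pair of excursions suffices, and no control of the directions of approach to $x$ is needed.
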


\begin{proof}
Let $x\in \Atom(\nu)\setminus \Atom_{\rm conv}^{\ast}(\nu)$.
Then a.s., there is an excursion
$\gamma\in \Xi_{\nu}$
with one endpoint $x$ and the other endpoint
strictly to the left of $x$,
and an other excursion $\gamma'\in \Xi_{\nu}$
with one endpoint $x$ and the other endpoint
strictly to the right of $x$.
Moreover, by Lemma \ref{Lem Intersect},
$\gamma$ and $\gamma'$ a.s. intersect near $x$.
Thus, $\Range(\gamma)\cup\Range(\gamma')$ disconnects a 
neighborhood of $x$ in $\overline{\D}$
from $A_{\rm R}$.
\end{proof}

\subsection{Local absolute continuity with respect to $\SLE_{\kappa}$ away from the boundary}
\label{SubSec Abs Cont}

In this section we will show that for $\nu\neq 0$,
the curve $\eta_{\kappa,\nu}$
is in some sense absolutely continuous with respect to a
chordal $\SLE_{\kappa}$ away from the boundary.
First we recall the result of
\cite{WernerWuCLEtoSLE}
which identifies the law of $\eta_{\kappa,\nu}$
when $\nu$ is a constant on $A_{\rm L}$.

\begin{theorem}[Werner-Wu \cite{WernerWuCLEtoSLE}]
\label{Thm Werner Wu}
Let $\kappa\in [8/3 , 4]$.
Assume that
$\nu = a\one_{A_{\rm L}}\sigma_{\partial\D}$,
with $a>0$ a constant.
Let $\tilde{\eta}_{\kappa,a}$
denote 
$\tilde{\eta}_{\kappa,\nu}=\psi_{0}(\eta_{\kappa,\nu})$
in this case.
Then $\tilde{\eta}_{\kappa,a}$ is distributed as
a chordal $\SLE_{\kappa}(\rho)$ curve in
$\mathbb{H}$
from $0$ to $\infty$,
with one force point at $0^{-}$,
with $\rho$ be the unique real in 
$(-2,+\infty)$ satisfying
\begin{displaymath}
a^2 = 
\dfrac{\pi}{2}
\dfrac{(\rho+2)(\rho + 6 - \kappa)}{\kappa} .
\end{displaymath}
In particular, if $a=\sqrt{\pi(6-\kappa)/\kappa}$, 
then $\tilde{\eta}_{\kappa,a}$
is distributed as a chordal $\SLE_{\kappa}$.
\end{theorem}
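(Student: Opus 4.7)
The plan is to identify the law of $\tilde\eta_{\kappa,a}$ by analyzing its driving function, proceeding in two stages: first show it must be an $\SLE_\kappa(\rho)$ for some $\rho$, then compute the relation between $\rho$ and $a$. By Proposition~\ref{prop::envelop_drivingfunction}, $\tilde\eta_{\kappa,a}$ admits a continuous driving function $\xi_t$ under half-plane capacity parametrization. A direct calculation using the conformal covariance formula of Proposition~\ref{prop::envelop_parameterization}(1) shows that transferring $\nu = a\one_{A_{\rm L}}\sigma_{\partial\D}$ from $\D$ to $\HH$ via $\psi_0$ produces the intensity $a\,dx$ on $(-\infty,0)$: the factor $|\psi_0'|$ evaluated on $A_{\rm L}$ exactly cancels the Jacobian produced when pushing forward the arc-length measure. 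Crucially, this intensity is preserved by every positive scaling $z\mapsto\lambda z$ (the Jacobian $\lambda$ cancels the $1/\lambda$ in the pushforward of Lebesgue measure) and transforms by a shift under real translations.

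Next I would invoke the conformal Markov property of the envelope construction, inherited from the independence of the Poisson point process of excursions and the domain Markov property of $\CLE_\kappa$. Given $\tilde\eta_{\kappa,a}[0,t]$, the image under the Loewner map $g_t$ of the future of the curve is distributed as an envelope construction in $\HH$ with an independent $\CLE_\kappa$ and Brownian excursion intensity $a\,dx$ on $(-\infty, V_t^-)$, where $V_t^-:=g_t(0^-)$. Hence $(\xi_t, V_t^-)$ is a Markov process. Translation invariance of the intensity forces the drift of $\xi_t$ to depend only on the displacement $\xi_t-V_t^-$, and scale invariance then forces it to take the form $\rho/(\xi_t-V_t^-)$ for some constant $\rho = \rho(a)$. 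The diffusion coefficient is $\sqrt{\kappa}$ by local absolute continuity with respect to $\SLE_\kappa$ away from the boundary (using that $\CLE_\kappa$ loops locally look like $\SLE_\kappa$ curves). Together these imply that $\tilde\eta_{\kappa,a}$ is distributed as $\SLE_\kappa(\rho(a))$ with a single force point at $0^-$.

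The main obstacle is computing the explicit function $\rho(a)$ to obtain $a^2 = \pi(\rho+2)(\rho+6-\kappa)/(2\kappa)$. My approach would be to find a conformally natural observable that is simultaneously a local martingale for $\SLE_\kappa(\rho)$ and for the envelope construction. On the $\SLE_\kappa(\rho)$ side, the boundary partition function $(\xi_t-V_t^-)^{\alpha}\, g_t'(V_t^-)^{\beta}$ is a local martingale for explicit exponents $\alpha,\beta$ depending on $\rho,\kappa$, derived via Itô's formula from the driving SDE. On the envelope side, the same quantity can be evaluated using the Brownian loop-soup description of $\CLE_\kappa$ together with the restriction exponents of Brownian excursion measures: for instance, as the expected mass of the family of excursions surviving the Loewner flow, computable directly in terms of $a$ and the intensity of the underlying loop-soup. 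Matching the two expressions yields a quadratic equation in $\rho$ whose coefficients involve $a^2$ and $\kappa$, and solving it gives the stated formula, with the branch $\rho>-2$ selected by monotonicity in $a$ (Proposition~\ref{Prop Monotonicity}). The special value $a = \sqrt{\pi(6-\kappa)/\kappa}$ corresponds to $\rho = 0$ and can be independently verified via the restriction property of $\SLE_\kappa$, offering a consistency check. The most delicate part of the argument is organizing this martingale computation so that the restriction exponents of Brownian excursions and of $\CLE_\kappa$ combine cleanly into the advertised quadratic relation.
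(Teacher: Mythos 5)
First, a point of comparison: the paper does not prove this statement at all --- it is quoted from Werner--Wu \cite{WernerWuCLEtoSLE} and used as a black box (e.g.\ in Proposition~\ref{Prop abs cont}). So your sketch can only be measured against the argument in the cited reference. Your overall strategy --- show that $\psi_{0}$ carries $a\one_{A_{\rm L}}\sigma_{\partial\D}$ to the scale- and translation-invariant intensity $a\,dx$ on $(-\infty,0)$, deduce from a conformal Markov property that the driving function is a time-changed Brownian motion with drift $\rho/(\xi_{t}-V_{t}^{-})$, and then identify $\rho(a)$ by matching a restriction-type observable --- is indeed the shape of the Werner--Wu proof, and your consistency check that $\rho=0$ corresponds to $a=\sqrt{\pi(6-\kappa)/\kappa}$ is correct.

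However, the two steps you treat as routine are precisely the substance of the theorem, and as written they are gaps. (i) The conformal Markov property of the envelope does not follow from ``independence of the Poisson point process and the domain Markov property of $\CLE_{\kappa}$'': an initial segment $\tilde{\eta}_{\kappa,a}[0,t]$ is assembled from \emph{partially explored} loops and excursions, and one must prove that the unexplored remainders, together with the untouched loops and excursions, reassemble in the slit domain into an independent $\CLE_{\kappa}$ plus a Poisson point process of excursions with the image intensity. This requires the Brownian loop-soup construction of $\CLE_{\kappa}$ and the restriction property of the loop and excursion measures; it is the core of the cited paper and cannot be asserted in one line. (ii) Your identification of the diffusion coefficient as $\sqrt{\kappa}$ ``by local absolute continuity with respect to $\SLE_{\kappa}$'' is circular in the present paper's logic, since Proposition~\ref{Prop abs cont} \emph{derives} that absolute continuity from the theorem you are trying to prove (via the special value $a=\sqrt{\pi(6-\kappa)/\kappa}$); an independent argument is needed, e.g.\ that the envelope contains arcs of $\CLE_{\kappa}$ loops, which are locally $\SLE_{\kappa}$-like, together with the mutual singularity of $\SLE_{\kappa'}$ and $\SLE_{\kappa}$ for $\kappa'\neq\kappa$. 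Finally, the martingale/restriction computation that is supposed to produce $a^{2}=\tfrac{\pi}{2}(\rho+2)(\rho+6-\kappa)/\kappa$ is left entirely schematic: you name the ingredients but do not exhibit the observable or carry out the matching, so the quantitative half of the statement is not established.
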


\begin{figure}[ht!]
\begin{center}
\includegraphics[width=0.3\textwidth]{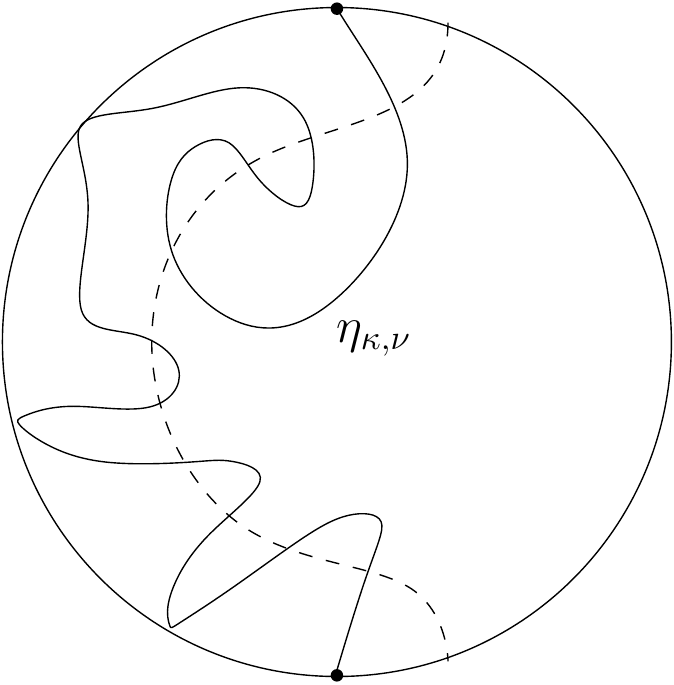}$\quad$
\includegraphics[width=0.3\textwidth]{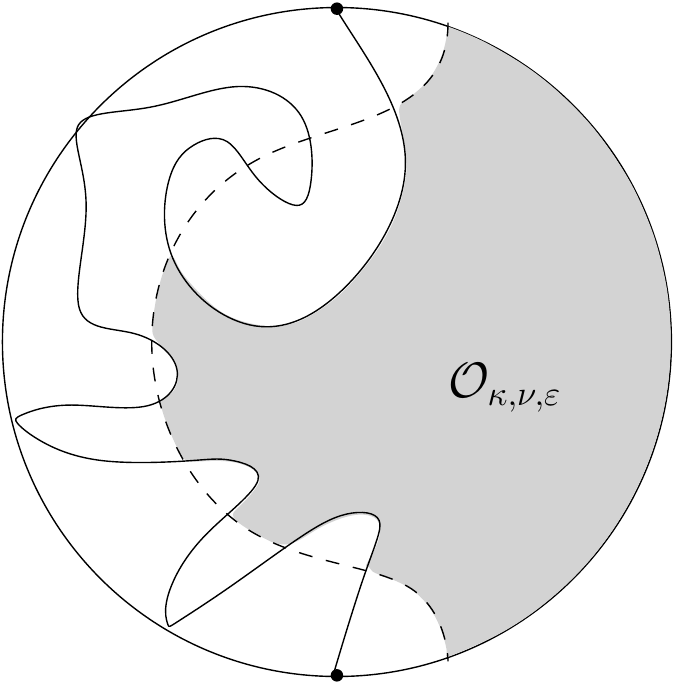}
\end{center}
\caption{\label{fig::absolutecontinuity} In the left panel, the dashed line indicates $\partial\D_{\varepsilon}\cap\D$ and the curve indicates $\eta_{\kappa, \nu}$ intersecting $\D_{\varepsilon}$. The set $\LO_{\kappa,\nu}$ is the connected component of $\D\setminus\eta_{\kappa,\nu}$ adjacent to $A_{\rm R}$. Consider $\LO_{\kappa, \nu}\cap\D_{\varepsilon}$, there are many connected components, and $\LO_{\kappa, \nu,\varepsilon}$ is the one adjacent to $A_{\rm R}\cap\overline{\D_{\varepsilon}}$. In the right panel, the region in gray indicates $\LO_{\kappa, \nu,\varepsilon}$. }
\end{figure}

For $\varepsilon\in (0,1)$, let 
$\LO_{\kappa,\nu,\varepsilon}$ denote the connected component of
$\LO_{\kappa,\nu}\cap\D_{\varepsilon}$
(see \eqref{Eq D eps})
adjacent to $A_{\rm R}\cap\overline{\D_{\varepsilon}}$; see Figure~\ref{fig::absolutecontinuity}. 
To motivate what will follow,
we state the next proposition.

\begin{proposition}
\label{Prop O k nu eps}
Let be $\kappa\in [8/3 , 4]$ and 
$\nu$ a finite non-negative Radon measure on $\overline{A_{\rm L}}$.
Assume that $\nu\neq 0$.
Then a.s., for every
$z\in \eta_{\kappa,\nu}\setminus\overline{A_{\rm L}}$,
there is $U$ a neighborhood of $z$ in
$\eta_{\kappa,\nu}\setminus\overline{A_{\rm L}}$
and $\varepsilon\in (0,1)$ such that
$U\subset \partial \LO_{\kappa,\nu,\varepsilon}$.
\end{proposition}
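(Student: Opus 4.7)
The plan is to exploit the uniformization $\psi_{\kappa,\nu}:\overline{\D}\to\overline{\LO_{\kappa,\nu}}$ (continuous by Proposition~\ref{prop::envelop_parameterization}~\ref{item::envelop_continuity}) to build, inside $\LO_{\kappa,\nu}$, a connected set that both approaches a full neighborhood of $z$ on $\eta_{\kappa,\nu}$ and reaches $A_{\rm R}$, while remaining at positive Euclidean distance from $A_{\rm L}$. For any $\varepsilon$ smaller than that distance, this set will lie in $\LO_{\kappa,\nu}\cap\D_{\varepsilon}$, and since its closure meets $A_{\rm R}\cap\overline{\D_{\varepsilon}}$, inside the $A_{\rm R}$-adjacent component $\LO_{\kappa,\nu,\varepsilon}$; the nearby boundary points on $\eta_{\kappa,\nu}$ will then automatically sit on $\partial\LO_{\kappa,\nu,\varepsilon}$.

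I would start by invoking Proposition~\ref{Prop double points}: because $z\notin\overline{A_{\rm L}}\supset\Atom_{\rm conv}(\nu)$, the preimage $\psi_{\kappa,\nu}^{-1}(\{z\})$ is a singleton $\{x\}\subset\overline{A_{\rm L}}$, and since $\psi_{\kappa,\nu}(\pm i)=\pm i\in\overline{A_{\rm L}}$ while $z\notin\overline{A_{\rm L}}$, the point $x$ lies in the open arc $A_{\rm L}$. Set $d:=\dist(z,\overline{A_{\rm L}})>0$ and, using continuity of $\psi_{\kappa,\nu}$ at $x$, pick $r\in(0,1)$ so small that $\psi_{\kappa,\nu}(W_{r})\subset B(z,d/2)$, where $W_{r}:=\{v\in\overline{\D}:|v-x|<r\}$. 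Let $v_{0}:=(1-r/2)x\in W_{r}\cap\D$ and introduce the polygonal arc $\Gamma:=[v_{0},1/2]\cup[1/2,1]\subset\overline{\D}$, which runs from $v_{0}$ through the interior of $\D$ to $1\in A_{\rm R}$ and meets $\partial\D$ only at $1$.

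Next I would check that $K:=\psi_{\kappa,\nu}(W_{r}\cap\D)\cup\psi_{\kappa,\nu}(\Gamma\cap\D)$ is a connected subset of $\LO_{\kappa,\nu}$ (both parts contain $\psi_{\kappa,\nu}(v_{0})$) at positive distance from $A_{\rm L}$: the first piece sits in $B(z,d/2)$, so is at distance exceeding $3d/2$ from $A_{\rm L}$; the compact set $\psi_{\kappa,\nu}(\Gamma)\subset\overline{\LO_{\kappa,\nu}}$ meets $\partial\LO_{\kappa,\nu}$ only at $\psi_{\kappa,\nu}(1)=1\in A_{\rm R}$, so is disjoint from $A_{\rm L}$ with some distance $\varepsilon_{\Gamma}>0$. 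Choosing $\varepsilon<\min(d,\varepsilon_{\Gamma})$ places $K$ in $\LO_{\kappa,\nu}\cap\D_{\varepsilon}$; as $\overline{K}\ni 1\in A_{\rm R}\cap\overline{\D_{\varepsilon}}$, the set $K$ lies in the $A_{\rm R}$-adjacent component, namely $\LO_{\kappa,\nu,\varepsilon}$. Uniqueness of the preimage of $z$ combined with compactness of $\overline{A_{\rm L}}$ makes the preimage map $z'\mapsto x'$ continuous at $z$ on $\eta_{\kappa,\nu}\setminus\overline{A_{\rm L}}$. Hence, for small $\delta$, the set $U:=B(z,\delta)\cap\eta_{\kappa,\nu}$ lies in $\eta_{\kappa,\nu}\setminus\overline{A_{\rm L}}$ and every $z'\in U$ has a unique preimage $x'\in W_{r}\cap A_{\rm L}$; the radial approximations $(1-1/n)x'\in W_{r}\cap\D$ yield $\psi_{\kappa,\nu}((1-1/n)x')\to z'$ with images in $K\subset\LO_{\kappa,\nu,\varepsilon}$, so $z'\in\overline{\LO_{\kappa,\nu,\varepsilon}}$. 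Since $z'\in\partial\LO_{\kappa,\nu}$ precludes $z'\in\LO_{\kappa,\nu,\varepsilon}$, this gives $U\subset\partial\LO_{\kappa,\nu,\varepsilon}$.

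The main obstacle I anticipate is ruling out that the local approach to $z$ from the $\LO_{\kappa,\nu}$-side lands in a pocket of $\LO_{\kappa,\nu}\cap\D_{\varepsilon}$ separated from $A_{\rm R}$ by a neck of $\LO_{\kappa,\nu}$ hugging $A_{\rm L}$. The route through $\D$ used above avoids this entirely: compact arcs in $\overline{\D}$ with their only boundary point in $A_{\rm R}$ map to compact subsets of $\overline{\LO_{\kappa,\nu}}$ touching $\partial\LO_{\kappa,\nu}$ only in $A_{\rm R}$, hence staying uniformly away from $A_{\rm L}$ regardless of how convoluted $\partial\LO_{\kappa,\nu}$ may be. The second delicate ingredient is the continuity of the preimage map at $z$, which relies crucially on the uniqueness furnished by Proposition~\ref{Prop double points} and on which the passage from a single-point statement to the existence of a genuine neighborhood $U$ depends.
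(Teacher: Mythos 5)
Your argument is correct and follows essentially the same route as the paper's proof: both pull the problem back to $\D$ via the continuous extension of $\psi_{\kappa,\nu}$ and exhibit a connected subset of $\LO_{\kappa,\nu}$ touching $\partial\D$ only near $\psi_{\kappa,\nu}^{-1}(z)$ and at $1\in A_{\rm R}$, hence at positive distance from $\overline{A_{\rm L}}$ (the paper uses the union of the images of the straight segments from $1$ to $\psi_{\kappa,\nu}^{-1}(w)$ for $w$ in a small arc, where you use the image of a half-disc around the preimage together with a polygonal path to $1$; you are also more explicit than the paper about the single-valuedness and continuity of the preimage map, which is a welcome clarification). The only flaw is an arithmetic slip: a point of $B(z,d/2)$ is at distance greater than $d/2$ (not $3d/2$) from $\overline{A_{\rm L}}$, so the choice should be $\varepsilon<\min(d/2,\varepsilon_{\Gamma})$; this does not affect the argument.
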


\begin{proof}
For $w\in \eta_{\kappa,\nu}\setminus\overline{A_{\rm L}}$,
let $I_{w}$ denote the straight line segment in $\overline{\D}$
with endpoints $1$ and
$\psi_{\kappa,\nu}^{-1}(w)$,
where $\psi_{\kappa,\nu}$ is the conformal map
from $\D$ to $\LO_{\kappa,\nu}$ 
defined in Section \ref{Sec Intro}.
Take $z\in\eta_{\kappa,\nu}\setminus\overline{A_{\rm L}}$.
Let $A$ be an open subarc of
$A_{\rm L}$ containing $\psi_{\kappa,\nu}^{-1}(z)$,
such that $\psi_{\kappa,\nu}(\overline{A})\subset 
\eta_{\kappa,\nu}\setminus\overline{A_{\rm L}}$.
Let $K$ be
\begin{displaymath}
K:=\bigcup_{w\in \overline{A}} \psi_{\kappa,\nu}(I_{w}).
\end{displaymath}
$K$ is a connected compact subset of
$\overline{\LO_{\kappa,\nu}}$ containing $1$,
$z$ and a neighborhood of $z$ in 
$\eta_{\kappa,\nu}\setminus\overline{A_{\rm L}}$.
By construction, $K\cap\overline{A_{\rm L}}=\emptyset$.
Thus, for $\varepsilon\in(0,\dist(K, A_{\rm L})\wedge 1)$,
$K\subset\overline{\LO_{\kappa,\nu,\varepsilon}}$.
\end{proof}

Next we state the absolute continuity result.

\begin{proposition}
\label{Prop abs cont}
Let $\kappa\in [8/3 , 4]$ and 
$\nu_{1}, \nu_{2}$ be two finite non-negative Radon measure on 
$\overline{A_{\rm L}}$.
Assume that both $\nu_{1}$ and $\nu_{2}$
are non-zero.
Then, for every $\varepsilon\in (0,1)$,
the laws of $\LO_{\kappa,\nu_{1},\varepsilon}$
and $\LO_{\kappa,\nu_{2},\varepsilon}$
are mutually absolutely continuous.
In particular, if $\eta_{\kappa}$ denotes
a chordal $\SLE_{\kappa}$ curve in $\D$
from $-i$ to $i$ and $\LO_{\kappa,\varepsilon}$
denotes the connected component of $\D\setminus\eta_{\kappa}$
adjacent to $A_{\rm R}\cap\overline{\D_{\varepsilon}}$,
then
for every $\nu$
non-negative non-zero Radon measure on 
$\overline{A_{\rm L}}$,
the law of $\LO_{\kappa,\nu,\varepsilon}$
is absolutely continuous with respect to that
of $\LO_{\kappa,\varepsilon}$.
\end{proposition}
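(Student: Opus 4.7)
The plan is to use the Markovian decomposition of Proposition~\ref{Prop Markov} to reduce the problem to comparing intensities of the ``middle pieces'' of excursions across different non-zero measures $\nu$.

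First I would split $\Xi_\nu = \Xi_\nu^{\rm in} \sqcup \Xi_\nu^{\rm out}$ into independent PPPs of excursions that enter $\D_\varepsilon$ and of those that stay in $\widehat{\D}_\varepsilon$, respectively; the first has finite total intensity. Applying Proposition~\ref{Prop Markov} to each $\gamma\in \Xi_\nu^{\rm in}$ decomposes it as $\gamma_{\rm f}\circ\gamma_{\rm m}\circ\gamma_{\rm l}$, with $\gamma_{\rm m}$ a Brownian excursion in $\D$ between two points $z,w\in\partial\widehat{\D}_\varepsilon\cap\D$. Marginalising over the boundary pieces $\gamma_{\rm f},\gamma_{\rm l}$ and their endpoints $x,y\in\overline{A_{\rm L}}$, the induced PPP of middle pieces has intensity
\begin{equation*}
\tfrac{1}{2}\, h_{\nu,\varepsilon}(z)\, h_{\nu,\varepsilon}(w)\, d\sigma_{\partial\widehat{\D}_\varepsilon}(z)\, d\sigma_{\partial\widehat{\D}_\varepsilon}(w)\, \mu^{\D}_{z,w}(d\gamma_{\rm m}),
\qquad
h_{\nu,\varepsilon}(z):=\int_{\overline{A_{\rm L}}}H_{\widehat{\D}_\varepsilon}(x,z)\, d\nu(x).
\end{equation*}
When $\nu\neq 0$, the function $h_{\nu,\varepsilon}$ is strictly positive and smooth on $\partial\widehat{\D}_\varepsilon\cap\D$, so for two non-zero measures $\nu_1,\nu_2$ the middle-piece intensities are mutually absolutely continuous with Radon--Nikodym derivative $h_{\nu_2,\varepsilon}(z)h_{\nu_2,\varepsilon}(w)/(h_{\nu_1,\varepsilon}(z)h_{\nu_1,\varepsilon}(w))$, which is bounded above and away from zero on compact subsets. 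Since the total intensities are finite, this lifts to mutual absolute continuity of the laws of the middle-piece PPPs; combined with the independence of $\FC_\kappa$ from $\Xi_\nu$, the joint law of $(\FC_\kappa, \{\gamma_{\rm m}\})$ is then mutually absolutely continuous across $\nu_1,\nu_2$.

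To transfer this absolute continuity to $\LO_{\kappa,\nu,\varepsilon}$, I would observe that the only additional dependence on $\nu$ beyond the middle pieces comes from how boundary pieces of excursions attach CLE loops that intersect $\D_\varepsilon$. For any such loop $L$, the conditional probability that $L$ is attached via a boundary piece of an excursion whose middle-piece endpoint is $z$ equals $1 - h_{\nu,\varepsilon}^{L}(z)/h_{\nu,\varepsilon}(z)$ with $h_{\nu,\varepsilon}^{L}(z):=\int H_{\widehat{\D}_\varepsilon\setminus L}(x,z)\, d\nu(x)$, which lies in $(0,1)$ whenever $\nu\neq 0$ and $L\cap\widehat{\D}_\varepsilon\neq\emptyset$. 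Hence the conditional law of the attachment pattern is mutually absolutely continuous under $\nu_1,\nu_2$, and therefore so are the laws of $\LO_{\kappa,\nu_1,\varepsilon}$ and $\LO_{\kappa,\nu_2,\varepsilon}$. The ``in particular'' statement is then obtained by taking $\nu_2=\sqrt{\pi(6-\kappa)/\kappa}\,\one_{A_{\rm L}}\sigma_{\partial\D}$ and invoking Theorem~\ref{Thm Werner Wu} to identify $\eta_{\kappa,\nu_2}$ with a chordal $\SLE_\kappa$ curve and hence $\LO_{\kappa,\nu_2,\varepsilon}$ with $\LO_{\kappa,\varepsilon}$.

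The main obstacle will be the boundary-attachment step: the distributions of boundary-piece endpoints on $\overline{A_{\rm L}}$ are themselves $\nu$-weighted and can be mutually singular between $\nu_1$ and $\nu_2$, so the boundary pieces themselves cannot be compared as random paths. The saving point is that only the binary attachment events for the CLE loops interacting with $\D_\varepsilon$ are relevant to $\LO_{\kappa,\nu,\varepsilon}$, and these events have probabilities given by positive $\nu$-integrated harmonic functions that remain uniformly in $(0,1)$; this reduces a seemingly delicate infinite-dimensional comparison to a collection of Bernoulli comparisons governed by the same positive harmonic extensions $h_{\nu,\varepsilon}$.
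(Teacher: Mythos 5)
Your first step---splitting off the excursions that reach $\D_{\varepsilon}$, applying the Markovian decomposition of Proposition~\ref{Prop Markov}, and checking that the intensities of the middle pieces under $\nu_{1}$ and $\nu_{2}$ are mutually absolutely continuous with an explicit density in terms of $h_{\nu,\varepsilon}$---is correct and is exactly how the paper exploits Proposition~\ref{Prop Markov}; the derivation of the ``in particular'' part from Theorem~\ref{Thm Werner Wu} also matches. The gap is in the transfer step. With the cutoff fixed at $\varepsilon$, a loop $L\in\FC_{\kappa}$ meeting $\D_{\varepsilon}$ may also dip into $\widehat{\D}_{\varepsilon}$, and whether it gets attached then depends on the boundary slices $\gamma_{\rm f},\gamma_{\rm l}$ \emph{and} on the excursions of $\Xi_{\nu}$ that never enter $\D_{\varepsilon}$ at all; the latter are not covered by your formula $1-h^{L}_{\nu,\varepsilon}(z)/h_{\nu,\varepsilon}(z)$, which accounts for a single boundary slice conditioned to end at $z$. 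More seriously, what is needed is mutual absolute continuity of the conditional law of the \emph{joint} attachment pattern of all loops meeting $\D_{\varepsilon}$, given the CLE and the middle pieces. A single boundary slice or small excursion can hit several loops, and an endpoint $z$ can be shielded from $A_{\rm L}$ inside $\widehat{\D}_{\varepsilon}$ by a dip of $L$, in which case your probability equals $1$ rather than lying in $(0,1)$. So the reduction to independent ``Bernoulli comparisons'' does not go through as stated: one would have to show that the support of the joint pattern is $\nu$-independent (a topological statement about which hitting/avoiding patterns are achievable by paths from $A_{\rm L}$ inside $\widehat{\D}_{\varepsilon}$) and that every achievable pattern has positive probability for every non-zero $\nu$. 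None of this is in the proposal.

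The paper avoids the whole issue with one device: instead of the fixed level $\varepsilon$, it cuts the excursions at the random level $\tilde{\varepsilon}:=\inf\{\dist(\Range(\tilde{\gamma}),A_{\rm L}):\tilde{\gamma}\in\FC_{\kappa},\ \Range(\tilde{\gamma})\cap\D_{\varepsilon}\neq\emptyset\}$, which is a.s.\ positive by local finiteness of the CLE. By construction no loop relevant to $\LO_{\kappa,\nu,\varepsilon}$ enters the strip $\widehat{\D}_{\tilde{\varepsilon}}$, so the attachment of every such loop is decided by the middle pieces at level $\tilde{\varepsilon}$ alone, and $\LO_{\kappa,\nu,\varepsilon}$ becomes measurable with respect to $(\FC_{\kappa},\widetilde{\Xi}_{\nu,\tilde{\varepsilon}})$. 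The mutual absolute continuity of these pairs then follows from your own density computation, applied conditionally on $\FC_{\kappa}$ at the level $\tilde{\varepsilon}$. I recommend replacing your transfer step by this random-cutoff argument.
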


\begin{proof}
The second part of the statement follows from the first part and
Theorem \ref{Thm Werner Wu}.

For the first part, fix $\varepsilon\in (0,1)$.
We assume that $\kappa\neq 8/3$,
the case $\kappa = 8/3$ being simpler.
Let $\FC_{\kappa,\varepsilon}$ be the subset of
$\FC_{\kappa}$ made of $\CLE$ loops
intersecting $\D_{\varepsilon}$.
Let $\tilde{\varepsilon}$ be the following r.v.:
\begin{displaymath}
\tilde{\varepsilon} :=
\inf_{\tilde{\gamma}\in\FC_{\kappa,\varepsilon}}
\dist(\Range(\tilde{\gamma}),A_{\rm L}) .
\end{displaymath}
We have that $\tilde{\varepsilon}\in (0,\varepsilon)$ a.s.
Define
\begin{displaymath}
\widetilde{\Xi}_{\nu_{1},\tilde{\varepsilon}}:=
\left\{(\gamma(T_{\gamma,\tilde{\varepsilon}}^{\rm f}+t))
_{0\leq t\leq 
T_{\gamma,\tilde{\varepsilon}}^{\rm l}
-T_{\gamma,\tilde{\varepsilon}}^{\rm f}}
: \gamma\in \Xi_{\nu_{1}},
\Range(\gamma)\cap \D_{\tilde{\varepsilon}}
\neq \emptyset
\right\},
\end{displaymath}
where $T_{\gamma,\tilde{\varepsilon}}^{\rm f}$
and $T_{\gamma,\tilde{\varepsilon}}^{\rm l}$ are given
by \eqref{Eq Tf Tl},
and where $\Xi_{\nu_{1}}$ is independent from $\FC_{\kappa}$.
Similarly define $\widetilde{\Xi}_{\nu_{2},\tilde{\varepsilon}}$.
We have that the law of
$(\FC_{\kappa},\widetilde{\Xi}_{\nu_{2},\tilde{\varepsilon}})$
is absolutely continuous with respect to that of
$(\FC_{\kappa},\widetilde{\Xi}_{\nu_{1},\tilde{\varepsilon}})$.
This follows from the
Markovian decomposition of Proposition \ref{Prop Markov}.
Further, $\LO_{\kappa,\nu_{1},\varepsilon}$,
respectively $\LO_{\kappa,\nu_{2},\varepsilon}$,
is measurable with respect to
$(\FC_{\kappa},\widetilde{\Xi}_{\nu_{1},\tilde{\varepsilon}})$,
respectively
$(\FC_{\kappa},\widetilde{\Xi}_{\nu_{2},\tilde{\varepsilon}})$.
This concludes the proof.
\end{proof}

\subsection{Curves hitting the boundary with positive measure}
\label{subsec::hittingboundary}
Fix $\kappa\in [8/3, 4]$.
Assume that the measure $\nu$ has full support on
$\overline{A_{\rm L}}$.
By Proposition \ref{Prop contain subarc},
$\eta_{\kappa,\nu}\cap \overline{A_{\rm L}}$
has a.s. empty interior.
However, $\eta_{\kappa,\nu}$ may still hit
$\overline{A_{\rm L}}$, depending on $\nu$.
Here we will show that actually for some $\nu$-s,
$\eta_{\kappa,\nu}\cap \overline{A_{\rm L}}$
may have a.s. empty interior,
yet have, with positive probability, 
a positive mass for the arc-length measure
$\sigma_{\partial\D}$.
We will construct examples with
$\nu$
actually being a continuous
function $u:\overline{A_{\rm L}}\rightarrow [0,+\infty)$.
We will write $\eta_{\kappa,u}$
in this case.

For $k\geq 0$, denote
\begin{displaymath}
Q_{k}
:=
\Big\{
\dfrac{\pi}{2}
+(2j+1)\dfrac{\pi}{2^{k+1}} :
0\leq j\leq 2^{k} -1
\Big\} .
\end{displaymath}
Set
\begin{displaymath}
Q := \bigcup_{k\geq 0} Q_{k}.
\end{displaymath}
Note that $Q_{k}\cap Q_{k'} = \emptyset$
for $k\neq k'$ and that
$Q$ is everywhere dense in 
$\big[\frac{1}{2}\pi, \frac{3}{2}\pi\big]$.
Given
$\varepsilon\in (0,1)$,
denote
$f_{k,\varepsilon}:\big[\frac{1}{2}\pi, \frac{3}{2}\pi\big]
\rightarrow [0,+\infty)$
the following function:
\begin{displaymath}
f_{k,\varepsilon}(x):=
(2^{-(k+1)}\pi - \varepsilon^{-1}\dist(x,Q_{k}))
\vee 0 .
\end{displaymath}
The function $f_{k,\varepsilon}$ is continuous and
bounded from above by $2^{-(k+1)}\pi$.
Moreover, given $\varepsilon\leq\varepsilon'\in (0,1)$,
we have that 
$f_{k,\varepsilon}\leq f_{k,\varepsilon'}$.

Given a sequence
$(\varepsilon_{k})_{k\geq 0}$ in $(0,1)$,
let $f = f_{(\varepsilon_{k})_{k\geq 0}}$
be the following function on 
$\big[\frac{1}{2}\pi, \frac{3}{2}\pi\big]$:
\begin{displaymath}
f = \sum_{k\geq 0}f_{k,\varepsilon_{k}} .
\end{displaymath}
The function $f$ is non-negative,
continuous, and positive on $Q$
whatever the choice of $(\varepsilon_{k})_{k\geq 0}$.
Moreover, $f\leq \pi$.
Let $u=u_{(\varepsilon_{k})_{k\geq 0}}$
be the function on $\overline{A_{\rm L}}$
defined by
$u(e^{i\theta}) = f(\theta)$
for $\theta\in\big[\frac{1}{2}\pi, \frac{3}{2}\pi\big]$.

\begin{proposition}
\label{Prop pos meas}
Fix $\kappa\in[8/3, 4]$.
There is a sequence
$(\varepsilon_{k})_{k\geq 0}$ in $(0,1)$
such that
\begin{displaymath}
\PP(\sigma_{\partial\D}(\eta_{\kappa,u}
\cap\overline{A_{\rm L}}) >0) > 0,
\end{displaymath}
where $u=u_{(\varepsilon_{k})_{k\geq 0}}$.
\end{proposition}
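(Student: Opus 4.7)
The plan is a first-moment computation via Fubini:
\[
\E\big[\sigma_{\partial\D}(\eta_{\kappa,u}\cap\overline{A_{\rm L}})\big]
=\int_{\overline{A_{\rm L}}}\PP(x\in\eta_{\kappa,u})\,d\sigma_{\partial\D}(x).
\]
It therefore suffices to exhibit a measurable subset $B\subset\overline{A_{\rm L}}$ with $\sigma_{\partial\D}(B)>0$ and a constant $c>0$ such that $\PP(x\in\eta_{\kappa,u})\geq c$ for every $x\in B$. The plan is to take $B:=\{x\in A_{\rm L}:u(x)=0,\ x\notin Q\}$. Since $\{u>0\}$ is contained in the union over $k\geq 0$ and $q\in Q_{k}$ of open arcs of length $2\varepsilon_{k}2^{-(k+1)}\pi$, we have $\sigma_{\partial\D}(\{u>0\})\leq\pi\sum_{k}\varepsilon_{k}$; choosing $(\varepsilon_{k})$ with $\sum_{k}\varepsilon_{k}<1$ then forces $\sigma_{\partial\D}(B)>0$, and $u$ vanishes on $B$.

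For $x\in B$, the event $\{x\in\eta_{\kappa,u}\}$ holds unless some connected component of $\LS_{\kappa,u}$ together with a sub-arc of $\overline{A_{\rm L}}$ surrounds a neighborhood of $x$ in $\overline{\D}$, disconnecting it from $A_{\rm R}$. I decompose this obstruction event by conditioning on $\FC_{\kappa}$: by the local finiteness of $\CLE_{\kappa}$ and the fact that its loops never touch $\partial\D$, for any small $\delta>0$ there is a high-probability event $\LG_{\delta}$ on which every CLE loop of diameter exceeding $\delta$ lies at distance at least $\delta$ from $\overline{A_{\rm L}}$. On $\LG_{\delta}$, any excursion $\gamma\in\Xi_{u}$ of diameter at most $\delta$ attaches only to CLE loops of diameter at most $\delta$, so the ``shadow'' of $\LS_{\kappa}(\gamma)$ on $\overline{A_{\rm L}}$ is contained in the $A_{\rm L}$-arc between the endpoints $x_{\gamma},y_{\gamma}$ of $\gamma$, widened on each side by some controlled $\delta'>0$.

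Conditionally on $\LG_{\delta}$, the Poisson intensity at $x\in B$ of obstructing clusters is thus bounded above by
\[
I_{x}\leq \frac{1}{\pi}\iint_{\overline{A_{\rm L}}^{2}}
\one\{x\text{ lies in the }\delta'\text{-widening of the }A_{\rm L}\text{-arc between }x',y'\}
\frac{u(x')u(y')}{|x'-y'|^{2}}\,d\sigma_{\partial\D}(x')d\sigma_{\partial\D}(y').
\]
Writing $u$ as a sum of triangular peaks with masses $m_{q,k}=\varepsilon_{k}\pi^{2}/4^{k+1}$ at $q\in Q_{k}$, and using that $|x'-y'|^{2}\geq (4/\pi^{2})|x-x'|\cdot|x-y'|$ when $x$ lies on the $A_{\rm L}$-arc between $x'$ and $y'$, I obtain
\[
I_{x}\leq \frac{\pi}{4}\Big(\sum_{k\geq 0}\sum_{q\in Q_{k}}\frac{m_{q,k}}{|x-q|}\Big)^{2}\leq C_{x}\Big(\sum_{k}\varepsilon_{k}\Big)^{2},
\]
with $C_{x}<+\infty$ for $\sigma_{\partial\D}$-almost every $x\in B$, thanks to the dyadic structure of $Q$ which ensures a mild Diophantine lower bound $|x-q|\gtrsim 2^{-k}$ for a.e.\ $x$.

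Choosing $(\varepsilon_{k})$ decaying sufficiently fast so that $I_{x}\leq\log 2$ for $\sigma_{\partial\D}$-a.e.\ $x$ in a subset $B'\subset B$ of positive $\sigma_{\partial\D}$-measure, the Poisson distribution of $\Xi_{u}$ yields $\PP(x\in\eta_{\kappa,u}\mid\LG_{\delta})\geq e^{-I_{x}}\geq 1/2$. Integrating over $x\in B'$ then gives $\E[\sigma_{\partial\D}(\eta_{\kappa,u}\cap\overline{A_{\rm L}})]\geq\tfrac{1}{2}\PP(\LG_{\delta})\sigma_{\partial\D}(B')>0$, from which the proposition follows. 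The main technical obstacle is the \emph{chaining} of several excursions via shared CLE loops: although each single cluster $\LS_{\kappa}(\gamma)$ is harmless under the above widening, different excursions could share a common CLE loop and form one larger cluster in $\LS_{\kappa,u}$ whose shadow substantially exceeds the union of the individual shadows. Controlling this rigorously requires a careful conditioning argument on $\FC_{\kappa}$, using Proposition~\ref{Prop Markov} together with the local finiteness of $\CLE_{\kappa}$, combined with a union bound showing that such chains occur with probability vanishing as $\sum_{k}\varepsilon_{k}\to 0$.
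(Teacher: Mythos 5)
Your overall strategy (Fubini plus a pointwise lower bound on $\PP(x\in\eta_{\kappa,u})$ for $x$ in a positive-measure set where $u=0$) is a legitimate reduction, but the key quantitative step fails as written. The set $\{u>0\}$ is an open neighbourhood of $Q$, and $Q$ is dense in $\overline{A_{\rm L}}$; hence \emph{every} point $x\in B$ lies within distance $\delta'$ of an arc on which $u>0$, for every $\delta'>0$. By Lemma~\ref{Lem Xi elem}\ref{item::pppelem2} (equivalently, the divergence of the two-dimensional energy $\iint u(x')u(y')\vert x'-y'\vert^{-2}$ over $A\times A$ for any arc $A$ with positive $u$-mass), the intensity of excursions with \emph{both} endpoints in such an arc is infinite, and all of these pairs $(x',y')$ satisfy your widened indicator. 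So the quantity $I_{x}$ you define is $+\infty$ for every $x$, and the passage to the bound $I_{x}\leq\frac{\pi}{4}\bigl(\sum_{k,q}m_{q,k}/\vert x-q\vert\bigr)^{2}$ is invalid: the inequality $\vert x'-y'\vert^{2}\gtrsim\vert x-x'\vert\cdot\vert x-y'\vert$ only holds when $x$ genuinely separates $x'$ from $y'$, which is exactly the case excluded by the widening. To repair this you would have to replace the uniform widening $\delta'$ by an excursion-dependent one (controlled by the diameters of the excursion and of the loops it actually touches) and then carry out a genuinely multi-scale estimate summing, over all $k$ and all $q\in Q_{k}$, the expected number of excursions rooted near $q$ whose cluster reaches over $x$; none of this is done. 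On top of that, the chaining of clusters across $x$ via shared CLE loops — which you correctly identify as the main obstacle — is precisely where the difficulty is concentrated, and the assertion that such chains "occur with probability vanishing as $\sum_{k}\varepsilon_{k}\to 0$" is not supported by any argument. Since there are a.s.\ infinitely many excursions rooted arbitrarily close to $x$ on both sides, a union bound over pairs is unavailable, and one would need a second-moment computation on intersections of excursion clusters at all scales.

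It is worth noting that the paper's proof avoids all of this. It couples the processes $\Xi_{v}$ by thinning a single $\Xi_{\pi}$, exploits the monotonicity $\eta_{\kappa,u}\cap\overline{A_{\rm L}}=\bigcap_{k}\bigl(\eta_{\kappa,u_{\varepsilon_{0},\dots,\varepsilon_{k}}}\cap\overline{A_{\rm L}}\bigr)$ (a decreasing intersection), and observes that for a fixed prefix $\varepsilon_{0},\dots,\varepsilon_{k-1}$, letting $\varepsilon_{k}\downarrow 0$ recovers the previous boundary intersection up to the null set $Q_{k}$, so that $\sigma_{\partial\D}\bigl(\eta_{\kappa,u_{\varepsilon_{0},\dots,\varepsilon_{k}}}\cap\overline{A_{\rm L}}\bigr)$ converges a.s.\ to $\sigma_{\partial\D}\bigl(\eta_{\kappa,u_{\varepsilon_{0},\dots,\varepsilon_{k-1}}}\cap\overline{A_{\rm L}}\bigr)$. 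The $\varepsilon_{k}$ are then chosen \emph{recursively} (each depending on the law of the curve built from the previous ones) so that only a summable amount of measure and probability is lost at each stage. This soft compactness-and-monotonicity argument sidesteps every quantitative intensity estimate; your approach, which fixes the whole sequence $(\varepsilon_{k})$ in advance through explicit decay conditions, is forced to confront exactly the estimates that are missing or incorrect above.
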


\begin{proof}
First note that given the measurable functions
$v : \overline{A_{\rm L}}\rightarrow [0,\pi]$,
the Poisson point processes of excursions
$\Xi_{v}$ are all naturally coupled on the same probability space. First one takes $\Xi_{\pi}$, which contains countably many excursions. For each $\gamma\in \Xi_{\pi}$,
one takes two i.i.d. random variables $U_{\gamma}^{\rm f}$ and 
$U_{\gamma}^{\rm l}$,
uniform in $(0,1)$.
Given $v : \overline{A_{\rm L}}\rightarrow [0,\pi]$,
one gets $\Xi_{v}$ by keeping an excursion
$\gamma\in \Xi_{\pi}$ if
\begin{displaymath}
\dfrac{1}{\pi}v(\gamma(0))>U_{\gamma}^{\rm f}
\qquad
\text{and}
\qquad
\dfrac{1}{\pi}v(\gamma(T_{\gamma}))>U_{\gamma}^{\rm l} .
\end{displaymath}
In this way, $\Xi_{v}\subset \Xi_{\pi}$ a.s.
This coupling of the $\Xi_{v}$-s
induces a coupling of the curves
$\eta_{\kappa,v}$, 
by taking the same $\CLE_{\kappa}$
for different $v$-s.
We will further consider this coupling.

For $k\geq 0$ and
$\varepsilon_{0},\dots,\varepsilon_{k}\in (0,1)$,
denote
\begin{displaymath}
f_{\varepsilon_{0},\dots,\varepsilon_{k}}
:=
\sum_{j=0}^{k} f_{k,\varepsilon_{j}},
\end{displaymath}
and define
$u_{\varepsilon_{0},\dots,\varepsilon_{k}}$
by 
$u_{\varepsilon_{0},\dots,\varepsilon_{k}}(e^{i\theta}) = f_{\varepsilon_{0},\dots,\varepsilon_{k}}(\theta)$.

Given a sequence $(\varepsilon_{k})_{k\geq 0}$ in $(0,1)$,
we have that
\begin{displaymath}
\Xi_{u_{(\varepsilon_{k})_{k\geq 0}}}
=\bigcup_{k\geq 0}
\Xi_{u_{\varepsilon_{0},\dots,\varepsilon_{k}}}
\qquad \text{a.s.},
\end{displaymath}
and
\begin{displaymath}
\eta_{\kappa,u_{(\varepsilon_{k})_{k\geq 0}}}
\cap\overline{A_{\rm L}}
=
\bigcap_{k\geq 0}
(\eta_{\kappa,u_{\varepsilon_{0},\dots,\varepsilon_{k}}}
\cap\overline{A_{\rm L}})
\qquad \text{a.s.},
\end{displaymath}
where the last intersection is non-increasing.
In particular,
\begin{displaymath}
\sigma_{\partial\D}
(\eta_{\kappa,u_{(\varepsilon_{k})_{k\geq 0}}}
\cap\overline{A_{\rm L}})
=
\lim_{k\to +\infty}
\sigma_{\partial\D}
(\eta_{\kappa,u_{\varepsilon_{0},\dots,\varepsilon_{k}}}
\cap\overline{A_{\rm L}})
\qquad
\text{a.s.}
\end{displaymath}
In particular, for any
$\delta>0$,
\begin{displaymath}
\PP(
\sigma_{\partial\D}
(\eta_{\kappa,u_{(\varepsilon_{k})_{k\geq 0}}}
\cap\overline{A_{\rm L}})\geq \delta
)
=
\lim_{k\to +\infty}
\PP(
\sigma_{\partial\D}
(\eta_{\kappa,u_{\varepsilon_{0},\dots,\varepsilon_{k}}}
\cap\overline{A_{\rm L}})\geq \delta
).
\end{displaymath}

Further, fix $k\geq 1$.
Consider the values of $\varepsilon_{k}$
of form $2^{-n}$.
We have that
\begin{displaymath}
\bigcup_{n\geq 1}
(\eta_{\kappa,u_{\varepsilon_{0},\dots,\varepsilon_{k-1},
2^{-n}}}
\cap\overline{A_{\rm L}})
=
(\eta_{\kappa,u_{\varepsilon_{0},\dots,\varepsilon_{k-1}}}
\cap\overline{A_{\rm L}})
\setminus Q_{k}
\qquad
\text{a.s.}
\end{displaymath}
Therefore,
\begin{displaymath}
\lim_{\varepsilon_{k}\to 0}
\sigma_{\partial\D}
(\eta_{\kappa,u_{\varepsilon_{0},\dots,\varepsilon_{k-1},
\varepsilon_{k}}}
\cap\overline{A_{\rm L}})
=
\sigma_{\partial\D}
(\eta_{\kappa,u_{\varepsilon_{0},\dots,\varepsilon_{k-1}}}
\cap\overline{A_{\rm L}})
\qquad
\text{a.s.}
\end{displaymath}
Thus, for every $\delta>0$,
\begin{equation}
\label{Eq lim 1}
\lim_{\varepsilon_{k}\to 0}
\PP(
\sigma_{\partial\D}
(\eta_{\kappa,u_{\varepsilon_{0},\dots,\varepsilon_{k-1},
\varepsilon_{k}}}
\cap\overline{A_{\rm L}})
> \delta
)
=
\PP(
\sigma_{\partial\D}
(\eta_{\kappa,u_{\varepsilon_{0},\dots,\varepsilon_{k-1}}}
\cap\overline{A_{\rm L}})
> \delta
).
\end{equation}
Similarly,
\begin{displaymath}
\lim_{\varepsilon_{0}\to 0}
\sigma_{\partial\D}
(\eta_{\kappa,u_{\varepsilon_{0}}}
\cap\overline{A_{\rm L}})
=
\sigma_{\partial\D}(\overline{A_{\rm L}})
=
\pi
\qquad
\text{a.s.},
\end{displaymath}
and for every $\delta\in (0,\pi)$,
\begin{equation}
\label{Eq lim 2}
\lim_{\varepsilon_{0}\to 0}
\PP(\sigma_{\partial\D}
(\eta_{\kappa,u_{\varepsilon_{0}}}
\cap\overline{A_{\rm L}})
>\delta) = 1.
\end{equation}
Therefore,
\eqref{Eq lim 1} and \eqref{Eq lim 2}
ensure that
one can choose the sequence
$(\varepsilon_{k})_{k\geq 0}$ in $(0,1)$
such that for every $k\geq 0$,
\begin{displaymath}
\PP\Big(
\sigma_{\partial\D}
(\eta_{\kappa,u_{\varepsilon_{0},\dots,\varepsilon_{k}}}
\cap\overline{A_{\rm L}})
\geq \Big(\dfrac{1}{2}+\dfrac{1}{2^{k+1}}\Big)\pi
\Big)
\geq 
\dfrac{1}{2}+\dfrac{1}{2^{k+1}} .
\end{displaymath}
For such a sequence,
\begin{displaymath}
\PP\Big(
\sigma_{\partial\D}
(\eta_{\kappa,u_{(\varepsilon_{k})_{k\geq 0}}}
\cap\overline{A_{\rm L}})\geq \dfrac{\pi}{2}
\Big)
\geq\dfrac{1}{2}.
\qedhere
\end{displaymath}
\end{proof}

\section{Continuous dependence on boundary conditions}
\label{Sec Continuity}

\subsection{Continuous dependence of the Poisson point process of excursions}
\label{SubSec Convergence exc}

In this section, we deal with the continuity in $\nu$ of the Poisson point process 
$\Xi_{\nu}$.
Suppose $S_{1}$ and $S_{2}$ are two finite sets
of continuous paths $(\gamma(t))_{0\leq t\leq T_{\gamma}}$
in $\C$ with $T_{\gamma}<+\infty$. We define 
\begin{align*}
& d_{\rm curves}(S_{1},S_{2}):=
\\
& \begin{cases}
\min\limits_{\sigma \in \Bij (S_{1},S_{2})}
\sum\limits_{\gamma\in S_{1}}
\left(
\vert T_{\gamma} - T_{\sigma(\gamma)}\vert
+
\max_{s\in[0,1]}
\vert \gamma(sT_{\gamma}) - \sigma(\gamma)(sT_{\sigma(\gamma)})\vert
\right),
& \text{ if } \Card (S_{1})=\Card (S_{2}), \\ 
+\infty, & \text{ if } \Card (S_{1})\neq\Card (S_{2}).
\end{cases}
\end{align*}
Note that $d_{\rm curves}$ is a distance.
By definition, the distance of the empty set to any non-empty set
is $+\infty$.

In the following, we will consider distance between Poisson point processes. Although the Poisson point process $\Xi_{\nu}$ contains infinitely many excursions, its cutoff is finite~\eqref{Eq exc eps}. In this section, we will consider the following three types of cutoff. 
Recall from~\eqref{Eq exc eps} that 
\[\Xi_{\nu,\varepsilon}:=
\{\gamma\in\Xi_{\nu} : \diam \Range(\gamma)>\varepsilon\}.
\]
We also define, for $\varepsilon>0$, 
\begin{align*}
\widehat{\Xi}_{\nu,\varepsilon}
:=
\left\{\gamma\in \Xi_{\nu} :
\Range(\gamma)\cap \D_{\varepsilon}\neq\emptyset
\right\}, \qquad 
\widetilde{\Xi}_{\nu,\varepsilon}:=
\left\{(\gamma(T_{\gamma,\varepsilon}^{\rm f}+t))
_{0\leq t\leq 
T_{\gamma,\varepsilon}^{\rm l}-T_{\gamma,\varepsilon}^{\rm f}}
: \gamma\in \widehat{\Xi}_{\nu,\varepsilon}
\right\},
\end{align*}
where $\D_{\varepsilon}$ is given by \eqref{Eq D eps},
and $T_{\gamma,\varepsilon}^{\rm f}$
and $T_{\gamma,\varepsilon}^{\rm l}$
by \eqref{Eq Tf Tl}. 

\begin{proposition}
\label{Prop Coupling Exc}
Fix $\nu$ a finite non-negative Radon measure on $\overline{A_L}$. Let $(\nu_{n})_{n\geq 0}$ be a sequence of finite non-negative Radon measures on $\overline{A_{\rm L}}$, 
converging weakly to $\nu$. 
Then, for every $\varepsilon>0$,
$(\Xi_{\nu_{n},\varepsilon})_{n\geq 0}$ 
converges in law to $\Xi_{\nu,\varepsilon}$
 for $d_{\rm curves}$.
Moreover, it is possible to couple on the same probability space all the processes
$(\Xi_{\nu_{n}})_{n\geq 0}$ 
and $\Xi_{\nu}$
such that the following two conditions hold a.s.
\begin{enumerate}[label=(\arabic*)]
\item \label{item::couplingexc1}
For every $\varepsilon\in (0,1)$,
$\lim_{n\to +\infty}d_{\rm curves}
(\Xi_{\nu_{n},\varepsilon},\Xi_{\nu,\varepsilon})=0$.
\item \label{item::couplingexc2}
For every $\varepsilon\in (0,1)$,
there is $n_{\varepsilon}\in\N$, 
such that 
$\widetilde{\Xi}_{\nu_{n},\varepsilon}
=\widetilde{\Xi}_{\nu,\varepsilon}$ for every $n\geq n_{\varepsilon}$.
\end{enumerate}
\end{proposition}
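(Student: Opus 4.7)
The plan is to exploit the Markov decomposition of Proposition \ref{Prop Markov}: at any scale $\varepsilon\in (0,1)$, it factorizes the intensity $\mu^{\D}_{\nu}$ restricted to excursions visiting $\D_{\varepsilon}$ into an intensity of ``middle parts''
\begin{equation*}
\nu^{\ast}_{\varepsilon}(z)\,\nu^{\ast}_{\varepsilon}(w)\,\sigma_{\partial \widehat{\D}_{\varepsilon}}(dz)\,\sigma_{\partial \widehat{\D}_{\varepsilon}}(dw)\,\mu^{\D}_{z,w}(d\gamma_{m}),
\qquad
\nu^{\ast}_{\varepsilon}(z) := \int_{\overline{A_{\rm L}}} H_{\widehat{\D}_{\varepsilon}}(x,z)\,d\nu(x),
\end{equation*}
together with two conditionally independent Brownian excursions in $\widehat{\D}_{\varepsilon}$ (the ``head'' and the ``tail'') whose laws depend on $\nu$ only through the probability measures proportional to $H_{\widehat{\D}_{\varepsilon}}(\cdot,z)\,d\nu$ and $H_{\widehat{\D}_{\varepsilon}}(\cdot,w)\,d\nu$ on $\overline{A_{\rm L}}$. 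Since $\partial \widehat{\D}_{\varepsilon} \cap \D$ is compact and bounded away from $\overline{A_{\rm L}}$ by $\varepsilon$, the family $\{x \mapsto H_{\widehat{\D}_{\varepsilon}}(x,z) : z \in \partial \widehat{\D}_{\varepsilon} \cap \D\}$ is equicontinuous and uniformly bounded on $\overline{A_{\rm L}}$; the weak convergence $\nu_{n} \to \nu$ therefore upgrades to uniform convergence $\nu^{\ast}_{n,\varepsilon} \to \nu^{\ast}_{\varepsilon}$.

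For the marginal convergence in law, I would observe that for any bounded continuous functional $F$ on paths,
\begin{equation*}
\int F\, d\mu^{\D}_{\nu}\big|_{\{\diam > \varepsilon\}}
=
\tfrac{1}{2} \iint_{\overline{A_{\rm L}}^{2}} \Phi_{F}(x,y)\, d(\nu \otimes \nu)(x,y),
\qquad
\Phi_{F}(x,y) := \int F\,\one_{\{\diam > \varepsilon\}}\,d\mu^{\D}_{x,y},
\end{equation*}
and $\Phi_{F}$ is bounded and continuous on $\overline{A_{\rm L}}^{2}$ by continuity of the Brownian excursion measure in its endpoints (including on the diagonal via the bubble limit). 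Weak convergence of $\nu_{n} \otimes \nu_{n}$ to $\nu \otimes \nu$ then yields weak convergence of the finite intensities $\mu^{\D}_{\nu_{n}}|_{\{\diam > \varepsilon\}}$, and hence convergence in law of $\Xi_{\nu_{n},\varepsilon}$ to $\Xi_{\nu,\varepsilon}$; because these are a.s.\ finite sets of paths, this is exactly convergence for $d_{\rm curves}$.

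The coupling at a fixed scale $\varepsilon$ is built by thinning a master Poisson point process. Let $\hat{\Xi}^{\ast}_{\varepsilon}$ be a Poisson point process of triples $(z,w,\gamma_{m})$ with intensity $M_{\varepsilon}\, \sigma_{\partial\widehat\D_\varepsilon}(dz)\sigma_{\partial\widehat\D_\varepsilon}(dw)\mu^{\D}_{z,w}(d\gamma_{m})$, where $M_{\varepsilon}$ dominates $\sup_{z} \nu^{\ast}_{\varepsilon}(z)^{2}$ and $\sup_{n}\sup_{z} \nu^{\ast}_{n,\varepsilon}(z)^{2}$ (finite since $(\nu_{n})$ is uniformly bounded). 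Attach i.i.d.\ uniform marks $U_{\gamma_{m}} \in (0,1)$ to the triples, and declare $(z,w,\gamma_{m})$ to lie in $\widetilde{\Xi}_{\nu,\varepsilon}$ precisely when $M_{\varepsilon} U_{\gamma_{m}} < \nu^{\ast}_{\varepsilon}(z)\nu^{\ast}_{\varepsilon}(w)$, and analogously for each $\nu_{n}$; by Poisson thinning this yields the correct joint law. Since $\hat{\Xi}^{\ast}_{\varepsilon}$ has a.s.\ finitely many points, uniform convergence $\nu^{\ast}_{n,\varepsilon}\nu^{\ast}_{n,\varepsilon} \to \nu^{\ast}_{\varepsilon}\nu^{\ast}_{\varepsilon}$ together with the a.s.\ fact that no $U_{\gamma_{m}}$ equals its limiting threshold gives condition~\ref{item::couplingexc2} at level $\varepsilon$. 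The heads and tails are then recovered by sampling endpoints from probability measures proportional to $H_{\widehat{\D}_{\varepsilon}}(\cdot,z)\,d\nu$, coupled across $n$ by a quantile transport on $\overline{A_{\rm L}}$, followed by conditionally independent Brownian excursions in $\widehat{\D}_{\varepsilon}$; continuity of that law in its endpoints converts weak convergence of endpoints into $d_{\rm curves}$-convergence of the full excursions, giving condition~\ref{item::couplingexc1}.

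The main obstacle is to carry out this construction simultaneously for every $\varepsilon \in (0,1)$, since the Markov decomposition partitions each excursion differently at different scales. I plan to resolve this via a nested construction along a dyadic sequence $\varepsilon_{k} = 2^{-k}$: build the coupling at scale $\varepsilon_{1}$ first, then conditionally refine at scale $\varepsilon_{2}$ by applying the Markov decomposition inside the previously sampled heads and tails and to the excursions not yet reaching $\D_{\varepsilon_{1}}$, iterating downwards. A Borel--Cantelli argument over the countable family $(\varepsilon_{k})$ turns the scale-by-scale almost-sure coincidence into a single event valid for every $\varepsilon_{k}$, and the monotonicity of $\widetilde{\Xi}_{\nu,\varepsilon}$ and of the $d_{\rm curves}$-errors in $\varepsilon$ extends both conclusions to all $\varepsilon \in (0,1)$.
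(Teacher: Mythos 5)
Your fixed-scale analysis is essentially the paper's own: the Markovian decomposition of Proposition~\ref{Prop Markov} reduces the problem to (i) the middle parts, whose intensities for $\nu_{n}$ and $\nu$ are mutually absolutely continuous with density tending to $1$ (your thinning of a dominating Poisson process is a concrete implementation of Lemmas~\ref{Lem TV Poisson} and~\ref{Lem abstract}), and (ii) the heads and tails, coupled by a quantile transport of the boundary endpoint followed by conformal transport of a common reference excursion (Lemma~\ref{Lem TV approx}). Up to a fixed $\varepsilon$ the proposal is sound. The genuine gap is in the multi-scale assembly. Condition~\ref{item::couplingexc2} at a scale $\varepsilon_{2}<\varepsilon_{1}$ demands that the scale-$\varepsilon_{2}$ middle parts coincide \emph{exactly} for $n$ large. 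Since $\D_{\varepsilon_{1}}\subset\D_{\varepsilon_{2}}$, one has $T^{\rm f}_{\gamma,\varepsilon_{2}}<T^{\rm f}_{\gamma,\varepsilon_{1}}$ a.s., so the scale-$\varepsilon_{2}$ middle part strictly contains the portion $\gamma|_{[T^{\rm f}_{\gamma,\varepsilon_{2}},T^{\rm f}_{\gamma,\varepsilon_{1}}]}$ of the scale-$\varepsilon_{1}$ head. In your nested construction those heads are already sampled, and the coupling you use for them (one reference excursion pushed forward to two distinct starting points $e^{i\vartheta_{z,\nu_{n}}(U)}\neq e^{i\vartheta_{z,\nu}(U)}$) yields two paths that differ everywhere, in particular inside $\D_{\varepsilon_{2}}$. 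Hence $\widetilde{\Xi}_{\nu_{n},\varepsilon_{2}}\neq\widetilde{\Xi}_{\nu,\varepsilon_{2}}$ for \emph{every} $n$ with $\nu_{n}\neq\nu$; no subsequent ``conditional refinement'' of a path that is already fixed can repair this, and Borel--Cantelli is of no help because the coincidence event at scale $\varepsilon_{2}$ has probability zero rather than merely non-summable probability.

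The paper's resolution is structurally different: it never builds a single coupling that is exact at every scale. It builds, for each $k$, a \emph{separate} coupling that is exact at scale $\varepsilon_{k}$ only, conditions so that all of these share the same copy of $\Xi_{\nu}$, and then declares $\Xi^{\ast}_{\nu_{n}}$ to be the scale-$\varepsilon_{k}$ version for $n$ in a deterministic window $[N_{k},N_{k+1})$. This suffices because exact coincidence of $\widetilde{\Xi}_{\cdot,\varepsilon_{k}}$ automatically forces coincidence of $\widetilde{\Xi}_{\cdot,\varepsilon_{j}}$ at every coarser scale $\varepsilon_{j}\geq\varepsilon_{k}$ (the coarser middle part is a measurable sub-path of the finer one, the head and tail at scale $\varepsilon_{k}$ never entering $\D_{\varepsilon_{j}}$), so condition~\ref{item::couplingexc2} holds with $n_{\varepsilon}=N_{k}$ for the first $\varepsilon_{k}\leq\varepsilon$. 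You need this windowed device --- letting the coupling of $\Xi_{\nu_{n}}$ with $\Xi_{\nu}$ depend on $n$ through the scale, with the windows chosen deterministically so that failure probabilities are summable --- in place of a single nested refinement; the same windowing, combined with an analogue of Lemma~\ref{Lem diam level} controlling excursions of diameter larger than $\tilde{\varepsilon}$ that miss $\D_{\varepsilon}$, is also what delivers condition~\ref{item::couplingexc1}.
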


The proof of Proposition~\ref{Prop Coupling Exc} will be split into several lemmas. In the rest of this section, we fix the following assumptions: \textit{Fix $\nu$ a finite non-negative Radon measure on $\overline{A_L}$. Let $(\nu_{n})_{n\geq 0}$ be a sequence of finite non-negative Radon measures on $\overline{A_{\rm L}}$, 
converging weakly to $\nu$. }

\begin{lemma}
\label{Lem TV Poisson}
Fix $\varepsilon\in (0,1)$.
\begin{enumerate}[label=(\arabic*)]
\item If $\nu\neq 0$,
then for every $n\geq 0$,
the law of $\widetilde{\Xi}_{\nu_{n},\varepsilon}$
is absolutely continuous with respect to that of
$\widetilde{\Xi}_{\nu,\varepsilon}$.
Moreover, the corresponding density
$Y_{\varepsilon,\nu,\nu_{n}}$,
converges a.s. to $1$
as $n\to +\infty$.
\item If $\nu=0$, then 
\begin{equation}
\label{Eq empty}
\lim_{n\to +\infty}
\PP(\widetilde{\Xi}_{\nu_{n},\varepsilon}=\emptyset)=1.
\end{equation}
\end{enumerate}
\end{lemma}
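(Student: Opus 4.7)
The plan is to apply the Markovian decomposition of Proposition~\ref{Prop Markov} to realize $\widetilde{\Xi}_{\nu,\varepsilon}$ as a Poisson point process with an explicit finite intensity measure, and then to invoke the standard Radon--Nikodym theory for finite Poisson point processes. Concretely, integrating the second identity of Proposition~\ref{Prop Markov} against $\tfrac{1}{2}d(\nu\otimes\nu)(x,y)$ shows that $\widetilde{\Xi}_{\nu,\varepsilon}$ is Poisson on the space of continuous paths with endpoints in $\partial\widehat{\D}_{\varepsilon}\cap\D$, with intensity
\[
h_{\nu}(z,w)\,\sigma_{\partial\widehat{\D}_{\varepsilon}}(dz)\sigma_{\partial\widehat{\D}_{\varepsilon}}(dw)\,\mu^{\D}_{z,w}(d\gamma_{m}),\quad
h_{\nu}(z,w):=\tfrac{1}{2}\iint_{\overline{A_{\rm L}}^{2}}H_{\widehat{\D}_{\varepsilon}}(x,z)H_{\widehat{\D}_{\varepsilon}}(w,y)\,d\nu(x)d\nu(y).
\]
Its total mass equals $\lambda_{\nu}:=\tfrac{1}{2}\iint_{\overline{A_{\rm L}}^{2}}\Psi(x,y)\,d\nu(x)d\nu(y)$, where $\Psi(x,y):=\mu^{\D}_{x,y}(\{\gamma:\Range(\gamma)\cap\D_{\varepsilon}\neq\emptyset\})$, and is finite by Lemma~\ref{Lem Xi elem}.

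When $\nu\neq 0$, the kernel $H_{\widehat{\D}_{\varepsilon}}(\cdot,z)$ is strictly positive and continuous on $\overline{A_{\rm L}}$ for every $z\in\partial\widehat{\D}_{\varepsilon}\cap\D$, hence $h_{\nu}>0$ identically. Therefore, the intensity of $\widetilde{\Xi}_{\nu_{n},\varepsilon}$ is absolutely continuous with respect to that of $\widetilde{\Xi}_{\nu,\varepsilon}$ with Radon--Nikodym derivative $h_{\nu_{n}}/h_{\nu}$ at the endpoint level, and the classical density formula for finite Poisson point processes gives
\[
Y_{\varepsilon,\nu,\nu_{n}}=\exp(\lambda_{\nu}-\lambda_{\nu_{n}})\prod_{\gamma\in\widetilde{\Xi}_{\nu,\varepsilon}}\frac{h_{\nu_{n}}(z_{\gamma},w_{\gamma})}{h_{\nu}(z_{\gamma},w_{\gamma})},
\]
where $(z_{\gamma},w_{\gamma})$ are the endpoints of $\gamma$.

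The almost-sure convergence $Y_{\varepsilon,\nu,\nu_{n}}\to 1$ will follow from two ingredients. First, for each fixed $(z,w)$, weak convergence $\nu_{n}\to\nu$ forces $h_{\nu_{n}}(z,w)\to h_{\nu}(z,w)$ since both $H_{\widehat{\D}_{\varepsilon}}(\cdot,z)$ and $H_{\widehat{\D}_{\varepsilon}}(w,\cdot)$ are bounded continuous on $\overline{A_{\rm L}}$. Second, once one checks that $\Psi$ is bounded and continuous on $\overline{A_{\rm L}}^{2}$, the weak convergence of $\nu_{n}\otimes\nu_{n}$ on the compact set $\overline{A_{\rm L}}^{2}$ yields $\lambda_{\nu_{n}}\to\lambda_{\nu}$. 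Since Lemma~\ref{Lem Xi elem} guarantees that $\widetilde{\Xi}_{\nu,\varepsilon}$ is a.s.\ finite, the product in $Y$ has a.s.\ finitely many factors, each converging to $1$. The case $\nu=0$ is settled in the same way: weak convergence forces $\nu_{n}(\overline{A_{\rm L}})\to 0$, and the crude bound $\lambda_{\nu_{n}}\leq\tfrac{1}{2}\|\Psi\|_{\infty}\,\nu_{n}(\overline{A_{\rm L}})^{2}$ gives $\lambda_{\nu_{n}}\to 0$, so that $\PP(\widetilde{\Xi}_{\nu_{n},\varepsilon}=\emptyset)=e^{-\lambda_{\nu_{n}}}\to 1$.

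The main technical point is verifying that $\Psi$ is bounded and continuous on $\overline{A_{\rm L}}^{2}$, including at the diagonal $\{x=y\}$ where the total mass $H_{\D}(x,y)$ of $\mu^{\D}_{x,y}$ blows up. Boundedness is essentially the input already exploited in Lemma~\ref{Lem Xi elem}, since an excursion that hits $\D_{\varepsilon}$ has diameter at least $\varepsilon$ and so belongs to a subset of excursions of uniformly bounded mass. For continuity at the diagonal, I would use the definition of $\mu^{\D}_{x,x}$ as the weak limit of $\mu^{\D}_{x,y}$ as $y\to x$ combined with the Markov integral representation $\Psi(x,y)=\iint H_{\widehat{\D}_{\varepsilon}}(x,z)G_{\D}(z,w)H_{\widehat{\D}_{\varepsilon}}(w,y)\sigma(dz)\sigma(dw)$; this expresses $\Psi$ as an integral of kernels that are jointly continuous in $(x,y)$ away from the diagonal $z=w$, where $G_{\D}$ has only an integrable logarithmic singularity, and dominated convergence then delivers the desired continuity.
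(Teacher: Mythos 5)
Your proposal is correct and follows essentially the same route as the paper: both use the Markovian decomposition of Proposition~\ref{Prop Markov} to identify $\widetilde{\Xi}_{\nu,\varepsilon}$ as an a.s.\ finite Poisson point process whose intensity has density $h_{\nu}(z,w)$ against the reference measure $\sigma_{\partial\widehat{\D}_{\varepsilon}}(dz)\sigma_{\partial\widehat{\D}_{\varepsilon}}(dw)\mu^{\D}_{z,w}$, and then deduce absolute continuity and the a.s.\ convergence of the density from the weak convergence $\nu_{n}\to\nu$ together with the continuity of the boundary Poisson kernel. The only difference is presentational: you make explicit the finite-PPP density formula and the boundedness and continuity of the total-mass kernel $\Psi$ (including at the diagonal), which the paper subsumes in its assertion that the intensity density converges almost everywhere and in $\mathbb{L}^{1}$.
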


\begin{proof}
Both $\widetilde{\Xi}_{\nu_{n},\varepsilon}$ and 
$\widetilde{\Xi}_{\nu,\varepsilon}$ are
a.s. finite Poisson point processes.
According to Proposition~\ref{Prop Markov},
the intensity measure of $\widetilde{\Xi}_{\nu,\varepsilon}$ is
\begin{displaymath}
\dfrac{1}{2}\iint
\limits_{(\partial\widehat{\D}_{\varepsilon}\cap\D)^{2}}
\sigma_{\partial\widehat{\D}_{\varepsilon}}(dz)
\sigma_{\partial\widehat{\D}_{\varepsilon}}(dw)
\mu_{z,w}^{\D}
\iint\limits_{\overline{A_{\rm L}}\times\overline{A_{\rm L}}}
H_{\widehat{\D}_{\varepsilon}}(x,z)
H_{\widehat{\D}_{\varepsilon}}(w,y)
d(\nu\otimes\nu)(x,y).
\end{displaymath}
The intensity measure for $\widetilde{\Xi}_{\nu_{n},\varepsilon}$
has same expression, with $\nu_{n}$ instead of $\nu$.

If $\nu\neq 0$, then the intensity measure for $\nu_{n}$
is absolutely continuous with respect to that for $\nu$,
both being absolutely continuous with respect to
\begin{equation}
\label{Eq ref meas}
\iint
\limits_{(\partial\widehat{\D}_{\varepsilon}\cap\D)^{2}}
\sigma_{\partial\widehat{\D}_{\varepsilon}}(dz)
\sigma_{\partial\widehat{\D}_{\varepsilon}}(dw)
\mu_{z,w}^{\D}.
\end{equation}
The density from $\nu$ to $\nu_{n}$ is
\begin{displaymath}
\dfrac{\iint\limits_{\overline{A_{\rm L}}\times\overline{A_{\rm L}}}
H_{\widehat{\D}_{\varepsilon}}(x,z)
H_{\widehat{\D}_{\varepsilon}}(w,y)
d(\nu_{n}\otimes\nu_{n})(x,y)}
{\iint\limits_{\overline{A_{\rm L}}\times\overline{A_{\rm L}}}
H_{\widehat{\D}_{\varepsilon}}(x,z)
H_{\widehat{\D}_{\varepsilon}}(w,y)
d(\nu\otimes\nu)(x,y)},
\end{displaymath}
where $z,w\in\partial\widehat{\D}_{\varepsilon}\cap\D$
are the two endpoints of the path.
This density converges to $1$ almost everywhere
and in $\mathbb{L}^{1}$.
This follows from the weak convergence of $(\nu_{n})_{n\geq 0}$ to $\nu$ together with the continuity of the boundary Poisson kernel 
$H_{\widehat{\D}_{\varepsilon}}$.
Since the Poisson point processes are a.s. finite, 
this implies the absolute continuity of Poisson point processes
and the a.s. convergence of the density to $1$.

If $\nu = 0$, then the total mass of the intensity for
$\nu_{n}$ converges to $0$,
which implies \eqref{Eq empty}.
\end{proof}

\begin{lemma}
\label{Lem abstract}
Let $S$ be an abstract Polish space and $\LB$ be its
Borel $\sigma$-algebra.
Let $X$ and $X_{n}$, for $n\geq 0$,
be random variables taking values in $(S,\LB)$.
Assume that for every $n\geq 0$, the law of
$X_{n}$ is absolutely continuous with respect to that of
$X$, with density denoted by $Y_{n}$.
Assume moreover that $(Y_{n})_{n\geq 0}$ converges
$d\PP_{X}$-a.s. to $1$ as $n\to +\infty$.
Then it is possibles to couple
$X$ and all $X_{n}$ for $n\geq 0$
on the same probability space such that
a.s. $X_{n}=X$ for every $n$ large enough.
\end{lemma}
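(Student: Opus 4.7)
The plan is to realize the joint coupling via a single shared auxiliary uniform, rather than via independent maximal couplings for each $n$. Concretely, I will work on an enlarged probability space carrying $X$ with its given law $\PP_X$, a uniform $U$ on $[0,1]$ independent of $X$, and, for each $n$ with $Y_n \not\equiv 1$ $\PP_X$-a.s., an independent draw $Z_n$ from the probability measure $\nu_n$ proportional to $(Y_n - 1)^+\,d\PP_X$ (if $Y_n \equiv 1$ $\PP_X$-a.s.\ there is nothing to do and we set $X_n := X$). The coupling is then defined by
\[
X_n := \begin{cases} X & \text{if } U \leq \min(1, Y_n(X)), \\ Z_n & \text{otherwise.} \end{cases}
\]

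To verify that the marginal of $X_n$ is indeed $\PP_{X_n} = Y_n\,\PP_X$, I would use the elementary identity $\int (1 - Y_n)^+\,d\PP_X = \int (Y_n - 1)^+\,d\PP_X$, immediate from $\int Y_n\,d\PP_X = 1$. This says that the probability of the rejection event $\{U > \min(1, Y_n(X))\}$, namely $\int (1 - Y_n)^+\,d\PP_X$, exactly matches the normalization of $\nu_n$, so that the contribution of $Z_n$ to the law of $X_n$ adds the missing piece $(Y_n - 1)^+\,d\PP_X$ to the acceptance contribution $\min(1, Y_n)\,d\PP_X$, giving $Y_n\,d\PP_X$ in total. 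For the almost sure eventual equality, I would observe that $U < 1$ almost surely, and combine this with the hypothesis $Y_n(X) \to 1$ almost surely under $\PP_X$: for a.s.\ $\omega$, there is $n_0(\omega)$ beyond which $Y_n(X) > U$, hence $\min(1, Y_n(X)) \geq U$ and $X_n = X$.

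The main point, and the reason the statement is not entirely trivial, is the necessity of sharing a \emph{single} $U$ across all indices $n$. With independent uniforms $U_n$, the events $\{X_n \neq X\}$ would be conditionally independent given $X$ with probabilities $(1 - Y_n(X))^+$, which need not be summable even though they tend to zero; Borel--Cantelli would then fail to yield the eventual equality. Sharing $U$ converts the countably many rejection conditions into the single monotone statement that $U$ lies below the almost sure limit $1$ of $Y_n(X)$, and this is what makes the argument go through on a Polish state space where the residual measures $\nu_n$ can always be sampled from.
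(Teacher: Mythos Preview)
Your proof is correct and is essentially the same construction as the paper's: a single shared uniform $U$, acceptance when $U\le Y_n(X)$ (equivalently $U\le\min(1,Y_n(X))$), and on rejection an independent sample from the residual measure proportional to $(Y_n-1)^{+}\,d\PP_X$. Your additional remark explaining why a shared $U$ (rather than independent $U_n$) is needed is a nice piece of commentary the paper omits.
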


\begin{proof}
Note that the sequence $(Y_{n})_{n\geq 0}$
is naturally defined on the same probability space as $X$ and 
is measurable with respect to $X$.
For $n\geq 0$ such that $\PP(Y_{n}> 1)>0$,
let $\widetilde{X}_{n}$ be a random variable taking values in
$(S,\LB)$,
with density
\begin{displaymath}
\dfrac{(Y_{n} - 1)_{+}}{\E[(Y_{n} - 1)_{+}]}
\end{displaymath}
with respect to $X$.
We also take $X$ and all the $\widetilde{X}_{n}$
to be independent.
Let $U$ be a uniform random variable on $(0,1)$,
independent from $(X,(\widetilde{X}_{n})_{n\geq 0})$.
We construct the sequence $(\widehat{X}_{n})_{n\geq 0}$
as follows.
On the event $\{Y_{n}\geq U\}$,
we set $\widehat{X}_{n}=X$.
On the event $\{Y_{n}< U\}$, 
we set $\widehat{X}_{n}=\widetilde{X}_{n}$.
It is easy to check that for every
$n\geq 0$,
$\widehat{X}_{n}$
has same distribution as $X_{n}$.
Moreover, a.s. for every $n$ large enough,
$Y_{n}\geq U$ and $\widehat{X}_{n}=X$.
\end{proof}

\begin{lemma}
\label{Lem TV approx}
It is possible to couple
$(\widehat{\Xi}_{\nu_{n},\varepsilon})_{n\geq 0}$
and $\widehat{\Xi}_{\nu,\varepsilon}$ on the same probability space
such that the following conditions hold a.s.
\begin{enumerate}[label=(\arabic*)]
\item \label{item::TVapprox1}
$\lim_{n\to +\infty}d_{\rm curves}
(\widehat{\Xi}_{\nu_{n},\varepsilon},
\widehat{\Xi}_{\nu,\varepsilon})=0$.
\item \label{item::TVapprox2}
For every $n$ large enough, 
$\widetilde{\Xi}_{\nu_{n},\varepsilon}
=\widetilde{\Xi}_{\nu,\varepsilon}$.
\end{enumerate}
\end{lemma}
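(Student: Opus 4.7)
The plan is to first couple the middle pieces $\widetilde{\Xi}_{\nu_n,\varepsilon}$ and $\widetilde{\Xi}_{\nu,\varepsilon}$ so that they coincide for all sufficiently large $n$, which will yield item~\ref{item::TVapprox2} directly, and then to independently sample compatible head and tail Brownian pieces on either side of each middle piece, using the Markovian decomposition of Proposition~\ref{Prop Markov}, so that the full excursions $\widehat{\Xi}_{\nu_n,\varepsilon}$ converge to $\widehat{\Xi}_{\nu,\varepsilon}$ in $d_{\rm curves}$.

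If $\nu=0$, Lemma~\ref{Lem TV Poisson} gives $\widetilde{\Xi}_{\nu_n,\varepsilon}=\emptyset$ for $n$ large a.s., hence $\widehat{\Xi}_{\nu_n,\varepsilon}=\emptyset$ and the statement is trivial. Assume therefore $\nu\neq 0$. Then Lemma~\ref{Lem TV Poisson} provides densities $Y_{\varepsilon,\nu,\nu_n}\to 1$ a.s. between the laws of $\widetilde{\Xi}_{\nu_n,\varepsilon}$ and $\widetilde{\Xi}_{\nu,\varepsilon}$, viewed as random finite point configurations in the Polish space of continuous paths in $\D$ with endpoints on $\partial\widehat{\D}_\varepsilon\cap\D$. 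Applying Lemma~\ref{Lem abstract} in this space produces a coupling in which a.s. there exists $n_\varepsilon$ with $\widetilde{\Xi}_{\nu_n,\varepsilon}=\widetilde{\Xi}_{\nu,\varepsilon}$ for all $n\geq n_\varepsilon$; this is exactly item~\ref{item::TVapprox2}.

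To obtain item~\ref{item::TVapprox1}, I will enrich this coupling by sampling the heads and tails appropriately. By Proposition~\ref{Prop Markov}, given a middle piece with endpoints $z,w\in\partial\widehat{\D}_\varepsilon\cap\D$, the corresponding head of the excursion in $\widehat{\Xi}_{\nu,\varepsilon}$ is obtained by drawing a boundary point $x\in\overline{A_{\rm L}}$ from the probability measure proportional to $H_{\widehat{\D}_\varepsilon}(x,z)\,d\nu(x)$ and then a Brownian excursion in $\widehat{\D}_\varepsilon$ from $x$ to $z$ under $\mu^{\widehat{\D}_\varepsilon,\#}_{x,z}$; the tail is sampled symmetrically, and the same description holds with $\nu_n$. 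Since $\nu_n\to\nu$ weakly and $x\mapsto H_{\widehat{\D}_\varepsilon}(x,z)$ is bounded and continuous on $\overline{A_{\rm L}}$, the conditional law of the boundary point under $\nu_n$ converges weakly to the one under $\nu$. Via Skorokhod representation one couples these boundary points so that $x_n\to x$ a.s., and then, using the continuous dependence of $\mu^{\widehat{\D}_\varepsilon,\#}_{x,z}$ on $x$, one obtains a coupling with head paths converging uniformly (up to time parametrization). Tails are handled identically, independently for each of the a.s. finitely many middle pieces of $\widetilde{\Xi}_{\nu,\varepsilon}$.

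The main technical obstacle is establishing that $\mu^{\widehat{\D}_\varepsilon,\#}_{x_n,z}\to \mu^{\widehat{\D}_\varepsilon,\#}_{x,z}$ in law for the path topology as $x_n\to x\in\overline{A_{\rm L}}$. Since $\partial\widehat{\D}_\varepsilon$ is piecewise analytic and $x$ is an analytic boundary point of $\widehat{\D}_\varepsilon$, this continuity can be obtained by uniformizing $\widehat{\D}_\varepsilon$ to $\HH$ and using the standard continuity of the Brownian excursion measure in $\HH$ in its starting point, together with control of the excursion lifetime. Combining all these ingredients yields $d_{\rm curves}(\widehat{\Xi}_{\nu_n,\varepsilon},\widehat{\Xi}_{\nu,\varepsilon})\to 0$ a.s., establishing item~\ref{item::TVapprox1}.
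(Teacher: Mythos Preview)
Your proposal is correct and follows essentially the same strategy as the paper: first invoke Lemmas~\ref{Lem TV Poisson} and~\ref{Lem abstract} to couple the middle pieces so that condition~\ref{item::TVapprox2} holds, then use the Markovian decomposition (Proposition~\ref{Prop Markov}) to attach heads and tails whose boundary endpoints and excursion laws depend continuously on~$\nu$.

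The only noteworthy differences are in the implementation of the last step. Where you invoke an abstract Skorokhod representation to couple the boundary endpoints $x_n\to x$, the paper writes down the explicit inverse-CDF coupling $\vartheta_{z,\nu_n}(U)\to\vartheta_{z,\nu}(U)$ with a single uniform variable $U$; and where you argue continuity of $\mu^{\widehat{\D}_\varepsilon,\#}_{x,z}$ in $x$ by uniformizing $\widehat{\D}_\varepsilon$ to $\HH$, the paper uses a family of conformal maps $\psi_{x,z}:\D\to\widehat{\D}_\varepsilon$ (normalized by $\psi_{x,z}(-i)=x$, $\psi_{x,z}(i)=z$) to push forward a \emph{fixed} excursion $\wp$ in $\D$ and reads off the continuity from that of $(x,z)\mapsto\psi_{x,z}$. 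The paper's explicit construction has the mild advantage of making it transparent that the coupling is simultaneous in $n$ (same $U$ and same $\wp$ for all $n$), which you should make sure your Skorokhod step also delivers; but the approaches are otherwise interchangeable.
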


\begin{proof}
We can assume that $\nu\neq 0$.
The case $\nu=0$ is trivial by \eqref{Eq empty}.
The fact that there is a coupling such that the condition~\ref{item::TVapprox2} is satisfied
follows from Lemmas~\ref{Lem TV Poisson} and~\ref{Lem abstract}.
It remains to couple the slices
\begin{displaymath}
\left\{(\gamma(t))_{0\leq t\leq T_{\gamma,\varepsilon}^{\rm f}}
: \gamma\in \widehat{\Xi}_{\nu_{n},\varepsilon}\right\},
\qquad
\left\{(\gamma(T_{\gamma,\varepsilon}^{\rm l}+t))
_{0\leq t\leq T_{\gamma} - T_{\gamma,\varepsilon}^{\rm l}}
: \gamma\in \widehat{\Xi}_{\nu_{n},\varepsilon}\right\},
\end{displaymath}
and
\begin{displaymath}
\left\{(\gamma(t))_{0\leq t\leq T_{\gamma,\varepsilon}^{\rm f}}
: \gamma\in \widehat{\Xi}_{\nu,\varepsilon}\right\},
\qquad
\left\{(\gamma(T_{\gamma,\varepsilon}^{\rm l}+t))
_{0\leq t\leq T_{\gamma} - T_{\gamma,\varepsilon}^{\rm l}}
: \gamma\in \widehat{\Xi}_{\nu,\varepsilon}\right\},
\end{displaymath}
in a way that the condition~\ref{item::TVapprox1} holds.
We only construct coupling of the slices $(\gamma(t))_{0\leq t\leq T_{\gamma,\varepsilon}^{\rm f}}$. The coupling for the slices
$(\gamma(T_{\gamma,\varepsilon}^{\rm l}+t))
_{0\leq t\leq T_{\gamma}-T_{\gamma,\varepsilon}^{\rm l}}$
can be obtained similarly.

According to Proposition~\ref{Prop Markov},
given $\gamma\in\widehat{\Xi}_{\nu,\varepsilon}$,
conditionally on 
$(\gamma(T_{\gamma,\varepsilon}^{\rm f}),
\gamma(T_{\gamma,\varepsilon}^{\rm l}))$,
the three slices
$(\gamma(t))_{0\leq t\leq T_{\gamma,\varepsilon}^{\rm f}}$,
$(\gamma(T_{\gamma,\varepsilon}^{\rm f}+t))
_{0\leq t
\leq T_{\gamma,\varepsilon}^{\rm l}-T_{\gamma,\varepsilon}^{\rm f}}$
and 
$(\gamma(T_{\gamma,\varepsilon}^{\rm l}+t))
_{0\leq t\leq T_{\gamma}-T_{\gamma,\varepsilon}^{\rm l}}$
are independent.
The conditional distribution of 
$(\gamma(t))_{0\leq t\leq T_{\gamma,\varepsilon}^{\rm f}}$ is
\begin{displaymath}
\left(
\int_{\overline{A_{\rm L}}}
d\nu(x)
H_{\widehat{\D}_{\varepsilon}}(x,\gamma(T_{\gamma,\varepsilon}^{\rm f}))
\right)^{-1}
\int_{\overline{A_{\rm L}}}
d\nu(x)\mu^{\widehat{\D}_{\varepsilon}}
_{x,\gamma(T_{\gamma,\varepsilon}^{\rm f})}. 
\end{displaymath}
In the case of $\widehat{\Xi}_{\nu_{n},\varepsilon}$ the distribution is the same,
with $\nu_{n}$ instead of $\nu$.

Given $\theta \in [\frac{1}{2}\pi,\frac{3}{2}\pi]$,
let $A[i,e^{i\theta}]$ denote the closed subarc of
$\overline{A_{\rm L}}$ with endpoints $i$ and $e^{i\theta}$.
Given $z\in\partial\widehat{\D}_{\varepsilon}\cap\D$,
let $\vartheta_{z,\nu}$ be the following function from
$[0,1]$ to $[\frac{1}{2}\pi,\frac{3}{2}\pi]$: 
\begin{displaymath}
\vartheta_{z,\nu}(u):=
\inf\left\{
\theta\in \left[\frac{1}{2}\pi,\frac{3}{2}\pi\right] :
\int_{A[i,e^{i\theta}]}
d\nu(x)
H_{\widehat{\D}_{\varepsilon}}(x,z)
\geq
u\int_{\overline{A_{\rm L}}}
d\nu(x)
H_{\widehat{\D}_{\varepsilon}}(x,z)
\right\}.
\end{displaymath}
Suppose $U$ is a uniform random variable on $(0,1)$,
then $e^{i\vartheta_{z,\nu}(U)}$ has the distribution
\begin{displaymath}
\left(
\int_{\overline{A_{\rm L}}}
d\nu(\tilde{x})
H_{\widehat{\D}_{\varepsilon}}(\tilde{x},z)
\right)^{-1}
H_{\widehat{\D}_{\varepsilon}}(x,z)d\nu(x).
\end{displaymath}
The functions $\vartheta_{z,\nu_{n}}$ are defined similarly,
with $\nu_{n}$ instead of $\nu$.
We have that $(\vartheta_{z,\nu_{n}}(U))_{n\geq 0}$
converges a.s. to $\vartheta_{z,\nu}(U)$.

Given $x\in\overline{A_{\rm L}}$ and
$z\in \partial\widehat{\D}_{\varepsilon}\cap\D$,
let $\psi_{x,z}$ be the conformal map from
$\D$ to $\widehat{\D}_{\varepsilon}$,
uniquely defined by
\begin{displaymath}
\psi_{x,z}(-i)=x,
\qquad
\psi_{x,z}(i)=z,
\qquad
\vert\psi_{x,z}'(-i)\vert=1.
\end{displaymath}
Given $(\wp(t))_{0\leq t\leq T_{\wp}}$
a continuous curve in $\overline{\D}$ from $-i$ to $i$,
let $\LT_{x,z}(\wp)$ denote the continuous curve
in $\widehat{\D}_{\varepsilon}$ from $x$ to $z$
obtained by applying to the curve $\wp$ the conformal map
$\psi_{x,z}$ and the change of time
$ds =\vert \psi_{x,z}'(\wp(t))\vert^{2} dt$.
The image of the normalized excursion probability measure
$\mu^{\D,\#}_{-i,i}$ under the map $\LT_{x,z}$ 
is the normalized excursion probability measure
$\mu^{\widehat{\D}_{\varepsilon},\#}_{x,z}$.

Now fix $z\in\partial\widehat{\D}_{\varepsilon}\cap\D$.
Let $\wp$ be a Brownian excursion from $-i$ to $i$ in
$\D$, sampled according to $\mu^{\D,\#}_{-i,i}$,
and let $U$ be an independent random variable uniform on $(0,1)$.
Then the random curve
$\LT_{e^{i\vartheta_{z,\nu}(U)},z}(\wp)$
is distributed according to the probability measure
\begin{displaymath}
\left(
\int_{\overline{A_{\rm L}}}
d\nu(x)
H_{\widehat{\D}_{\varepsilon}}(x,z)
\right)^{-1}
\int_{\overline{A_{\rm L}}}
d\nu(x)\mu^{\widehat{\D}_{\varepsilon}}
_{x,z}.
\end{displaymath}
The curve $\LT_{e^{i\vartheta_{z,\nu_{n}}(U)},z}(\wp)$
has a similar distribution, with $\nu_{n}$ instead of $\nu$.
Moreover, as $n\to +\infty$,
the sequence 
$(\LT_{e^{i\vartheta_{z,\nu_{n}}(U)},z}(\wp))_{n\geq 0}$
converges a.s. to
$\LT_{e^{i\vartheta_{z,\nu_{n}}(U)},z}(\wp)$.
So, this construction provides a way to couple
the slices
$(\gamma(t))_{0\leq t\leq T_{\gamma,\varepsilon}^{\rm f}}$
for $\gamma\in \widehat{\Xi}_{\nu,\varepsilon}$,
respectively $\gamma\in\widehat{\Xi}_{\nu_{n},\varepsilon}$,
so that the a.s. convergence holds.
\end{proof}

\begin{lemma}
\label{Lem diam level}
Fix $\tilde{\varepsilon}>0$.
Then 
\begin{displaymath}
\lim_{\varepsilon\to 0}
\PP(
\Xi_{\nu,\tilde{\varepsilon}}\setminus
\widehat{\Xi}_{\nu,\varepsilon}
\neq\emptyset)=0,
\qquad
\lim_{\varepsilon\to 0}
\sup_{n\geq 0}
\PP(
\Xi_{\nu_{n},\tilde{\varepsilon}}\setminus
\widehat{\Xi}_{\nu_{n},\varepsilon}
\neq\emptyset)=0.
\end{displaymath}
\end{lemma}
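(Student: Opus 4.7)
The plan is to apply Markov's inequality to the Poisson point process and reduce both statements to controlling the intensity measure. Setting
\[
B_\varepsilon := \{\gamma : \diam\Range(\gamma) > \tilde{\varepsilon},\ \Range(\gamma)\cap\D_\varepsilon = \emptyset\},
\]
one has
\[
\PP\bigl(\Xi_{\nu,\tilde{\varepsilon}}\setminus\widehat{\Xi}_{\nu,\varepsilon}\neq\emptyset\bigr)
\leq \E\bigl[\Card\bigl(\Xi_{\nu,\tilde{\varepsilon}}\setminus\widehat{\Xi}_{\nu,\varepsilon}\bigr)\bigr]
= \mu_\nu^{\D}(B_\varepsilon),
\]
and the analogous bound for each $\nu_n$. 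It thus suffices to show that $\mu_\nu^{\D}(B_\varepsilon)\to 0$ and $\sup_n \mu_{\nu_n}^{\D}(B_\varepsilon)\to 0$ as $\varepsilon\to 0$.

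For the first (fixed $\nu$) statement, the family $\{B_\varepsilon\}_{\varepsilon>0}$ is decreasing as $\varepsilon\downarrow 0$, and $\bigcap_{\varepsilon>0} B_\varepsilon\subset\{\gamma : \Range(\gamma)\subset\partial\D\}$, which is null for every $\mu_{x,y}^{\D}$ since a Brownian excursion a.s. visits the interior of $\D$. The total mass $\mu_\nu^{\D}(\{\diam\Range>\tilde\varepsilon\})$ is bounded above by $\tfrac{1}{2}\nu(\overline{A_{\rm L}})^{2} \sup_{x,y\in\partial\D}\mu_{x,y}^{\D}(\{\diam>\tilde\varepsilon\})$, which is finite by the uniform bound recalled in Section~\ref{Subsec Excursions}. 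Continuity of measure on a decreasing family of sets of finite measure then yields $\mu_\nu^{\D}(B_\varepsilon)\to 0$.

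For the uniform-in-$n$ statement, I write $\mu_{\nu_n}^{\D}(B_\varepsilon) = \tfrac{1}{2}\iint_{\overline{A_{\rm L}}\times\overline{A_{\rm L}}} g_\varepsilon(x,y)\,d\nu_n(x)\,d\nu_n(y)$ with $g_\varepsilon(x,y) := \mu_{x,y}^{\D}(B_\varepsilon)$. The function $g_\varepsilon$ is uniformly bounded on $\overline{A_{\rm L}}\times\overline{A_{\rm L}}$ and continuous there: away from the diagonal this is standard, and at a diagonal point $(x_0,x_0)$ one checks that $\mu_{x,y}^{\D}|_{\{\diam>\tilde\varepsilon\}}$ converges to $\mu_{x_0,x_0}^{\D}|_{\{\diam>\tilde\varepsilon\}}$ as $(x,y)\to(x_0,x_0)$, because the blow-up $H_{\D}(x,y)\sim(\pi|x-y|^{2})^{-1}$ is exactly compensated by the decay of the probability under $\mu_{x,y}^{\D,\#}$ of producing an excursion of diameter exceeding $\tilde\varepsilon$. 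Since $\nu_n\to\nu$ weakly on $\overline{A_{\rm L}}$ implies $\nu_n\otimes\nu_n\to\nu\otimes\nu$ weakly on the compact space $\overline{A_{\rm L}}\times\overline{A_{\rm L}}$, Portmanteau yields $\mu_{\nu_n}^{\D}(B_\varepsilon)\to \mu_\nu^{\D}(B_\varepsilon)$ for each fixed $\varepsilon$.

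The uniformity then follows by a standard argument: given $\delta>0$, choose $\varepsilon_0$ with $\mu_\nu^{\D}(B_{\varepsilon_0})<\delta/2$; then $N_0$ with $\mu_{\nu_n}^{\D}(B_{\varepsilon_0})<\delta$ for all $n\geq N_0$; and since $B_\varepsilon\subset B_{\varepsilon_0}$ for $\varepsilon\leq\varepsilon_0$, this inequality extends to all such $\varepsilon$ and $n\geq N_0$. For the finitely many remaining indices $n<N_0$, the first statement applied to each $\nu_n$ supplies a common threshold $\varepsilon_{\ast}\leq\varepsilon_0$ below which $\sup_n \mu_{\nu_n}^{\D}(B_\varepsilon)<\delta$. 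The main technical point, and where I would spend most of the effort, is the continuity of $g_\varepsilon$ at the diagonal: it encodes the controlled degeneration of the excursion measures $\mu_{x,y}^{\D}$ into the bubble measure $\mu_{x_0,x_0}^{\D}$ on sets of macroscopic diameter.
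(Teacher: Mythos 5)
Your proof is correct, but the route to the uniform-in-$n$ statement is genuinely different from the paper's. Both arguments start the same way, bounding $\PP(\Xi_{\nu,\tilde{\varepsilon}}\setminus\widehat{\Xi}_{\nu,\varepsilon}\neq\emptyset)$ by the intensity mass of $B_{\varepsilon}$. From there the paper proves the single estimate
\begin{displaymath}
\lim_{\varepsilon\to 0}\ \sup_{x,y\in\overline{A_{\rm L}}}\ \mu^{\D}_{x,y}\big(\{\gamma : \diam\Range(\gamma)>\tilde{\varepsilon},\ \Range(\gamma)\cap\D_{\varepsilon}=\emptyset\}\big)=0,
\end{displaymath}
and combines it with the bound $\mu^{\D}_{\nu_{n}}(B_{\varepsilon})\leq\frac{1}{2}\nu_{n}(\overline{A_{\rm L}})^{2}\sup_{x,y}\mu^{\D}_{x,y}(B_{\varepsilon})$, where $\sup_{n}\nu_{n}(\overline{A_{\rm L}})<+\infty$ follows from weak convergence; this gives both limits of the lemma simultaneously and yields uniformity over the entire class of measures with bounded total mass, not only over the given sequence. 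You instead establish, for each fixed $\varepsilon$, the convergence $\mu^{\D}_{\nu_{n}}(B_{\varepsilon})\to\mu^{\D}_{\nu}(B_{\varepsilon})$ via continuity of $g_{\varepsilon}(x,y)=\mu^{\D}_{x,y}(B_{\varepsilon})$ on $\overline{A_{\rm L}}\times\overline{A_{\rm L}}$ and weak convergence of $\nu_{n}\otimes\nu_{n}$, then recover uniformity by monotonicity in $\varepsilon$ and a finite exhaustion of the indices $n<N_{0}$; your bookkeeping there is sound, as is your continuity-from-above argument for the fixed-$\nu$ statement (the intersection $\bigcap_{\varepsilon}B_{\varepsilon}$ is indeed contained in the null set of excursions never entering $\D$). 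What your route buys is a stronger intermediate conclusion (actual convergence of the masses for each fixed $\varepsilon$); what it costs is that you must justify continuity of $g_{\varepsilon}$ \emph{at the diagonal}, i.e. that $H_{\D}(x,y)\mu^{\D,\#}_{x,y}(B_{\varepsilon})$ converges to the bubble-measure value as $(x,y)\to(x_{0},x_{0})$, together with the fact that $B_{\varepsilon}$ is a continuity set for the limiting measure. That is the most delicate point of your argument and is only asserted; it is plausible (it is essentially the definition of $\mu^{\D}_{x,x}$ as a limit, restricted to macroscopic-diameter excursions), but note that it can be bypassed entirely: only an upper bound, uniform in the endpoints, is needed, which is exactly the paper's weaker and easier claim.
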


\begin{proof}
The Poisson point processes
$\Xi_{\nu,\tilde{\varepsilon}}\setminus
\widehat{\Xi}_{\nu,\varepsilon}$ and
$\Xi_{\nu_{n},\tilde{\varepsilon}}\setminus
\widehat{\Xi}_{\nu_{n},\varepsilon}$
consist precisely of excursions of diameter greater than
$\tilde{\varepsilon}$,
but that do not visit $\D_{\varepsilon}$.
It is easy to see that
\[
\lim_{\varepsilon\to 0}
\sup_{x,y\in \overline{A_{\rm L}}}
\mu^{\D}_{x,y}
(
\{
\gamma :
\diam(\gamma)>\tilde{\varepsilon},
\Range(\gamma)\cap \D_{\varepsilon} = \emptyset
\}
)=0.
\] 
The conclusion follows. 
\end{proof}

Now we are ready to complete the proof of Proposition~\ref{Prop Coupling Exc}. 
\begin{proof}[Proof of Proposition~\ref{Prop Coupling Exc}]
According to Lemma \ref{Lem diam level},
for every $k\geq 1$,
there is $\varepsilon_{k}\in (0, 2^{-k}]$
such that
\begin{displaymath}
\PP\left(
\Xi_{\nu,2^{-k}}\setminus
\widehat{\Xi}_{\nu,\varepsilon_{k}}
\neq\emptyset\right)\leq 2^{-k},
\qquad
\sup_{n\geq 0}
\PP\left(
\Xi_{\nu_{n},2^{-k}}\setminus
\widehat{\Xi}_{\nu_{n},\varepsilon_{k}}
\neq\emptyset\right)\leq 2^{-k}.
\end{displaymath}
We may also take the sequence $(\varepsilon_{k})_{k\geq 1}$
to be non-increasing.

According to Lemma \ref{Lem TV approx},
for every $k\geq 1$,
there is a coupling on the same probability space
of $\Xi^{(k)}_{\nu}$ and $\Xi^{(k)}_{\nu_{n}}$
for $n\geq 0$,
with $\Xi^{(k)}_{\nu}$ distributed as $\Xi_{\nu}$
and $\Xi^{(k)}_{\nu_{n}}$ distributed as $\Xi_{\nu_{n}}$,
such that a.s., 
$\lim_{n\to +\infty}d_{\rm curves}
(\widehat{\Xi}^{(k)}_{\nu_{n},\varepsilon_{k}},
\widehat{\Xi}^{(k)}_{\nu,\varepsilon_{k}})=0$
and
for every $n$ large enough, 
$\widetilde{\Xi}^{(k)}_{\nu_{n},\varepsilon_{k}}
=\widetilde{\Xi}^{(k)}_{\nu,\varepsilon_{k}}$.
Moreover, since
$\Xi^{(k)}_{\nu}\setminus \widehat{\Xi}^{(k)}_{\nu,\varepsilon_{k}}$
is independent from 
$\widehat{\Xi}^{(k)}_{\nu,\varepsilon_{k}}$
and
$\Xi^{(k)}_{\nu_{n}}\setminus
\widehat{\Xi}^{(k)}_{\nu_{n}\varepsilon_{k}}$
is independent from 
$\widehat{\Xi}^{(k)}_{\nu_{n},\varepsilon_{k}}$,
one can further require that for every $k\geq 1$,
\begin{align*}
&\PP\left(
\Xi^{(k)}_{\nu,2^{-k}}\setminus
\widehat{\Xi}^{(k)}_{\nu,\varepsilon_{k}}
\neq\emptyset
\text{ or }
\exists n\geq 0,
\Xi^{(k)}_{\nu_{n},2^{-k}}\setminus
\widehat{\Xi}^{(k)}_{\nu_{n},\varepsilon_{k}}
\neq\emptyset
\right)\\
& \leq 
\PP\left(
\Xi^{(k)}_{\nu,2^{-k}}\setminus
\widehat{\Xi}^{(k)}_{\nu,\varepsilon_{k}}
\neq\emptyset\right)
\vee
\sup_{n\geq 0}
\PP\left(
\Xi^{(k)}_{\nu_{n},2^{-k}}\setminus
\widehat{\Xi}^{(k)}_{\nu_{n},\varepsilon_{k}}
\neq\emptyset\right)
\leq 2^{-k}.
\end{align*}

By considering the conditional law of
the sequence $(\Xi^{(k)}_{\nu_{n}})_{n\geq 0}$
given $\Xi^{(k)}_{\nu}$,
one can further couple all the
$\Xi^{(k)}_{\nu}$ and $\Xi^{(k)}_{\nu_{n}}$
for $n\geq 0$ and $k\geq 1$ on the same probability space such that
the Poisson point processes
$\Xi^{(k)}_{\nu}$ are a.s. all the same for different values of $k$.
Will denote by $\Xi^{\ast}_{\nu}$ their common value.
It is distributed as $\Xi_{\nu}$.

Set $N_{1}:=0$,
and for $k\geq 2$,
\begin{displaymath}
N_{k}:=
\min
\left\{
N>N_{k-1} : 
\PP\left(
\exists n\geq N,
d_{\rm curves}
(\widehat{\Xi}^{(k)}_{\nu_{n},\varepsilon_{k}},
\widehat{\Xi}^{(k)}_{\nu,\varepsilon_{k}})
> 2^{-k}
\text{ or }
\widetilde{\Xi}^{(k)}_{\nu_{n},\varepsilon_{k}}
\neq\widetilde{\Xi}^{\ast}_{\nu,\varepsilon_{k}}
\right)
\leq 2^{-k}
\right\}.
\end{displaymath}
We would like to emphasize that the sequence
$(N_{k})_{k\geq 1}$ is deterministic.
We define the sequence
$(\Xi^{\ast}_{\nu_{n}})_{n\geq 1}$
as follows.
Given $n\geq 0$,
there is a unique $k\geq 1$
such that $N_{k}\leq n < N_{k+1}$, and we set
$\Xi^{\ast}_{\nu_{n}}=\Xi^{(k)}_{\nu_{n}}$.
For every $n\geq 0$,
$\Xi^{\ast}_{\nu_{n}}$ is distributed as $\Xi_{\nu_{n}}$.

For $j\geq 1$ and $k\geq j$, let
$E_{j,k}$ denote the event that there is
$n\in\{ N_{k},\dots , N_{k+1}-1\}$
such that 
$\widetilde{\Xi}^{(k)}_{\nu_{n},\varepsilon_{j}}
\neq\widetilde{\Xi}^{\ast}_{\nu,\varepsilon_{j}}$.
By construction, 
for every $k\geq 1$
$\PP(E_{k,k})\leq 2^{-k}$.
Moreover,
$E_{j,k}\subset E_{k,k}$ for $j\leq k$.
Thus, for every $j\geq 1$,
\begin{displaymath}
\sum_{k\geq j} \PP(E_{j,k}) < +\infty .
\end{displaymath}
By Borel-Cantelli lemma, this means that
a.s., the events $E_{j,k}$ occur for only finitely many values of $k$.
Thus, a.s.
for every $n$ large enough, 
$\widetilde{\Xi}^{\ast}_{\nu_{n},\varepsilon_{j}}
=\widetilde{\Xi}^{\ast}_{\nu,\varepsilon_{j}}$.

Similarly, by using the Borel-Cantelli lemma,
we get that for every $j\geq 1$,
a.s. 
\begin{displaymath}
\lim_{n\to +\infty}d_{\rm curves}
\left(\widehat{\Xi}^{\ast}_{\nu_{n},\varepsilon_{j}},
\widehat{\Xi}^{\ast}_{\nu,\varepsilon_{j}}\right)=0 .
\end{displaymath}
By applying the Borel-Cantelli lemma once more,
we get that a.s.,
for every $j\geq 1$, there is $k\geq j$
such that for every $n\geq N_{k}$,
$\Xi^{\ast}_{\nu_{n},2^{-j}}\subset 
\widehat{\Xi}^{\ast}_{\nu_{n},\varepsilon_{k}}$.
This concludes the proof.
\end{proof}

We end this section with the following lemma which will be useful for the proof of Theorem~\ref{thm::cvg_envelop}. 
\begin{lemma}
\label{Lem Unif Loc aux 1}
Assume
$(\Xi_{\nu_{n}})_{n\geq 0}$ 
and $\Xi_{\nu}$
are coupled on the same probability space as in Proposition~\ref{Prop Coupling Exc}.
Then a.s. the family
\[(\Range(\gamma)\cup\partial\D)_{\gamma\in \Xi_{\nu_{n}}, n\geq 0}\]
is uniformly locally connected;
see Definition~\ref{Def Loc Connec}.
Furthermore, a.s. the family
\begin{displaymath}
(\Range(\gamma)
\cup\Range(\gamma')
\cup\partial\D)_{\gamma,\gamma'\in \Xi_{\nu_{n}}, n\geq 0}
\end{displaymath}
is uniformly locally connected, too.
\end{lemma}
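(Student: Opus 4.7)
The plan is to fix $\omega$ in the probability space (so everything becomes deterministic) and a target scale $\varepsilon>0$, then split excursions according to their diameter using an auxiliary threshold $\varepsilon_{0}\in(0,\varepsilon/12)$. For excursions $\gamma$ with $\diam\Range(\gamma)<\varepsilon_{0}$, I would handle $\Range(\gamma)\cup\partial\D$ directly: $\Range(\gamma)$ is connected (as a continuous image of a compact interval), has diameter less than $\varepsilon_{0}$, and has both endpoints on $\partial\D$, so any two points $z,z'\in\Range(\gamma)\cup\partial\D$ within Euclidean distance $\delta$ can be $\varepsilon$-connected by going from each of $z,z'$ to the nearest endpoint of $\gamma$ along a sub-arc of $\gamma$ (of diameter at most $\varepsilon_{0}$) and then bridging by a short arc of $\partial\D$ (of diameter at most $2(\varepsilon_{0}+\delta)$), yielding a total diameter at most $3\varepsilon_{0}+2\delta<\varepsilon$ when $\delta$ is small in terms of $\varepsilon$ alone.

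For excursions with $\diam\Range(\gamma)\geq\varepsilon_{0}$, I would invoke Proposition~\ref{Prop Coupling Exc}~\ref{item::couplingexc1}: the set $\Xi_{\nu,\varepsilon_{0}}$ is a.s. finite, and $d_{\rm curves}$-convergence yields $N_{0}$ such that for $n\geq N_{0}$ there is a bijection $\sigma_{n}:\Xi_{\nu_{n},\varepsilon_{0}}\to\Xi_{\nu,\varepsilon_{0}}$ pairing excursions with uniformly close parametrizations on $[0,1]$. For $n<N_{0}$, the large-diameter excursions form a finite collection of continuous curves, so Lemma~\ref{Lem Loc Con Elem}~\ref{item::localcon2} gives a common modulus for these finitely many $\Range(\gamma)\cup\partial\D$. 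For $n\geq N_{0}$ and $\gamma_{n}=\sigma_{n}^{-1}(\tilde\gamma)$ with $\tilde\gamma\in\Xi_{\nu,\varepsilon_{0}}$, I would transfer the modulus from the limit to the approximation: given $z,z'\in\Range(\gamma_{n})\cup\partial\D$ with $|z-z'|<\delta$, choose nearby $w,w'\in\Range(\tilde\gamma)\cup\partial\D$ (within $\|\gamma_{n}-\tilde\gamma\|_{\infty}$ in the reparametrized sup norm), use the local connectedness of $\Range(\tilde\gamma)\cup\partial\D$ to produce a compact connected witness $K$ of diameter less than $\varepsilon/3$ containing $w,w'$, and then lift $K$ to a compact connected $K_{n}\subset\Range(\gamma_{n})\cup\partial\D$ by replacing each sub-arc of $\tilde\gamma$ in $K$ with the corresponding-parameter sub-arc of $\gamma_{n}$, and bridging the small Euclidean gaps at curve endpoints by short arcs of $\partial\D$; the additional diameter incurred is $O(\|\gamma_{n}-\tilde\gamma\|_{\infty})=o(1)$, so $\diam K_{n}<\varepsilon$ for $n$ large.

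The hard part will be the lifting step, which requires decomposing $K\subset\Range(\tilde\gamma)\cup\partial\D$ into finitely many sub-arcs of $\tilde\gamma$ and arcs of $\partial\D$, each of which lifts with controlled error. This decomposition is available because $K$ has small diameter and $\Range(\tilde\gamma)$ is the specific continuous image of $[0,1]$ under $\tilde\gamma$, so $K\cap\Range(\tilde\gamma)$ can be analyzed via its pre-image under $\tilde\gamma$. The explicit coupling from Lemma~\ref{Lem TV approx} provides additional rigidity: Proposition~\ref{Prop Coupling Exc}~\ref{item::couplingexc2} ensures that the middle slices of $\gamma_{n}$ and $\tilde\gamma$ in $\D_{\tilde\varepsilon}$ coincide exactly for $n$ large (with $\tilde\varepsilon$ a further auxiliary scale), so the lifting is nontrivial only on the outside slice in $\widehat\D_{\tilde\varepsilon}$, which is a conformal image of a common Brownian excursion through uniformizing maps converging uniformly on $\overline\D$. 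For the second assertion of the lemma (the two-excursion union), the same strategy applies: small-pair cases are trivial, and the large-pair regime reduces, via Lemma~\ref{Lem Count Union Loc Con}, to a finite limiting family of pairs handled as above.
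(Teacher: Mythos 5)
Your small-diameter case and the reduction of the remaining excursions to a finite limiting family $\Xi_{\nu,\varepsilon_{0}}$ via Proposition~\ref{Prop Coupling Exc} are fine and broadly parallel to the paper's strategy. The gap is in the step you yourself flag as hard: lifting a connectedness witness $K\subset\Range(\tilde\gamma)\cup\partial\D$ to $\Range(\gamma_{n})\cup\partial\D$. First, a compact connected $K$ contained in the range of a Brownian excursion need not decompose into \emph{finitely} many parameter sub-arcs: $\tilde\gamma^{-1}(K)$ is a closed subset of $[0,T_{\tilde\gamma}]$ that can have infinitely many components (this is generic for Brownian paths, which return to small neighbourhoods of a self-intersection point along infinitely many disjoint parameter intervals). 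Second, and more seriously, even granting a decomposition into parameter sub-arcs, the connectivity of $K$ is typically achieved by these sub-arcs meeting at \emph{interior} self-intersection points of $\tilde\gamma$. Replacing each sub-arc by the corresponding-parameter sub-arc of $\gamma_{n}$ moves points by at most $\|\gamma_{n}-\tilde\gamma\|_{\infty}$ but breaks every such interior intersection; the resulting gaps lie in the interior of $\D$, where no arc of $\partial\D$ is available to bridge them. So the lifted set is in general disconnected, and sup-norm closeness of curves simply does not transfer a local-connectedness modulus — which is precisely why uniform local connectedness is a nontrivial property of the family. Your appeal to the exact coincidence of the middle slices (Proposition~\ref{Prop Coupling Exc}, item (2)) removes the problem only for witnesses entirely contained in $\D_{\tilde\varepsilon}$; for witnesses meeting the outside slices in $\widehat\D_{\tilde\varepsilon}$ you would face the same lifting problem one level down, and the remark about uniformizing maps converging on $\overline\D$ again only gives sup-norm control, not transfer of witnesses.

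The paper avoids this entirely by arguing by contradiction and compactness: a failure of uniform local connectedness yields sequences of offending point pairs $z_{n_{j}},z'_{n_{j}}$ on excursions of diameter bounded below, which after extraction converge to a point $z_{\infty}$ on a limiting excursion $\gamma_{\infty}\in\Xi_{\nu}$. If $z_{\infty}\in\D$, the exact coincidence of middle slices puts both points on a single fixed locally connected set for $j$ large (no lifting needed); if $z_{\infty}\in\overline{A_{\rm L}}$, the points lie in initial/final slices of vanishing diameter whose union with $\partial\D$ is connected, so they are $\varepsilon$-connected through $\partial\D$. To repair your direct argument you would need an analogue of this dichotomy — e.g., showing that any witness of small diameter can be chosen either inside $\D_{\tilde\varepsilon}$ or within a small slice hooked onto $\partial\D$ — at which point you would essentially be reproducing the paper's case analysis.
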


\begin{proof}
We will only prove the first point. The proof of the second point is similar and we omit it.

First note that for any fixed $n\geq 0$, 
the family 
$(\Range(\gamma)\cup\partial\D)_{\gamma\in \Xi_{\nu_{n}}}$
is uniformly locally connected. 
Indeed, each $\Range(\gamma)\cup\partial\D$, is compact, connected, and locally connected due to Lemma \ref{Lem Loc Con Elem}.
Moreover, for every $\varepsilon>0$, 
there are only finitely many $\gamma \in \Xi_{\nu_{n}}$
such that  $\diam \Range (\gamma)\geq\varepsilon$.
So, if
$(\Range(\gamma)\cup\partial\D)_{\gamma\in \Xi_{\nu_{n}}, n\geq 0}$
were not uniformly locally connected,
one of the two cases would occur:
\begin{itemize}
\item Case~1:
there are $\varepsilon>0$,
a subsequence $(n_{j})_{j\geq 0}$,
with $n_{j}\to +\infty$ as $j\to +\infty$,
excursions $\gamma_{n_{j}}\in \Xi_{\nu_{n_{j}}}$,
and points 
$z_{n_{j}},z_{n_{j}}'\in\Range(\gamma_{n_{j}})$,
such that $\vert z_{n_{j}}'-z_{n_{j}}\vert\to 0$
and $z_{n_{j}}$ and $z_{n_{j}}'$ are not
$\varepsilon$-connected in $\Range(\gamma_{n_{j}})$.
\item Case~2:
there are $\varepsilon>0$,
a subsequence $(n_{j})_{j\geq 0}$,
with $n_{j}\to +\infty$ as $j\to +\infty$,
excursions $\gamma_{n_{j}}\in \Xi_{\nu_{n_{j}}}$,
and points 
$z_{n_{j}}\in\Range(\gamma_{n_{j}})$,
$z_{n_{j}}'\in\partial\D$,
such that $\vert z_{n_{j}}'-z_{n_{j}}\vert\to 0$
and $z_{n_{j}}$ and $z_{n_{j}}'$ are not
$\varepsilon$-connected in $\Range(\gamma_{n_{j}})$.
\end{itemize}
In both cases, necessarily
\begin{displaymath}
\inf_{j\geq 0} \diam \Range(\gamma_{n_{j}})
>0.
\end{displaymath} 
So, up to further extracting a subsequence, 
one can assume that
$\gamma_{n_{j}}$ converges to an excursion 
$\gamma_{\infty}\in\Xi_{\nu}$
for $d_{\rm curves}$,
and that $z_{n_{j}}$ and $z'_{n_{j}}$
converge to $z_{\infty}$.

In Case~1, $z_{\infty}\in \Range(\gamma_{\infty})$.
One can further distinguish the following subcases:
\begin{itemize}
\item Case~1a: $z_{\infty}\in \Range(\gamma_{\infty})\cap\D$.
\item Case~1b:
$z_{\infty}\in \Range(\gamma_{\infty})\cap\overline{A_{\rm L}}$.
\end{itemize}

In Case~1a, consider 
$\varepsilon'\in 
(0,\dist(z_{\infty},\overline{A_{\rm L}})\wedge 1)$.
For $j$ large enough, we have that
\begin{displaymath}
(\gamma_{n_{j}}(T_{\gamma_{n_{j}},\varepsilon'}^{\rm f}+t))
_{0\leq t\leq 
T_{\gamma_{n_{j}},\varepsilon'}^{\rm l}
-T_{\gamma_{n_{j}},\varepsilon'}^{\rm f}}
=
(\gamma_{\infty}(T_{\gamma_{\infty},\varepsilon'}^{\rm f}+t))
_{0\leq t\leq 
T_{\gamma_{\infty},\varepsilon'}^{\rm l}
-T_{\gamma_{\infty},\varepsilon'}^{\rm f}}
,
\end{displaymath}
and
\begin{displaymath}
z_{n_{j}}, z'_{n_{j}}\in
\{\gamma_{\infty}(T_{\gamma_{\infty},\varepsilon'}^{\rm f}+t)
: 0\leq t\leq 
T_{\gamma_{\infty},\varepsilon'}^{\rm l}
-T_{\gamma_{\infty},\varepsilon'}^{\rm f}\}
\subset \Range(\gamma_{n_{j}}).
\end{displaymath}
However, since the set
$\{\gamma_{\infty}(T_{\gamma_{\infty},\varepsilon'}^{\rm f}+t)
: 0\leq t\leq 
T_{\gamma_{\infty},\varepsilon'}^{\rm l}
-T_{\gamma_{\infty},\varepsilon'}^{\rm f}\}$
is locally connected,
we get a contradiction.
So Case~1a cannot occur.

In Case~1b, there is a sequence of positive times
$(t_{n_{j}})_{j\geq 0}$ converging to $0$ such that
for every $j\geq 0$,
\begin{displaymath}
z_{n_{j}}, z'_{n_{j}}\in
\{\gamma_{n_{j}}(t) : 0\leq t\leq t_{n_{j}}\}
\cup
\{\gamma_{n_{j}}(t) : 
T_{\gamma_{n_{j}}}-t_{n_{j}}\leq t\leq T_{\gamma_{n_{j}}}\}.
\end{displaymath}
We have that
\begin{displaymath}
\lim_{j\to +\infty}
\diam \{\gamma_{n_{j}}(t) : 0\leq t\leq t_{n_{j}}\}
= \lim_{j\to +\infty} \diam
\{\gamma_{n_{j}}(t) : 
T_{\gamma_{n_{j}}}-t_{n_{j}}\leq t\leq T_{\gamma_{n_{j}}}\} = 0,
\end{displaymath}
and that the set
$\{\gamma_{n_{j}}(t) : 0\leq t\leq t_{n_{j}}\}
\cup
\{\gamma_{n_{j}}(t) : 
T_{\gamma_{n_{j}}}-t_{n_{j}}\leq t\leq T_{\gamma_{n_{j}}}\}
\cup\partial\D$
is closed and connected. 
So we get that the family
\begin{displaymath}
(\{\gamma_{n_{j}}(t) : 0\leq t\leq t_{n_{j}}\}
\cup
\{\gamma_{n_{j}}(t) : 
T_{\gamma_{n_{j}}}-t_{n_{j}}\leq t\leq T_{\gamma_{n_{j}}}\}
\cup\partial\D)_{j\geq 0}
\end{displaymath}
is uniformly locally connected.
So Case~1b cannot occur.

In Case~2, again 
$z_{\infty}\in \Range(\gamma_{\infty})\cap\overline{A_{\rm L}}$.
So Case~2 can be ruled out by an argument very similar to that used for
Case~1b.
\end{proof}

\subsection{Continuous dependence of the curve $\eta_{\kappa,\nu}$ and proof of Theorem~\ref{thm::cvg_envelop}}
\label{SubSec Convergence curve}

In this section we deal with the dependence of the curve
$\eta_{\kappa,\nu}$ on the measure $\nu$.
Recall that
$\LO_{\kappa,\nu}$ is an open simply connected subset of $\D$,
and
$\partial\LO_{\kappa,\nu} = \eta_{\kappa,\nu}\cup A_{\rm R}$.
Recall that $\psi_{\kappa,\nu}$ is the conformal transformation from
$\D$ to $\LO_{\kappa,\nu}$ uniquely defined by the normalization 
$\psi_{\kappa, \nu}(-i)=-i$, $\psi_{\kappa, \nu}(1)=1$, 
$\psi_{\kappa, \nu}(i)=i$
and $\psi_{\kappa, \nu}(A_{\rm R})=A_{\rm R}$.
According to Theorem~\ref{Thm Cara boundary},
$\psi_{\kappa,\nu}$ extends continuously to $\overline{\D}$.
In case the curve $\eta_{\kappa,\nu}$ is simple
(see Proposition \ref{Prop double points}),
$\psi_{\kappa,\nu}$ induces a homeomorphism
from $\overline{\D}$ to $\overline{\LO_{\kappa,\nu}}$.
In general, $\psi_{\kappa,\nu}$ induces a homeomorphism
from $\D\cup\overline{A_{\rm R}}$ to
$\LO_{\kappa,\nu}\cup\overline{A_{\rm R}}$;
see e.g. \cite[Theorem~2.15]{Pommerenke}.
By construction,
$\eta_{\kappa,\nu}=\psi_{\kappa,\nu}(\overline{A_{\rm L}})$.
The goal of this section is to complete the proof of Theorem~\ref{thm::cvg_envelop}. 

We will restrict to the case $\kappa\neq 8/3$,
as the case $\kappa=8/3$ is simpler.
Note that all the probabilistic content of our proof is already
contained in Proposition~\ref{Prop Coupling Exc}.
We will additionally rely on deterministic geometrical arguments and
some a.s. properties of Brownian excursions and CLE. In the rest of this section, we fix the following assumptions: 
\textit{Fix $\kappa\in (8/3,4]$ and $\nu$ a finite non-negative Radon measure on $\overline{A_L}$.
Let $(\nu_{n})_{n\geq 0}$ be a sequence of finite non-negative Radon measures on $\overline{A_{\rm L}}$, 
converging weakly to $\nu$.
Assume
$(\Xi_{\nu_{n}})_{n\geq 0}$ 
and $\Xi_{\nu}$
are coupled on the same probability space as in Proposition~\ref{Prop Coupling Exc}.
Let $\FC_{\kappa}$ be a $\CLE_{\kappa}$ in $\D$
independent from 
$((\Xi_{\nu_{n}})_{n\geq 0},\Xi_{\nu})$.}

\begin{lemma}
\label{Lem Conv CLE}
Denote by $\widetilde{\FC}_{\kappa,\nu}$ the set 
constructed from
$\FC_{\kappa}$ and $\Xi_{\nu}$
as in \eqref{Eq tilde FC}.
Recall that 
$\LO_{\kappa,\nu,\varepsilon}$ denotes the connected component of
$\LO_{\kappa,\nu}\cap\D_{\varepsilon}$
(see \eqref{Eq D eps})
adjacent to $A_{\rm R}\cap\overline{\D_{\varepsilon}}$.
Define $\widetilde{\FC}_{\kappa,\nu_n}$ and $\LO_{\kappa,\nu_{n},\varepsilon}$ for $\FC_{\kappa}$ and $\Xi_{\nu_n}$ similarly. 
Then a.s., for every $\varepsilon\in (0,1)$,
there is $n'_{\varepsilon}\in\N$, 
such that, for every $n\geq n'_{\varepsilon}$, 
\begin{displaymath}
\LO_{\kappa,\nu_{n},\varepsilon}
=\LO_{\kappa,\nu,\varepsilon}.
\end{displaymath}
\end{lemma}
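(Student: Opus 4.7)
The plan is to reduce the claim to a local comparison of the obstruction sets inside $\overline{\D_\varepsilon}$, using the a.s.\ existence of a positive ``buffer'' between the $\CLE_\kappa$ loops that touch $\D_\varepsilon$ and the arc $A_{\rm L}$. First I would fix a realization and use the local finiteness of $\FC_\kappa$, together with the a.s.\ positive distance from each individual $\CLE_\kappa$ loop to $\partial\D$, to produce a random $\tilde\varepsilon=\tilde\varepsilon(\omega)\in(0,\varepsilon)$ such that every loop $\tilde\gamma\in\FC_\kappa$ with $\Range(\tilde\gamma)\cap\overline{\D_\varepsilon}\neq\emptyset$ is contained in $\D_{\tilde\varepsilon}$. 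Any loop of diameter at most $\varepsilon/2$ that meets $\overline{\D_\varepsilon}$ is automatically contained in $\D_{\varepsilon/2}$; the loops of diameter larger than $\varepsilon/2$ are finite in number, and among the finitely many of them that meet $\overline{\D_\varepsilon}$ the quantity $\dist(\Range(\tilde\gamma),A_{\rm L})$ is a.s.\ positive, so one may take $\tilde\varepsilon$ to be the minimum of $\varepsilon/2$ and this finite infimum.

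Having produced $\tilde\varepsilon$, I would apply Proposition~\ref{Prop Coupling Exc}(2) with the random parameter $\tilde\varepsilon$ to obtain an integer $n_{\tilde\varepsilon}$ with $\widetilde{\Xi}_{\nu_n,\tilde\varepsilon}=\widetilde{\Xi}_{\nu,\tilde\varepsilon}$ for every $n\geq n_{\tilde\varepsilon}$. The central observation is that both $\LS_{\kappa,\nu}\cap\overline{\D_\varepsilon}$ and $\LS_{\kappa,\nu_n}\cap\overline{\D_\varepsilon}$ are determined entirely by these middle pieces. Indeed, any excursion whose range meets $\overline{\D_\varepsilon}$ must cross $\D_{\tilde\varepsilon}$, and the intersection of its range with $\overline{\D_\varepsilon}$ lies inside the middle piece in $\D_{\tilde\varepsilon}$; by the choice of $\tilde\varepsilon$, every loop in $\widetilde{\FC}_{\kappa,\nu}$ or $\widetilde{\FC}_{\kappa,\nu_n}$ whose range meets $\overline{\D_\varepsilon}$ lies in $\D_{\tilde\varepsilon}$, and hence can only be activated by the middle piece of an excursion; and the parts of excursions lying outside $\D_{\tilde\varepsilon}$ are contained in the $\tilde\varepsilon$-neighborhood of $A_{\rm L}$ and therefore disjoint from $\overline{\D_\varepsilon}$. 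Combining these three remarks yields $\LS_{\kappa,\nu_n}\cap\overline{\D_\varepsilon}=\LS_{\kappa,\nu}\cap\overline{\D_\varepsilon}$ for every $n\geq n_{\tilde\varepsilon}$.

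To upgrade this to $\LO_{\kappa,\nu_n,\varepsilon}=\LO_{\kappa,\nu,\varepsilon}$, I would use the purely topological identification of $\LO_{\kappa,\nu,\varepsilon}$ with the connected component of $\D_\varepsilon\setminus\LS_{\kappa,\nu}$ adjacent to $A_{\rm R}\cap\overline{\D_\varepsilon}$. One inclusion is immediate from the definition; for the other, the latter component is connected, adjacent to $A_{\rm R}$, and disjoint from $\LS_{\kappa,\nu}\cup\overline{A_{\rm L}}$, so it is contained in $\LO_{\kappa,\nu}$ and hence in $\LO_{\kappa,\nu,\varepsilon}$. Applying the same identification to $\nu_n$ and combining with the equality of the obstruction sets on $\overline{\D_\varepsilon}$ proves the lemma with $n'_\varepsilon:=n_{\tilde\varepsilon}$.

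The hard part is the first step, the construction of $\tilde\varepsilon$: one needs a strictly positive random separation between the loops of $\FC_\kappa$ that reach $\D_\varepsilon$ and the arc $A_{\rm L}$, which rests crucially on both the a.s.\ separation of individual $\CLE_\kappa$ loops from $\partial\D$ and on local finiteness. Everything beyond that is a set-theoretic combination of this separation with the strong coupling from Proposition~\ref{Prop Coupling Exc}.
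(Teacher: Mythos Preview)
Your proposal is correct and follows essentially the same approach as the paper: produce a random buffer $\tilde\varepsilon\in(0,\varepsilon)$ separating the $\CLE_\kappa$ loops that reach $\D_\varepsilon$ from $A_{\rm L}$, then invoke Proposition~\ref{Prop Coupling Exc}\ref{item::couplingexc2} at level $\tilde\varepsilon$ to conclude. The paper's proof is in fact terser than yours---it records only the existence of $\varepsilon'$ and the choice $n'_\varepsilon=n_{\varepsilon'}$---while you spell out the intermediate set-theoretic steps (that the middle pieces determine $\LS_{\kappa,\nu}\cap\overline{\D_\varepsilon}$, and the topological identification of $\LO_{\kappa,\nu,\varepsilon}$ as a component of $\D_\varepsilon\setminus\LS_{\kappa,\nu}$), which the paper leaves implicit.
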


\begin{proof}
It is enough to check this for fixed $\varepsilon$, 
and then consider a sequence of
$\varepsilon$ converging to $0$.
Fix $\varepsilon\in (0,1)$.
The local finiteness of the CLE ensures that a.s. there is 
$\varepsilon'\in (0,\varepsilon)$
such that all the $\CLE_{\kappa}$ loops in $\FC_{\kappa}$
intersecting $\D_{\varepsilon}$ are at distance
greater than $\varepsilon'$ from $A_{\rm L}$.
The condition~\ref{item::couplingexc2} in Proposition~\ref{Prop Coupling Exc}
ensures that one can take 
$n'_{\varepsilon}=n_{\varepsilon'}$.
\end{proof}

\begin{lemma}
\label{Lem Conv Cara}
A.s., for every $w\in\LO_{\kappa,\nu}$,
the point $w$ belongs to $\LO_{\kappa,\nu_{n}}$
for every $n$ large enough and
$(\LO_{\kappa,\nu_{n}},w)$ converges to
$(\LO_{\kappa,\nu},w)$ in the Carathéodory sense as
$n\to +\infty$; see Definition~\ref{Def Cara}.
\end{lemma}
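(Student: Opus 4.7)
The plan is to verify the three conditions of Definition~\ref{Def Cara} directly, taking the marked points to be the same point $w\in \LO_{\kappa,\nu}$ (so condition~\ref{item::defCara1} will be trivial once we know $w\in\LO_{\kappa,\nu_{n}}$ for large $n$). The whole argument is deterministic once we are on the almost sure event provided by Lemma~\ref{Lem Conv CLE}. The substantive input is that lemma; the rest is elementary topology of simply connected planar domains.

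For condition~\ref{item::defCara2}, fix $z\in\LO_{\kappa,\nu}$. Since $\LO_{\kappa,\nu}$ is open and connected, hence path-connected, one can pick a point $z_{0}\in \LO_{\kappa,\nu}$ very close to $A_{\rm R}$ and a continuous path $\alpha:[0,1]\to \LO_{\kappa,\nu}$ with $\alpha(0)=z$ and $\alpha(1)=z_{0}$. The image of $\alpha$ is compact and contained in $\LO_{\kappa,\nu}\subset \D\setminus A_{\rm L}$, hence at positive distance $3\varepsilon$ from $A_{\rm L}$, for some $\varepsilon\in(0,1)$. Choosing $z_{0}$ sufficiently close to $A_{\rm R}$ so that $z_{0}$ lies in the connected component $\LO_{\kappa,\nu,\varepsilon}$, the whole path $\alpha$ lies in that same connected component (it is a connected subset of $\LO_{\kappa,\nu}\cap \D_{\varepsilon}$ meeting $\LO_{\kappa,\nu,\varepsilon}$), and so $z\in \LO_{\kappa,\nu,\varepsilon}$; since $\LO_{\kappa,\nu,\varepsilon}$ is open, $z$ has an open neighborhood $U\subset \LO_{\kappa,\nu,\varepsilon}$. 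By Lemma~\ref{Lem Conv CLE}, for $n\geq n'_{\varepsilon}$ we have $\LO_{\kappa,\nu_{n},\varepsilon}=\LO_{\kappa,\nu,\varepsilon}$, so $U\subset \LO_{\kappa,\nu_{n}}$ for all such $n$. Applying this in particular to $z=w$ yields $w\in\LO_{\kappa,\nu_{n}}$ for all large $n$, so condition~\ref{item::defCara1} is trivially satisfied.

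For condition~\ref{item::defCara3}, given $z\in \partial\LO_{\kappa,\nu}$, pick a sequence $(z^{(m)})_{m\geq 1}$ in $\LO_{\kappa,\nu}$ with $z^{(m)}\to z$ (this is possible since $\LO_{\kappa,\nu}$ is open and $z$ lies in its boundary). Condition~\ref{item::defCara2} applied to each $z^{(m)}$ gives indices $n_{m}$, which we may take strictly increasing, such that $z^{(m)}\in \LO_{\kappa,\nu_{n}}$ for all $n\geq n_{m}$. Setting $z_{n}:=z^{(m)}$ for $n_{m}\leq n<n_{m+1}$ yields $z_{n}\in\LO_{\kappa,\nu_{n}}$ with $z_{n}\to z$. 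Combining these three points gives the claimed Carathéodory convergence. The only nontrivial ingredient is condition~\ref{item::defCara2}, and that is where Lemma~\ref{Lem Conv CLE} does all the work; everything else reduces to standard connectedness/accessibility arguments, so I do not expect genuine obstacles.
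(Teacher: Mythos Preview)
Your verification of condition~\ref{item::defCara2} is essentially the paper's argument: show $z$ lies in $\LO_{\kappa,\nu,\varepsilon}$ for some $\varepsilon$ (via a connecting path inside $\LO_{\kappa,\nu}$ at positive distance from $A_{\rm L}$), then invoke Lemma~\ref{Lem Conv CLE}. There is a minor circularity in your write-up---$\varepsilon$ depends on $\alpha$, which depends on $z_0$, but then you adjust $z_0$ based on $\varepsilon$---easily repaired by fixing $z_0\in\LO_{\kappa,\nu,\varepsilon_0}$ first and then taking $\varepsilon\le\varepsilon_0$.

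The substantive gap is condition~\ref{item::defCara3}. The paper's Definition~\ref{Def Cara} literally asks for $z_n\in D_n$, but this is a typo for $z_n\in\partial D_n$. With $z_n\in D_n$ the condition is vacuously implied by~\ref{item::defCara2} (precisely by your diagonalisation), and it does \emph{not} characterise Carath\'eodory convergence: take $D_n=\D$, $D=\tfrac12\D$, $w=0$; then (1), (2) and your version of (3) all hold, yet the uniformising maps onto $D_n$ do not converge to that onto $D$. The paper's own proof confirms the intended reading: it actually constructs points $z_n\in\partial\LO_{\kappa,\nu_n}$, which is what is required later to apply \cite[Theorem~1.8]{Pommerenke} in the proof of Theorem~\ref{thm::cvg_envelop}.

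So your proof of~\ref{item::defCara3} is too weak as it stands. The fix is to upgrade $z_n\in\LO_{\kappa,\nu_n}$ to $z_n\in\partial\LO_{\kappa,\nu_n}$. For $z\in\partial\LO_{\kappa,\nu}\cap\partial\D$ this is immediate: $z\notin\LO_{\kappa,\nu_n}\subset\D$, so the segment from your $z_n$ to $z$ crosses $\partial\LO_{\kappa,\nu_n}$. For $z\in\partial\LO_{\kappa,\nu}\cap\D$ one must also argue $z\notin\LO_{\kappa,\nu_n}$ for large $n$; the paper does this by noting that otherwise, since $z\in\D_\varepsilon\cap\overline{\LO_{\kappa,\nu_n,\varepsilon}}$, one would get $z\in\LO_{\kappa,\nu_n,\varepsilon}=\LO_{\kappa,\nu,\varepsilon}\subset\LO_{\kappa,\nu}$, contradicting $z\in\partial\LO_{\kappa,\nu}$.
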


\begin{proof}
The condition~\ref{item::defCara1} in Definition~\ref{Def Cara} is automatic. 
We then check the condition~\ref{item::defCara2} in Definition~\ref{Def Cara}.
Given $z\in\overline{\LO_{\kappa,\nu}}$, 
let $I_{z}$ denote the straight line segment in $\overline{\D}$
with endpoints $1$ and
$\psi_{\kappa,\nu}^{-1}(z)$.
If $z$ is a multiple point on $\partial \LO_{\kappa,\nu}$,
then $\psi_{\kappa,\nu}^{-1}(z)$ will denote an arbitrary choice of a preimage of $z$.
Let $J_{z}$ denote $\psi_{\kappa,\nu}(I_{z})$.
It is a continuous curve in $\overline{\LO_{\kappa,\nu}}$
from $1$ to $z$.
If $z\in \LO_{\kappa,\nu}$, 
then $\dist(I_{z},A_{\rm L})>0$,
and thus
\begin{displaymath}
\dist(J_{z},A_{\rm L})
\geq
\dist(J_{z},\eta_{\kappa,\nu})>0.
\end{displaymath}
So, for $\varepsilon\in(0,\dist(J_{z},A_{\rm L})/2)$,
we have $J_{z}\subset\LO_{\kappa,\nu}\cap\D_{\varepsilon}$.
Then necessarily $J_{z}\subset\LO_{\kappa,\nu,\varepsilon}$.
According to Lemma~\ref{Lem Conv CLE},
for $n\geq n_{\varepsilon}'$,
we have $\LO_{\kappa,\nu,\varepsilon}=
\LO_{\kappa,\nu_{n},\varepsilon}$, 
and thus $\LO_{\kappa,\nu_{n},\varepsilon}$ is a neighborhood of
$z$ in $\LO_{\kappa,\nu_{n}}$.
So we get that for every $w\in\LO_{\kappa,\nu}$,
the point $w$ belongs to $\LO_{\kappa,\nu_{n}}$
for every $n$ large enough. This guarantees the condition~\ref{item::defCara2} in Definition~\ref{Def Cara}. 
It remains to check the condition~\ref{item::defCara3}.

So consider $z\in\partial\LO_{\kappa,\nu}$.
There are two cases, either $z\in\D$ or
$z\in\overline{A_{\rm L}}$.
In the first case, $z\in(\partial\LO_{\kappa,\nu})\cap\D$,
we still have
$\dist(J_{z},A_{\rm L})>0$.
For $\varepsilon\in(0,\dist(J_{z},A_{\rm L})/2)$,
one has that
$z\in(\partial\LO_{\kappa,\nu,\varepsilon})\cap\D_{\varepsilon}$.
It follows that for $n\geq n_{\varepsilon}'$,
$z\in\overline{\LO_{\kappa,\nu_{n},\varepsilon}}$
and also
$z\not\in\LO_{\kappa,\nu_{n}}$,
since otherwise one would have
$z\in\LO_{\kappa,\nu,\varepsilon}$.
Thus, for $n\geq n_{\varepsilon}'$,
$z\in\partial\LO_{\kappa,\nu_{n}}$.

Now consider the second case,
$z\in(\partial\LO_{\kappa,\nu})\cap\overline{A_{\rm L}}$.
For $j\geq 1$, let $\tilde{z}_{j}$ be the point
\begin{displaymath}
\tilde{z}_{j}:=
\psi_{\kappa,\nu}\left(
\left(1-\frac{1}{j}\right)\psi_{\kappa,\nu}^{-1}(z)
+
\dfrac{1}{j}
\right)
\in \LO_{\kappa,\nu}.
\end{displaymath}
We have that $\tilde{z}_{j}\to z$ as $j\to +\infty$.
Let $J_{z,j}$ be the part of the curve $J_{z}$
running from $1$ to $\tilde{z}_{j}$.
Then for every $j\geq 1$,
\begin{displaymath}
\dist(J_{z,j},A_{\rm L})
\geq
\dist(J_{z,j},\eta_{\kappa,\nu})>0.
\end{displaymath}
Thus, 
for $\varepsilon_{j}\in(0,\dist(J_{z,j},A_{\rm L})/2)$,
we have $\tilde{z}_{j}\in \LO_{\kappa,\nu,\varepsilon_{j}}$, 
and moreover, for $n\geq n_{\varepsilon_{j}}'$,
we have $\tilde{z}_{j}\in \LO_{\kappa,\nu_{n},\varepsilon_{j}}$.
Using a diagonal extraction, we get a subsequence
$(\tilde{z}_{j_{n}})_{n\geq n_{0}}$,
with for every $n\geq n_{0}$,
$\tilde{z}_{j_{n}}\in  \LO_{\kappa,\nu_{n}}$,
and
\begin{displaymath}
\lim_{n\to +\infty}\tilde{z}_{j_{n}}=z.
\end{displaymath}
Since $z\in \partial\D$, we have that
$z\not\in \LO_{\kappa,\nu_{n}}$.
Thus, the straight line segment from $\tilde{z}_{j_{n}}$
to $z$ contains a point $z_{n}\in\partial \LO_{\kappa,\nu_{n}}$.
Moreover, by construction,
$\vert z-z_{n}\vert<\vert z-\tilde{z}_{j_{n}}\vert$,
and so
$z_{n}\to z$ as $n\to +\infty$.
So one gets the condition~\ref{item::defCara3} of Definition~\ref{Def Cara}.
\end{proof}

\begin{lemma}
\label{Lem Unif Loc Connect}
A.s. the family 
$(\C\setminus\LO_{\kappa,\nu_{n}})_{n\geq 0}$
is uniformly locally connected;
see Definition \ref{Def Loc Connec}.
\end{lemma}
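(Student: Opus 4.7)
The plan is to apply Lemma~\ref{Lem Loc Con Boundary}\ref{item::localconboundary2} to reduce the claim to the uniform local connectedness of the family $\widetilde{C}_n := \LS_{\kappa,\nu_n}\cup\partial\D$, and then to prove this uniform version by a careful refinement of the two-step argument from the proof of Proposition~\ref{prop::envelop_parameterization}\ref{item::envelop_continuity}. For the reduction, setting $C_n := \C\setminus\LO_{\kappa,\nu_n}$, we have $\widetilde{C}_n\subset C_n$ (since $\LS_{\kappa,\nu_n}$ is disjoint from $\LO_{\kappa,\nu_n}$ by construction, and $\partial\D$ lies outside $\LO_{\kappa,\nu_n}\subset\D$), while $C_n\setminus\widetilde{C}_n$ consists of $\C\setminus\overline{\D}$ together with the connected components of $\D\setminus(\LS_{\kappa,\nu_n}\cup\overline{A_L})$ distinct from $\LO_{\kappa,\nu_n}$; this union is open in $\C$, as required.

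To establish the uniform local connectedness of $(\widetilde{C}_n)_{n\geq 0}$, fix $\varepsilon > 0$. By local finiteness of $\FC_\kappa$, only finitely many $\CLE_\kappa$ loops have diameter $\geq \varepsilon/4$, and they lie at positive distance $d_0>0$ from $\partial\D$. Pick $\varepsilon' > 0$ smaller than both $\varepsilon/4$ and $d_0/2$. By Proposition~\ref{Prop Coupling Exc}\ref{item::couplingexc2}, there exists a (random) $n_0$ such that $\widetilde{\Xi}_{\nu_n,\varepsilon'} = \widetilde{\Xi}_{\nu,\varepsilon'}$ for every $n\geq n_0$, and by Proposition~\ref{Prop Coupling Exc}\ref{item::couplingexc1}, $d_{\rm curves}(\widehat{\Xi}_{\nu_n,\varepsilon'},\widehat{\Xi}_{\nu,\varepsilon'})\to 0$. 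Since every $\CLE_\kappa$ loop of diameter $\geq\varepsilon/4$ is contained in $\D_{\varepsilon'}$, it can only be intersected through middle parts of excursions, which now coincide for $n \geq n_0$; hence the set of such large loops in $\widetilde{\FC}_{\kappa,\nu_n}$ is the same for every $n\geq n_0$. The ``stable skeleton'' $K^\ast\subset\widetilde{C}_n$ obtained as the union of $\partial\D$, these common large loops, and the common middle parts of excursions is then a locally connected compact set by Lemma~\ref{Lem Loc Con Elem}\ref{item::localcon2}, and hence admits a deterministic modulus of local connectedness.

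For $n\geq n_0$, one obtains a uniform modulus on $\widetilde{C}_n$ by chaining the skeleton modulus with Lemma~\ref{Lem Unif Loc aux 1} and performing a case analysis: any point of $\widetilde{C}_n\setminus K^\ast$ lies on a small $\CLE_\kappa$ loop (diameter $<\varepsilon/4$) or on an excursion range, and can be routed to $K^\ast$ through it, using either the loop's small diameter or the uniform local connectedness provided by Lemma~\ref{Lem Unif Loc aux 1}. For the finitely many $n<n_0$, each $\widetilde{C}_n$ is individually locally connected by the proof of Proposition~\ref{prop::envelop_parameterization}\ref{item::envelop_continuity}, and a common modulus is obtained as the minimum. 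The main obstacle is the case where the two nearby points sit on distinct small loops or outer excursion pieces potentially touching different portions of the skeleton: here the second part of Lemma~\ref{Lem Unif Loc aux 1}, which provides uniform local connectedness of pairs of excursion ranges together with $\partial\D$ (simultaneously over all $\gamma,\gamma'\in\Xi_{\nu_n}$ and all $n$), is what keeps the final $\delta$ independent of $n$.
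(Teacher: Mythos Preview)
Your reduction via Lemma~\ref{Lem Loc Con Boundary}\ref{item::localconboundary2} to the family $(\LS_{\kappa,\nu_n}\cup\partial\D)_{n\geq 0}$ is exactly what the paper does, and your identification of the key ingredients (Lemma~\ref{Lem Unif Loc aux 1}, the stability of middle parts from Proposition~\ref{Prop Coupling Exc}\ref{item::couplingexc2}, and local finiteness of the CLE) is correct. However, your constructive ``route to $K^\ast$, then apply the skeleton modulus'' scheme has a genuine quantifier-order problem that is not resolved in the sketch.

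Concretely: routing a point $z$ on a small loop to a point $w$ on an intersecting excursion moves it by up to $\varepsilon/4$ (your small-loop threshold). If $z'$ is handled similarly, the routed points $w,w'$ may be at distance up to $\delta + \varepsilon/2$. To then apply either the modulus $\delta_K$ of the skeleton $K^\ast$ or the modulus $\delta_A$ coming from Lemma~\ref{Lem Unif Loc aux 1}, you need $\delta + \varepsilon/2 < \delta_K$ (or $\delta_A$). But both $\delta_K$ and $\delta_A$ are moduli \emph{at level} $\varepsilon/3$ or so, and there is no reason they should exceed $\varepsilon/2$. The same circularity arises if you try to set the small-loop threshold below $\delta_A$ first: then the collection of ``large'' loops, and hence $K^\ast$ and $\delta_K$, change, and you face the same issue one level down. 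A related difficulty you do not address is the mixed case of $z$ on a large loop and $z'$ on a small loop whose excursion-intersection point lies in a first/last slice (varying with $n$); handling this requires the extra observation that large loops are at distance $>d_0$ from $\partial\D$ while first/last slices are within $\varepsilon'$ of $A_{\rm L}$, hence such pairs are excluded for small $\delta$ --- but this is not in your sketch.

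The paper sidesteps all of this by arguing by contradiction. Assuming uniform local connectedness fails, one extracts a subsequence $(n_j)$ and points $z_{n_j},z'_{n_j}$ that are $o(1)$-close yet not $\varepsilon$-connected, and then performs an exhaustive case analysis (roughly ten subcases: same excursion cluster vs.\ different ones; limiting excursion diameter zero vs.\ positive; points on loops vs.\ excursions vs.\ $\partial\D$; loops of vanishing vs.\ bounded diameter). Each subcase is reduced either to Lemma~\ref{Lem Unif Loc aux 1} directly, or to the local connectedness of a single fixed set such as $\Range(\gamma_\infty)\cup\Range(\tilde\gamma)$, or is excluded because diameters shrink to zero. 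The contradiction format avoids ever having to compare two moduli at the same scale.
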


\begin{proof}
According to Lemma \ref{Lem Loc Con Boundary},
it is enough to check that the family
$(\LS_{\kappa,\nu_{n}}\cup\partial\D)_{n\geq 0}$ 
is uniformly locally connected.
If this is not the case, then at least one of the following happens:
\begin{itemize}
\item Case~1:
there are $\varepsilon>0$,
a subsequence $(n_{j})_{j\geq 0}$,
excursions 
$\gamma_{n_{j}}\in \Xi_{\nu_{n_{j}}}$,
and points 
$z_{n_{j}},z_{n_{j}}'\in\LS_{\kappa}(\gamma_{n_{j}})$
(see \eqref{Eq notations gamma})
such that $\vert z_{n_{j}}'-z_{n_{j}}\vert\to 0$
and $z_{n_{j}}$ and $z_{n_{j}}'$
are not $\varepsilon$-connected in 
$\LS_{\kappa,\nu_{n_{j}}}\cup\partial\D$.
\item Case~2:
there are $\varepsilon>0$,
a subsequence $(n_{j})_{j\geq 0}$,
excursions 
$\gamma_{n_{j}}\in \Xi_{\nu_{n_{j}}}$,
and points 
$z_{n_{j}}\in\LS_{\kappa}(\gamma_{n_{j}})$,
$z_{n_{j}}'\in\partial\D$,
such that $\vert z_{n_{j}}'-z_{n_{j}}\vert\to 0$
and $z_{n_{j}}$ and $z_{n_{j}}'$
are not $\varepsilon$-connected in 
$\LS_{\kappa,\nu_{n_{j}}}\cup\partial\D$.
\item Case~3:
there are $\varepsilon>0$,
a subsequence $(n_{j})_{j\geq 0}$,
excursions 
$\gamma_{n_{j}},\gamma'_{n_{j}}\in \Xi_{\nu_{n_{j}}}$,
$\gamma_{n_{j}}\neq\gamma'_{n_{j}}$,
and points 
$z_{n_{j}}\in\LS_{\kappa}(\gamma_{n_{j}})$,
$z_{n_{j}}'\in\LS_{\kappa}(\gamma'_{n_{j}})$,
such that $\vert z_{n_{j}}'-z_{n_{j}}\vert\to 0$
and $z_{n_{j}}$ and $z_{n_{j}}'$
are not $\varepsilon$-connected in 
$\LS_{\kappa,\nu_{n_{j}}}\cup\partial\D$.
\end{itemize}

\medskip

First consider Case~1.
One can further distinguish between the case
$\inf_{j\geq 0} \diam \Range(\gamma_{n_{j}})=0$
and the case
$\inf_{j\geq 0} \diam \Range(\gamma_{n_{j}})>0$.
By extracting sub-subsequences, one can thus reduce Case~1 to the following two subcases:
\begin{itemize}
\item Case~1a:
Case~1 with moreover $\lim_{j\to +\infty} 
\diam \Range(\gamma_{n_{j}})=0$.
\item Case~1b:
Case~1 with moreover
$\gamma_{n_{j}}$ converging to an excursion
$\gamma_{\infty}\in\Xi_{\nu}$
for $d_{\rm curves}$.
\end{itemize}

Regarding Case~1a, 
$\LS_{\kappa}(\gamma_{n_{j}})$ is a connected compact subset
connecting $z_{n_{j}}$ and $z_{n_{j}}'$.
Moreover, the fact that we use for every
$j\geq 0$ the same $\CLE_{\kappa}$,
together with the local finiteness of the $\CLE_{\kappa}$,
ensures that
\begin{equation}
\label{Eq diam S 0}
\lim_{j\to +\infty}
\diam(\LS_{\kappa}(\gamma_{n_{j}}))=0.
\end{equation}
So Case~1a cannot occur.

Regarding Case~1b, by considering sub-subsequences,
one can further reduce it to the following subcases:
\begin{itemize}
\item Case~1ba:
Case~1b with moreover
$z_{n_{j}},z_{n_{j}}'\in \Range(\gamma_{n_{j}})$.
\item Case~1bb:
Case~1b with moreover
$z_{n_{j}}'\in \Range(\gamma_{n_{j}})$
and $z_{n_{j}}\in \Range(\tilde{\gamma}_{n_{j}})$
for 
$\tilde{\gamma}_{n_{j}}\in
\widetilde{\FC}_{\kappa}(\gamma_{n_{j}})$
(see \eqref{Eq notations gamma}),
with $\diam \Range(\tilde{\gamma}_{n_{j}})\to 0$
as $j\to +\infty$.
\item Case~1bc:
Case~1b with moreover
$z_{n_{j}}'\in \Range(\gamma_{n_{j}})$
and $z_{n_{j}}\in \Range(\tilde{\gamma})$
for 
$\tilde{\gamma}\in \bigcap_{j\geq 0}
\widetilde{\FC}_{\kappa}(\gamma_{n_{j}})$.
\item Case~1bd:
Case~1b with moreover
$z_{n_{j}}\in \Range(\tilde{\gamma}_{n_{j}})$
and $z_{n_{j}}'\in \Range(\tilde{\gamma}'_{n_{j}})$
for 
$\tilde{\gamma}_{n_{j}},\tilde{\gamma}'_{n_{j}}\in
\widetilde{\FC}_{\kappa}(\gamma_{n_{j}})$,
with 
$\diam \Range(\tilde{\gamma}_{n_{j}})\to 0$
and
$\diam \Range(\tilde{\gamma}'_{n_{j}})\to 0$
as $j\to +\infty$.
\item Case~1be:
Case~1b with moreover
$z_{n_{j}}\in \Range(\tilde{\gamma}_{n_{j}})$
and $z_{n_{j}}'\in \Range(\tilde{\gamma}')$
for 
$\tilde{\gamma}_{n_{j}}\in
\widetilde{\FC}_{\kappa}(\gamma_{n_{j}})$
and
$\tilde{\gamma}'\in \bigcap_{j\geq 0}
\widetilde{\FC}_{\kappa}(\gamma_{n_{j}})$
,
with 
$\diam \Range(\tilde{\gamma}_{n_{j}})\to 0$
as $j\to +\infty$.
\item Case~1bf:
Case~1b with moreover
$z_{n_{j}},z_{n_{j}}'\in \Range(\tilde{\gamma})$
for 
$\tilde{\gamma}\in \bigcap_{j\geq 0}
\widetilde{\FC}_{\kappa}(\gamma_{n_{j}})$.
\end{itemize}

Lemma \ref{Lem Unif Loc aux 1} ensures that
Case~1ba cannot occur.

In Case~1bb, take points
$\tilde{z}_{n_{j}}\in
\Range(\tilde{\gamma}_{n_{j}})\cap\Range(\gamma_{n_{j}})$.
Then for $j$ large enough,
$\diam \Range(\tilde{\gamma}_{n_{j}})<\varepsilon/2$,
and thus
$\tilde{z}_{n_{j}}$ and $z_{n_{j}}'$
cannot be $\varepsilon/2$-connected in
$\LS_{\kappa,\nu_{n}}\cup\partial\D$.
So Case~1bb reduces to Case~1ba and cannot occur.

In Case~1bc, one can proceed similarly to the proof of
Lemma \ref{Lem Unif Loc aux 1}.
Indeed, away from $\overline{A_{\rm L}}$,
$\Range(\gamma_{n_{j}})$ coincides with
$\Range(\gamma_{\infty})$ for $j$ large enough,
and by Lemma \ref{Lem Loc Con Elem},
$\Range(\gamma_{\infty})\cup\Range(\tilde{\gamma})$
is locally connected.
This rules out Case~1bc.

Case~1bd reduces to Case~1bb by considering points
$\tilde{z}_{n_{j}}\in
\Range(\tilde{\gamma}_{n_{j}})\cap\Range(\gamma_{n_{j}})$.

Similarly, Case~1be reduces to Case~1bc.

Case~1bf cannot occur because $\Range(\tilde{\gamma})$
is locally connected.

\medskip

In Case~2 one can see that
$\dist(z_{n_{j}}',\Range(\gamma_{n_{j}})\cap\overline{A_{\rm L}})
\to 0$ as $j\to +\infty$. Thus, Case~2 reduces to Case~1.

\medskip

Case~3 can be reduced, by considering sub-subsequences,
to the following two subcases:
\begin{itemize}
\item Case~3a:
Case~3 with moreover $\lim_{j\to +\infty} 
\diam \Range(\gamma_{n_{j}})=0$.
\item Case~3b:
Case~3 with moreover
$\gamma_{n_{j}}$, respectively $\gamma_{n_{j}}'$,
converging to excursions $\gamma_{\infty}$,
respectively $\gamma_{\infty}'$ in $\Xi_{\nu}$
for $d_{\rm curves}$.
\end{itemize}

In Case~3a, since~\eqref{Eq diam S 0} holds, and in this way Case~3a reduces to Case~2.

Case~3b can be ruled out by arguments similar to those used for Case~1b.
\end{proof}

\begin{proof}[Proof of Theorem~\ref{thm::cvg_envelop}]
As mentioned, we deal only with $\kappa\in (8/3,4]$.
Assume
$(\Xi_{\nu_{n}})_{n\geq 0}$ 
and $\Xi_{\nu}$
are coupled on the same probability space as in Proposition~\ref{Prop Coupling Exc},
and that $\FC_{\kappa}$ is
sampled independent from 
$((\Xi_{\nu_{n}})_{n\geq 0},\Xi_{\nu})$.
We will deduce the a.s. convergence of
$((\psi_{\kappa,\nu_{n}}(x))_{x\in \overline{A_{\rm L}}})_{n \geq 0}$
to
$(\psi_{\kappa,\nu}(x))_{x\in \overline{A_{\rm L}}}$
in this coupling.

Take $w\in \LO_{\kappa,\nu}$.
According to Lemma \ref{Lem Conv Cara},
there is $n_{w}\geq 0$ such that
for $n\geq n_{w}$, we have $w\in \LO_{\kappa,\nu_{n}}$.
Denote by $\psi_{w}$
the conformal map
from $\D$ to $\LO_{\kappa,\nu}$ uniquely determined by the normalization:
$\psi_{w}(0)=w$ and $\psi_{w}'(0)>0$.
According to Theorem~\ref{Thm Cara boundary}, 
$\psi_{w}$
extends continuously
from $\overline{\D}$ to $\overline{\LO_{\kappa,\nu}}$. 
Define $\psi_{w,n}$ for $\LO_{\kappa,\nu_{n}}$ similarly. 
Since
$(\LO_{\kappa,\nu_{n}},w)$ converges to
$(\LO_{\kappa,\nu},w)$ in the Carathéodory sense
(Lemma~\ref{Lem Conv Cara}),
it follows that
$\psi_{w,n}$ converges to $\psi_{w}$ uniformly on compact
subsets of $\D$; 
see \cite[Theorem~1.8]{Pommerenke}.
Since the family
$(\C\setminus\LO_{\kappa,\nu_{n}})_{n\geq n_{w}}$
is uniformly locally connected
(Lemma~\ref{Lem Unif Loc Connect}),
$\psi_{w,n}$ converges to $\psi_{w}$ uniformly on
$\overline{\D}$;
see \cite[Corollary~2.4]{Pommerenke}.

Further, we write 
\begin{displaymath}
\psi_{\kappa,\nu}=
\psi_{w}\circ
\widetilde{\psi}_{w},
\end{displaymath}
where $\tilde{\psi}_{w}$ is the Möbius transformation from
$\D$ to $\D$ uniquely determined by the normalization 
\begin{displaymath}
\widetilde{\psi}_{w}(-i)=\psi_{w}^{-1}(-i),
\qquad
\widetilde{\psi}_{w}(i)=\psi_{w}^{-1}(i),
\qquad
\widetilde{\psi}_{w}(1)=\psi_{w}^{-1}(1). 
\end{displaymath}
In case $-i$ or $i$
are not simple points
of $\eta_{\kappa,\nu}$,
the notions $\psi_{w}^{-1}(-i)$ and $\psi_{w}^{-1}(i)$
are to be understood as
\begin{displaymath}
\psi_{w}^{-1}(-i)
=\lim_{\substack{\theta\to -\frac{\pi}{2}\\ \theta > -\frac{\pi}{2}}}
\psi_{w}^{-1}(e^{i\theta}),
\qquad
\psi_{w}^{-1}(i)
=\lim_{\substack{\theta\to \frac{\pi}{2}\\ \theta < \frac{\pi}{2}}}
\psi_{w}^{-1}(e^{i\theta}). 
\end{displaymath}
Define $\widetilde{\psi}_{w, n}$ for $\psi_{\kappa, \nu_n}$ and $\psi_{w, n}$ as $\psi_{\kappa, \nu_n}=\psi_{w, n}\circ\widetilde{\psi}_{w, n}$ similarly. 
Since $\psi_{w,n}^{-1}(-i)$, $\psi_{w,n}^{-1}(i)$,
respectively $\psi_{w,n}^{-1}(1)$,
converges to
$\psi_{w}^{-1}(-i)$, $\psi_{w}^{-1}(i)$,
respectively $\psi_{w}^{-1}(1)$,
we get that $\widetilde{\psi}_{w,n}$ converges to
$\widetilde{\psi}_{w}$ uniformly on $\overline{\D}$,
and $\psi_{\kappa,\nu_{n}}$ converges to
$\psi_{\kappa,\nu}$ uniformly on
$\overline{\D}$.
\end{proof}

\subsection{Continuous dependence of the driving functions and proof of Proposition~\ref{prop::cvg_envelop_drivingfunction}}
\label{subsec::cvg_drivingfunction}
Suppose $\eta_n$ is a continuous curve with continuous driving function $\xi^{(n)}$. In the literature, one is always interested in the following question: whether the convergence of driving function 
$\xi^{(n)}$ implies the convergence of curves $\eta_n$. See~\cite{SheffieldSunStrongPathCvg} and~\cite{KemppainenSmirnovRandomCurves}. In this section, we are interested in the question in reverse direction and the goal is to show Proposition~\ref{prop::cvg_envelop_drivingfunction}. We first give a general conclusion on convergence of driving functions out of convergence of curves: Proposition~\ref{prop::cvg_fromcurvetodrivingfunction}. Then Proposition~\ref{prop::cvg_envelop_drivingfunction} follows. Recall that $\psi_0$ is the conformal map from $\D$ to $\HH$ defined in~\eqref{eqn::psi0}. 

\begin{proposition}\label{prop::cvg_fromcurvetodrivingfunction}
Suppose $(\eta_n(t))_{0\le t\le 1}$ and $(\eta(t))_{0\le t\le 1}$ are parameterized continuous curves in $\overline{\D}$ from $-i$ to $+i$.  Assume the following hold.
\begin{enumerate}[label=(\alph*)]
\item \label{item::cvg_curvetodriving_a} The curves $((\eta_n(t))_{0\le t\le 1})_{n}$ converges to $(\eta(t))_{0\le t\le 1}$ for the uniform topology: 
\begin{equation}\label{eqn::uniform_certainparameter}
\|\eta_n-\eta\|_{\infty}:=\sup\{|\eta_n(t)-\eta(t)|: 0\le t\le 1\}\to 0,\quad\text{as }n\to\infty.
\end{equation}
\item The curves $\psi_0(\eta_n)$ and $\psi_0(\eta)$ satisfy assumptions in Proposition~\ref{prop::Loewnerchain_qualification}. 
\end{enumerate}
For each $t\in (0,1)$, let $g_t$ be the conformal map from the unbounded connected component of $\HH\setminus\psi_0(\eta[0,t])$ onto $\HH$ with normalization $\lim_{z\to\infty}|g_t(z)-z|=0$. 
Denote by $\xi_t=g_t(\psi_0(\eta(t)))$. Define $g^{(n)}_t$ and 
$\xi^{(n)}_{t}$ for $\psi_0(\eta_n)$ similarly. 
Then we have the following conclusions. 
\begin{enumerate}[label=(\arabic*)]
\item The half-plane capacity converges: for any $t\in (0,1)$, 
\begin{equation}\label{eqn::cvg_halfplanecapacity}
\sup\{|\hcap(\psi_0(\eta_n[0,s]))-\hcap(\psi_0(\eta[0,s]))|: 0\le s\le t\}\to 0, \quad \text{as }n\to\infty. 
\end{equation}
\item  When parameterized by the half-plane capacity, we have $\psi_0(\eta_n)\to \psi_0(\eta)$ and $\xi^{(n)}\to \xi$ for the local uniform topology. 
\end{enumerate}
\end{proposition}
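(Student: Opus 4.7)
My plan is to deduce the three claims in sequence, reducing everything to classical continuity properties of the half-plane capacity and of the Loewner correspondence.

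\textbf{Step 1 (capacity convergence).} Let $T^{\ast}:=\inf\{t\in[0,1]:\eta(t)=i\}\in (0,1]$, and fix any $t_{0}<T^{\ast}$. Then $\eta([0,t_{0}])$ is a compact subset of $\overline{\D}\setminus\{i\}$; by uniform convergence of $\eta_{n}$ to $\eta$ together with continuity of $\eta$, the sets $\eta_{n}([0,t_{0}])$ stay uniformly bounded away from $i$ for $n$ large. Composing with $\psi_{0}$ therefore yields uniform convergence of $\psi_{0}\circ\eta_{n}$ to $\psi_{0}\circ\eta$ on $[0,t_{0}]$, with values in a compact subset of $\overline{\HH}$. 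This gives Hausdorff convergence of the hulls $\psi_{0}(\eta_{n}[0,s])$ to $\psi_{0}(\eta[0,s])$, uniformly in $s\in[0,t_{0}]$. By the classical continuity of the half-plane capacity under Hausdorff convergence of uniformly bounded $\HH$-hulls (see e.g.\ \cite[Section~3.4]{LawlerConformallyInvariantProcesses}), we obtain pointwise convergence of $s\mapsto\hcap(\psi_{0}(\eta_{n}[0,s]))$ to $s\mapsto\hcap(\psi_{0}(\eta[0,s]))$. As these are non-decreasing in $s$ with a continuous limit, Polya's monotone convergence theorem upgrades this to uniform convergence, establishing \eqref{eqn::cvg_halfplanecapacity}.

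\textbf{Step 2 (curve in capacity parametrization).} Define $\phi(s):=\tfrac{1}{2}\hcap(\psi_{0}(\eta[0,s]))$ and $\phi_{n}(s):=\tfrac{1}{2}\hcap(\psi_{0}(\eta_{n}[0,s]))$, so Step~1 gives $\phi_{n}\to\phi$ uniformly on compacts of $[0,T^{\ast})$. The assumption that $\psi_{0}\circ\eta$ satisfies Proposition~\ref{prop::Loewnerchain_qualification}, in particular condition~\ref{item::Loewnerchain_qualifyb}, ensures that $\phi$ is continuous and strictly increasing on $[0,T^{\ast})$, with continuous inverse $\phi^{-1}:[0,T_{\max})\to[0,T^{\ast})$; the analogous statement holds for each $\phi_{n}$. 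Uniform convergence of $\phi_{n}$ to a continuous strictly increasing $\phi$ implies local uniform convergence $\phi_{n}^{-1}\to\phi^{-1}$ on $[0,T_{\max})$. Combining this with uniform continuity of $\psi_{0}\circ\eta$ on compacts of $[0,T^{\ast})$ and uniform convergence of $\psi_{0}\circ\eta_{n}$ to $\psi_{0}\circ\eta$, the triangle inequality yields that the capacity-parametrized curves $t\mapsto\psi_{0}(\eta_{n}(\phi_{n}^{-1}(t)))$ converge locally uniformly to $t\mapsto\psi_{0}(\eta(\phi^{-1}(t)))$, which is the first half of (2).

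\textbf{Step 3 (driving function).} Write $\tilde\eta^{\hcap}(t):=\psi_{0}(\eta(\phi^{-1}(t)))$ and $\tilde\eta^{\hcap}_{n}(t):=\psi_{0}(\eta_{n}(\phi_{n}^{-1}(t)))$. The corresponding hulls $K_{t}^{(n)}:=\tilde\eta^{\hcap}_{n}[0,t]$ converge in Hausdorff distance to $K_{t}:=\tilde\eta^{\hcap}[0,t]$, uniformly in $t$ on compacts of $[0,T_{\max})$, and the maps $g_{t}^{(n)}$ converge to $g_{t}$ locally uniformly on $\HH\setminus K_{t}$ by Carath\'eodory convergence. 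The main obstacle I anticipate is passing to the limit at the tip, since $\xi_{t}^{(n)}=g_{t}^{(n)}(\tilde\eta^{\hcap}_{n}(t))$ is a boundary value at a moving prime end and $g_{t}^{(n)}\to g_{t}$ only locally uniformly in the interior. The cleanest route is an Arzel\`a--Ascoli argument: using standard modulus-of-continuity estimates that relate the modulus of the driving function of a Loewner chain to that of its generating curve and to the capacity (see \cite[Section~4]{LawlerConformallyInvariantProcesses}), one shows that the family $(\xi^{(n)})_{n}$ is equicontinuous on compact subintervals of $[0,T_{\max})$, with $\xi^{(n)}_{0}=0$ giving pointwise boundedness. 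Any subsequential limit $\xi^{\infty}$ of $(\xi^{(n)})$ generates, by stability of the Loewner ODE under uniform convergence of the driving function, the same family of hulls $(K_{t})_{t}$; uniqueness of the driving function in the Loewner correspondence then forces $\xi^{\infty}=\xi$, yielding $\xi^{(n)}\to\xi$ locally uniformly and completing the proof.
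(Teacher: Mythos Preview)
Your Steps~1 and~2 are correct and are in fact tidier than the paper's route: the paper obtains uniform convergence of the capacity via an oscillation estimate (equation \eqref{eqn::equicontinuity_Lawler}), whereas your Polya argument (monotone functions converging pointwise to a continuous limit converge locally uniformly) gets the same conclusion with less work. The argument for Step~2 via convergence of inverse functions is standard and fine.

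The genuine gap is in Step~3. You assert equicontinuity of the family $(\xi^{(n)})_{n}$ by appeal to ``standard modulus-of-continuity estimates'' in \cite[Section~4]{LawlerConformallyInvariantProcesses}, but no such estimate is available at the level of generality you need. What one does have is $|\xi_{t}^{(n)}-\xi_{s}^{(n)}|\le \mathrm{diam}\bigl(g_{s}^{(n)}(K_{t}^{(n)}\setminus K_{s}^{(n)})\bigr)$, and the right-hand side is \emph{not} controlled by $\mathrm{diam}(\tilde\eta_{n}^{\hcap}[s,t])$ alone: the conformal map $g_{s}^{(n)}$ can have arbitrarily large distortion near the tip of the curve, so a small arc of curve can be mapped to a hull of large diameter. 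Equicontinuity of the curves does not, by itself, yield equicontinuity of the driving functions; this is exactly the obstruction that motivates crossing-type hypotheses in \cite{KemppainenSmirnovRandomCurves}, which are not assumed here. Consequently your Arzel\`a--Ascoli compactness step is unjustified as written.

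The paper circumvents this by decomposing $\xi_{s}=g_{s}(x)-h_{s}(x)$ for a fixed real $x>0$ away from the hulls, where $h_{s}(x):=g_{s}(x)-\xi_{s}$ has the harmonic-measure interpretation $h_{s}(x)=\lim_{y\to\infty}\pi y\,\PP^{yi}[\LB_{\tau}\in \LR(\psi_{0}(\eta[0,s]))\cup(0,x)]$. Uniform convergence of $g_{s}^{(n)}(x)$ follows from the oscillation estimate (this you could also recover from your Step~1 machinery), and the key Lemma~\ref{lem::cvg_renormalizedharm} establishes uniform convergence of $h_{s}^{(n)}(x)$ by a Beurling estimate comparing the probability that a Brownian motion exits on the ``wrong side'' of $\eta_{n}[0,s]$ versus $\eta[0,s]$. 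This Beurling argument is what replaces your missing equicontinuity; once you have it, your subsequential-limit identification becomes unnecessary because one obtains uniform convergence of $\xi^{(n)}$ directly. If you want to keep the Arzel\`a--Ascoli architecture, you would still need an analogue of Lemma~\ref{lem::cvg_renormalizedharm} to produce the equicontinuity, so the Beurling input is unavoidable.
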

\begin{proof}
For each $t\in (0,1)$, define $f_t=\psi_0^{-1}\circ g_t\circ\psi_0$. From Schwarz reflection principle, $f_t$ can be extended analytically in a neighborhood of $i$. Denote by $D_t$ the connected component of $\D\setminus\eta[0,t]$ with $i$ on the boundary. From elementary calculation, the function $f_t$ is the conformal map from $D_t$ onto $\D$ with the normalization 
$f_t(i)= i$, $f_t'(i)=1$, and $f_t''(i)= 0$.
Moreover, we have
\[\hcap(\psi_0(\eta[0,t]))=\frac{2}{3}f_t'''(i). \]
We define $f^{(n)}_t=\psi_0^{-1}\circ g^{(n)}_t\circ\psi_0$ for $\eta_n$ similarly. 

For each $t\in (0,1)$, we have $\eta_n[0,t]\to \eta[0,t]$ in Hausdorff metric. Consequently, $f^{(n)}_t\to f_t$ uniformly when bounded away from $\eta[0,t]$. Therefore, 
\[\hcap(\psi_0(\eta_n[0,t]))=\frac{2}{3}(f_t^{(n)})'''(i)\to \frac{2}{3}f_t'''(i)=\hcap(\psi_0(\eta[0,t])),\quad n\to\infty.\]
This gives the pointwise convergence of half-plane capacity. We will explain the uniform convergence below. 
Fix $t\in (0,1)$. From~\cite[Lemma~4.1]{LawlerConformallyInvariantProcesses}, one can show that, for any $0\le s_1<s_2\le t$, 
\begin{equation}\label{eqn::equicontinuity_Lawler}
\|f_{s_1}-f_{s_2}\|_{\infty}:=\sup\{|f_{s_1}(z)-f_{s_2}(z)|: z\in D_t\}\le C(\eta[0,t]) \sqrt{\osc(\eta, s_2-s_1, t)}, 
\end{equation}
where $\osc(\eta, \delta, t):=\sup\{|\eta(u)-\eta(v)|: 0\le u, v\le t, |u-v|\le\delta \}$ and $C(\eta[0,t])$ is a constant depending on the diameter of $\psi_0(\eta[0,t])$. Similarly, we have 
\[\|f^{(n)}_{s_1}-f^{(n)}_{s_2}\|_{\infty}\le C(\eta_n[0,t]) \sqrt{\osc(\eta_n, s_2-s_1, t)}. \]
From~\eqref{eqn::uniform_certainparameter}, we may choose $C_t^{\eta}$ large so that $C(\eta_n[0,t]), C(\eta[0,t])\le C_t^{\eta}$. 
Note that 
\[\osc(\eta_n, \delta, t)\le 2\|\eta_n-\eta\|_{\infty}+\osc(\eta, \delta, t). \]
Therefore, there exists a constant $C_t^{\eta}\in (0,\infty)$ depending on $\eta[0,t]$ such that 
\begin{equation}\label{eqn::equicontinuity}
\|f_{s_1}-f_{s_2}\|_{\infty}\le C_t^{\eta}\sqrt{\osc(\eta, s_2-s_1, t)},\quad \|f^{(n)}_{s_1}-f^{(n)}_{s_2}\|_{\infty}\le C_t^{\eta}\sqrt{2\|\eta_n-\eta\|_{\infty}+\osc(\eta, s_2-s_1, t)}. 
\end{equation}
Combining with~\eqref{eqn::uniform_certainparameter} and the pointwise convergence, we obtain  the uniform convergence of half-plane capacity~\eqref{eqn::cvg_halfplanecapacity}. 
As a consequence, we have $\psi_0(\eta_n)\to \psi_0(\eta)$ locally uniformly when parameterized by the half-plane capacity. 
It remains to show the convergence of $\xi^{(n)}$. 

Pick $x>0$ large enough so that it has positive distance to $\psi_0(\eta[0,t])$. For $t\in (0,1)$, define
\begin{equation}\label{eqn::def_renormalizedharm1}
h_t(x)=g_t(x)-\xi_t. 
\end{equation}
Define $h_t^{(n)}$ for $\eta_n$ similarly. 
From Lemma~\ref{lem::cvg_renormalizedharm}, 
we have $\sup\{|h_s^{(n)}(x)-h_s(x)|: 0\le s\le t\}\to 0$ as $n\to\infty$. Therefore,
\begin{equation}\label{eqn::cvg_renormalizedharm}
\sup\{|g_s^{(n)}(x)-\xi^{(n)}_{s}-g_s(x)+\xi_s|: 0\le s\le t\}\to 0,\quad \text{as }n\to\infty. 
\end{equation}
From~\eqref{eqn::equicontinuity} and pointwise convergence, we have $\sup\{|g_s^{(n)}(x)-g_s(x)|: 0\le s\le t\}\to 0$ as $n\to\infty$. 
Combining with~\eqref{eqn::cvg_renormalizedharm}, we have 
$\sup\{|\xi^{(n)}_{s}-\xi_s|: 0\le s\le t\}\to 0$ as $n\to\infty$. As the half-plane capacity also converge as in~\eqref{eqn::cvg_halfplanecapacity}, we have that $\xi^{(n)}\to \xi$ locally uniformly when parameterized by the half-plane capacity. 
\end{proof}

\begin{lemma}\label{lem::cvg_renormalizedharm}
Assume the same setup as in Proposition~\ref{prop::cvg_fromcurvetodrivingfunction}. Define $h_t(x)$ for $\eta$ and define $h_t^{(n)}(x)$ for $\eta_n$ as in~\eqref{eqn::def_renormalizedharm1}. Then we have 
\[\sup\{|h_s^{(n)}(x)-h_s(x)|: 0\le s\le t\}\to 0, \quad\text{as }n\to\infty. \]
\end{lemma}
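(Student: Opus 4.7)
The plan is to give $h_s$ an intrinsic conformal description not involving $\xi_s$, deduce pointwise convergence via the Carath\'eodory kernel theorem, and upgrade to uniform convergence using an equicontinuity estimate in $s$ that is uniform in $n$. Unwinding the definitions, $h_s$ is the unique conformal map from the unbounded connected component $H_s$ of $\HH\setminus\psi_0(\eta[0,s])$ onto $\HH$ which sends the tip $\psi_0(\eta(s))$ to $0$ and admits the expansion $h_s(z)=z+c_s+O(1/z)$ at infinity (equivalently, has derivative $1$ at $\infty$); the shift constant $c_s=-\xi_s$ plays no role in this characterization. The analogous description holds for $h_s^{(n)}$.

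For pointwise convergence at a fixed $s\in[0,t]$, the uniform convergence $\eta_n\to\eta$ implies Carath\'eodory convergence of $(H_s^{(n)},w_0)$ to $(H_s,w_0)$ for any base point $w_0\in\HH$ far from the hulls, together with convergence of tips $\psi_0(\eta_n(s))\to\psi_0(\eta(s))$. The Carath\'eodory kernel theorem, applied after conjugating by a M\"obius map that turns the normalization at $\infty$ and the tip into two interior marked points, yields uniform convergence of $h_s^{(n)}$ to $h_s$ on compact subsets of $H_s$. Since $x$ has positive distance to $\psi_0(\eta[0,t])$ (and to $\psi_0(\eta_n[0,t])$ for $n$ large, by uniform convergence of $\eta_n$), Schwarz reflection across the real axis extends $h_s^{(n)}$ and $h_s$ analytically to a fixed disc around $x$, yielding $h_s^{(n)}(x)\to h_s(x)$.

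To upgrade to uniform convergence in $s$, I prove equicontinuity of the family $\{s\mapsto h_s^{(n)}(x)\}_n$. Decompose $h_{s_2}^{(n)}=\Psi_{s_1,s_2}^{(n)}\circ h_{s_1}^{(n)}$, where $\Psi_{s_1,s_2}^{(n)}$ is the analogous normalized conformal map that removes the image hull $h_{s_1}^{(n)}(\psi_0(\eta_n[s_1,s_2]))$ from $\HH$ and sends $h_{s_1}^{(n)}(\psi_0(\eta_n(s_2)))$ to $0$. The bound~\eqref{eqn::equicontinuity} on the $\D$-side controls the diameter of this image hull uniformly in $n$ by a function of $\sqrt{\osc(\eta_n,s_2-s_1,t)+2\|\eta_n-\eta\|_\infty}$. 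A standard half-plane estimate then shows that $\Psi_{s_1,s_2}^{(n)}$ is close to the identity on compact sets bounded away from its image hull, so $|h_{s_2}^{(n)}(x)-h_{s_1}^{(n)}(x)|=|\Psi_{s_1,s_2}^{(n)}(h_{s_1}^{(n)}(x))-h_{s_1}^{(n)}(x)|\to 0$ as $s_2-s_1\to 0$, uniformly in $n$. Pointwise convergence on a countable dense set of times in $[0,t]$, combined with this equicontinuity, yields the required uniform convergence by an Arzel\`a--Ascoli-type argument.

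The main obstacle is the equicontinuity step, which requires two uniform-in-$n$ inputs: that the image hull $h_{s_1}^{(n)}(\psi_0(\eta_n[s_1,s_2]))$ has small diameter when $s_2-s_1$ is small, and that the evaluation point $h_{s_1}^{(n)}(x)$ stays in a fixed bounded region of $\overline{\HH}$ at positive distance from this image hull. The first follows from~\eqref{eqn::equicontinuity} combined with the fact that the image hull is connected and touches the image tip $h_{s_1}^{(n)}(\psi_0(\eta_n(s_2)))$. The second uses that $|h_{s_1}^{(n)}(x)|=|g_{s_1}^{(n)}(x)-\xi_{s_1}^{(n)}|$ is controlled above and below by a Beurling-type harmonic measure estimate together with the hydrodynamic capacity bound $|g_{s_1}^{(n)}(x)-x|\le 2s_1/\dist(x,\psi_0(\eta_n[0,s_1]))$, neither of which invokes convergence of $\xi^{(n)}$.
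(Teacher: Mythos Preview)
Your argument has two genuine gaps, one in each of its two main steps.

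\textbf{Pointwise convergence.} The claim that one can ``conjugate by a M\"obius map that turns the normalization at $\infty$ and the tip into two interior marked points'' is incorrect: both $\infty$ and the tip $\psi_0(\eta(s))$ are boundary prime ends of $H_s$, and conformal maps preserve the interior/boundary dichotomy. The Carath\'eodory kernel theorem yields convergence of Riemann maps normalized at an \emph{interior} point; it says nothing directly about maps pinned at a moving boundary prime end. Concretely, you already know $g_s^{(n)}\to g_s$ locally uniformly on $H_s$ from the hydrodynamic normalization, so what remains is precisely $\xi_s^{(n)}=g_s^{(n)}(\text{tip}_n)\to g_s(\text{tip})=\xi_s$. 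That is exactly the boundary statement Carath\'eodory does not give you, and it is the content of the lemma itself.

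\textbf{Equicontinuity.} Write $\Psi=\Psi_{s_1,s_2}^{(n)}$. In your decomposition $\Psi=h_{s_2}^{(n)}\circ(h_{s_1}^{(n)})^{-1}$ one has $\Psi(w)=G(w)+\big(\xi_{s_1}^{(n)}-\xi_{s_2}^{(n)}\big)$, where $G$ is the hydrodynamically normalized map removing the image hull. The estimate~\eqref{eqn::equicontinuity} (via $f'''(i)$) controls only the half-plane capacity of that hull, hence only $|G(w)-w|$ for $w$ away from the hull; it does \emph{not} control the additive shift $\xi_{s_1}^{(n)}-\xi_{s_2}^{(n)}$. Equivalently, the diameter of the image hull is bounded below by $\osc(\xi^{(n)},[s_1,s_2])$ (its intersection with $\R$ is the interval swept by the driving function), while small hcap only bounds its height. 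So ``$\Psi$ is close to the identity'' and ``the image hull has small diameter'' both presuppose equicontinuity of $\xi^{(n)}$, which is what you are trying to establish.

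The paper avoids this circularity by giving $h_s(x)$ an intrinsic description that involves only the curve and not $\xi_s$: namely $h_s(x)=\lim_{y\to\infty}\pi y\,\PP^{yi}\big(\LB_\tau\in \LR(\psi_0(\eta[0,s]))\cup(0,x)\big)$, an excursion/harmonic-measure quantity. The difference $h_s-h_s^{(n)}$ is then bounded by Brownian crossing probabilities, which Beurling's estimate controls in terms of purely geometric quantities $r_s(\delta),s_s(\delta)$ measuring the separation between the two sides of $\eta[0,s]$; these depend only on $\eta$ and on $\|\eta_n-\eta\|_\infty$, uniformly in $s\in[0,t]$. If you want to salvage a conformal-map proof, you would need a replacement for this: some a~priori, $\xi$-free bound on $\osc(\xi^{(n)},[s_1,s_2])$ in terms of the curves.
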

\begin{proof}
The proof relies on a useful interpretation of the quantity $h_t(x)$. 
For $\eta[0,t]$, denote by $\LR(\psi_0(\eta[0,t]))$ the right-side of $\psi_0(\eta[0,t])$ and by $\LL(\psi_0(\eta[0,t]))$ the left-side of $\psi_0(\eta[0,t])$. 
Denote by $\LB=(\LB_t)_{t\ge 0}$ the Brownian motion in $\HH$ starting from $yi$ with $y>0$ large. Define $\tau$ to be the first time that it exits $\HH\setminus\psi_0(\eta[0,t])$.
From conformal invariance of Brownian motion, we have 
\begin{equation*}\label{eqn::def_renormalizedharm2}
h_t(x)=\lim_{y\to\infty}\pi y\PP^{yi}\left(\LB_{\tau}\in \LR(\psi_0(\eta[0,t]))\cup(0,x)\right). 
\end{equation*}
Similarly, define $\tau^{(n)}$ to be the first time that $\LB$ exits $\HH\setminus\psi_0(\eta_n[0,t])$. Then we have
\[h^{(n)}_t(x)=\lim_{y\to\infty}\pi y\PP^{yi}\left(\LB_{\tau^{(n)}}\in \LR(\psi_0(\eta_n[0,t]))\cup(0,x)\right). \]
For $\eps>0$, denote by $\LV_{\eps}(\psi_0(\eta[0,t]))$ the $\eps$-neighborhood of $\psi_0(\eta[0,t])$ and define $T^{\eps}$ to be the first time that $\LB$ hits $\LV_{\eps}(\psi_0(\eta[0,t]))$. Choose $n$ large enough so that 
\begin{equation}\label{eqn::rhm_aux}
\sup\{|\psi_0(\eta_n(s))-\psi_0(\eta(s))|: 0\le s\le t\}\le \eps/2.
\end{equation}
Then $\psi_0(\eta_n[0,t])$ is contained in $\LV_{\eps}(\psi_0(\eta[0,t]))$. Denote by 
\[V_t^R=\sup\{\psi_0(\eta[0,t])\cap \R\},\quad V_t^L=\inf\{\psi_0(\eta[0,t])\cap \R\}.\] 
If $\tau<T^{\eps}$, we have $\LB_{\tau^{(n)}}=\LB_{\tau}\in (V_t^R+\eps, x)$. 
Thus 
\begin{align}\label{eqn::rhm_aux1}
h_t(x)-h^{(n)}_t(x)=\lim_{y\to\infty}\pi y\PP^{yi}\left(R_t^{(n)}\right)-\lim_{y\to\infty}\pi y\PP^{yi}\left(S_t^{(n)}\right), 
\end{align}
where 
\begin{align*}
R_t^{(n)}=\left\{T^{\eps}\le \tau, \LB_{\tau}\in \LR(\psi_0(\eta[0,t]))\cup(V_t^R,V_t^R+\eps),  \LB_{\tau^{(n)}}\not\in \LR(\psi_0(\eta_n[0,t]))\cup(0,x)\right\}, \\
S_t^{(n)}=\left\{T^{\eps}\le \tau, \LB_{\tau_n}\in \LR(\psi_0(\eta_n[0,t]))\cup(V_t^R,V_t^R+\eps),  \LB_{\tau}\not\in \LR(\psi_0(\eta[0,t]))\cup(0,x)\right\}. 
\end{align*}

Let us first estimate the probability of $R_t^{(n)}$. For $\delta>0$, consider a path in $\HH\setminus\psi_0(\eta[0,t])$ starting from a point in 
\[\LR(\psi_0(\eta[0,t]))\setminus B(\psi_0(\eta(t)), \delta)\]
and terminating at a point in 
\[\LL(\psi_0(\eta[0,t]))\cup(-\infty, V_t^L)\cup (x,\infty).\]
Denote by $r_t(\delta)$ the infimum of the length of such paths. Recall that $\eta$ satisfies Proposition~\ref{prop::Loewnerchain_qualification}~\ref{item::Loewnerchain_qualifya}. We have $r_t(\delta)>0$ and $r_t(\delta)\to 0$ as $\delta\to 0$. 
From~\eqref{eqn::rhm_aux}, we see that any path in $\HH\setminus\psi_0(\eta[0,t])$ starting from a point in 
\[\LR(\psi_0(\eta[0,t]))\setminus B(\psi_0(\eta(t)), \delta)\] 
and terminating at a point in 
\[\LL(\psi_0(\eta_n[0,t]))\cup(-\infty, V_t^L)\cup (x,\infty)\]
has length at least $r_t(\delta)-\eps$. From Beurling estimate, there exists a universal constant $c\in (0,\infty)$ such that 
\begin{align*}
\PP^{yi}\left(R_t^{(n)}\right)\le &\PP^{yi}\left(R_t^{(n)}\cap \{\LB_{\tau}\not\in B(\psi_0(\eta(t)), \delta)\}\right)+\PP^{yi}\left(T^{\eps}\le \tau, \LB_{\tau}\in B(\psi_0(\eta(t)), \delta)\right)\\
\le & c\sqrt{\frac{\eps}{r_t(\delta)-\eps}}\PP^{yi}(T^{\eps}\le \tau)+\PP^{yi}\left(T^{\eps}\le \tau, \LB_{\tau}\in B(\psi_0(\eta(t)), \delta)\right). 
\end{align*}

Next, we estimate $\PP(S_t^{(n)})$ in a similar way. For $\delta>0$, consider a path in $\HH\setminus\psi_0(\eta[0,t])$ such that it starts from a point in 
\[\LL(\psi_0(\eta[0,t]))\cup(-\infty, V_t^L)\cup(x,\infty)\setminus B(\psi_0(\eta(t)), \delta)\]
and it terminates at a point in 
\[\LR(\psi_0(\eta[0,t])).\]
Denote by $s_t(\delta)$ the infimum of the length of such paths. Similarly, we have $s_t(\delta)>0$ and $s_t(\delta)\to 0$ as $\delta\to 0$, and 
\begin{align*}
\PP^{yi}\left(S_t^{(n)}\right)\le &\PP^{yi}\left(S_t^{(n)}\cap \{\LB_{\tau}\not\in B(\psi_0(\eta(t)), \delta)\}\right)+\PP^{yi}\left(T^{\eps}\le \tau, \LB_{\tau}\in B(\psi_0(\eta(t)), \delta)\right)\\
\le & c\sqrt{\frac{\eps}{s_t(\delta)-\eps}}\PP^{yi}(T^{\eps}\le \tau)+\PP^{yi}\left(T^{\eps}\le\tau, \LB_{\tau}\in B(\psi_0(\eta(t)), \delta)\right). 
\end{align*}

Plugging these into~\eqref{eqn::rhm_aux1}, we have 
\begin{align*}
|h_t(x)-h_t^{(n)}(x)|\le C_t^{\eta}(\eps, \delta)+F_t^{\eta}(\delta), 
\end{align*}
where $C_t^{\eta}(\eps, \delta)\to 0$ as $\eps\to 0$ and $F_t^{\eta}(\delta)\to 0$ as $\delta\to 0$. The same analysis applies for all $s\in [0,t]$. Thus, there exist $\tilde{C}_t^{\eta}(\eps, \delta)$ and $\tilde{F}_t^{\eta}(\delta)$ such that $\tilde{C}_t^{\eta}(\eps, \delta)\to 0$ as $\eps\to 0$ and $\tilde{F}_t^{\eta}(\delta)\to 0$ as $\delta\to 0$ and that 
\[\sup\{|h_s(x)-h_s^{(n)}(x)|: s\in [0,t]\}\le \tilde{C}_t^{\eta}(\eps, \delta)+\tilde{F}_t^{\eta}(\delta), \quad \text{as long as~\eqref{eqn::rhm_aux} holds.}\]
This gives the conclusion. 
\end{proof}

\begin{proof}[Proof of Proposition~\ref{prop::cvg_envelop_drivingfunction}]
The curves $\eta_{\kappa, \nu_n}$ and $\eta_{\kappa, \nu}$ satisfy the conditions in Proposition~\ref{prop::cvg_fromcurvetodrivingfunction}. 
\begin{itemize}
\item From Theorem~\ref{thm::cvg_envelop}, $(\eta_{\kappa, \nu_n})_{n\ge 0}$ and $\eta_{\kappa, \nu}$ satisfy Proposition~\ref{prop::cvg_fromcurvetodrivingfunction}~\ref{item::cvg_curvetodriving_a}. 
\item From the proof of Proposition~\ref{prop::envelop_drivingfunction}, $(\eta_{\kappa,\nu_n})_{n\ge 0}$ and $\eta_{\kappa,\nu}$ satisfy all the conditions in Proposition~\ref{prop::Loewnerchain_qualification}. 
\end{itemize}
Thus the conclusion follows. 
\end{proof}

\section{Identification with level lines of the GFF for $\kappa=4$}
\label{Sec Level lines}
\subsection{Proof of Theorem~\ref{thm::gfflevelline_coupling}}

In this section, we will construct a level line of GFF and complete the proof of Theorem~\ref{thm::gfflevelline_coupling}. Recall that $\Phi$ is a zero-boundary GFF in $\D$ and $\nu$ is a finite non-negative Radon measure on $\overline{A_{\rm L}}$ such that the support of $\nu$ equals $\overline{A_{\rm L}}$ and that $\pm i$ are not atoms of $\nu$. The goal of this section is to construct a level line of $\Phi+\nu$ as in Definition~\ref{def::GFFlevelline}. The strategy is to approximate such level line by the level line of GFF with piecewise constant boundary data which is already well understood. 

Let us introduce approximations of the measure $\nu$. For $n\ge 1$, we first decompose the arc $\overline{A_{\rm L}}$ into subarcs of length $\frac{\pi}{n+1}$: 
\[\theta^{(n)}_k=\frac{\pi}{2}+\frac{(n+1-k)\pi}{n+1},\quad k\in \{0, 1, \ldots, n+1\}. \]
Note that $\theta^{(n)}_{n+1}=\frac{\pi}{2}<\theta^{(n)}_{n}<\cdots<\theta^{(n)}_{1}<\theta^{(n)}_{0}=\frac{3\pi}{2}$. Denote by 
$A^{(n)}_k$ the subarc of $\overline{A_{\rm L}}$: 
\[A^{(n)}_k=\left\{e^{i\theta}: \theta^{(n)}_{k+1}<\theta\le \theta^{(n)}_{k}\right\}, \quad k\in \{0, 1, \ldots, n\}.\]
Define the measure $\nu_{n}$ on $\overline{A_{\rm L}}$:
\begin{equation}
\label{Eq nu n}
\nu_{n}=2\lambda\one_{A^{(n)}_0}\sigma_{\partial\D}+\sum_{k=1}^n \frac{\nu\left(A^{(n)}_k\right)}{\sigma_{\partial\D}\left(A^{(n)}_k\right)}\one_{A^{(n)}_k}\sigma_{\partial\D}.
\end{equation} 
Denote by $\eta^{(n)}=\eta_{4,\nu_{n}}$ the curve constructed from $\CLE_4$ and the Poisson point process of intensity 
$\mu^{\D}_{\nu_{n}}$. 
Then we have the following.
\begin{itemize}
\item From~\cite{AruLupuSepulvedaFPSGFFCVGISO}, $\eta^{(n)}$ can be coupled with $\Phi$ as the level line of $\Phi+\nu_{n}$.
\item From conclusions recalled in Section~\ref{subsec::levellines}, in the above coupling, $\eta^{(n)}$ is a.s. determined by $\Phi$. 
\end{itemize}
 In particular, the law of $\eta^{(n)}$ is the same as $\SLE_4(\rho^{(n)}_n, \ldots, \rho^{(n)}_1)$ in $\overline{\D}$ from $-i$ to $i$ with force points 
$(e^{i\theta^{(n)}_n}, \ldots, e^{i\theta^{(n)}_1})$.
Comparing with~\eqref{eqn::piecewisebc}, the parameters $(\rho^{(n)}_n, \ldots, \rho^{(n)}_1)$ are determined as follows: 
\begin{align*}
\bar{\rho}^{(n)}_k=\frac{1}{\lambda}\frac{\nu\left(A^{(n)}_k\right)}{\sigma_{\partial\D}\left(A^{(n)}_k\right)}-2, \quad k\in\{1, \ldots, n\}; \qquad \rho^{(n)}_k=\bar{\rho}^{(n)}_k-\bar{\rho}^{(n)}_{k-1}, \quad k\in\{1, \ldots, n\};
\end{align*}
with the convention that $\rho^{(n)}_0=0$. From Proposition~\ref{prop::cvg_envelop_drivingfunction}, the sequence $\eta^{(n)}$ converges to $\eta_{4,\nu}$. 

\begin{lemma}\label{lem::levelline_approxiamtion}
Suppose $\nu$ is a finite non-negative Radon measure on 
$\overline{A_{\rm L}}$ such that the support of $\nu$ equals $\overline{A_{\rm L}}$ and $\Atom_{\rm conv}^{\ast}(\nu)=\emptyset$.
Then the sequence $(\nu_{n})_n$ converges weakly to $\nu$. Consequently, when parameterized by the half-plane capacity, $\tilde{\eta}^{(n)}:=\psi_0(\eta^{(n)})$ converges in law to $\tilde{\eta}_{4,\nu}:=\psi_0(\eta_{4,\nu})$ and the driving function of $\tilde{\eta}^{(n)}$ converges in law to the driving function of $\tilde{\eta}_{4,\nu}$ for the local uniform topology. 
\end{lemma}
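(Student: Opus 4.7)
The plan is to reduce the statement to Proposition~\ref{prop::cvg_envelop_drivingfunction} applied with $\kappa=4$. For this, two verifications are needed: weak convergence $\nu_n\to\nu$ on $\overline{A_{\rm L}}$, and equality of the support of each $\nu_n$ with $\overline{A_{\rm L}}$.

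I would first dispatch the support condition. Since $\mathrm{supp}(\nu)=\overline{A_{\rm L}}$, every subarc $A^{(n)}_k$ with non-empty relative interior in $\partial \D$ satisfies $\nu(A^{(n)}_k)>0$, for otherwise its interior would be a non-empty open set of $\nu$-measure zero contradicting the support assumption. Hence the density $\nu(A^{(n)}_k)/\sigma(A^{(n)}_k)$ is strictly positive on each $A^{(n)}_k$ for $k\geq 1$, and the $2\lambda$ factor makes the density positive on $A^{(n)}_0$. Therefore $\mathrm{supp}(\nu_n)=\overline{A_{\rm L}}$.

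Next I would establish the weak convergence by testing against $f\in C(\overline{A_{\rm L}})$. The contribution $2\lambda\int_{A^{(n)}_0}f\,d\sigma_{\partial\D}$ is bounded by $2\lambda\|f\|_\infty\,\pi/(n+1)$ and hence vanishes. For the averaged contribution, the uniform continuity of $f$ on the compact arc $\overline{A_{\rm L}}$ gives, for $n$ large, an oscillation $<\varepsilon$ on each $A^{(n)}_k$, so
\[
\Big|\sum_{k=1}^n \tfrac{\nu(A^{(n)}_k)}{\sigma(A^{(n)}_k)} \int_{A^{(n)}_k}\! f\,d\sigma_{\partial\D} \;-\; \int_{\bigcup_{k=1}^n A^{(n)}_k}\! f\,d\nu \Big| \;\leq\; \varepsilon\,\nu(\overline{A_{\rm L}}).
\]
The sets $\bigcup_{k=1}^n A^{(n)}_k$ increase to the open arc $A_{\rm L}=\overline{A_{\rm L}}\setminus\{-i,i\}$. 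Thus it suffices to observe that $\nu(\{-i\})=\nu(\{i\})=0$: indeed, by the remark preceding Lemma~\ref{Lem hit}, $\Atom(\nu)\cap\{-i,i\}\subset \Atom_{\rm conv}^{\ast}(\nu)$, and the latter set is empty by assumption. Hence $\int f\,d\nu_n \to \int f\,d\nu$, i.e. $\nu_n\to\nu$ weakly.

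With both hypotheses verified, Proposition~\ref{prop::cvg_envelop_drivingfunction} (applied with $\kappa=4$) delivers the desired convergence in law, when parameterized by half-plane capacity, both of the curves $\tilde\eta^{(n)}=\tilde\eta_{4,\nu_n}$ to $\tilde\eta_{4,\nu}$ and of their driving functions, for the local uniform topology. No step is a real obstacle here; the only subtle point is the exclusion of mass of $\nu$ at the endpoints $\pm i$, which is exactly what the assumption $\Atom_{\rm conv}^{\ast}(\nu)=\emptyset$ is there to guarantee.
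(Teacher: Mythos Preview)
Your proposal is correct and matches the paper's approach: the paper states the lemma without an explicit proof, having noted just before it that ``From Proposition~\ref{prop::cvg_envelop_drivingfunction}, the sequence $\eta^{(n)}$ converges to $\eta_{4,\nu}$,'' so the intended argument is precisely to verify the hypotheses of that proposition (weak convergence of $\nu_n$ to $\nu$ and full support of each $\nu_n$) and then apply it. Your verification of both hypotheses, including the use of $\Atom_{\rm conv}^{\ast}(\nu)=\emptyset$ to rule out mass at $\pm i$, is exactly what is needed.
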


We already know that $\eta^{(n)}$ can be coupled with $\Phi$ as a level line of $\Phi+\nu_{n}$. 
The goal is to argue that $\eta$ can be coupled with $\Phi$ as a level line of $\Phi+\nu$. To this end, the following lemma plays an essential role. 

\begin{lemma}\label{lem::criteria_coupling}
Suppose $\eta$ is a continuous curve in $\overline{\D}$ from $-i$ to $i$ such that $\psi_0(\eta)$ has continuous driving function $\xi$.  Suppose $\Phi$ is zero-boundary GFF in $\D$. 
Then the pair $(\Phi, \eta)$ can be coupled such that $\eta$ is a level line of $\Phi+\nu$ as in Definition~\ref{def::GFFlevelline} if and only if for every choice of $z\in\D$, 
the process $(\nu_{t}(z))_{ t\ge 0}$ is a Brownian motion with respect to the filtration generated by $\xi$ when parameterized by minus log of the conformal radius: 
\[\log\CR(z, \D)-\log\CR(z, \D\setminus\eta[0,t]). \]
\end{lemma}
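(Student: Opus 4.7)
The criterion is a classical Schramm–Sheffield–Dubédat characterization: the harmonic observable $\nu_t(z)$ records the conditional expectation of $\Phi+\nu$ at $z$ given $\eta[0,t]$, and the two conditions of Definition~\ref{def::GFFlevelline} should translate precisely into the statement that this observable is a Brownian motion in the natural conformal-radius time. My plan is to prove the two implications separately, with necessity following from the Markov property combined with the GFF covariance, and sufficiency requiring an explicit construction of the coupling from $\eta$.

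For the ``only if'' direction, I would assume the level-line coupling exists, fix $z\in\D$, and take an $\eta$-stopping time $\tau$ strictly less than the swallowing time of $z$. By the Markov property, the conditional law of $(\Phi+\nu)|_{\D\setminus\eta[0,\tau]}$ is a GFF with boundary data $\nu_\tau$; pairing $\Phi+\nu$ with circle averages at $z$ and passing to the limit should give $\nu_\tau(z)=\lim_{\varepsilon\to 0}\E[(\Phi+\nu)_\varepsilon(z)\mid\mathcal{F}_\tau]$, which identifies $(\nu_t(z))_{t\ge 0}$ as a continuous $\mathcal{F}_t$-martingale. Its quadratic variation equals the variance killed by conditioning, namely $G_\D(z,z)-G_{\D\setminus\eta[0,t]}(z,z)$, which with the paper's conventions and choice $\lambda=\sqrt{\pi/8}$ is exactly $\log\CR(z,\D)-\log\CR(z,\D\setminus\eta[0,t])$, and Lévy's theorem then concludes.

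For the ``if'' direction, I would construct the coupling as follows. Conditionally on $\eta$, sample in each connected component $U$ of $\D\setminus\eta$ an independent zero-boundary GFF $\Phi^U$, and put $\tilde\Phi|_U := \Phi^U + \nu_\infty^U$, where $\nu_\infty^U$ is the harmonic extension to $U$ of the boundary data equal to $2\lambda$ on the left of $\eta$, $0$ on the right of $\eta$, and $\nu$ on $\partial\D$; then set $\Phi := \tilde\Phi - \nu$. The Markov property of Definition~\ref{def::GFFlevelline} is built into this construction, so the remaining task is to verify that $\Phi$ has the law of a zero-boundary GFF. Gaussianity is immediate, the mean identity $\E[\tilde\Phi(z)]=\nu(z)$ follows from the martingale property $\E[\nu_\infty(z)]=\nu(z)$, and the total-covariance formula should reduce the rest to the identity
\[\operatorname{Cov}(\nu_\infty(z),\nu_\infty(w)) = G_\D(z,w) - \E[G_{\D\setminus\eta}(z,w)].\]

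The main obstacle will be upgrading the single-point Brownian motion hypothesis to this off-diagonal covariation identity. My plan is to represent $\nu_t(z)-\nu_0(z)=\int_0^t \sigma_s(z)\,d\xi_s$ as a stochastic integral against the driving Brownian motion underlying $\xi$ --- this representation exists because $\nu_t(z)$ is a continuous $\mathcal{F}_\xi$-martingale with the prescribed quadratic variation --- then identify $\sigma_s(z)$ from the diagonal data as an explicit conformal-derivative / harmonic function, and conclude that $\langle\nu_\cdot(z),\nu_\cdot(w)\rangle_t=\int_0^t \sigma_s(z)\sigma_s(w)\,ds$. The value at each $t$ should then be checked to equal $G_\D(z,w)-G_{\D\setminus\eta[0,t]}(z,w)$ by harmonic uniqueness in $w$ with matching boundary data on $\partial(\D\setminus\eta[0,t])$. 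This covariation computation is the technical heart of the proof; once it is in place, the standard characterization of the GFF by its mean and covariance finishes the argument.
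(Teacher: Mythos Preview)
Your outline is precisely the Schramm--Sheffield argument that the paper invokes; the paper's own proof of this lemma is nothing more than a pointer to \cite[Section~2.2]{SchrammSheffieldContinuumGFF} and \cite[Lemma~2.16]{PowellWuLevellinesGFF}, so at the level of strategy you are in full agreement. The ``only if'' direction is correct as you state it.

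In the ``if'' direction there is one step that is not yet justified. You write $\nu_t(z)-\nu_0(z)=\int_0^t \sigma_s(z)\,d\xi_s$ and say this representation exists because $\nu_t(z)$ is a continuous $\mathcal F_\xi$-martingale with the prescribed quadratic variation. That reasoning is circular: a martingale representation against $\xi$ (or against a Brownian motion ``underlying'' $\xi$) requires knowing in advance that $\xi$ is a semimartingale, which is not part of the hypothesis. The way this is actually done in the cited references is to use the \emph{explicit} formula: $\nu_t(z)$ is a smooth function of $g_t(z)$, the images $g_t(x)$ of boundary points, and $\xi_t$; since $g_t(z)$ and $g_t(x)$ are $C^1$ in $t$ by the Loewner ODE and the $\xi$-partial derivative of this function is nonvanishing away from the boundary, the semimartingale property of $\nu_t(z)$ transfers to $\xi_t$. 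Only then can you run It\^o's formula to get $d\nu_t(z)=\sigma_t(z)\,d\xi_t^{M}$, read off $d\langle\xi\rangle_t=4\,dt$ from the quadratic-variation hypothesis, and compute the cross-variation $\langle\nu_\cdot(z),\nu_\cdot(w)\rangle_t$ as the explicit Loewner integral which Hadamard's variational formula identifies with $G_{\D}(z,w)-G_{\D\setminus\eta[0,t]}(z,w)$. With that step filled in, the remainder of your construction (independent GFFs in the complementary components, mean/covariance check) goes through exactly as in Schramm--Sheffield.
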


\begin{proof}
The proof follows~\cite[Section~2.2]{SchrammSheffieldContinuumGFF}, see also~\cite[Lemma~2.16]{PowellWuLevellinesGFF}. 
\end{proof}

\begin{proof}[Proof of Theorem~\ref{thm::gfflevelline_coupling}]
Suppose $\eta^{(n)}$ is parameterized by the half-plane capacity of $\psi_0(\eta^{(n)})$ and define $\nu_{n,t}$ as in Definition~\ref{def::GFFlevelline}. Applying Lemma~\ref{lem::criteria_coupling} to $\eta^{(n)}$, for any $z\in\D$, the process $(\nu_{n,t}(z))_{t\ge 0}$ is a Brownian motion when parameterized by 
\[C^{(n)}_t(z):=\log\CR(z, \D)-\log\CR(z, \D\setminus\eta^{(n)}[0,t]). \]

Suppose $\eta_{4,\nu}$ is parameterized by the half-plane capacity of $\psi_0(\eta_{4,\nu})$ and define $\nu_t$ as in Definition~\ref{def::GFFlevelline}. 
We wish to argue that $\eta_{4,\nu}$ can be coupled with $\Phi$ as a level line of $\Phi+\nu$. 
Fix an arbitrary $z\in\D$.
From Lemma~\ref{lem::criteria_coupling}, 
we need to argue that 
$(\nu_t(z))_{t\ge 0}$ is a Brownian motion when parameterized by 
\[C_t(z)=\log\CR(z, \D)-\log\CR(z, \D\setminus\eta[0,t]).\]
It suffices to show that $(\nu_{n,t}(z))_{t\ge 0}$ converges to 
$(\nu_t(z))_{t\ge 0}$ and $(C^{(n)}_t(z))_{t\ge 0}$ converges to 
$(C_t(z))_{t\ge 0}$ for the local uniform topology on processes parametrized by the time $t$. 
To this end, we use similar analysis as in Section~\ref{subsec::cvg_drivingfunction}. 

We parameterize $\psi_0(\eta_{4,\nu})$ by the half-plane capacity and denote by $g_t$ be the conformal map from the unbounded connected component of $\HH\setminus\psi_0(\eta_{4,\nu}[0,t])$ with normalization $\lim_{z\to\infty}|g_t(z)-z|=0$. Set $f_t=\psi_0^{-1}\circ g_t\circ\psi_0$. Define $g^{(n)}_t$ and $f^{(n)}_t$ for $\eta^{(n)}$ similarly.  

From the conformal covariance of the conformal radius, we have
\begin{align*}
C_t(z)=&\log|f_t'(z)|-\log\CR(f_t(z), \D)+\log\CR(z, \D);\\
C^{(n)}_t(z)=&\log|(f_t^{(n)})'(z)|-\log\CR(f_t^{(n)}(z), \D)+\log\CR(z, \D).
\end{align*}
From the proof of Proposition~\ref{prop::cvg_fromcurvetodrivingfunction}, $(f^{(n)}_t(z))_{t\ge 0}$ converges to $(f_t(z))_{t\ge 0}$ and $((f^{(n)}_t)'(z))_{t\ge 0}$ converges to $(f_t'(z))_{t\ge 0}$ for the local uniform topology, this implies that $(C^{(n)}_t(z))_{t\ge 0}$ converges to $(C_t(z))_{t\ge 0}$ for the local uniform topology.

We further claim that $(\nu_{n,t}(z))_{t\ge 0}$ converges to 
$(\nu_t(z))_{t\ge 0}$ for the local uniform topology.
The convergence of
$2\lambda H_{\D\setminus\eta^{(n)}[0,t]}(z, \LL(\eta^{(n)}[0,t]))$
towards
$2\lambda H_{\D\setminus\eta[0,t]}(z, \LL(\eta[0,t]))$,
locally uniformly in $t$, is similar to 
Lemma~\ref{lem::cvg_renormalizedharm}.
It remains to check the convergence of
\begin{equation}
\label{Eq I1}
\int_{\partial \D}H_{\D\setminus\eta^{(n)}[0,t]}(z,x)d\nu_{n}(x)
\end{equation}
towards
\begin{equation}
\label{Eq I2}
\int_{\partial \D}H_{\D\setminus\eta[0,t]}(z,x)d\nu(x).
\end{equation}
Without loss of generality, we assume that
$\eta$ and the $\eta^{(n)}$ are defined on the same probability space
such that the convergence of the $\eta^{(n)}$
towards $\eta$ is a.s. for the local uniform topology.
Denote by $A_{t}(z)$
the maximal open arc of $\partial\D$ that can be accessed
from $z$ without hitting $\eta[0,t]$.
Then the following holds a.s.:
\begin{itemize}
\item Both $H_{\D\setminus \eta^{(n)}[0,t]}(z,x)$
and $H_{\D\setminus \eta[0,t]}(z,x)$
are bounded from above by $H_{\D}(z,x)$.
\item $H_{\D\setminus \eta^{(n)}[0,t]}(z,x)$
converges to $H_{\D\setminus \eta[0,t]}(z,x)$
uniformly for $x$ belonging to compact
subsets of $A_{t}(z)$,
and locally uniformly in $t$.
\item $H_{\D\setminus \eta^{(n)}[0,t]}(z,x)$
converges to $0$
uniformly for $x$ belonging to compact
subsets of $\partial\D\setminus \overline{A_{t}(z)}$,
and locally uniformly in $t$.
\end{itemize}
Note that we do not claim that
$H_{\D\setminus \eta^{(n)}[0,t]}(z,x)$ converges to
$H_{\D\setminus \eta[0,t]}(z,x)$
for $x\in\partial A_{t}(z)$,
and the convergence of $\eta^{(n)}$ to $\eta$
is not sufficient to ensure that.
However, this is not needed, since by Lemma \ref{Lem hit},
a.s. for every $t$,
$\nu(\partial A_{t}(z))=0$.
So the three points listed above are sufficient to ensure the
convergence of \eqref{Eq I1} towards \eqref{Eq I2}.
\end{proof}

\subsection{Proof of Theorem~\ref{thm::determination}}

We may repeat the same argument in~\cite[Section~4]{PowellWuLevellinesGFF} and arrive at the following conclusion.
\begin{proposition}\label{prop::determination_PW}
Suppose $\Phi$ is zero-boundary GFF in $\D$.  Fix $\eps>0$. 
Suppose $\nu$ is a finite non-negative Radon measure on 
$\overline{A_{\rm L}}$ such that 
\begin{equation}\label{eqn::determination_assumptionPW}
\nu\ge \eps \one_{\overline{A_{\rm L}}}\sigma_{\partial\D}.
\end{equation}
Suppose that $\eta_{\nu}$ is a continuous simple curve in 
$\overline{\D}$ from $-i$ to $i$ with continuous driving function. 
Assume that $\eta_{\nu}$ is coupled with $\Phi$ as a level line of 
$\Phi+\nu$. Then the level line coupling is unique and $\eta_{\nu}$ is almost surely determined by $\Phi$. 
\end{proposition}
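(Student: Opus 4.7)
The plan is to adapt the Schramm-Sheffield determination argument as executed in~\cite[Section~4]{PowellWuLevellinesGFF}. First observe that uniqueness in law of the coupling is an automatic consequence of determination: if $\eta_\nu$ is a.s.\ a measurable functional of $\Phi$, then the joint law of $(\Phi,\eta_\nu)$ is fixed by the marginal of $\Phi$. So the task reduces to showing determination. For this, by a standard regular-conditional-distribution argument, it suffices to place two level-line couplings $(\Phi,\eta^1_\nu)$ and $(\Phi,\eta^2_\nu)$ on a single probability space so that $\eta^1_\nu$ and $\eta^2_\nu$ are conditionally independent given $\Phi$, each with the prescribed level-line marginal, and then prove $\eta^1_\nu=\eta^2_\nu$ almost surely.

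Fix $i\in\{1,2\}$ and parameterize $\psi_0(\eta^i_\nu)$ by half-plane capacity, with driving function $\xi^i$. The martingale criterion of Lemma~\ref{lem::criteria_coupling} says that for every interior point $z\in\D$ the process $t\mapsto\nu^i_t(z)$ is a Brownian motion under the conformal-radius time change $\log\CR(z,\D)-\log\CR(z,\D\setminus\eta^i[0,t])$. Taking boundary limits and applying Itô's formula converts these martingale relations into an SDE for $\xi^i$ in $\HH$-coordinates of the form
\[
d\xi^i_t = 2\,dB^i_t + \left(\int \frac{d\rho(x)}{\xi^i_t-V^i_t(x)}\right)dt,
\]
where $B^i$ is a standard Brownian motion and $\rho$ is the signed boundary measure encoding $\nu$ as in~\eqref{Eq bc}. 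The hypothesis $\nu\ge\varepsilon\one_{\overline{A_{\rm L}}}\sigma_{\partial\D}$ translates into $\rho((x,0])\ge -2+\varepsilon/\lambda$, which is precisely the condition~\eqref{assumption::PowellWu} used in~\cite{PowellWuLevellinesGFF} to ensure instantaneous reflection off the force points and to rule out the driving function being swallowed by a force-point singularity, giving the SDE a well-defined pathwise meaning with a non-degenerate Brownian component (cf.\ Definition~\ref{def::SLE4rho}).

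With the SDE description in hand, the core step is to establish $\eta^1_\nu=\eta^2_\nu$ a.s.\ via a Schramm-Sheffield style boundary-data computation. Conditioning on $(\eta^1_\nu,\eta^2_\nu)$ and applying the domain Markov property in both orders, one deduces that on each connected component of $\D\setminus(\eta^1_\nu\cup\eta^2_\nu)$ the field $\Phi$ is a GFF whose boundary values are the harmonic extension of $\nu$ on $\partial\D$ together with jumps of $\pm 2\lambda$ across the portions of $\eta^1_\nu$ and $\eta^2_\nu$ lying in the open disk. The two orderings of the Markov conditioning must produce the same conditional law, which imposes compatibility constraints on the boundary data; on any event where the two curves disagree, these constraints would force an extra deterministic drift in the martingale $\nu^1_t(z)-\nu^2_t(z)$, contradicting the conditional-Brownian-motion property inherited from Lemma~\ref{lem::criteria_coupling} unless the two driving functions coincide.

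The main obstacle is making the final compatibility step rigorous when $\nu$ is a general measure on $\overline{A_{\rm L}}$ rather than a piecewise-constant or smooth boundary function. The $\varepsilon$-lower bound is the crucial technical input: it guarantees, via Proposition~\ref{Prop abs cont}, that locally away from the boundary the law of $\eta^i_\nu$ is absolutely continuous with respect to chordal $\SLE_4$, which in turn controls the regularity of $\xi^i$, of the hulls $g^i_t$, and of the harmonic measures that feed into the boundary-value analysis. Once this absolute continuity is invoked, the argument of~\cite[Section~4]{PowellWuLevellinesGFF} transfers essentially verbatim and yields $\xi^1=\xi^2$ almost surely, hence $\eta^1_\nu=\eta^2_\nu$ by the Loewner correspondence, which delivers both determination by $\Phi$ and uniqueness of the coupling.
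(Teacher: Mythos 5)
Your overall reduction is the right one and matches the paper's: it suffices to couple two level lines $\eta^{1}_{\nu}$, $\eta^{2}_{\nu}$ (in the paper, a forward level line of $\Phi+\nu$ and a reverse level line of $-\Phi-\nu$) so that they are conditionally independent given $\Phi$, and to show they coincide. The gap is in how you propose to prove coincidence. First, the SDE route cannot carry the argument: the paper explicitly points out (end of Section~\ref{subsec::levellines}) that for $\SLE_4(\rho)$ in the sense of Definition~\ref{def::SLE4rho} only \emph{existence} is known, not uniqueness in law, so knowing that both driving functions solve the same SDE with instantaneous reflection does not identify them, and a fortiori does not give determination by $\Phi$. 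Second, your appeal to Proposition~\ref{Prop abs cont} is circular here: that proposition concerns the envelope $\eta_{\kappa,\nu}$ built from the CLE and the excursion process, whereas the hypothesis of Proposition~\ref{prop::determination_PW} is an \emph{arbitrary} continuous simple curve coupled as a level line; one may not assume it has the law of the envelope before uniqueness is established. Third, the decisive step --- ``compatibility of the two domain Markov decompositions forces an extra drift unless the curves coincide'' --- is exactly the part that needs a proof, and the mechanism you describe is not the one that works for measure-valued $\nu$.

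What actually closes the argument in the paper is a geometric ordering argument resting on three lemmas you do not establish: (i) the level line a.s.\ avoids any open boundary arc on which the (conditional) boundary data dominates $2\lambda$ (Lemma~\ref{lem::PW_avoidanarc}); (ii) it a.s.\ avoids any fixed boundary point (Lemma~\ref{lem::PW_avoidapoint}); and (iii) a monotonicity statement (Lemma~\ref{lem::PW_mono}) saying that the forward level line of $\Phi+\nu_{1}$ stays to the right of the reverse level line of $-\Phi-\nu_{2}$ when $\nu_{1}\ge\nu_{2}$. These are proved by dominating $\nu$ from below by a \emph{piecewise constant} harmonic datum $F$ (this is precisely where the hypothesis $\nu\ge\eps\one_{\overline{A_{\rm L}}}\sigma_{\partial\D}$ enters, not via absolute continuity with respect to $\SLE_4$) and comparing with the level line of $-\Phi-F$, whose boundary behaviour is known from the piecewise-constant theory. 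Applying (iii) with $\nu_{1}=\nu_{2}=\nu$ in both directions gives $\eta_{\nu}=\eta'_{\nu}$ as sets, whence determination. Without (i)--(iii), or some substitute controlling the boundary interaction of the curve for general Radon measures $\nu$, your ``compatibility'' step has no rigorous content, so the proposal as written does not prove the proposition.
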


The proof of Proposition~\ref{prop::determination_PW} follows the same argument in~\cite[Section~4]{PowellWuLevellinesGFF}. To be self-contained, we briefly summarize the proof below. The proof relies on the following three lemmas~\ref{lem::PW_avoidanarc}, \ref{lem::PW_avoidapoint} and~\ref{lem::PW_mono}. 

\begin{figure}[ht!]
\begin{center}
\includegraphics[width=0.3\textwidth]{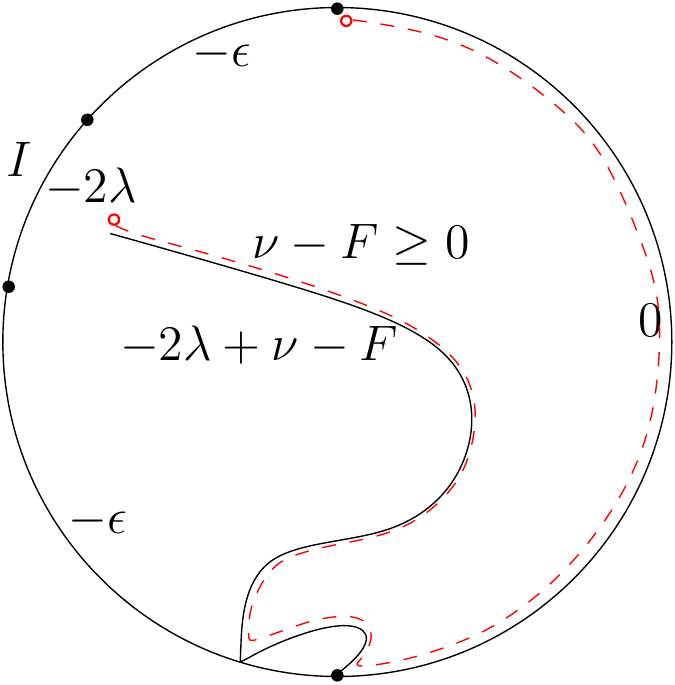}
\end{center}
\caption{\label{fig::PW_avoidanarc} Given $\eta_{\nu}[0,T_{\delta}]$, let $\tilde{\Phi}$ be $\Phi$ restricted to the connected component of $\D\setminus\eta_{\nu}[0,T_{\delta}]$ with $i$ on the boundary. Then the boundary data for $-\tilde{\Phi}-F$ is as follows: it equals $F$ on $\partial\D$, it equals $-2\lambda+\nu-F$ to the left side of $\eta_{\nu}[0,T_{\delta}]$, and it equals $\nu-F$ to the right side of $\eta_{\nu}[0,T_{\delta}]$. Note that $\nu-F\ge 0$. Thus $\eta'_F$ does not hit the union the union of the right side of $\eta_{\nu}[0,T_{\delta}]$ and $A_R$ (the dashed line marked in red). }
\end{figure}

\begin{lemma}\label{lem::PW_avoidanarc}
Assume the same assumptions as in Proposition~\ref{prop::determination_PW}. Suppose there is an open arc $I$ of $A_{\rm L}$ such that 
$\nu\ge 2\lambda\one_I\sigma_{\partial\D}$.
Then $\eta_{\nu}\cap I=\emptyset$ almost surely. 
\end{lemma}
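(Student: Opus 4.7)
The plan is to argue by contradiction along the lines suggested by Figure~\ref{fig::PW_avoidanarc}. Suppose $\PP(\eta_{\nu}\cap I\neq\emptyset)>0$. For small $\delta>0$, let $T_{\delta}$ be the first time $\eta_{\nu}$ enters the $\delta$-neighborhood of $I$; on the event $\{\eta_{\nu}\cap I\neq\emptyset\}$, one has $T_{\delta}<\infty$ and $T_{\delta}\to T$, the hitting time of $I$, as $\delta\to 0$. By the domain Markov property of Definition~\ref{def::GFFlevelline}, conditionally on $\eta_{\nu}[0,T_{\delta}]$, the field $\tilde{\Phi}:=(\Phi+\nu)|_{\D\setminus\eta_{\nu}[0,T_{\delta}]}$ in the component whose boundary meets $I$ is a GFF with boundary data $\nu_{T_{\delta}}$ as in \eqref{eqn::def_boundarydata_Markov}: it equals $2\lambda$ on the left side of $\eta_{\nu}[0,T_{\delta}]$, $0$ on the right side, and coincides with $\nu$ on the unswallowed part of $\partial\D$.

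Next, I would introduce the bounded harmonic function $F$ on $\D\setminus\eta_{\nu}[0,T_{\delta}]$ having boundary values $2\lambda$ on $I$, $2\lambda$ on the left side of $\eta_{\nu}[0,T_{\delta}]$, $0$ on the right side, and equal to $\nu$ on the remaining (regulated, $\geq\varepsilon>0$) part of $\partial\D$. The assumption $\nu\geq 2\lambda\one_{I}\sigma_{\partial\D}$ together with the assumption \eqref{eqn::determination_assumptionPW} ensures that $\nu-F\geq 0$ on the arcs where it is defined. The field $-\tilde{\Phi}-F$ is then a GFF whose boundary data, in the sense of \eqref{eqn::def_boundarydata_Markov}, is $\nu-F\geq 0$ on the right side of $\eta_{\nu}[0,T_{\delta}]$ and on $A_{\rm R}$, is $-2\lambda+\nu-F\geq -2\lambda$ on the left side of $\eta_{\nu}[0,T_{\delta}]$, and is still bounded below by a positive constant on a neighborhood of $I$ along $\partial\D$. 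I would then run the level line $\eta'_{F}$ of $-\tilde{\Phi}-F$ from $i$ to the tip of $\eta_{\nu}[0,T_{\delta}]$, which exists and is almost surely determined by the field in the piecewise constant/approximable setting of Theorem~\ref{thm::gfflevelline_coupling} applied in this sub-domain (approximating $\nu$ by the $\nu_n$ of \eqref{Eq nu n} and invoking Theorem~\ref{thm::cvg_envelop} together with Lemma~\ref{lem::levelline_approxiamtion} for the convergence).

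The key geometric consequence, which is the standard level-line avoidance principle (see \cite{SchrammSheffieldContinuumGFF} and the discussion summarized in \cite[Section~2]{PowellWuLevellinesGFF}), is that a level line of a GFF whose boundary data along a boundary arc satisfies the continuation-threshold bound $\geq 0$ on its right and $\geq -2\lambda$ on its left cannot hit that arc. Applied here, $\eta'_{F}$ a.s. stays away from $A_{\rm R}$ and from the right side of $\eta_{\nu}[0,T_{\delta}]$. The monotonicity of level lines under pointwise shifts of the boundary data — used for constants in \cite{SchrammSheffieldContinuumGFF} and extended in \cite{PowellWuLevellinesGFF,MillerSheffieldIG1} to the regulated setting that applies to $\nu$ under \eqref{eqn::determination_assumptionPW} — then sandwiches any further evolution of $\eta_{\nu}$ beyond $T_{\delta}$ between $\eta'_{F}$ and the right side of $\eta_{\nu}[0,T_{\delta}]\cup A_{\rm R}$. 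In particular $\eta_{\nu}$ is forced to stay at positive distance from $I$ after time $T_{\delta}$, uniformly as $\delta\to 0$, contradicting $T_{\delta}\to T<\infty$.

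The main obstacle is justifying that both the existence of $\eta'_{F}$ and the monotonicity comparison are valid under the measure-valued boundary condition $\nu$, rather than just the piecewise constant one of Theorem~\ref{thm::ALS}. My plan for this is to carry out the whole contradiction first for piecewise constant approximations $\nu_n$ satisfying \eqref{eqn::determination_assumptionPW} uniformly (where the coupling and monotonicity are established in \cite{PowellWuLevellinesGFF}), and then to pass to the limit $n\to\infty$ using the joint convergence of the pair $(\eta^{(n)},\eta^{(n)\prime}_{F_n})$ that follows from Theorem~\ref{thm::cvg_envelop} and Lemma~\ref{lem::levelline_approxiamtion}; the lower bound \eqref{eqn::determination_assumptionPW} is essential to keep all approximating level lines within the framework of \cite{PowellWuLevellinesGFF}, so no continuation-threshold issue appears in the limit.
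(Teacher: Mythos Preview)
Your overall plan---stop $\eta_\nu$ at $T_\delta$, run a reverse level line in the slit domain, and derive a contradiction---is close in spirit to the paper, but the execution misses the crucial simplification and the contradiction as written does not go through.

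The paper takes $F$ to be the harmonic function in the \emph{original} domain $\D$ with \emph{piecewise constant} boundary data ($2\lambda$ on $I$, $\varepsilon$ on $A_{\rm L}\setminus I$, $0$ on $A_{\rm R}$), and lets $\eta'_F$ be the level line of $-\Phi-F$ in $\D$ from $i$ to $-i$. This $\eta'_F$ is a fixed classical $\SLE_4(\rho)$-type curve, independent of $\delta$, for which it is \emph{already known} that $\eta'_F\cap I=\emptyset$ a.s. The contradiction is then clean: conditioning on $\eta_\nu[0,T_\delta]$, the boundary data of $-\tilde\Phi-F$ in the slit domain is regulated (since $\nu$ and $F$ are both harmonic in $\D$, hence smooth on the interior curve $\eta_\nu[0,T_\delta]$), so \cite[Lemma~4.1]{PowellWuLevellinesGFF} forces $\eta'_F$ to avoid the right side of $\eta_\nu[0,T_\delta]$ and $A_{\rm R}$. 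Hence the fixed curve $\eta'_F$ must get within $\delta$ of $J\subset\!\subset I$; letting $\delta\to 0$ makes $\eta'_F$ hit $\overline{J}$, contradicting $\eta'_F\cap I=\emptyset$.

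Your construction instead defines $F$ in the slit domain with boundary data equal to $\nu$ on part of $\partial\D$. This has two problems. First, $\nu$ is measure-valued, so the existence and properties of the resulting $\eta'_F$ are precisely what Section~\ref{Sec Level lines} is building towards---you cannot yet invoke them. Second, and more seriously, your $\eta'_F$ depends on $\delta$ and terminates at the tip $\eta_\nu(T_\delta)$, which is at distance $\delta$ from $I$; so the ``sandwiching'' you describe gives no $\delta$-uniform positive distance of $\eta_\nu$ from $I$, and no contradiction follows. Your fallback approximation plan is also circular: Lemma~\ref{lem::PW_avoidanarc} concerns an \emph{arbitrary} level-line coupling $(\Phi,\eta_\nu)$, not necessarily the curve $\eta_{4,\nu}$ of \eqref{Eq eta kappa nu}, so you cannot identify $\eta_\nu$ with the limit of the $\eta^{(n)}$ before Proposition~\ref{prop::determination_PW} is proved.
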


\begin{proof}
We prove by contradiction. 
Suppose $\eta_{\nu}$ does hit $I$ with positive probability.
Then there exists an open arc $J\subset\overline{J}\subset I$ so that $\eta_{\nu}$ hits $J$ with positive probability. 
On this event, for $\delta>0$, define $T_{\delta}$ to the first time that $\eta_{\nu}$ gets within $\delta$ of $J$. Let $F$ be the bounded harmonic function in $\D$ with the following boundary data: it equals $2\lambda$ on $I$, it equals $\eps$ on $A_{\rm L}\setminus I$, and it equals zero on $A_{\rm R}$.  Then it is clear that $\nu(z)\ge F(z)$ for all $z\in\D$. Let $\eta'_F$ be the level line of $-\Phi-F$ in $\overline{\D}$ from $i$ to $-i$ and assume that the triple $(\Phi, \eta_{\nu}, \eta'_F)$ is coupled so that $\eta_{\nu}$ and $\eta'_F$ are conditionally independent given $\Phi$. Note that $F$ is piecewise constant on $\partial\D$ and that $\eta'_F$ does not hit $I$ almost surely.

For any $\delta>0$, given $\eta_{\nu}[0,T_{\delta}]$, let $\tilde{\Phi}$ be $\Phi$ restricted to the connected component of $\D\setminus\eta_{\nu}[0,T_{\delta}]$ with $i$ on the boundary. Then, given $\eta_{\nu}[0,T_{\delta}]$, the curve $\eta'_F$ is coupled with $-\tilde{\Phi}$ as the level line of $-\tilde{\Phi}-F$ whose boundary data is shown in Figure~\ref{fig::PW_avoidanarc} up to the first hitting time of $\eta_{\nu}[0,T_{\delta}]$.  Note that such boundary data is regulated. From~\cite[Lemma~4.1]{PowellWuLevellinesGFF}, the curve $\eta'_F$ does not hit the union of the right side of $\eta_{\nu}[0,T_{\delta}]$ and $A_R$. This implies that $\eta'_F$ has to get within $\delta$ of $J$. This holds for any $\delta>0$. Let $\delta\to 0$, it implies that $\eta'_F$ hits $\overline{J}\subset I$ with positive probability, contradiction. 
\end{proof}

\begin{lemma}\label{lem::PW_avoidapoint}
Assume the same assumptions as in Proposition~\ref{prop::determination_PW}. For any fixed $x_0\in A_{\rm L}$, the curve $\eta_{\nu}$ does not hit $\{x_0\}$ almost surely. 
\end{lemma}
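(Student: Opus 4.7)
The plan is to approximate $\eta_\nu$ by the level lines $\eta^{(n)} := \eta_{4,\nu_n}$ of $\Phi + \nu_n$, where $\nu_n$ is the piecewise constant approximation from~\eqref{Eq nu n}. By the discussion preceding Lemma~\ref{lem::levelline_approxiamtion}, each $\eta^{(n)}$ has the law of an $\SLE_4(\rho_n^{(n)},\ldots,\rho_1^{(n)})$ process, and by Lemma~\ref{lem::levelline_approxiamtion}, $\eta^{(n)} \to \eta_\nu$ in law in the uniform topology; by Skorohod representation, I may realize this convergence almost surely in a common probability space.

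The crux is a uniform non-hitting estimate for the $\eta^{(n)}$, obtained by dominating them by a single reference curve. Since $\nu_n \geq \varepsilon \one_{\overline{A_L}}\sigma_{\partial\D}$ for every $n$, Proposition~\ref{Prop Monotonicity} provides a coupling of $\eta^{(n)}$ with the reference curve $\zeta := \eta_{4,\varepsilon\one_{A_L}\sigma_{\partial\D}}$ such that $\zeta$ lies between $A_L$ and $\eta^{(n)}$. By Theorem~\ref{Thm Werner Wu}, $\zeta$ has the law of an $\SLE_4(\rho_*)$ with a single force point and $\rho_* := \varepsilon/\lambda - 2 > -2$. In the monotone coupling, whenever $\zeta \cap \overline{B(x_0, r)} = \emptyset$, the small disk $\overline{B(x_0, r)} \cap \D$ is trapped on the $A_L$-side of $\zeta$, hence also on the $A_L$-side of $\eta^{(n)}$, and therefore $\eta^{(n)} \cap \overline{B(x_0, r)} = \emptyset$ in that coupling.

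For $\SLE_4(\rho_*)$ with $\rho_* > -2$ and a single force point, classical Bessel-type analysis of the driving SDE produces a quantitative estimate $\PP(\zeta \cap \overline{B(x_0, r)} \neq \emptyset) \leq f(r)$ with $f(r) \to 0$ as $r \to 0$, where $f$ depends only on $\rho_*$ and $|x_0 \mp i|$. Combining with the monotone coupling, $\PP(\eta^{(n)} \cap \overline{B(x_0, r)} \neq \emptyset) \leq f(r)$ uniformly in $n$. If $\eta_\nu \cap \overline{B(x_0, r')} \neq \emptyset$ for some $r' < r$, then in the a.s. coupling the uniform convergence forces $\eta^{(n)} \cap \overline{B(x_0, r)} \neq \emptyset$ for all large $n$. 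Fatou's lemma gives
\[
\PP\big(\eta_\nu \cap \overline{B(x_0, r')} \neq \emptyset\big) \leq \liminf_{n\to\infty}\PP\big(\eta^{(n)} \cap \overline{B(x_0, r)} \neq \emptyset\big) \leq f(r),
\]
and sending $r \downarrow r' \downarrow 0$ yields $\PP(x_0 \in \eta_\nu) = 0$.

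The main obstacle is to ensure that the monotone coupling from Proposition~\ref{Prop Monotonicity} (which is a coupling of CLE-based constructions) is compatible with the GFF-coupling in which $\eta^{(n)}$ appears as a level line of $\Phi + \nu_n$. This compatibility follows from the identification, in the piecewise constant case, of the CLE construction with the unique deterministic function of $\Phi$ producing the level line (Theorems~\ref{thm::ALS} and~\ref{Thm Werner Wu}, together with the known determination for piecewise constant boundary data~\cite{WangWuLevellinesGFFI,MillerSheffieldIG1}).
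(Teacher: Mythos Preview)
Your argument has a genuine circularity problem. In the setting of Proposition~\ref{prop::determination_PW}, the curve $\eta_{\nu}$ is an \emph{arbitrary} continuous simple curve coupled with $\Phi$ as a level line of $\Phi+\nu$; it is not assumed to coincide with the CLE-based curve $\eta_{4,\nu}$. Lemma~\ref{lem::levelline_approxiamtion} tells you that $\eta^{(n)}=\eta_{4,\nu_n}$ converges in law to $\eta_{4,\nu}$, not to $\eta_{\nu}$. Your sentence ``$\eta^{(n)}\to\eta_{\nu}$ in law'' therefore silently assumes the identification $\eta_{\nu}\stackrel{d}{=}\eta_{4,\nu}$, which is exactly the uniqueness statement of Proposition~\ref{prop::determination_PW} that this lemma is being used to prove. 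The monotone coupling via Proposition~\ref{Prop Monotonicity} likewise lives entirely on the CLE side and says nothing about an abstract level line $\eta_{\nu}$.

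The paper avoids this circularity by arguing directly from the level-line coupling itself, without any appeal to the CLE construction or to approximation. One introduces the reverse level line $\eta'_{F}$ of $-\Phi-F$ from $i$ to $-i$, where $F$ is the harmonic function with boundary data $\varepsilon$ on $A_{\rm L}$ and $0$ on $A_{\rm R}$; since $F$ is piecewise constant, $\eta'_{F}$ is a standard $\SLE_4(\rho)$ and is known not to hit $\{x_0\}$. One then runs the contradiction argument of Lemma~\ref{lem::PW_avoidanarc}: if $\eta_{\nu}$ hit $\{x_0\}$ with positive probability, conditioning on $\eta_{\nu}[0,T_{\delta}]$ and using the domain Markov property (which holds for \emph{any} level line by definition) forces $\eta'_{F}$ arbitrarily close to $x_0$, contradicting the known behaviour of $\eta'_{F}$. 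This argument uses only the coupling hypothesis on $\eta_{\nu}$ and properties of the well-understood piecewise constant curve $\eta'_{F}$, so no circularity arises.
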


\begin{proof}
Let $F$ be the bounded harmonic function in $\D$ with the following boundary data: it equals $\eps$ on $A_{\rm L}$ and it equals zero on $A_{\rm R}$. Then $\nu(z)\ge F(z)$ for all $z\in \D$. Let $\eta'_F$ be the level line of $-\Phi-F$ in $\overline{\D}$ from $i$ to $-i$ and assume that the triple $(\Phi, \eta_{\nu}, \eta'_F)$ is coupled so that $\eta_{\nu}$ and $\eta'_F$ are conditionally independent given $\Phi$. Note that $F$ is piecewise constant on $\partial\D$ and that $\eta'_F$ does not hit $\{x_0\}$ almost surely. If $\eta_{\nu}$ hits $\{x_0\}$ with positive probability, by the same argument as in the proof of Lemma~\ref{lem::PW_avoidanarc}, it would imply that $\eta'_F$ hits $\{x_0\}$ with positive probability, contradiction. 
\end{proof}

\begin{figure}[ht!]
\begin{center}
\includegraphics[width=0.3\textwidth]{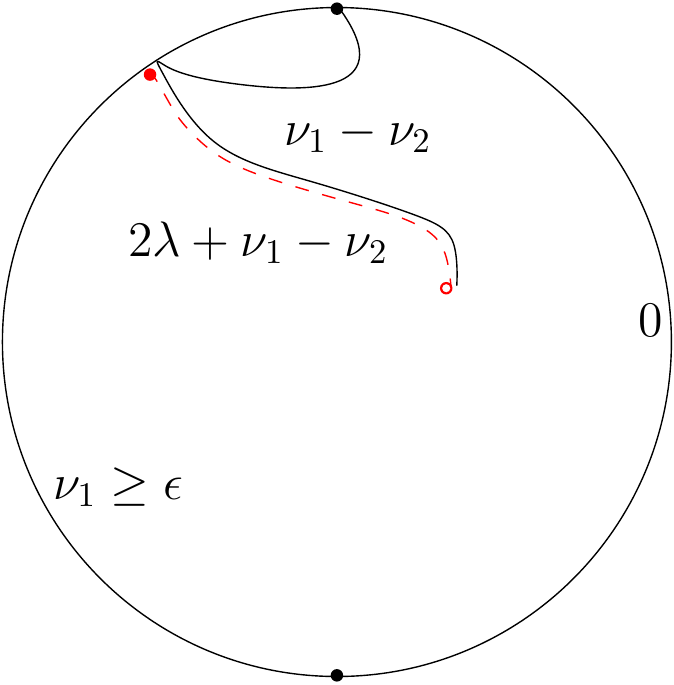}
\end{center}
\caption{\label{fig::PW_mono}  Given $\eta'_{\nu_2}[0,\tau']$, let $\tilde{\Phi}$ be $\Phi$ restricted to the connected component of $\D\setminus\eta'_{\nu_2}[0,\tau']$ with $-i$ on the boundary. Then the boundary data for $\tilde{\Phi}+\nu_1$ is as follows: it equals $\nu_1$ on $\partial\D$, it equals $2\lambda+\nu_1-\nu_2$ to the right side of $\eta'_{\nu_2}[0,\tau']$, and it equals $\nu_1-\nu_2$ to the left side of $\eta'_{\nu_2}[0,\tau']$. As $2\lambda+\nu_1-\nu_2\ge 2\lambda$, Lemma~\ref{lem::PW_avoidanarc} and absolute continuity imply that $\eta_{\nu_1}$ does not hit the right side of $\eta'_{\nu_2}[0,\tau']$. Let $x'$ be the last point of $\partial\D\cap\eta'_{\nu_2}[0,\tau']$ (the solid point marked in red). As the boundary data is greater than $\eps$ in neighborhood of $x'$, Lemma~\ref{lem::PW_avoidapoint} and absolute continuity imply that $\eta_{\nu_1}$ does not hit the point $x'$. In summary, $\eta_{\nu_1}$ does not hit the union of the right side of $\eta'_{\nu_2}[0,\tau']$ and the point $\{x'\}$ (the dash line marked in red). This implies that the point $\eta'_{\nu_2}(\tau')$ stays to the left of $\eta_{\nu_1}$.}
\end{figure}

\begin{lemma}\label{lem::PW_mono}
Fix $\eps>0$. 
Suppose $\nu_1, \nu_2$ are finite non-negative Radon measures on 
$\overline{A_{\rm L}}$ such that 
\begin{equation*}
\nu_1\ge \nu_2, \quad\text{and}\quad \nu_1\ge \eps \one_{\overline{A_{\rm L}}}\sigma_{\partial\D}.
\end{equation*}
Suppose that $\eta_{\nu_1}$ is a  continuous simple curve in $\overline{\D}$ from $-i$ to $i$ with continuous driving function and suppose $\eta'_{\nu_2}$ is a continuous simple curve in $\overline{\D}$ from $i$ to $-i$ with continuous driving function. Assume that $\eta_{\nu_1}$ is coupled with $\Phi$ as a level line of $\Phi+\nu_1$ from $-i$ to $i$ and $\eta'_{\nu_2}$ is coupled with $\Phi$ as a level line of $-\Phi-\nu_2$ from $i$ to $-i$ and that the triple $(\Phi, \eta_{\nu_1}, \eta'_{\nu_2})$ is coupled so that $\eta_{\nu_1}$ and $\eta'_{\nu_2}$ are conditionally independent given $\Phi$. Then $\eta_{\nu_1}$ stays to the right of $\eta'_{\nu_2}$ almost surely. 
\end{lemma}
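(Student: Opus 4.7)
The strategy, sketched already by Figure~\ref{fig::PW_mono}, is to show that for every stopping time $\tau'$ for $\eta'_{\nu_2}$ in a countable dense collection of times, the tip $\eta'_{\nu_2}(\tau')$ a.s.\ lies strictly to the left of $\eta_{\nu_1}$, and then conclude by continuity of both curves. I will first localize so that I can condition on $\eta'_{\nu_2}[0,\tau']$, then use a conditional domain Markov property to read off the boundary data of the conditional field, and finally apply the non-intersection Lemmas~\ref{lem::PW_avoidanarc} and~\ref{lem::PW_avoidapoint}.

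Fix such a $\tau'$ (for example $\tau'=\inf\{t:\eta'_{\nu_2}(t)\in K\}$ for a closed set $K\subset\overline{\D}\setminus\{-i\}$), and let $\tilde D$ denote the connected component of $\D\setminus\eta'_{\nu_2}[0,\tau']$ whose boundary contains $-i$, with $\tilde\Phi:=\Phi|_{\tilde D}$. By the domain Markov property for $\eta'_{\nu_2}$ as a level line of $-\Phi-\nu_2$, the conditional law of $\tilde\Phi+\nu_1$ given $\eta'_{\nu_2}[0,\tau']$ is the GFF on $\tilde D$ plus the harmonic extension of the boundary values indicated in Figure~\ref{fig::PW_mono}: $\nu_1$ on $\partial\D\cap\partial\tilde D$, $2\lambda+\nu_1-\nu_2$ on the right side of $\eta'_{\nu_2}[0,\tau']$, and $\nu_1-\nu_2$ on the left side. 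Using that $\eta_{\nu_1}$ and $\eta'_{\nu_2}$ are conditionally independent given $\Phi$, and that $\eta_{\nu_1}$ is a level line of $\Phi+\nu_1$, the conditional law of $\eta_{\nu_1}$ starting from $-i$ is, up to its first meeting with $\eta'_{\nu_2}[0,\tau']$, a level line of $\tilde\Phi+\nu_1$ in $\tilde D$.

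Next I transfer the non-intersection lemmas from $\D$ to $\tilde D$ via the conformal map $\tilde D\to \D$ (which extends continuously to the closures as soon as the boundaries are locally connected, cf.~Theorem~\ref{Thm Cara boundary}). Because $\nu_1\ge\nu_2$, the boundary data of $\tilde\Phi+\nu_1$ on the right side of $\eta'_{\nu_2}[0,\tau']$ is pointwise $\ge 2\lambda$, so Lemma~\ref{lem::PW_avoidanarc} (combined with absolute continuity: adding a non-negative harmonic function to the boundary data of the reference set-up of Lemma~\ref{lem::PW_avoidanarc} keeps one in the hypothesis of that lemma) implies that $\eta_{\nu_1}$ a.s.\ does not hit the right side of $\eta'_{\nu_2}[0,\tau']$. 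Similarly, letting $x'$ be the last point of $\partial\D\cap\eta'_{\nu_2}[0,\tau']$, the boundary data in a neighborhood of $x'$ is bounded below by $\varepsilon$ (using $\nu_1\ge\varepsilon\mathbf 1_{\overline{A_{\rm L}}}\sigma_{\partial\D}$ on one side and $2\lambda+\nu_1-\nu_2\ge 2\lambda$ on the other), so Lemma~\ref{lem::PW_avoidapoint} gives that $\eta_{\nu_1}$ a.s.\ does not hit $x'$.

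These two pieces force $\eta_{\nu_1}$ to remain, as a continuous simple curve from $-i$ to $i$, in the closure of $\tilde D$ \emph{and} to avoid the red portion in Figure~\ref{fig::PW_mono}; a planar-topology argument then shows that $\eta'_{\nu_2}(\tau')$ must belong to the left connected component of $\overline{\D}\setminus\eta_{\nu_1}$. Applying this simultaneously to a countable dense collection of stopping times $(\tau'_k)$ and using continuity of $\eta'_{\nu_2}$ and $\eta_{\nu_1}$ yields the claim. \textbf{The main obstacle} I expect is Step 3, i.e., the rigorous transfer of Lemmas~\ref{lem::PW_avoidanarc} and~\ref{lem::PW_avoidapoint} from $\D$ with a prescribed piecewise-constant (or regulated) boundary data to $\tilde D$ with the conditionally prescribed data. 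One must check both that the conformal image of the relevant GFF dominates (in the absolute-continuity sense) a GFF to which the two lemmas apply, and that the conditional level-line characterization of $\eta_{\nu_1}$ inside $\tilde D$ genuinely holds up to the first intersection with $\eta'_{\nu_2}[0,\tau']$; this latter point rests on the conditional independence hypothesis in the statement and on the domain Markov property of Definition~\ref{def::GFFlevelline}.
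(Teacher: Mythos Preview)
Your proposal is correct and follows essentially the same route as the paper's proof: condition on $\eta'_{\nu_2}[0,\tau']$, read off the boundary data of the restricted field $\tilde{\Phi}+\nu_1$ as in Figure~\ref{fig::PW_mono}, invoke Lemmas~\ref{lem::PW_avoidanarc} and~\ref{lem::PW_avoidapoint} (via absolute continuity) to show $\eta_{\nu_1}$ avoids the right side of $\eta'_{\nu_2}[0,\tau']$ and the point $x'$, and conclude by planar topology. The paper is somewhat terser---it quantifies over all stopping times $\tau'$ rather than a countable dense family, and leaves the conformal transfer and absolute-continuity step implicit in the figure caption---but the substance is identical, and the ``main obstacle'' you flag is exactly the step the paper also treats informally.
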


\begin{proof}
As both $\eta_{\nu_1}$ and $\eta'_{\nu_2}$ are simple, it suffices to show that, for any $\eta'_{\nu_2}$-stopping time $\tau'$, the point $\eta'_{\nu_2}(\tau')$ is to the left of $\eta_{\nu_1}$. Given $\eta'_{\nu_2}[0,\tau']$, let $\tilde{\Phi}$ be $\Phi$ restricted to the connected component of $\D\setminus\eta'_{\nu_2}[0,\tau']$ with $-i$ on the boundary. Then, given $\eta'_{\nu_2}[0,\tau']$, the curve $\eta_{\nu_1}$ is coupled with $\tilde{\Phi}$ as a level line of $\tilde{\Phi}+\nu$ whose boundary data is shown in Figure~\ref{fig::PW_mono} up to the first hitting time of $\eta'_{\nu_2}[0,\tau']$. From Lemmas~\ref{lem::PW_avoidanarc} and~\ref{lem::PW_avoidapoint}, the curve $\eta_{\nu_1}$ does not hit the right side of $\eta'_{\nu_2}[0,\tau']$, see detail in Figure~\ref{fig::PW_mono}. This implies the point $\eta'_{\nu_2}(\tau')$ is to the left of $\eta_{\nu_1}$ as desired. 
\end{proof}

Now, we are ready to complete the proof of Proposition~\ref{prop::determination_PW}. 

\begin{proof}[Proof of Proposition~\ref{prop::determination_PW}]
Suppose $\eta_{\nu}'$ is a continuous simple curve in $\overline{\D}$ from $i$ to $-i$ with continuous driving function. Suppose that $\eta'_{\nu}$ is coupled with $\Phi$ as a level line of $-\Phi-\nu$ from $i$ to $-i$ and that  the triple $(\Phi, \eta_{\nu}, \eta'_{\nu})$ is coupled so that $\eta_{\nu}$ and $\eta'_{\nu}$ are conditionally independent given $\Phi$. From Lemma~\ref{lem::PW_mono}, we know that $\eta_{\nu}$ stays to the right of $\eta'_{\nu}$ and that $\eta_{\nu}$ stays to the left of $\eta'_{\nu}$ almost surely. As both of them are simple, we have $\eta_{\nu}=\eta'_{\nu}$ (viewed as sets) almost surely. Since $\eta_{\nu}$ and $\eta'_{\nu}$ are coupled with $\Phi$ so that they are conditionally independent given $\Phi$, the fact $\eta_{\nu}=\eta'_{\nu}$ implies that $\eta_{\nu}$ is almost surely determined by $\Phi$. 
\end{proof}

\begin{corollary}\label{cor::mono_PW}
Suppose $\Phi$ is zero-boundary GFF in $\D$. Fix $\eps>0$. 
Suppose $\nu_1, \nu_2$ are finite non-negative Radon measures on 
$\overline{A_{\rm L}}$ such that 
\begin{equation}\label{eqn::mono_assumptionPW}
\nu_1\ge \nu_2, \quad\text{and}\quad \nu_1\ge \eps \one_{\overline{A_{\rm L}}}\sigma_{\partial\D}.
\end{equation}
Suppose that $\eta_{\nu_1}, \eta_{\nu_2}$ are continuous simple curves in $\overline{\D}$ from $-i$ to $i$ with continuous driving functions. 
Assume that $\eta_{\nu_1}$ (resp. $\eta_{\nu_2}$) is coupled with $\Phi$ as a level line of $\Phi+\nu_1$ (resp. of $\Phi+\nu_2$), then $\eta_{\nu_1}$ stays to the right of $\eta_{\nu_2}$ almost surely. 
\end{corollary}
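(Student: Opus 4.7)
The plan is to adapt the strategy of Lemma~\ref{lem::PW_mono}, working with $\eta_{\nu_{2}}$ directly rather than with a reverse-oriented variant. Since both $\eta_{\nu_{1}}$ and $\eta_{\nu_{2}}$ are simple curves from $-i$ to $i$, it suffices to prove that for every $\eta_{\nu_{2}}$-stopping time $\tau$ (before the terminal time), the point $\eta_{\nu_{2}}(\tau)$ lies weakly to the left of $\eta_{\nu_{1}}$. I would couple $(\Phi,\eta_{\nu_{1}},\eta_{\nu_{2}})$ so that $\eta_{\nu_{1}}$ and $\eta_{\nu_{2}}$ are conditionally independent given $\Phi$; this is consistent because, by Proposition~\ref{prop::determination_PW}, $\eta_{\nu_{1}}$ is in fact almost surely determined by $\Phi$.

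Next I would condition on $\eta_{\nu_{2}}[0,\tau]$ and write $D:=\D\setminus \eta_{\nu_{2}}[0,\tau]$. The level line property~\eqref{eqn::def_boundarydata_Markov} of $\eta_{\nu_{2}}$ gives that $(\Phi+\nu_{2})|_{D}$ is a GFF on $D$ with boundary data $\nu_{2,\tau}$: the harmonic extension of $\nu_{2}$ on $\partial\D$, $2\lambda$ on the left side of $\eta_{\nu_{2}}[0,\tau]$, and $0$ on the right side. Decomposing $\Phi+\nu_{1}=(\Phi+\nu_{2})+(\nu_{1}-\nu_{2})$ and noting that the harmonic extension of the non-negative measure $\nu_{1}-\nu_{2}$ is non-negative, one sees that $(\Phi+\nu_{1})|_{D}$ is a GFF on $D$ whose boundary data dominates $\nu_{2,\tau}$: in particular it is $\geq \eps$ on $A_{\rm L}$, $\geq 2\lambda$ on the left side of $\eta_{\nu_{2}}[0,\tau]$, and $\geq 0$ on the right side. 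Restricted to $D$ up to the first hitting time of $\eta_{\nu_{2}}[0,\tau]\cup\{i\}$, the curve $\eta_{\nu_{1}}$ is a level line of this GFF. Conformally mapping $D$ onto $\D$ so that the arc consisting of $A_{\rm L}$ together with the left side of $\eta_{\nu_{2}}[0,\tau]$ is sent to the new left half-circle and so that $\eta_{\nu_{2}}(\tau)$ lies in the interior of that arc, the conclusion of Lemma~\ref{lem::PW_avoidanarc} (applied to open subarcs of the image of the left side, where the boundary data is $\geq 2\lambda$) and of Lemma~\ref{lem::PW_avoidapoint} (applied at the image of the tip $\eta_{\nu_{2}}(\tau)$, near which the boundary data is bounded below by $\min(\eps,2\lambda)>0$) show that $\eta_{\nu_{1}}$ can hit neither the left side of $\eta_{\nu_{2}}[0,\tau]$ nor the tip $\eta_{\nu_{2}}(\tau)$. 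Hence $\eta_{\nu_{2}}(\tau)$ lies weakly to the left of $\eta_{\nu_{1}}$.

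The main technical obstacle is that Lemmas~\ref{lem::PW_avoidanarc} and~\ref{lem::PW_avoidapoint} are stated in $\D$ with boundary data identically zero on $A_{\rm R}$, while in our sub-domain setup the conformal image of $A_{\rm R}$ together with the right side of $\eta_{\nu_{2}}[0,\tau]$ carries non-negative but a priori non-zero boundary data (coming from the right side of the slit and the harmonic extension of $\nu_{1}-\nu_{2}$). This is handled by the same absolute continuity argument already invoked in the proof of Lemma~\ref{lem::PW_mono}: since the difference in boundary data is a bounded harmonic function, the laws of the two GFFs differ only by a Radon--Nikodym derivative, and therefore share the same almost-sure avoidance events, so the avoidance conclusions of Lemmas~\ref{lem::PW_avoidanarc} and~\ref{lem::PW_avoidapoint} transfer to our setting.
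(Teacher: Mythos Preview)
Your approach differs from the paper's in a way that creates a genuine gap. The paper does \emph{not} condition on $\eta_{\nu_{2}}[0,\tau]$ and analyze $\eta_{\nu_{1}}$ directly. Instead it introduces the reverse-oriented curve $\eta'_{\nu_{1}}$ (a level line of $-\Phi-\nu_{1}$ from $i$ to $-i$), applies Lemma~\ref{lem::PW_mono} to obtain that $\eta'_{\nu_{1}}$ lies to the right of $\eta_{\nu_{2}}$, and then uses the identification $\eta_{\nu_{1}}=\eta'_{\nu_{1}}$ (as sets) established in the proof of Proposition~\ref{prop::determination_PW}. The point of routing through a reverse curve is precisely that the two curves being compared then emanate from \emph{opposite} boundary points, so that after conditioning on a piece of one, the starting point of the other is a single, unambiguous prime end of the slit domain.

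In your argument both $\eta_{\nu_{1}}$ and $\eta_{\nu_{2}}$ emanate from $-i$. Once you remove $\eta_{\nu_{2}}[0,\tau]$, the boundary point $-i$ splits into two prime ends of $D$, one on each side of the slit, and you have not established from which of them $\eta_{\nu_{1}}$ departs. Your avoidance conclusions (that $\eta_{\nu_{1}}$ misses the left side of the slit and the tip) do not by themselves force $\eta_{\nu_{2}}(\tau)$ to lie to the left of $\eta_{\nu_{1}}$: if $\eta_{\nu_{1}}$ started from the left prime end it could stay in the left region throughout. Relatedly, your conformal-map description is inconsistent as written: if the new $A_{\rm L}$ is exactly the image of ``original $A_{\rm L}$ together with the left side of $\eta_{\nu_{2}}[0,\tau]$'', then the tip $\eta_{\nu_{2}}(\tau)$ sits at an \emph{endpoint} of that arc, not in its interior; and under the alternative normalizations the hypotheses of Lemmas~\ref{lem::PW_avoidanarc}--\ref{lem::PW_avoidapoint} (in particular the uniform lower bound on the new left arc, or the curve starting at the new $-i$) are no longer met. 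The reverse-curve trick in the paper is exactly what resolves this starting-point ambiguity.
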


\begin{proof}
Suppose $\eta_{\nu_1}'$ is a continuous simple curve in $\overline{\D}$ from $i$ to $-i$ with continuous driving function. Suppose that $\eta'_{\nu_1}$ is coupled with $\Phi$ as a level line of $-\Phi-\nu_1$ from $i$ to $-i$ and that $(\Phi, \eta_{\nu_1}, \eta_{\nu_2}, \eta'_{\nu_1})$ is coupled so that $\eta_{\nu_1}, \eta_{\nu_2}$ and $\eta'_{\nu_1}$ are conditionally independent given $\Phi$. From Lemma~\ref{lem::PW_mono}, the curve $\eta'_{\nu_1}$ stays to the right of $\eta_{\nu_2}$ almost surely. From the proof of Proposition~\ref{prop::determination_PW}, we have $\eta_{\nu_1}=\eta'_{\nu_1}$ (viewed as sets) almost surely. These imply that $\eta_{\nu_1}$ stays to the right of $\eta_{\nu_2}$ almost surely as desired. 
\end{proof}

We emphasize that in the above proof of Proposition~\ref{prop::determination_PW}, we follow the method in~\cite{PowellWuLevellinesGFF} and the assumption~\eqref{eqn::determination_assumptionPW} plays an essential role. In the following, we will remove the assumption and complete the proof of Theorem~\ref{thm::determination}. 

\begin{proof}[Proof of Theorem \ref{thm::determination}]
Consider $(\Phi, \eta_{4,\nu})$ to be coupled as in Theorem~\ref{thm::gfflevelline_coupling}.
For $n\geq 1$, denote
$\nu_{n} := \nu +2^{-n}$.
Let $\eta_{\nu_{n}}$ be the level line of
$\Phi + \nu_{n}$ from $-i$ to $i$.
The existence of $\eta_{\nu_{n}}$ is ensured by
Theorem~\ref{thm::gfflevelline_coupling}.
Its uniqueness and measurability with respect to $\Phi$
is given by Proposition~\ref{prop::determination_PW}.
Moreover, for every $n\geq 1$,
$\eta_{\nu_{n}}$ is distributed as 
$\eta_{4,\nu_{n}}$ given by~\eqref{Eq eta kappa nu}.
However, the sequence $(\eta_{\nu_{n}})_{n\geq 1}$
is \textit{a priori} not coupled in the same way as the
sequence $(\eta_{4,\nu_{n}})_{n\geq 1}$
in Section~\ref{SubSec Convergence curve}
in the proof of Theorem~\ref{thm::cvg_envelop}.

According to Corollary~\ref{cor::mono_PW},
$\eta_{\nu_{n}}$ stays a.s. to the right of $\eta_{4,\nu}$
and to the right of $\eta_{\nu_{n + 1}}$,
for every $n\geq 1$.
Let $\LO_{4,\nu}$ denote the connected component of
$\D\setminus \eta_{4,\nu}$ to the right of $\eta_{4,\nu}$, 
and $\LO_{n}$ the connected component of
$\D\setminus \eta_{\nu_{n}}$ to the right of
$\eta_{\nu_{n}}$. 
A.s., $\LO_{n}\subset \LO_{n+1}$.
Denote
\begin{displaymath}
\LO_{\infty} :=\bigcup_{n\geq 1} \LO_{n}.
\end{displaymath}
We have that $\LO_{\infty}\subset \LO_{4,\nu}$ a.s.
Moreover, Theorem \ref{thm::cvg_envelop}
ensures that $\LO_{\infty}$ has the same distribution as
$\LO_{4,\nu}$.
This implies that $\LO_{\infty}=\LO_{4,\nu}$ a.s.
Since the sequence $(\eta_{\nu_{n}})_{n\geq 1}$
is measurable with respect to $\Phi$,
we get that $\LO_{4,\nu}$, 
and thus $\eta_{4,\nu}$,
are measurable with respect to $\Phi$.
\end{proof}

\subsection{An equation for the driving function}
\label{Subsec equation kappa 4}

Let $\nu$ be a finite non-negative Radon measure on 
$\overline{A_{\rm L}}$.
We assume that $\nu$ has full support 
on $\overline{A_{\rm L}}$.
We also assume that a.s., the curve
$\eta_{4,\nu}$ does not hit
$\Atom(\nu)$.
A sufficient condition for that is given by
Lemma \ref{Lem hit}.
Denote $\zeta_{\nu}$ the following Radon measure on
$\R$:
\begin{displaymath}
d \zeta_{\nu} (x) := 
\dfrac{1}{2} (1+x^{2})
d((\psi_{0})_{\ast}\nu)(x),
\end{displaymath}
where $\psi_{0}$ is the Möbius transformation from $\D$ 
to $\HH$ given by \eqref{eqn::psi0}.
$\zeta_{\nu}$ is a
non-negative Radon measure on $(-\infty, 0]$
satisfying
\begin{displaymath}
\int_{(-\infty, 0]}
\dfrac{1}{1+x^{2}}
d \zeta_{\nu}(x)
< +\infty .
\end{displaymath}
On $(0,+\infty)$, $\zeta_{\nu}$ equals $0$.
We see $\zeta_{\nu}$ as an analogue of the boundary 
condition \eqref{Eq bc}.
In particular, if $\nu$ is of the form
$\nu = a\one_{A_{\rm L}}\sigma_{\partial\D}$ with
$a>0$ a constant,
then $\zeta_{\nu}$ is a piecewise constant function,
equal to $a$ on
$(-\infty,0)$ and $0$ on $(0,+\infty)$.
Similarly to \eqref{Eq bc}, we define $\rho_{\nu}$ by
\begin{displaymath}
\rho_{\nu} :=
-\dfrac{1}{\lambda} \dfrac{d}{dx} \zeta_{\nu} - 2\delta_{0},
\end{displaymath}
where $\delta_{0}$ is the Dirac measure at $0$ and
where the derivative $\frac{d}{dx}$ is to be taken in the sense of
generalized function.
In general, $\rho_{\nu}$ is an order $1$
generalized function on 
$\R$ which is $0$ on $(0,+\infty)$.
Given $f\in \LC^{1}(\R)$ with compact support,
by integration by parts, we have that
\begin{equation}
\label{Eq IPP rho}
\int_{\R} f \rho_{\nu} dx
= -2 f(0)+
\dfrac{1}{\lambda}\int_{\R} f'(x) d \zeta_{\nu}(x).
\end{equation}

Consider now the curve $\eta_{4,\nu}$, 
parametrized by the half-plane capacity.
Denote by $g_t$ be the conformal map from the unbounded connected component of $\HH\setminus\psi_0(\eta_{4,\nu}[0,t])$ with normalization 
$\lim_{z\to\infty}|g_t(z)-z|=0$,
and $\xi_t$ the driving function in the corresponding Loewner chain.
Following \eqref{Eq SDE}, we are interested in giving a
meaning to
\begin{equation}
\label{Eq int rho nu}
\int_{(-\infty,0]} \dfrac{\rho_{\nu} dx}{\xi_t-g_{t}(x)} . 
\end{equation}
Denote
\begin{displaymath}
x_{{\rm L}}(t)
:=
\min\{x\in (-\infty,0] : x\in \psi_0(\eta_{4,\nu}[0,t])\} .
\end{displaymath}
Given \eqref{Eq IPP rho}, we set
\begin{equation}
\label{Eq drift}
\SZ_{t} :=
-\dfrac{2}{\xi_t -g_{t}(0^{-})}
+
\dfrac{1}{\lambda}
\int_{(-\infty,x_{{\rm L}}(t)]}
\dfrac{g'_{t}(x) d \zeta_{\nu}(x)}{(\xi_t-g_{t}(x))^{2}}
.
\end{equation}
If $\nu$ is actually a piecewise constant function on
$A_{\rm L}$, then $\SZ_{t}$ coincides with 
\eqref{Eq int rho nu}.
Denote $f_{t}=\psi_{0}^{-1}\circ g_{t}\circ \psi_{0}$,
and let $\SU_{t}\in \big(-\frac{3}{2}\pi,\frac{1}{2}\pi\big)$
such that 
$\psi_{0}(e^{i \SU_{t}}) = \xi_t$.
Then $\SZ_{t}$ can be expressed as
\begin{equation}
\label{Eq A t 2}
\SZ_{t} = 
\dfrac{(e^{i \SU_{t}}-i)(f_{t}((-i)^{-})-i)}
{e^{i \SU_{t}}-f_{t}((-i)^{-})}
-
\dfrac{i (e^{i\SU_{t}}-i)^{2}}{2\lambda}
\int_{\overline{A_{\rm L}(t)}} 
\dfrac{x f'_{t}(x)}{(e^{i\SU_{t}}-f_{t}(x))^{2}} 
d\nu (x)
,
\end{equation}
where
\begin{displaymath}
f_{t}((-i)^{-}) = 
\lim_{\substack{\theta\to -\frac{\pi}{2}\\ \theta < -\frac{\pi}{2}}}
f_{t}(e^{i\theta}),
\end{displaymath}
and $A_{\rm L}(t)$ is the connected component of
$A_{\rm L}\setminus \eta_{4,\nu}[0,t]$
adjacent to $i$.


\begin{proposition}
\label{Prop drift}
Let $\nu$ be a finite non-negative Radon measure on 
$\overline{A_{\rm L}}$
with full support and assume
$\eta_{4,\nu}$ is
parametrized by the half-plane capacity.
Also assume that a.s. the curve
$\eta_{4,\nu}$ does not hit
$\Atom(\nu)$.
\begin{enumerate}[label=(\arabic*)]
\item Let $\SZ_{t}$ be given by \eqref{Eq drift}.
Then $\SZ_{t}$ is well defined and continuous on the
subset of times
\begin{equation}
\label{Eq set away boundary}
I_{\nu}:=\{t\in [0,T_{\rm max}) : \eta_{4,\nu}(t)\not\in \overline{A_{\rm L}}\}.
\end{equation}
\item Let $(\nu_{n})_{n\geq 0}$ be a sequence of finite non-negative Radon measures on $\overline{A_{\rm L}}$
with full support, 
converging weakly to $\nu$. 
Assume that for every $n\geq 0$,
a.s. $\eta_{4,\nu_{n}}$ does not hit
$\Atom(\nu_{n})$.
Assume that each $\eta_{4,\nu_{n}}$
is parametrized by the half-plane capacity and that
$\eta_{4,\nu}$ and all the $\eta_{4,\nu_{n}}$
are coupled on the same probability space such that 
the sequence $(\eta_{4,\nu_{n}})_{n\geq 0}$
converges a.s. locally uniformly to $\eta_{4,\nu}$.
Let $\SZ^{(n)}_{t}$ be defined as $\SZ_{t}$,
but with $\nu_{n}$ and $\eta_{4,\nu_{n}}$
instead of $\nu$ and $\eta_{4,\nu}$.
Then, as $n\to +\infty$, $\SZ^{(n)}_{t}$ converges a.s.
to $\SZ_{t}$ uniformly on compact subsets of
$I_{\nu}$.
\end{enumerate}
\end{proposition}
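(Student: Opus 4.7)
The plan is to first rewrite $\SZ_{t}$ in the disk picture, which is the formula \eqref{Eq A t 2}, since this makes the integral explicit over the bounded arc $\overline{A_{\rm L}(t)}$; then to establish well-definedness and continuity on $I_{\nu}$, and the uniform convergence in part (2), by conformal-map estimates together with dominated convergence, leveraging Theorem \ref{thm::cvg_envelop} and Proposition \ref{prop::cvg_envelop_drivingfunction}.

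The equivalence of \eqref{Eq drift} and \eqref{Eq A t 2} is a direct computation: using $\psi_{0}'(z) = -2/(z-i)^{2}$, $\psi_{0}(a)-\psi_{0}(b) = -2(a-b)/((a-i)(b-i))$ and $1+\psi_{0}(z)^{2} = -4iz/(z-i)^{2}$, together with $g_{t} = \psi_{0}\circ f_{t}\circ\psi_{0}^{-1}$, the change of variable $x=\psi_{0}(y)$ in \eqref{Eq drift} produces \eqref{Eq A t 2} term by term. Hence the two formulations agree wherever either is defined, and I would work on the disk side. For well-definedness at $t\in I_{\nu}$, the first term of \eqref{Eq A t 2} is continuous because $\SU_{t}$ and $f_{t}((-i)^{-})$ depend continuously on $t$ and are distinct: $e^{i\SU_{t}}=f_{t}(\eta_{4,\nu}(t))$ while $f_{t}((-i)^{-})$ is the $f_{t}$-image of a point in $\overline{A_{\rm L}}$, and $t\in I_{\nu}$ forces $\eta_{4,\nu}(t)\notin\overline{A_{\rm L}}$. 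For the integral, $f_{t}$ sends the interior of $A_{\rm L}(t)$ injectively into an arc of $\partial\D$ bounded away from $e^{i\SU_{t}}$, so $|e^{i\SU_{t}}-f_{t}(y)|$ is bounded below on $\overline{A_{\rm L}(t)}$ and the integrand is controlled by $C_{t}|f_{t}'(y)|$. The normalization $f_{t}(i)=i$, $f_{t}'(i)=1$, $f_{t}''(i)=0$ makes $f_{t}$ smooth near $y=i$, while near the attachment endpoint $x^{*}$ of $A_{\rm L}(t)$ the derivative $f_{t}'$ may blow up only as a power $|y-x^{*}|^{\alpha-1}$ with $\alpha>0$, which is locally integrable against arc length and hence against the finite measure $\nu$ since by assumption $\eta_{4,\nu}$ does not hit $\Atom(\nu)$, so in particular $\nu(\{x^{*}\})=0$. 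Continuity in $t$ then follows by dominated convergence from the joint continuity of $(t,y)\mapsto f_{t}(y)$ and $(t,y)\mapsto f_{t}'(y)$ on suitable compact sets.

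For part (2), I would work in the coupling of Theorem \ref{thm::cvg_envelop}, in which $\eta_{4,\nu_{n}}\to\eta_{4,\nu}$ almost surely for the uniform topology, and use Proposition \ref{prop::cvg_envelop_drivingfunction} for the corresponding convergence of the driving functions under the half-plane capacity parametrization. On a compact subset $K\subset I_{\nu}$, the tips $\eta_{4,\nu_{n}}(t)$ remain at a uniformly positive distance from $\overline{A_{\rm L}}$ for $n$ large, so $K\subset I_{\nu_{n}}$ eventually and $\SZ^{(n)}_{t}$ is defined there. Carathéodory convergence (Lemma \ref{Lem Conv Cara}) combined with the uniform local connectedness of $\C\setminus\LO_{\kappa,\nu_{n}}$ (Lemma \ref{Lem Unif Loc Connect}) upgrades this to uniform convergence of $f^{(n)}_{t}$ and its derivatives on the compact subsets of $\overline{\D}$ relevant to the first term of \eqref{Eq A t 2}, which therefore converges uniformly on $K$ to the first term of $\SZ_{t}$. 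For the integral term, the weak convergence $\nu_{n}\to\nu$ together with uniform-in-$n$ bounds on the integrand and a dominated-convergence argument would deliver the desired uniform convergence. The main obstacle is producing such bounds uniform in both $t\in K$ and $n$: the exponent $\alpha$ governing the blow-up of $(f^{(n)}_{t})'$ near the attachment endpoint of $A_{\rm L}(t)$ can depend on $n$, and taming this likely requires strengthening the uniform local connectedness from Section \ref{SubSec Convergence curve} into an equicontinuity statement for the family $(f^{(n)}_{t})$ up to the boundary.
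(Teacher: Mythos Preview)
Your overall structure is right: work with the disk-side formula \eqref{Eq A t 2}, control the integrand, and pass to the limit by dominated convergence. But the control you propose for $f_{t}'$ is both more complicated than needed and, as stated, incorrect; and the gap you yourself flag in part~(2) is real and is resolved by exactly the estimate you are missing in part~(1).

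Your claim that $f_{t}'$ may blow up like $|y-x^{\ast}|^{\alpha-1}$ near the attachment endpoint, ``locally integrable against arc length and hence against the finite measure $\nu$ since $\nu(\{x^{\ast}\})=0$'', does not hold: integrability of a power singularity against Lebesgue measure does \emph{not} imply integrability against an arbitrary finite $\nu$ with no atom at $x^{\ast}$, because $\nu$ can still concentrate near $x^{\ast}$ fast enough to make $\int |y-x^{\ast}|^{\alpha-1}\,d\nu(y)$ diverge. So your argument for well-definedness in part~(1) has a genuine hole. The paper bypasses this entirely with a one-line harmonic-measure bound: for $x\in A_{\rm L}(t)$,
\[
|f_{t}'(x)| \;=\; 2\pi\,H_{\D\setminus\eta_{4,\nu}[0,t]}\big((f_{t})^{-1}(0),x\big)\;\le\;2\pi\,H_{\D}\big((f_{t})^{-1}(0),x\big),
\]
the equality coming from the conformal covariance of the Poisson kernel (evaluate at the preimage of the center, where $H_{\D}(0,\cdot)=1/(2\pi)$) and the inequality from domain monotonicity. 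Since $(f_{t})^{-1}(0)$ stays in a compact subset of $\D$ locally uniformly in $t$, this shows $f_{t}'$ is \emph{bounded} on $A_{\rm L}(t)$, not merely integrable. Well-definedness and continuity of the integral term then follow immediately against any finite $\nu$.

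This same bound is also what dissolves your ``main obstacle'' in part~(2): replacing $\eta_{4,\nu}$ by $\eta_{4,\nu_{n}}$ gives $|(f^{(n)}_{t})'(x)|\le 2\pi H_{\D}\big((f^{(n)}_{t})^{-1}(0),x\big)$, and since $(f^{(n)}_{t})^{-1}(0)\to (f_{t})^{-1}(0)$ locally uniformly in $t$, the right-hand side is bounded uniformly in $n$ and in $t\in K$. No equicontinuity upgrade or control of an $n$-dependent exponent is needed. One further point you do not address: the arcs $A_{\rm L}^{(n)}(t)$ need not be contained in $A_{\rm L}(t)$, and on the excess $A_{\rm L}^{(n)}(t)\setminus A_{\rm L}(t)$ one does not have convergence of $(f^{(n)}_{t})'$ to $f_{t}'$. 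The paper handles this by observing that $\sup_{x\in A_{\rm L}^{(n)}(t)\setminus A_{\rm L}(t)}|(f^{(n)}_{t})'(x)|\to 0$ (again a harmonic-measure statement: those boundary points are being ``pinched off'' by the limiting curve), so this portion of the integral vanishes in the limit.
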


\begin{proof}
We will use the expression \eqref{Eq A t 2}.
Observe that $A_{\rm L}(t)$
is constant on connected components of
$I_{\nu}$.
For the first point we use the following:
\begin{itemize}
\item $f_{t}$ is continuous
on $\overline{A_{\rm L}(t)}$,
locally uniformly in $t$ for $t\in I_{\nu}$;
see \eqref{eqn::equicontinuity_Lawler}.
\item $f'_{t}$ is bounded on $A_{\rm L}(t)$,
locally uniformly in $t$ for $t\in I_{\nu}$.
Indeed, $t\in [0,T_{\rm max})$
and $x\in A_{\rm L}(t)$,
\begin{displaymath}
\vert f'_{t}(x)\vert =
2\pi H_{\D\setminus\eta_{4,\nu_{n}}[0,t]}
((f_{t})^{-1}(0),x)
\leq
2\pi H_{\D}
((f_{t})^{-1}(0),x).
\end{displaymath}
\item $f'_{t}$ is continuous on compact subsets of $A_{\rm L}(t)$,
locally uniformly in $t$ for $t\in I_{\nu}$.
Indeed, one can use the Schwarz reflection principle
so as to analytically extend
$f_{t}$ across $A_{\rm L}(t)$, and then
Cauchy's integral formula to express
$f'_{t}$ through $f_{t}$.
\item For every $t\in I_{\nu}$,
$e^{i \SU_{t}}\not\in f_{t}(\overline{A_{\rm L}(t)})$.
\end{itemize}

Now let us check the second point.
Let $T^{(n)}_{\rm max}\in(0,+\infty]$ denote
the maximal parameter in the parametrization of
$\eta_{4,\nu_{n}}$ by half-plane capacity. Denote
\begin{displaymath}
I_{\nu_{n}}:=\{t\in [0,T_{\rm max}^{(n)}) : 
\eta_{4,\nu_{n}}(t)\not\in \overline{A_{\rm L}}\}.
\end{displaymath}
Denote $A_{\rm L}^{(n)}(t)$
the connected component of
$A_{\rm L}\setminus \eta_{4,\nu_{n}}[0,t]$
adjacent to $i$.
We will also use the notations
$\SU^{(n)}_t$ and $f^{(n)}_{t}$ in the case of
$\eta_{4,\nu_{n}}$, with straightforward meaning.
Every compact subset of $I_{\nu}$ is contained in
$I_{\nu_{n}}$ for $n$ large enough.
Moreover, for every $t\in I_{\nu}$,
\begin{displaymath}
A_{\rm L}(t)
\subset
\liminf_{n\to +\infty} A_{\rm L}^{(n)}(t).
\end{displaymath}
The equality does not hold in general.
The following holds.
\begin{itemize}
\item $\SU^{(n)}_t$ converges to
$\SU_t$, locally uniformly in $t$;
see Proposition \ref{prop::cvg_envelop_drivingfunction}.
\item For every $t\in I_{\nu}$
and $K$ compact subset of
$A_{\rm L}^{(n)}(t)\cup\{ i\}$,
$f^{(n)}_{t}$, respectively
$(f^{(n)}_{t})'$ converges
to $f_{t}$, respectively $f'_{t}$,
uniformly on $K$ and locally uniformly in $t$.
\item For every $n\geq 0$,
$t\in [0,T^{(n)}_{\rm max})$
and $x\in A_{\rm L}^{(n)}(t)$,
\begin{displaymath}
\vert (f^{(n)}_{t})'(x)\vert =
2\pi H_{\D\setminus\eta_{4,\nu_{n}}[0,t]}
((f^{(n)}_{t})^{-1}(0),x)
\leq
2\pi H_{\D}
((f^{(n)}_{t})^{-1}(0),x).
\end{displaymath}
In particular,
for every $t_{0}\in [0,T_{\rm max})$,
\begin{displaymath}
\limsup_{n\to +\infty}
\sup_{t\in[0,t_{0}]}
\sup_{x\in A_{\rm L}^{(n)}(t)}
\vert (f^{(n)}_{t})'(x)\vert
< +\infty.
\end{displaymath}
\item For every $t_{0}\in [0,T_{\rm max})$,
\begin{displaymath}
\lim_{n\to +\infty}
\sup_{t\in[0,t_{0}]}
\sup_{x\in A_{\rm L}^{(n)}(t)\setminus A_{\rm L}(t)}
\vert (f^{(n)}_{t})'(x)\vert
= 0.
\end{displaymath}
\end{itemize}
This implies the convergence.
\end{proof}

The following proposition tells that the driving function
$\xi_t$ satisfies the SDE
\begin{displaymath}
d \xi_t = 2 dB_{t} + \SZ_{t} dt
\end{displaymath}
on the set of times \eqref{Eq set away boundary},
where $(B_{t})_{t\geq 0}$ is a standard Brownian motion.

\begin{proposition}
\label{Prop equation}
Let $\nu$ be a finite non-negative Radon measure on 
$\overline{A_{\rm L}}$
with full support and assume
$\eta_{4,\nu}$ is
parametrized by the half-plane capacity.
Also assume that a.s. the curve
$\eta_{4,\nu}$ does not hit
$\Atom(\nu)$.
Let $(\LF_{t})_{t\geq 0}$
be the natural filtration of
$\eta_{4,\nu}[0,t\wedge T_{\max}]$.
Fix $t_{0}> 0$.
Let $E_{t_{0}}$ be the event defined by
$t_{0}<T_{\max}$ and by
$\eta_{4,\nu}(t_{0})\not\in\overline{A_{\rm L}}$.
Let be the stopping time
\begin{displaymath}
\tau(t_{0}):=
\sup \{ t\in [t_{0}, T_{\max}) :
\eta_{4,\nu}[t_{0},t] \cap \overline{A_{\rm L}}
=\emptyset
\}.
\end{displaymath}
Then, conditionally on the event $E_{t_{0}}$,
the stochastic process
\begin{displaymath}
\Big(\xi_{t\wedge \tau(t_{0})}
-\int_{t_{0}}^{t\wedge \tau(t_{0})}
\SZ_{s} ds
\Big)_{t\geq t_{0}}
\end{displaymath}
is a continuous martingale for the filtration
$(\LF_{t})_{t\geq t_{0}}$,
with quadratic variation given by
\begin{displaymath}
4(t\wedge \tau(t_{0}) - t_{0}).
\end{displaymath}
\end{proposition}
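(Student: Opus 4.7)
The plan is to prove Proposition~\ref{Prop equation} by a density argument. First I would pick a sequence $(\nu_{n})_{n\geq 0}$ of piecewise constant, strictly positive, finite Radon measures on $\overline{A_{\rm L}}$ with full support, converging weakly to $\nu$; a variant of the approximation \eqref{Eq nu n} works. Since each $\nu_{n}$ has no atoms, the non-hitting hypothesis of the proposition is automatic for $\nu_{n}$, and the identification recalled in Section~\ref{subsec::levellines} together with Theorem~\ref{thm::ALS} implies that $\eta^{(n)}:=\eta_{4,\nu_{n}}$ is distributed as an $\SLE_{4}(\rho^{(n)})$ curve with finitely many force points. Its driving function $\xi^{(n)}$ therefore satisfies
\[
d\xi^{(n)}_{t}=2\,dB^{(n)}_{t}+\sum_{j}\frac{\rho^{(n)}_{j}\,dt}{\xi^{(n)}_{t}-V^{(n),j}_{t}}
\]
for a standard Brownian motion $B^{(n)}$, up to the swallowing of a force point.

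Next, I would recognize this finite-sum drift as exactly $\SZ^{(n)}_{t}$ obtained by applying \eqref{Eq drift} to $\nu_{n}$. This is a deterministic Riemann--Stieltjes integration by parts mirroring the formal computation \eqref{Eq IPP rho}: when $\zeta_{\nu_{n}}$ is piecewise constant, its distributional derivative is a finite sum of Dirac masses at the jump points, so the integral against $\rho_{\nu_{n}}$ collapses to the finite sum above, the $-2/(\xi_{t}-g_{t}(0^{-}))$ boundary term coming from the $-2\delta_{0}$ contribution.

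I would then appeal to Theorem~\ref{thm::cvg_envelop} and Proposition~\ref{prop::cvg_envelop_drivingfunction} to couple the $\eta^{(n)}$ and $\eta_{4,\nu}$ on one probability space so that, when parameterized by half-plane capacity, $\eta^{(n)}\to\eta_{4,\nu}$ a.s. locally uniformly and $\xi^{(n)}\to\xi$ a.s. locally uniformly. Proposition~\ref{Prop drift} then gives $\SZ^{(n)}\to\SZ$ a.s. uniformly on compact subsets of $I_{\nu}$. Fix deterministic $t_{0}<t_{1}$ and restrict to the event $E_{t_{0}}\cap\{t_{1}<\tau(t_{0})\}$; by uniform convergence, for $n$ large the curve $\eta^{(n)}[t_{0},t_{1}]$ also stays at positive distance from $\overline{A_{\rm L}}$, so that $M^{(n)}_{t}:=\xi^{(n)}_{t}-\int_{t_{0}}^{t}\SZ^{(n)}_{s}\,ds$ is a continuous martingale on $[t_{0},t_{1}]$ with quadratic variation $4(t-t_{0})$ for the natural filtration of $\eta^{(n)}$. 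The a.s. uniform convergence of $\xi^{(n)}$ and $\SZ^{(n)}$ on $[t_{0},t_{1}]$ provides the uniform bounds needed for $L^{2}$-convergence of $M^{(n)}$ to $M_{t}:=\xi_{t}-\int_{t_{0}}^{t}\SZ_{s}\,ds$, from which the martingale property for the filtration $(\LF_{t})$ (to which $M$ is adapted, since $\xi$ is a measurable functional of $\eta_{4,\nu}[0,\,\cdot\,]$) and the identification of the quadratic variation with $4(t-t_{0})$ are inherited. A monotone limit $t_{1}\nearrow\tau(t_{0})$ together with standard localization then yields the full statement.

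The main obstacle is the limit passage. The identification of the drift with $\SZ^{(n)}_{t}$ is purely deterministic, but requires careful bookkeeping at the jumps and at $0$; more substantively, one needs to verify that the $\LF^{(n)}$-martingale property of $M^{(n)}$ upgrades to an $(\LF_{t})$-martingale property of $M$. The key point is that, in the constructed coupling, the filtrations $\LF^{(n)}$ and $\LF$ are linked through the a.s. uniform convergence of the curves, so that conditional expectations converge and the martingale property is preserved in the limit; one also has to verify that the stopping times $\tau^{(n)}(t_{0})$ eventually exceed the deterministic $t_{1}$ on the event considered, which again follows from uniform convergence.
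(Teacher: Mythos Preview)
Your approach is essentially the same as the paper's: approximate $\nu$ by piecewise constant boundary data (e.g.\ \eqref{Eq nu n}), use that the statement is known for $\SLE_{4}(\rho)$ with finitely many force points, and pass to the limit via Proposition~\ref{prop::cvg_envelop_drivingfunction} and Proposition~\ref{Prop drift}. The paper's proof is in fact even terser than your sketch; you have correctly isolated the one point that deserves care, namely upgrading the $\LF^{(n)}$-martingale property to an $\LF$-martingale property, and your suggested route (test against bounded continuous functionals of the path up to time $s$ and use uniform convergence of the curves together with $L^{2}$-convergence of $M^{(n)}$) is the standard and correct way to close that gap.
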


\begin{proof}
The result is true in $\nu$ is a piecewise constant function.
For general $\nu$, one takes an approximation of 
$\nu$ for the weak topology on measures by piecewise constant functions.
For instance, one can take \eqref{Eq nu n}.
Then the result follows by convergence,
by applying Proposition \ref{Prop drift}.
\end{proof}

\section{Some open questions}
\label{Sec questions}

Here we present some open questions related to this work:
\begin{enumerate}
\item In Proposition \ref{Prop double points}
we present a necessary and a sufficient condition for the
presence of double points in
$\eta_{\kappa,\nu}$, but
the two do not match. 
What is the optimal criterion for the presence of double points?
\item Similarly,
in Lemma \ref{Lem hit}
we give a necessary condition for $\eta_{\kappa,\nu}$
hitting an atom of $\nu$ with positive probability.
But what is the optimal criterion for this?
\item If $\nu$ is a Dirac measure at $-i$,
then $\eta_{\kappa,\nu}$ draws a bubble from
$-i$ to $-i$ in $\D$.
What is the distribution of this bubble?
We believe that it is singular to
the usual $\SLE_{\kappa}$ bubble measure
\cite[Section~4]{SheffieldWernerCLE}
because of the behaviour near $-i$.
\item If $\nu$ is a Dirac measure at $-i$ and $\kappa=4$,
what is the harmonic extension of
$\nu$ inside the bubble created by 
$\eta_{4,\nu}$ ?
Does an uniformizing map for this bubble
actually admit a derivative at $-i$ ?
\item In Proposition \ref{Prop equation}
we give an equation for the driving function
of $\eta_{4,\nu}$ when the curve is away from the boundary.
But what happens when the curve hits the boundary?
Is there an additional term
accounting for the interaction with the boundary?
This might be the case in some situations,
given that $\eta_{4,\nu}$
can actually intersect the boundary with a positive Lebesgue measure (Proposition \ref{Prop pos meas}).
\item What would be an equation for the driving
function of $\eta_{\kappa,\nu}$
for $\kappa\neq 4$?
This is not known even for $\nu$ being a piecewise constant function,
but it is known that the curve does not belong in general to the
$\SLE_{\kappa}(\rho)$ family.
\end{enumerate}

\appendix
\section{Appendix: Non-negative harmonic functions}
\label{appendix}
Here we recall some classical properties of non-negative harmonic functions.

\begin{propA}
Let $f$ be a non-negative harmonic function on the unit disk $\D$.
Then there is a finite non-negative Radon measure $\nu$ on
$\partial\D$, such that for every $z\in\D$,
\begin{equation}
\label{Eq pos harm}
f(z)=\int_{\partial\D} H_{\D}(z,x) d\nu(x),
\end{equation}
where $H_{\D}(z,x)$ is the Poisson kernel on $\D$
\eqref{Eq Poisson kernel}.
Moreover, the measure $\nu$ is unique.
\end{propA}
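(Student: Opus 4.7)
The statement is the classical Herglotz representation of non-negative harmonic functions on the disk. The plan is to obtain existence by a weak compactness argument on rescaled boundary values, and uniqueness by injectivity of the Poisson transform against signed Radon measures.

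For existence, I would first fix $r\in(0,1)$ and apply the standard Poisson integral formula to $f$ restricted to the closed disk of radius $r$ (where $f$ is smooth up to the boundary). After the change of variable $y=(re^{i\theta})/r\in\partial\D$, this gives, for every $z\in\D$ with $\vert z\vert<r$,
\[
f(z)=\int_{\partial\D}H_{\D}(z/r,y)f(ry)\,d\sigma_{\partial\D}(y).
\]
Define the finite non-negative Radon measure $\nu_{r}$ on $\partial\D$ by $d\nu_{r}(y)=f(ry)\,d\sigma_{\partial\D}(y)$. By the mean value property, its total mass equals $\nu_{r}(\partial\D)=2\pi f(0)$, independent of $r$. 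Hence $(\nu_{r})_{r\in(0,1)}$ is a bounded family in the space of finite measures on the compact set $\partial\D$, and by Banach--Alaoglu (or Prokhorov) one can extract a sequence $r_{n}\uparrow 1$ such that $\nu_{r_{n}}$ converges weakly to some finite non-negative Radon measure $\nu$ on $\partial\D$.

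To pass to the limit in the Poisson representation, fix $z\in\D$ and observe that as $r_{n}\to 1$, the functions $y\mapsto H_{\D}(z/r_{n},y)$ converge uniformly on $\partial\D$ to $y\mapsto H_{\D}(z,y)$ (both are continuous on $\partial\D$, and $z/r_{n}\to z$ stays in a compact subset of $\D$). Combining uniform convergence of the kernel with weak convergence of the measures yields
\[
f(z)=\lim_{n\to+\infty}\int_{\partial\D}H_{\D}(z/r_{n},y)\,d\nu_{r_{n}}(y)=\int_{\partial\D}H_{\D}(z,y)\,d\nu(y),
\]
which is \eqref{Eq pos harm}.

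For uniqueness, suppose $\nu_{1},\nu_{2}$ are two finite non-negative Radon measures giving the same right-hand side in \eqref{Eq pos harm}. The signed measure $\mu:=\nu_{1}-\nu_{2}$ then satisfies $\int_{\partial\D}H_{\D}(z,y)\,d\mu(y)=0$ for every $z\in\D$. Writing $z=re^{i\varphi}$ with $r\in[0,1)$ and expanding the Poisson kernel in Fourier series,
\[
H_{\D}(re^{i\varphi},e^{i\theta})=\frac{1}{2\pi}\sum_{n\in\Z}r^{\vert n\vert}e^{in(\varphi-\theta)},
\]
with normal convergence in $\theta$, the identity forces every Fourier coefficient $\int_{\partial\D}e^{-in\theta}\,d\mu(e^{i\theta})$ to vanish. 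By Stone--Weierstrass, trigonometric polynomials are dense in $C(\partial\D)$, so $\mu=0$ and $\nu_{1}=\nu_{2}$.

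I do not foresee a serious obstacle: the only delicate point is ensuring the interchange of the weak limit with the integration against the Poisson kernel, which is handled cleanly by the uniform convergence of $H_{\D}(z/r_{n},\cdot)$ on $\partial\D$ for fixed $z\in\D$.
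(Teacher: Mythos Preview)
Your existence argument is essentially identical to the paper's: both define the measures $d\nu_{r}(y)=f(ry)\,d\sigma_{\partial\D}(y)$ (the paper writes $r=1-\varepsilon$), observe the constant total mass $2\pi f(0)$, extract a weak subsequential limit, and pass to the limit in the Poisson representation using continuity of the kernel.

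For uniqueness the two proofs diverge. You argue directly that the Poisson transform is injective on signed measures, via the Fourier expansion of $H_{\D}$ and Stone--Weierstrass. The paper instead reuses the existence machinery: it shows that for any continuous $u$ on $\partial\D$ and any representing measure $\nu$, one has
\[
\int_{\partial\D} u\,d\nu=\lim_{\varepsilon\to 0}\int_{\partial\D} u(x)\,f((1-\varepsilon)x)\,\sigma_{\partial\D}(dx),
\]
because $\int_{\partial\D} u(x)H_{\D}((1-\varepsilon)x,y)\,\sigma_{\partial\D}(dx)\to u(y)$ uniformly in $y$. This recovers $\nu$ as the weak limit of the $\nu_{\varepsilon}$, simultaneously proving uniqueness and showing that the full family (not just a subsequence) converges. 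Your Fourier argument is slightly more self-contained; the paper's argument has the advantage of giving an explicit constructive description of $\nu$ from $f$.
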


\begin{proof}
Let us first prove the existence of $\nu$.
For $\varepsilon\in (0,1)$, 
denote the following absolutely continuous measure on $\partial\D$:
\begin{displaymath}
d\nu_{\varepsilon}(x) :=
f((1-\varepsilon)x) 
\sigma_{\partial\D}(dx).
\end{displaymath}
For every $z\in\D$ with $\vert z\vert< 1-\varepsilon$,
we have that
\begin{displaymath}
f(z)=\int_{\partial \D}
H_{\D}((1-\varepsilon)^{-1}z,x) 
d\nu_{\varepsilon}(x),
\end{displaymath}
In particular, the total mass of $\nu_{\varepsilon}$ is
always $2\pi f(0)$.
Thus, the family
$(\nu_{\varepsilon})_{\varepsilon\in (0,1)}$
is relatively compact for the weak topology of measures,
and admits subsequential limits as $\varepsilon \to 0$.
Any such subsequential limit $\nu$ satisfies
\eqref{Eq pos harm}.

Now let us show the uniqueness.
Let $\nu$ be such that \eqref{Eq pos harm} is satisfied.
Let $u$ be a continuous function on $\partial\D$.
We have that
\begin{displaymath}
\int_{\partial\D} u(x) f((1-\varepsilon)x)
\sigma_{\partial\D}(dx)
=
\int_{\partial\D} d\nu(y)
\Big(
\int_{\partial \D}
u(x)H_{\D}((1-\varepsilon)x,y)\sigma_{\partial\D}(dx)
\Big) .
\end{displaymath}
The function
\begin{displaymath}
y\mapsto 
\int_{\partial \D}
u(x)H_{\D}((1-\varepsilon)x,y)\sigma_{\partial\D}(dx)
\end{displaymath}
converges uniformly to $u$ as $\varepsilon\to 0$.
Thus,
\begin{displaymath}
\lim_{\varepsilon\to 0}
\int_{\partial\D} u(x) f((1-\varepsilon)x)
\sigma_{\partial\D}(dx)
= \int_{\partial\D} u(y) d\nu(y).
\end{displaymath}
This characterizes $\nu$.
\end{proof}

\begin{corA}
\label{Cor pos harm arc}
A function 
$f: \D\rightarrow\R$ is of form
\begin{displaymath}
f(z)=\int_{\overline{A_{\rm L}}} H_{\D}(z,x) d\nu(x),
\qquad z\in \D ,
\end{displaymath}
where $\nu$ is
a finite non-negative Radon measure on $\overline{A_{\rm L}}$
if and only if
$f$ is non-negative harmonic on $\D$,
and for every $x\in A_{\rm R}$,
\begin{displaymath}
\lim_{\substack{z\to x\\ z\in\D}} f(z) = 0.
\end{displaymath}
\end{corA}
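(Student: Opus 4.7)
The plan is to prove both directions separately: the forward direction is a direct estimate on the Poisson kernel, while the converse reduces to Proposition~A1 together with a boundary behavior argument to show that the measure furnished by Proposition~A1 does not charge $A_{\rm R}$.

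For the ``only if'' direction, assume $f(z)=\int_{\overline{A_{\rm L}}} H_{\D}(z,x)d\nu(x)$. Non-negativity and harmonicity in $\D$ are immediate from the properties of $H_{\D}(\cdot,x)$. For the boundary behavior, fix $x_0\in A_{\rm R}$. Since $\Re x_0>0$ and every $y\in\overline{A_{\rm L}}$ satisfies $\Re y\leq 0$, one has $|y-z|\geq \Re x_0 - |z-x_0|$, so for $z\in \D$ sufficiently close to $x_0$ there is a uniform lower bound $|y-z|\geq \Re(x_0)/2$ valid for all $y\in\overline{A_{\rm L}}$. The explicit expression $H_{\D}(z,y)=(2\pi)^{-1}(1-|z|^2)/|y-z|^2$ then gives a uniform bound $H_{\D}(z,y)\leq C(1-|z|^2)$ on $\overline{A_{\rm L}}$, which sends the integral to $0$ as $z\to x_0$ (dominated convergence, using that $\nu$ is finite).

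For the ``if'' direction, Proposition~A1 produces a unique finite non-negative Radon measure $\tilde\nu$ on $\partial\D$ with $f(z)=\int_{\partial\D}H_{\D}(z,x)d\tilde\nu(x)$. The task is to show $\tilde\nu(A_{\rm R})=0$, or equivalently $\tilde\nu(K)=0$ for every compact $K\subset A_{\rm R}$. Fix such a $K$ and choose a compact $K'\subset A_{\rm R}$ with $K$ in the interior of $K'$, and a continuous $u:\partial\D\to[0,1]$ with $u=1$ on $K$ and $\operatorname{supp}(u)\subset K'$. From the uniqueness argument in the proof of Proposition~A1, one has
\begin{displaymath}
\int_{\partial\D} u(y)\,d\tilde\nu(y)
=\lim_{\varepsilon\to 0}
\int_{\partial\D} u(x) f((1-\varepsilon)x)\,\sigma_{\partial\D}(dx).
\end{displaymath}

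The main step is to show that the right-hand side vanishes. By the hypothesis $\lim_{z\to x,z\in\D}f(z)=0$ for every $x\in A_{\rm R}$, extending $f$ by $0$ on $A_{\rm R}$ yields a function continuous on $\D\cup A_{\rm R}$. Since $K'$ is a compact subset of $A_{\rm R}$, a straightforward compactness argument (cover $K'$ by finitely many neighborhoods on which $f<\eta$) gives: for every $\eta>0$ there exists $\delta>0$ such that $f(z)<\eta$ for every $z\in\D$ within distance $\delta$ of $K'$. For $\varepsilon<\delta$ and $x\in K'$, the point $(1-\varepsilon)x$ lies in this $\delta$-neighborhood, so $f((1-\varepsilon)x)<\eta$ uniformly on $K'$. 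Hence the integral above is bounded by $\eta\,\sigma_{\partial\D}(K')$, which can be made arbitrarily small, forcing the limit to equal $0$. Combined with $\one_K\leq u$ this gives $\tilde\nu(K)=0$, and taking supremum over compact subsets of $A_{\rm R}$ concludes that $\tilde\nu$ is supported on $\overline{A_{\rm L}}$.

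The only delicate point is the uniform boundary decay used in the ``if'' direction, which is really a consequence of the topological compactness of $K'$ inside the open arc $A_{\rm R}$; everything else is bookkeeping around Proposition~A1. No other result from the paper is needed.
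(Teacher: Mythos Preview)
Your proof is correct. The paper states Corollary~A2 without proof, treating it as an immediate consequence of Proposition~A1; your argument supplies exactly the details one would expect, using the uniqueness characterization from the proof of Proposition~A1 to show the representing measure does not charge $A_{\rm R}$.
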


\end{document}